\newtheorem{theorem}{Theorem}[section]
\newtheorem{proposition}[theorem]{Proposition}
\newtheorem{lemma}[theorem]{Lemma}
\newtheorem{corollary}[theorem]{Corollary}
\theoremstyle{definition}
\newtheorem{definition}[theorem]{Definition}
\newtheorem{remark}[theorem]{Remark}
\DeclareMathAlphabet{\mathcalligra}{T1}{calligra}{m}{n}
\DeclareFontShape{T1}{calligra}{m}{n}{<->s*[2.2]callig15}{}
\newcommand{\Tboot}{T_{(Boot)}}
\newcommand{\Ntop}{N_{Top}}
\newcommand{\Nmid}{N_{Mid}}
\newcommand{\noshock}{v}
\newcommand{\noshockuparg}[1]{v^{#1}}
\newcommand{\diffnoshock}{V}
\newcommand{\diffnoshockdownarg}[1]{V_{#1}}
\newcommand{\diffnoshockdoublearg}[2]{V_{#1}^{#2}}
\newcommand{\blowupcoeff}{\mathcal{G}}
\newcommand{\Trandatasize}{\mathring{A}}
\newcommand{\TranminusdatasizeWithFactor}{\mathring{A}_{\ast}}
\newcommand{\Psiep}{\mathring{\upalpha}}
\newcommand{\covL}{H}
\newcommand{\GdVar}{\upgamma}
\newcommand{\BadVar}{\underline{\upgamma}}
\newcommand{\Fullset}{\mathscr{Z}}
\newcommand{\Tanset}{\mathscr{P}}
\newcommand{\Singletan}{P}
\newcommand{\Lunit}{L}
\newcommand{\Lunitback}{\widetilde{L}}
\newcommand{\Radunit}{X}
\newcommand{\Radunitback}{\widetilde{X}}
\newcommand{\Rad}{\breve{X}}
\newcommand{\CoordAng}[1]{{^{(#1) \mkern-3mu} \Theta}}
\newcommand{\CoordAngcomp}[2]{{^{(#1)} \Theta^{#2}}}
\newcommand{\CoordAngSmallcomp}[2]{{^{(#1)} \Theta_{(Small)}^{#2}}}
\newcommand{\vol}{\varpi}
\newcommand{\tvol}{\underline{\varpi}}
\newcommand{\conevol}{\overline{\varpi}}
\newcommand{\torusvol}{\vartheta}
\newcommand{\shocken}{\mathbb{E}^{(Shock)}}
\newcommand{\noshocken}{\mathbb{E}^{(Regular)}}
\newcommand{\noshockfl}{\mathbb{F}^{(Regular)}}
\newcommand{\totmax}{\mathbb{Q}}
\newcommand{\smoothfunction}{\mathrm{f}}
\numberwithin{equation}{subsection}
\begin{document}
\title{Multidimensional nonlinear geometric optics 
for transport operators 
with applications to stable shock formation}
\author[JS]{Jared Speck$^{*\dagger}$}

\thanks{$^{\dagger}$JS gratefully acknowledges support from NSF grant \# DMS-1162211,
from NSF CAREER grant \# DMS-1454419,
from a Sloan Research Fellowship provided by the Alfred P. Sloan foundation,
and from a Solomon Buchsbaum grant administered by the Massachusetts Institute of Technology.
}

\thanks{$^{*}$Massachusetts Institute of Technology, Cambridge, MA, USA.
\texttt{jspeck@math.mit.edu}}

\begin{abstract}
In $n \geq 1$ spatial dimensions,
we study the Cauchy problem for a quasilinear transport equation 
coupled to a quasilinear symmetric hyperbolic subsystem of a rather general type.
For an open set (relative to a suitable Sobolev topology) of regular initial data
that are close to the data of a simple plane wave,
we give a sharp, constructive proof of shock formation 
in which the transport variable remains bounded but its
first-order Cartesian coordinate partial derivatives blow up in finite time.
Moreover, we prove that the singularity does not propagate into 
the symmetric hyperbolic variables: they and their first-order Cartesian coordinate partial derivatives remain bounded,
even though they interact with the transport variable all the way up to its singularity.
The formation of the singularity is tied to the finite-time degeneration, relative to 
the Cartesian coordinates, of a system of geometric coordinates adapted to the characteristics of the transport operator.
Two crucial features of the proof are that relative to the geometric coordinates, 
all solution variables remain smooth,
and that the finite-time degeneration coincides with the intersection of the characteristics.
Compared to prior shock formation results in more than one spatial dimension, 
in which the blowup occurred in solutions to wave equations, 
the main new features of the present work are:
\textbf{i)} we develop a theory of nonlinear geometric optics 
for transport operators,
which is compatible with the coupling and which allows us to implement a quasilinear geometric vectorfield method, 
even though the regularity properties of the corresponding eikonal function are less favorable
compared to the wave equation case
and \textbf{ii)} we allow for a full quasilinear coupling, i.e., 
the principal coefficients in all equations are allowed to depend on all solution variables.


\bigskip

\noindent \textbf{Keywords}: characteristics, eikonal equation, eikonal function, simple wave, vectorfield method, wave breaking

\bigskip

\noindent \textbf{Mathematics Subject Classification (2010)} 
Primary: 35L67 - Secondary: 35L45
\end{abstract}

\maketitle

\centerline{\today}

\tableofcontents
\setcounter{tocdepth}{1}

\newpage

\section{Introduction}
\label{S:INTRO}
The study of quasilinear hyperbolic PDE systems is one of the most classical pursuits
in mathematics and, at the same time, among the most active.
Such systems are of intense theoretical interest, in no small part due to the fact that 
their study lies at the core of the revered field of nonlinear hyperbolic conservation laws 
(more generally ``balance laws''); we refer readers to Dafermos' work \cite{cD2010} for a detailed discussion 
of the history of nonlinear hyperbolic balance laws
as well as a comprehensive introduction to the main results
of the field and the main techniques behind their proofs, 
with an emphasis on the case of one spatial dimension.
The subject of quasilinear hyperbolic systems is of physical interest as well, 
since they are used to model a vast range of physical phenomena.
A fundamental issue surrounding the study of the initial value problem for such PDEs
is that solutions can develop singularities in finite time, 
starting from regular initial data.
In one spatial dimension, the theory is in a rather advanced state, and in many cases,
the known well-posedness results are able to accommodate the formation of shock singularities
as well as their subsequent interactions; see the aforementioned work of Dafermos. 
The advanced status of the one-space-dimensional theory 
is highly indebted to the availability of estimates in the space of functions of bounded variation (BV).
In contrast, Rauch \cite{jR1986} showed that for quasilinear hyperbolic systems in more than one spatial dimension, 
well-posedness in BV class \emph{generally does not hold}.
For this reason, energy estimates in 
$L^2$-based Sobolev spaces play an essential role in multiple spatial dimensions, 
and even the question of whether or not there is stable singularity formation 
(starting from regular initial data) can be exceptionally challenging. 
In particular, in order to derive a constructive shock formation result in
more than one spatial dimension, one cannot avoid 
the exacting task of deriving energy estimates that hold up to the singularity. 

In view of the above remarks, it is not surprising that the earliest blowup results for quasilinear hyperbolic PDEs 
in more than one spatial dimension without symmetry assumptions
were not constructive, but were instead based on proofs by contradiction, 
with influential contributions 
coming from, for example, John \cite{fJ1981} for a class of wave equations
and Sideris for a class of hyperbolic systems \cite{tS1984}
and later for the compressible Euler equations \cite{tS1985}.
The main idea of the proofs was to show that for smooth solutions with suitable initial data,
certain spatially averaged quantities verify ordinary differential inequalities 
that force them to blow up, contradicting the assumption of smoothness.

Although the blowup results mentioned in the previous paragraph are compelling,
their chief drawback is that they provide 
no information about the nature of the singularity, 
other than an upper bound on the solution's classical lifespan. 
In particular, such results are not useful if one aims
to extract sharp information about the blowup-mechanism and blowup-time, or if one 
aims to uniquely continue the solution past the singularity in a weak sense.
In contrast, many state-of-the-art blowup-results for hyperbolic PDEs
yield a detailed description of the singularity formation, even in the challenging 
setting of more than one spatial dimension. This is especially true for results on the formation of shocks
starting from smooth initial conditions, a topic that has enjoyed remarkable progress 
in the last decade, as we describe in Subsect.\ \ref{SS:PRIORWORKANDCONTEXT}.
Our main results are in this vein.
We recall that a shock singularity\footnote{The formation of a shock is sometimes referred to as ``wave breaking.''}
is such that some derivative of the solution
blows up in finite time while the solution itself remains bounded.
Shock singularities are of interest in part due to their
rather mild nature, which leaves open the hope that one might be able to
extend the solution uniquely past the shock, in a weak sense, under suitable selection
criteria. In the case of the compressible Euler equations in three spatial dimension, 
this hope has been realized in the form of
Christodoulou's recent breakthrough resolution \cite{dC2017} 
of the \emph{restricted shock development} problem without symmetry problems; 
see Subsubsect.\ \ref{SSS:MULTIDRESULTS} for further discussion.

We now provide a very rough statement of our results;
see Theorem~\ref{T:ROUGHMAINTHM} on pg.~\pageref{T:ROUGHMAINTHM} for a 
more detailed summary and Theorem~\ref{T:MAINTHM} on pg.~\pageref{T:MAINTHM}
for the complete statements.

\begin{theorem}[\textbf{Stable shock formation} (very rough version)]
	\label{T:EXTREMELYROUGH}
	In an \underline{arbitrary number} of spatial dimensions,
	there are many quasilinear hyperbolic PDE systems
	comprising a transport equation
	coupled to a symmetric hyperbolic subsystem such that the following occurs: 
	there exists an open set of initial data 
	\underline{without symmetry assumptions}
	such that the transport variable remains bounded but its first derivatives
	blow up in finite time. More precisely, the derivatives of the transport variable
	in directions \textbf{tangent} to the transport characteristics remain bounded,
	while any derivative in a \textbf{transversal} direction
	blows up.
	Moreover, the singularity does not propagate into
	the symmetric hyperbolic variables; they remain bounded, as do 
	their first derivatives in \textbf{all} directions.
\end{theorem}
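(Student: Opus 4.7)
The plan is to adapt the geometric vectorfield and eikonal function framework that has driven recent wave-equation shock formation results to the present transport-coupled setting. I would begin by constructing an eikonal function $u$ solving $T u = 0$ with data chosen on $\{t=0\}$ so its level sets $\mathcal{C}_u$ (the characteristics of $T$) are transverse to the near-simple-plane-wave data. The inverse foliation density $\mu$ of the $\mathcal{C}_u$ relative to the Cartesian time function, together with an angular coordinate propagated by $T$, yields geometric coordinates $(t,u,\vartheta)$ in which all solution variables should remain smooth even as the Cartesian derivatives blow up. The rescaled transversal vectorfield $\breve{X} = \mu X$ (with $X$ the unit $\Sigma_t$-tangent $g$-orthogonal complement of $\mathcal{C}_u$) plays the central role: the structural identity $\mu \cdot (X\Psi) = \breve{X}\Psi$, combined with uniform boundedness of $\breve{X}\Psi$, reduces the blowup statement to proving $\mu \to 0$ in finite time while $\mu$ stays away from $0$ on the initial slice.

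Next I would set up the commutator set $\{L, \breve{X}, \text{angular rotations}\}$ and develop a Lie-calculus for the associated deformation tensors, organized so that bad directional derivatives of $u$ appear only through $\mu$-weighted geometric derivatives. Energy estimates are run simultaneously for the transport variable $\Psi$ and for the symmetric hyperbolic (SH) subsystem $\vec{v}$ in a hierarchy: top-order norms allowed to degenerate as a negative power of $\mu_\star(t) := \min_{\Sigma_t} \mu$, mid-order norms with milder degeneration, and low-order norms uniformly bounded, propagated by Gronwall along the $L$-flow. Coupling to the SH subsystem contributes source terms which, because the $v^A$ and their first derivatives remain bounded, are absorbable as lower-order perturbations, provided one first verifies that the principal symbol of the SH system, though depending on $\Psi$ and $\vec{v}$, preserves the null structure adapted to the transport characteristics.

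The hard part — and the main departure from the wave-equation literature — is closing the top-order energy estimates without derivative loss. For wave equations, $u$ satisfies a \emph{second-order} eikonal equation whose elliptic structure permits recovering one derivative for the top-order terms in $\mathrm{tr}\chi$; here $u$ solves only the first-order transport $T u = 0$, so the analogous argument loses a derivative. I would therefore introduce modified quantities, constructed by rewriting the top-order transport equations for the relevant connection coefficients (the transport analogs of $\mathrm{tr}\chi$ and of $\mathrm{angular}\, \mu$) so that the derivative-losing piece is absorbed into an exact $L$-derivative of a correction term, leaving a manageable remainder after integration in $t$. These modified quantities must be designed to remain compatible with the coupling to $\vec{v}$, which is precisely where the full quasilinear dependence of the coefficients on all solution variables complicates the construction.

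Finally I would derive a transport equation $L\mu = \tfrac{1}{2}(\breve{X}\Psi)\, \mathcal{G} + \text{bounded}$, where $\mathcal{G}$ is the structural coefficient that is assumed nonzero by the genuine nonlinearity hypothesis. For plane-wave-like data with $\breve{X}\Psi|_{t=0}$ of a definite sign, this ODE forces $\mu$ to reach $0$ in finite time $T_{\mathrm{Shock}}$ with $\mu_\star(t) \sim T_{\mathrm{Shock}} - t$. Continuous induction closes the bootstrap on $[0, T_{\mathrm{Shock}})$: the rescaled geometric energies remain finite up to $T_{\mathrm{Shock}}$, $\Psi$ and $\vec{v}$ stay bounded in $C^1_{\mathrm{Cart}}$, and only $X\Psi = \mu^{-1}\breve{X}\Psi$ blows up, as $1/\mu$, precisely at the first intersection of the $T$-characteristics. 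The main obstacle to emphasize is the derivative loss for the first-order eikonal function, which dictates the entire design of the modified top-order quantities and is the principal technical innovation required.
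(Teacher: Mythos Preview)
Your overall framework is right --- eikonal function, inverse foliation density $\upmu$, geometric coordinates, commutation set $\{L,\breve{X},\Theta_i\}$, bootstrap, and the ODE $L\upmu \sim \mathcal{G}\,\breve{X}\Psi$ forcing $\upmu\to 0$ --- and matches the paper. But you have misidentified the main technical obstacle and proposed the wrong remedy for it.

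You claim the hard part is a derivative loss in the first-order eikonal equation, to be cured by wave-equation-style modified top-order quantities and a degenerating energy hierarchy (top-order norms blowing up like $\upmu_\star^{-p}$). The paper's point is precisely the opposite: because the transport equation is \emph{first}-order, commuting with geometric vectorfields costs only one derivative, and one can simply \emph{accept} that the eikonal quantities ($\upmu$, $\upxi_j$, the $\Theta$-components) are one degree less differentiable than $\Psi$ and still close. No modified quantities, no Raychaudhuri-type renormalizations, no elliptic estimates, and --- crucially --- \emph{no degeneration of any geometric energy}: even the top-order energies remain uniformly bounded up to the shock. Your proposed descent scheme and $\upmu_\star^{-p}$ hierarchy are machinery imported from the second-order wave setting that is not needed and does not appear here.

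The actual technical innovation you are missing is how to handle the symmetric hyperbolic subsystem under the full quasilinear coupling. Commuting the SH equation directly with the geometric vectorfields at top order would produce commutator terms involving their Cartesian components at a regularity level one too high. The paper's device is to introduce $V_\alpha := \partial_\alpha v$ as independent unknowns satisfying their own SH system, and then commute that system only with $\mathcal{P}_u$-\emph{tangent} vectorfields $\Tanset^{N-1}$ (never with $\breve{X}$ at top order). Transversal control of $v$ and $V_\alpha$ is recovered algebraically from the equations themselves, using that the characteristic matrix $A^0 - A^1$ is invertible (the SH speed is strictly slower than the transport speed). This slower-speed assumption also makes the $\mathcal{P}_u$ spacelike for the SH system, yielding characteristic fluxes $\mathbb{F}^{(Regular)}$ with \emph{no} $\upmu$-weight --- which is what lets you close Gronwall without the singular $\int \upmu_\star^{-1}\,dt$ that your proposal implicitly anticipates. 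Your remark about verifying ``null structure'' for the SH principal symbol is not the mechanism; positivity of the characteristic flux from the speed gap is.
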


\begin{remark}[\textbf{Rescaling the transversal derivative so as to ``cancel'' the blowup}]
	\label{R:RADPSIDOESNOTBLOWUP}
	We note already that a key part of the proof
	is showing the derivative of the transport variable in the transversal direction $\Rad$ also remains bounded.
	This does not contradict Theorem~\ref{T:EXTREMELYROUGH} for the following reason: 
	the vectorfield $\Rad$ is constructed so that its Cartesian components go
	to $0$ as the shock forms, in a manner that exactly compensates for the blowup of an ``order-unity-length''
	transversal derivative of the transport variable.
	Roughly, the situation can be described as follows, where $\Psi$ is the transport variable
	and the remaining quantities will be rigorously defined later in the article:
	$|\Radunit \Psi|$ blows up,\footnote{Here and throughout, 
if $Z$ is a vectorfield and $f$ is a scalar function, 
then $Z f := Z^{\alpha} \partial_{\alpha} f$ is the derivative of $f$ in the direction $Z$. \label{FN:SCALARVECTORFIELDIFFNOTATION}}
	$|\Rad \Psi|$ remains bounded,
	$\Rad = \upmu \Radunit$,
	and the weight $\upmu$ vanishes for the first time at the shock;
	one could say that $|\Radunit \Psi|$ blows up like $C/\upmu$ as $\upmu \downarrow 0$,
	where $C$ is the size of $|\Rad \Psi|$ at the shock;
	see Subsubsect.\ \ref{SSS:INTROSHOCKFORMS} for a more in-depth discussion of this point.
\end{remark}

\begin{remark}[\textbf{The heart of the proof and the kind of initial data under study}]
\label{R:HEARTOFPROOF}
The heart of the proof of Theorem~\ref{T:EXTREMELYROUGH} is to control the singular
terms and to show that the shock actually happens, 
i.e., that chaotic interactions do not prevent the shock from forming
or cause a more severe kind of singularity. 
In an effort to focus only on the singularity formation,
we have chosen to study the simplest non-trivial set of initial data to which 
our methods apply: perturbations of the data corresponding to simple plane symmetric waves
(see Subsect.\ \ref{SS:SIMPLEWAVS} for further discussion),
where we assume plentiful initial Sobolev regularity. The corresponding solutions do not
experience dispersion, so there are no time or radial weights in our estimates.
We will describe the initial data in more detail in Subsubsect.\ \ref{SSS:INTRODATABOOTSTRAPPOINTWISE}. 
\end{remark}

\begin{remark}[\textbf{Extensions to other kinds of hyperbolic subsystems}]
	\label{R:OTHERHYPERBOLICSUBSYSTEMS}
	From our proof, one can infer that the assumption of symmetric hyperbolicity for the subsystem 
	from Theorem~\ref{T:EXTREMELYROUGH} is in itself not important; 
	we therefore anticipate that similar shock formation results should hold for 
	systems comprising quasilinear transport equations coupled to many other types of hyperbolic subsystems,
	such as wave equations or regularly hyperbolic (in the sense of \cite{dC2000}) subsystems.
\end{remark}

\subsection{Paper outline}
\label{SS:PAPEROUTLINE}
\begin{itemize}
	\item In the remainder of Sect.\ \ref{S:INTRO},
		we give a more detailed description of our main results,
		summarize the main ideas behind the proofs,
		place our work in context by discussing prior works on shock formation,
		and summarize some of our notation.
	\item In Sect.\ \ref{S:SETUPOFPROBLEM}, 
		we precisely define the class of systems to which our main results apply.
	\item In Sect.\ \ref{S:GEOMETRICCONSTRUCTIONS}, we construct the majority of the
		geometric objects that play a role in our analysis. We also derive
		evolution equations for some of the geometric quantities.
	\item In Sect.\ \ref{S:ENERGYID}, we derive energy identities.
	\item In Sect.\ \ref{S:NUMDERIVSDATASIZEANDBOOTSTRAPASSUMPTIONS}, we
		state the number of derivatives that we use to close our estimates,
		state our size assumptions on the data,
		and state bootstrap assumptions that are useful for deriving estimates.
	\item In Sect.\ \ref{S:POINTWISEESTIMATESANDIMPROVEMENTOFAUX}, we derive
		pointwise estimates for solutions to the evolution equations and their derivatives,
		up to top order.
	\item In Sect.\ \ref{S:ESTIMATESFORCHOV}, we derive some properties of the change of variables map from
		geometric to Cartesian coordinates.
	\item In Sect.\ \ref{S:ENERGYESTIMATES}, which is the main section of the paper,
		we derive a priori estimates for all of the quantities under study.
	\item In Sect.\ \ref{S:CONTINUATION}, we provide some continuation criteria
		that, in the last section, we use to show that the solution survives up to the shock.
	\item In Sect.\ \ref{S:MAINTTHM}, we state and prove the main theorem.
\end{itemize}

\subsection{The role of nonlinear geometric optics in proving Theorem~\ref{T:EXTREMELYROUGH}}
\label{SS:NONLINEARGEOMERICOPTICS}
In prior stable shock formation results in more than one spatial dimension
(which we describe in Subsubsect.\ \ref{SSS:MULTIDRESULTS}),
the blowup occurred in a solution to a wave equation.
In the present work, the blowup occurs in the derivatives of the solution to the transport equation.
The difference is significant in that to obtain the sharp picture of shock formation,
one must rely on a geometric version of the vectorfield method 
that is precisely tailored to the family of characteristics whose intersection is tied to the blowup.
The key point is that the basic regularity properties of the characteristics
and the corresponding geometric vectorfields
are \emph{different in the wave equation and transport equation cases}; 
we will discuss this fundamental point in more detail below.
Although the blowup mechanism for solutions to the transport equations under study
is broadly similar to the Riccati-type 
mechanism that drives singularity formation in the simple one-space-dimensional example of Burgers' equation\footnote{The Riccati term appears after one spatial differentiation of the equation.} 
(see Subsect.\ \ref{SS:SIMPLEWAVS} for related discussion), 
the proof of our main theorem is much more complicated,
owing in part to the aforementioned difficulty of having to
derive energy estimates in multiple spatial dimensions.  

The overall strategy of our proof is to construct a system of geometric coordinates
adapted to the transport characteristics,
\emph{relative to which the solution remains smooth}, 
in part because the geometric coordinates ``hide''\footnote{In one spatial dimension, this is sometimes referred to
as ``straightening out the characteristics'' via a change of coordinates.} 
the Riccati-type term mentioned above. In more than one spatial dimension, 
the philosophy of constructing geometric coordinates to regularize the problem of shock formation
seems to have originated Alinhac's work \cites{sA1999a,sA1999b,sA2001b} 
on quasilinear wave equations; see Subsubsect.\ \ref{SSS:MULTIDRESULTS} for further discussion.
As will become abundantly clear, 
our construction of the geometric coordinates and other related 
quantities is tied to the following fundamental ingredient in our approach:
our development of a theory of 
\underline{nonlinear geometric optics for quasilinear transport equations}, tied to an eikonal function,
that is compatible with full quasilinear coupling to the symmetric hyperbolic subsystem.
We use nonlinear geometric optics to construct vectorfield differential operators adapted to the
characteristics as well as to detect the singularity formation. 
By ``compatible,'' we mean, especially, from the perspective of regularity considerations.
Indeed, in any situation in which one uses nonlinear geometric optics
to study a quasilinear hyperbolic PDE system,
\emph{one must ensure that the regularity of the corresponding eikonal function is consistent
with that of the solution}. By ``full quasilinear coupling,'' we mean that in the systems 
that we study, the
\emph{principal coefficients in all equations are allowed to depend on all solution variables}.

Upon introducing nonlinear geometric optics into the problem, 
we encounter the following key difficulty:
\begin{quote}
	Some of the geometric vectorfields that we construct have
	Cartesian components that are one degree less differentiable than the transport variable,
	as we explain in Subsubsect.\ \ref{SSS:REGULARITYCONSIDERATIONS}.
\end{quote}
On the one hand, due to the full quasilinear coupling,
it seems that we must use the geometric vectorfields when commuting 
the symmetric hyperbolic subsystem to obtain higher-order estimates;
this allows us to avoid generating uncontrollable commutator error terms 
involving ``bad derivatives'' (i.e., in directions transversal to the transport characteristics)
of the shock-forming transport variable.
On the other hand, the loss of regularity of the Cartesian components of the
geometric vectorfields leads,
at the top-order derivative level, 
to commutator error terms in the symmetric hyperbolic
subsystem that are uncontrollable in that they have insufficient regularity.
To overcome this difficulty, we employ the following strategy:
\begin{quote}
We never commute the symmetric hyperbolic subsystem a top-order number of times with
a pure string of geometric vectorfields; instead, 
we first commute it with a single Cartesian coordinate partial derivative, 
and then follow up the Cartesian derivative with commutations by the geometric vectorfields. 
\end{quote}
The above strategy allows us to avoid the loss of a derivative, 
but it generates commutator error terms depending on a single Cartesian coordinate partial derivative, 
which are dangerous because they are transversal to the transport characteristics. Indeed, the first 
Cartesian coordinate partial derivatives of the transport variable blow up at the shock.
Fortunately, by using a weight\footnote{The weight is the quantity $\upmu$ from Remark~\ref{R:RADPSIDOESNOTBLOWUP},
and we describe it in detail below.} 
adapted to the characteristics, 
we are able to control such error terms featuring a single Cartesian differentiation, 
all the way up to the singularity.

We close this subsection by providing some remarks on 
using nonlinear geometric optics to study the maximal development\footnote{The maximal development of the data is, roughly,
the largest possible classical solution that is uniquely determined by the data. 
Readers can consult \cites{jSb2016,wW2013} for further discussion.}  
of initial data for
quasilinear hyperbolic PDEs without symmetry assumptions.
The approach was pioneered by Christodoulou--Klainerman in their celebrated proof \cite{dCsK1993} of the stability of Minkowski
spacetime as a solution to the Einstein-vacuum equations.\footnote{Roughly, \cite{dCsK1993} is a small-data global
existence result for Einstein's equations.} Since perturbative global existence results for hyperbolic PDEs typically
feature estimates with ``room to spare,''
in many cases, it is possible to close the proofs by relying on a version of \emph{approximate} nonlinear geometric optics,
which features approximate eikonal functions whose level sets approximate the characteristics.
The advantage of using approximate eikonal functions is that 
is that their regularity theory is typically very simple.
For example, such an approach was taken by
Lindblad--Rodnianski in their proof of the stability of the Minkowski spacetime \cite{hLiR2010}
relative to wave coordinates. Their proof was less precise than Christodoulou--Klainerman's
but significantly shorter since,
unlike Christodoulou--Klainerman, 
Lindblad--Rodnianski relied on approximate eikonal functions 
whose level sets were standard Minkowski light cones.

The use of eikonal functions for proving shock formation for quasilinear wave equations
in more than one spatial dimension without symmetry assumptions
was pioneered by Alinhac in his aforementioned works \cites{sA1999a,sA1999b,sA2001b},
and his approach was later remarkably sharpened/extended by Christodoulou \cite{dC2007}.
In contrast to global existence problems, 
in proofs of shock formation without symmetry assumptions, 
the \emph{use of an eikonal function adapted to the true characteristics (as opposed to approximate ones) seems essential}, 
since the results yield that the singularity formation
exactly coincides with the intersection of the characteristics.
One can also draw an analogy between works on shock formation 
and works on low regularity well-posedness for quasilinear wave equations,
such as \cites{sKiR2003,sKiR2005d,hSdT2005,sKiRjS2015}, 
where the known proofs fundamentally rely on eikonal functions whose levels sets are true characteristics.

\subsection{A more precise statement of the main results}
\label{SS:ROUGHSTATEMENT}
For the systems under study,
we assume that the number of spatial dimensions is $n \geq 1$, where $n$ is arbitrary.
For convenience, we study the dynamics of solutions in spacetimes of the form
$\mathbb{R} \times \Sigma$, where 
\begin{align} \label{E:SPACEMANIFOLD}
	\Sigma & = \mathbb{R} \times \mathbb{T}^{n-1}
\end{align}
is the spatial manifold and $\mathbb{T}^{n-1}$ is the standard $n-1$ dimensional torus
(i.e., $[0,1)^{n-1}$ with the endpoints identified and equipped with the usual smooth orientation).
The factor $\mathbb{T}^{n-1}$ in \eqref{E:SPACEMANIFOLD} will correspond to perturbations away from plane symmetry.
Our assumption on the topology of $\Sigma$ is for technical convenience only;
since our results are localized in spacetime, one could derive similar stable blowup results for 
arbitrary spatial topology.\footnote{However, assumptions on the data that lead to shock formation
generally must be adapted to the spatial topology.}
Throughout, $\lbrace x^{\alpha} \rbrace_{\alpha = 0,\cdots,n}$
are a fixed set of Cartesian spacetime coordinates on $\mathbb{R} \times \Sigma$,
where $t := x^0 \in \mathbb{R}$ is the time coordinate,
$\lbrace x^i \rbrace_{i = 1,\cdots,n}$ are the spatial coordinates on $\Sigma$,
$x^1 \in \mathbb{R}$ is the ``non-compact space coordinate,'' 
and $\lbrace x^i \rbrace_{i = 2,\cdots,n}$ are standard locally
defined coordinates on $\mathbb{T}^{n-1}$ such that
$(\partial_2,\cdots,\partial_n)$ is a positively oriented frame. 
We denote the Cartesian coordinate partial
derivative vectorfields by 
$
\partial_{\alpha} := \frac{\partial}{\partial x^{\alpha}}
$,
and we sometimes use the alternate notation $\partial_t := \partial_0$.
Note that the vectorfields $\lbrace \partial_{\alpha} \rbrace_{\alpha = 0,\cdots,n}$
can be globally defined so as to form a smooth frame,
even though the $\lbrace x^i \rbrace_{i = 2,\cdots,n}$ 
are only locally defined.
For mathematical convenience, our main results are adapted to
\underline{nearly plane symmetric solutions}, where by our conventions,
exact plane symmetric solutions depend only on $t$ and $x^1$.
We now roughly summarize our main results; see Theorem~\ref{T:MAINTHM}
for precise statements.

\begin{theorem}[\textbf{Stable shock formation} (rough version)]
\label{T:ROUGHMAINTHM} \

\noindent \underline{\textbf{Assumptions}}:
	Consider the following coupled system\footnote{Throughout we use Einstein's summation convention.
	Greek lowercase ``spacetime'' indices vary over $0,1,\cdots,n$, 
	while Latin lowercase ``spatial'' indices vary over $1,2,\cdots,n$.} 
	with initial data
	posed on the constant-time hypersurface
	$\Sigma_0 := \lbrace 0 \rbrace \times \mathbb{R} \times \mathbb{T}^{n-1} \simeq \mathbb{R} \times \mathbb{T}^{n-1}$:
	\begin{align} 
	\Lunit^{\alpha}(\Psi,\noshock) \partial_{\alpha} \Psi & = 0,
		\label{E:INTROSHOCKEQN} \\
	A^{\alpha}(\Psi,\noshock) \partial_{\alpha} \noshock & = 0,
	\label{E:INTRONONSHOCKEQUATION} 
\end{align}
where $\Psi$ is a scalar function, 
$\noshock = (\noshockuparg{1},\cdots,\noshockuparg{M})$ is an array ($M$ is arbitrary),
and the $A^{\alpha}$ are symmetric $M \times M$ matrices.
Assume that $\Lunit^1(\Psi,\noshock)$ verifies a genuinely nonlinear-type condition tied to its dependence on $\Psi$
(specifically, condition \eqref{E:GENUINELYNONLINEAR})
and that for small $\Psi$ and $\noshock$, the constant-time hypersurfaces $\Sigma_t$ and the $\mathcal{P}_u$ are 
\textbf{spacelike}\footnote{This means that $A^{\alpha} \omega_{\alpha}$ is positive definite, where
the one-form $\omega$ is co-normal to the surface and satisfies $\omega_0 > 0$.} 
for the subsystem \eqref{E:INTRONONSHOCKEQUATION}.
Here and throughout, the $\mathcal{P}_u$ are $\Lunit$-characteristics,
which are the family of (solution-dependent) hypersurfaces 
equal to the level sets of the eikonal function $u$, that is,
the solution to the eikonal equation (see Footnote~\ref{FN:SCALARVECTORFIELDIFFNOTATION} regarding the notation)
$\Lunit u = 0$
with the initial condition
$u|_{\Sigma_0} = 1 - x^1$.

To close the proof, we make the following assumptions on the data, 
which we propagate all the way up to the singularity: 
\begin{quote}
$\bullet$ Along $\Sigma_0$,
$\noshock$, all of its derivatives,
and the $\mathcal{P}_u$-tangential derivatives of $\Psi$
are small \underline{relative}\footnote{We also assume an absolute smallness condition on 
$\| \Psi \|_{L^{\infty}(\Sigma_0)}$.} 
to quantities constructed out of
a first $\mathcal{P}_u$-transversal derivative of $\Psi$
(see Subsect.\ \ref{SS:SMALLNESSASSUMPTIONS} for the precise smallness assumptions, 
which involve geometric derivatives).
Moreover, along $\mathcal{P}_0$, all derivatives of $\noshock$
up to top order are relatively small.
\end{quote}

\medskip

\noindent \underline{\textbf{Conclusions}}: There exists an open set (relative to a suitable Sobolev topology)
of data that are close to the data of a simple plane wave
(where a simple plane wave is such that $\Psi = \Psi(t,x^1)$ and $\noshock \equiv 0$),
given along the unity-thickness subset $\Sigma_0^1$ of $\Sigma_0$ and a finite portion of $\mathcal{P}_0$, 
such that the solution behaves as follows:
\begin{quote}
$\bullet$ 
$\max_{\alpha=0,\cdots,n} |\partial_{\alpha} \Psi|$ 
blows up in finite time 
while $|\Psi|$, $\lbrace |\noshockuparg{J}| \rbrace_{1 \leq J \leq M}$, 
and $\lbrace |\partial_{\alpha} \noshockuparg{J}| \rbrace_{0 \leq \alpha \leq n, 1 \leq J \leq M}$
remain uniformly bounded.
\end{quote}
The blowup is tied to the intersection of the $\mathcal{P}_u$,
which in turn is precisely characterized by the vanishing of the inverse foliation density 
$\upmu := \frac{1}{\partial_t u}$
of the $\mathcal{P}_u$, which is initially near unity; 
see Fig.\ \ref{F:FRAME} for a picture in which a shock is about to form
(in the region up top, where $\upmu$ is small).
Moreover, one can complete $(t,u)$ to form a geometric coordinate system
$(t,u,\vartheta^2,\cdots,\vartheta^n)$ on spacetime with the following key property, central to the proof:
\begin{quote}
$\bullet$
No singularity occurs in $\Psi$, $\noshock$, $\partial_{\alpha} \noshock$,
or their derivatives with respect to the geometric coordinates\footnote{In practice, we will derive
estimates for the derivatives of the solution with respect to the vectorfields
depicted in Fig.\ \ref{F:FRAME}.}
up to top order.
\end{quote}
Put differently, the problem of shock formation can be transformed into an equivalent problem in which
one proves non-degenerate estimates relative to the geometric coordinates and, at the same time,
proves that the geometric coordinates degenerate in a precise fashion with respect to the
Cartesian coordinates as $\upmu \downarrow 0$.
\end{theorem}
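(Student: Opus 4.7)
The plan is a long-time bootstrap argument carried out in geometric coordinates adapted to the $\Lunit$-characteristics. First I would solve the eikonal equation $\Lunit u = 0$ with $u|_{\Sigma_0} = 1 - x^1$, define the inverse foliation density $\upmu := 1/(\partial_t u)$, and from $u$ construct the geometric frame $\{\Lunit,\Rad,\GeoAng{2},\ldots,\GeoAng{n}\}$ of Sect.~\ref{S:GEOMETRICCONSTRUCTIONS}, where $\Rad := \upmu \Radunit$ is the $\upmu$-rescaled $\mathcal{P}_u$-transversal vectorfield and the $\GeoAng{A}$ are $\mathcal{P}_u$-tangent. Completing $(t,u)$ to geometric coordinates $(t,u,\vartheta^2,\ldots,\vartheta^n)$ and expressing each $\partial_\alpha$ as a frame combination, one finds that the $\Rad$-component carries a $1/\upmu$ factor. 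This produces the central dichotomy used throughout: $\Lunit\Psi$ and $\angD\Psi$ are $\mathcal{P}_u$-tangential and will be shown to remain bounded, whereas a generic Cartesian $\partial_\alpha\Psi$ inherits a $1/\upmu$ factor and therefore blows up precisely when $\upmu \downarrow 0$.

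Second, I would derive the transport equations for $\upmu$ and for $\Radunit\Psi$ along the integral curves of $\Lunit$. Differentiating the eikonal equation and exploiting the structure of $\Lunit^\alpha(\Psi,\noshock)$ yields a relation of the schematic form $\Lunit\upmu = \tfrac{1}{2}\blowupcoeff\,\Rad\Psi + (\text{small})$, in which the genuinely nonlinear assumption \eqref{E:GENUINELYNONLINEAR} on $\Lunit^1$ ensures that $\blowupcoeff$ has a definite sign and is quantitatively bounded below. The bootstrap control of $|\Rad\Psi|$ together with the nearness to simple plane symmetry then turns this into an essentially autonomous ODE showing that $\upmu$ decays approximately linearly along those integral curves on which $\Rad\Psi$ has the shock-forming sign initially, producing a finite time $T_* > 0$ at which $\inf_{\Sigma_t}\upmu \to 0$. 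Simultaneously, the commuted transport equation for $\Radunit\Psi$ closes as a Riccati-type inequality yielding $|\Radunit\Psi|\sim 1/\upmu$, while $|\Rad\Psi| = |\upmu \Radunit\Psi|$ and the $\mathcal{P}_u$-tangential derivatives of $\Psi$ remain $O(1)$.

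Third, and this is where I expect the main obstacle, I need high-order energy estimates that remain valid up to $\upmu\downarrow 0$ and that simultaneously control the symmetric hyperbolic variable $\noshock$. I would build energies that weight the transversal derivative by $\sqrt{\upmu}$, and combine $L^2(\Sigma_t)$-norms of successive commuted geometric derivatives of $\Psi$ with the standard symmetric-hyperbolic energy of $\noshock$ from Sect.~\ref{S:ENERGYID}. The difficulty flagged in Subsect.~\ref{SS:NONLINEARGEOMERICOPTICS} is the decisive one: the Cartesian components of $\Rad$ and $\GeoAng{A}$ are one degree less differentiable than $\Psi$, so a pure top-order commutation of the $\noshock$-subsystem by geometric vectorfields would demand an extra derivative of $\Psi$ through deformation tensors that is simply unavailable. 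I would resolve this with the mixed strategy indicated in the paper: never commute the $\noshock$-subsystem $N$ times by a pure string of geometric vectorfields, but rather hit it first with a single Cartesian $\partial_\alpha$ and then commute the resulting equation for $\partial_\alpha\noshock$ by up to $N-1$ geometric vectorfields. The price is inhomogeneities containing one Cartesian derivative of $\Psi$, which is transversally singular but tamable by the $\upmu$-weighted energies and the pointwise bounds of Sect.~\ref{S:POINTWISEESTIMATESANDIMPROVEMENTOFAUX}, through a Gr\"onwall inequality whose coefficient is integrable against the rate at which $\upmu$ degenerates.

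Finally, the continuation criteria of Sect.~\ref{S:CONTINUATION} combined with the change-of-variables estimates of Sect.~\ref{S:ESTIMATESFORCHOV} promote the improved bootstrap estimates to the conclusion that the classical solution persists in geometric coordinates on $[0,T_*)$, that $\Psi$, $\noshock$, and $\partial_\alpha\noshock$ remain bounded, and that the Jacobian of the geometric-to-Cartesian map degenerates exactly like $\upmu$, forcing $\max_\alpha\|\partial_\alpha\Psi\|_{L^\infty} \to \infty$ as $t\uparrow T_*$. The genuinely hard step is the top-order energy estimate, the one place where the regularity loss of the eikonal geometry, the singular $1/\upmu$ factors generated by commutators, and the full quasilinear coupling of $\noshock$ into $\Lunit^\alpha$ all interact; the mixed Cartesian-then-geometric commutation above, together with a carefully tuned hierarchy of $\upmu$-weights across derivative levels, is what makes the argument close all the way up to the shock.
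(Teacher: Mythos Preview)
Your high-level architecture is right and matches the paper: geometric coordinates built from the eikonal function, the frame $\{\Lunit, \Rad, \CoordAng{2}, \ldots, \CoordAng{n}\}$ (the paper uses the torus-coordinate vectorfields $\CoordAng{i}$ rather than $\GeoAng{A}$), the transport equation for $\upmu$ driving the shock, treating $\diffnoshockdownarg{\alpha} := \partial_\alpha \noshock$ as independent unknowns (your mixed Cartesian-then-geometric commutation), a bootstrap closed by energy estimates, and continuation to the first vanishing of $\upmu$. Two points in your energy discussion, however, import wave-equation intuition that does not apply here and would mislead you about where the difficulty lies.

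First, you anticipate ``a carefully tuned hierarchy of $\upmu$-weights across derivative levels.'' The paper needs no such hierarchy: the a priori estimate \eqref{E:MAINAPRIORIENERGYESTIMATE} shows the geometric energies are uniformly bounded at \emph{every} derivative level up to the shock. This contrasts sharply with the wave-equation setting (Alinhac, Christodoulou), where top-order energies may blow up and a descent scheme is required; see the discussion surrounding \eqref{E:INTROAPRIORENERGY} and in Subsubsect.~\ref{SSS:MULTIDRESULTS}. The difference is that commuting geometric vectorfields through the first-order operator $\Lunit$ produces far milder commutator terms than commuting through a second-order wave operator.

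Second, you miss the mechanism that actually prevents $1/\upmu$ from entering the Gr\"onwall for $\noshock$ and $\diffnoshock$: the characteristic flux $\noshockfl[\cdot]$ along $\mathcal{P}_u$ (see \eqref{E:NOSHOCKNULLFLUX} and Lemma~\ref{L:COERCIVENESSOFNOSHOCKENERGY}). Because $\mathcal{P}_u$ is strictly spacelike for the $\noshock$-subsystem (assumption \eqref{E:POSITIVEDEFMATRICES}), this flux is coercive \emph{without} a $\upmu$-weight. Spacetime error integrals of $|\noshock|^2$ and $|\diffnoshock|^2$ cannot be absorbed by $\noshocken$ (which carries a degenerate factor of $\upmu$) without generating $1/\upmu_\star$, but they \emph{can} be absorbed by $\int_0^u \noshockfl(t,u')\,du'$; see \eqref{E:GRONWALLREADYINTROSTANDARDERRORTERM} and the key paragraph following it. This flux, not a weight hierarchy, is why the Gr\"onwall closes cleanly.

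Two minor corrections: the transport equation reads $\Lunit\upmu = \blowupcoeff\,\Rad\Psi + O(\varepsilon)$ with no factor $\tfrac12$ (see \eqref{E:LUNITMUNOTCOMMUTED}), and there is no Riccati equation anywhere in the argument. Rather, $\Lunit(\blowupcoeff\,\Rad\Psi) = O(\varepsilon)$, so $\Rad\Psi$ is nearly conserved along $\Lunit$-integral curves, $\upmu$ decreases linearly, and $|\Radunit\Psi| = |\Rad\Psi|/\upmu$ blows up by division (Lemma~\ref{L:CRUCIALESTIMATESFORUPMUANDRADPSI}).
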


\begin{center}
\begin{overpic}[scale=.25]{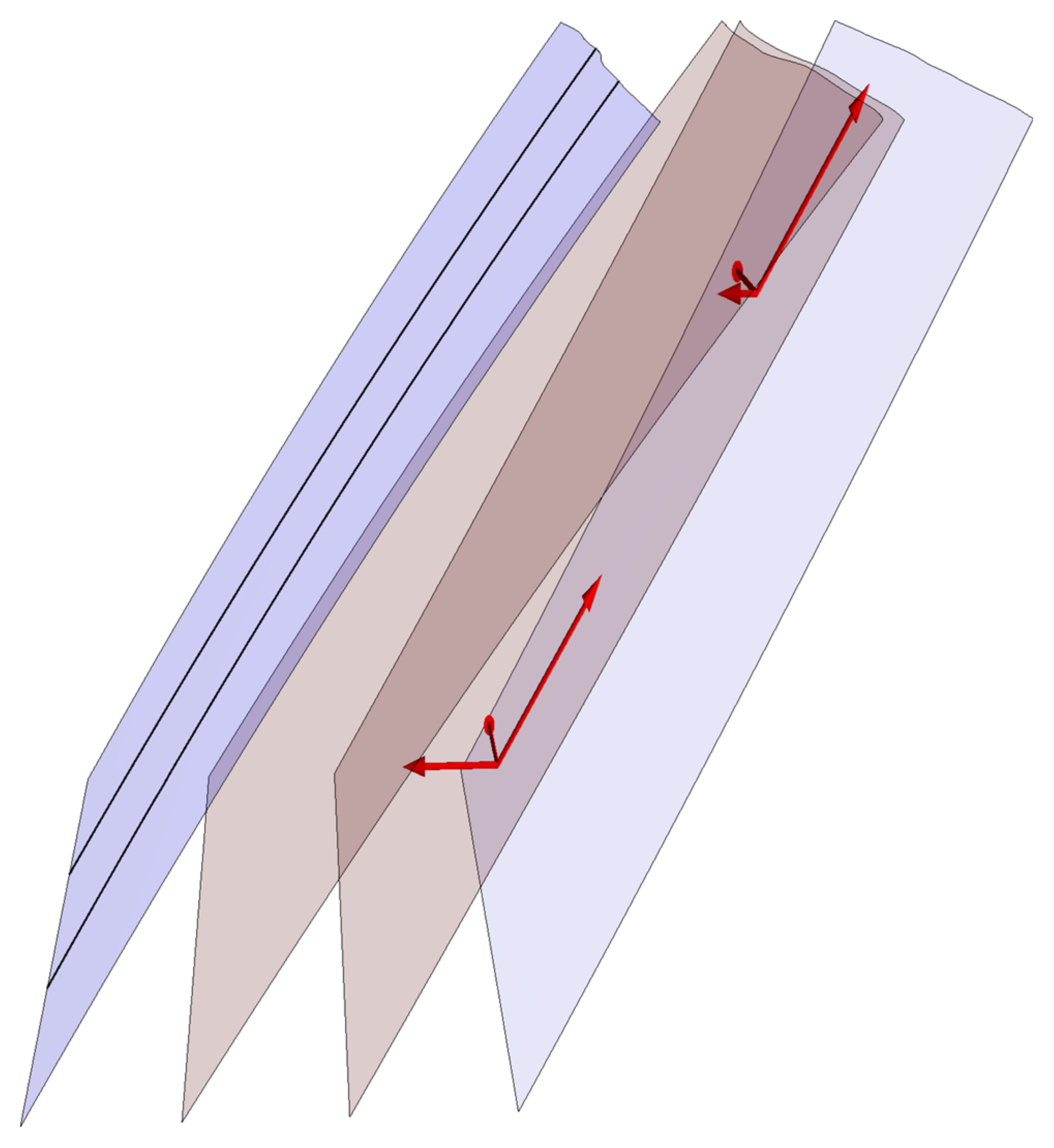} 
\put (71.5,82) {\large$\displaystyle \Lunit$}
\put (58,72) {\large$\displaystyle \Rad$}
\put (59.7,81.5) {\large$\displaystyle \frac{\partial}{\partial \vartheta^2}$}
\put (48,39.2) {\large$\displaystyle \Lunit$}
\put (30.5,31) {\large$\displaystyle \Rad$}
\put (38,42.3) {\large$\displaystyle \frac{\partial}{\partial \vartheta^2}$}
\put (49,14) {\large$\displaystyle \mathcal{P}_0^t$}
\put (34,14) {\large$\displaystyle \mathcal{P}_u^t$}
\put (6.5,14) {\large$\displaystyle \mathcal{P}_1^t$}
\put (20,20) {\large$\displaystyle \upmu \approx 1$}
\put (56,99) {\large$\displaystyle \upmu \ \mbox{\upshape small}$}
\put (12,35) {\rotatebox{57}{\large$\displaystyle \mbox{\upshape integral curves of } \Lunit$}}
%
\end{overpic}
\captionof{figure}{The dynamics until close to the time of the shock when $n=2$}
\label{F:FRAME}
\end{center}

\begin{remark}[\textbf{Non trivial interactions all the way up to the singularity}]
	We emphasize that in Theorem~\ref{T:ROUGHMAINTHM},
	$\noshock$ can be non-zero at the singularity in $\max_{\alpha=0,\cdots,n} |\partial_{\alpha} \Psi|$.
	This means, in particular, that the problem cannot be reduced to the study of
	blowup for the simple case of a decoupled scalar transport equation.
\end{remark}	

\begin{remark}[\textbf{Extensions to allow for semilinear terms}]
	\label{R:EXTENSIONSALLOWINGSEMILINEARTERMS}
	We expect that the results of Theorem~\ref{T:ROUGHMAINTHM} could be extended 
	to allow for the presence of arbitrary smooth semilinear terms 
	on RHSs~\eqref{E:INTROSHOCKEQN}-\eqref{E:INTRONONSHOCKEQUATION}
	that are functions of $(\Psi,\noshock)$. The extension would be straightforward to derive
	for semilinear terms that vanish when $\noshock = 0$ (for example, $\noshock \cdot \Psi$).
	The reason is that our main results imply that such semilinear terms 
	remain small, in suitable norms, up to the shock.
	In fact, such semilinear terms completely vanish for the exact simple waves 
	whose perturbations we treat in Theorem~\ref{T:ROUGHMAINTHM}; 
	see Subsect.\ \ref{SS:SIMPLEWAVS} for further discussion of simple waves.
	Consequently, a set of initial data similar to the one from Theorem~\ref{T:ROUGHMAINTHM} would also lead to the formation
	of a shock in the presence of such semilinear terms.
	In contrast, for semilinear terms that do not vanish when $\Psi = 0$ (for example, $\Psi^2$),
	the analysis would be more difficult and the assumptions on the data might have to be changed
	to produce shock-forming solutions. In particular, such semilinear terms can, 
	at least for data with $\Psi$ large, radically alter the behavior of some solutions.
	This can be seen in the simple model problem of the inhomogeneous
	Burgers-type equation $\partial_t \Psi + \Psi \partial_x \Psi = \Psi^2$. 
	This equation admits the family of ODE-type
	blowup solutions $\Psi_{(ODE);T}(t) := (T-t)^{-1}$, whose singularity is much more severe
	than the shocks that that typically form when the semilinear term $\Psi^2$ is absent.
\end{remark}

\begin{remark}[\textbf{Description of a portion of the maximal development}]
	\label{R:DESCRIPTIONOFMAXIMALDEVELOPMENT}
	We expect that the approach that we take 
	in proving our main theorem is
	precise enough that it can be extended to
	yield sharp information about
	the behavior of the solution up the boundary of the maximal development,
	as Christodoulou did in his related work \cite{dC2007}*{Chapter 15}
	(which we describe in Subsubsect.\ \ref{SSS:MULTIDRESULTS}).
	For brevity, we do not pursue this issue in the present article.
	However, in the detailed version of our main results (i.e., Theorem~\ref{T:MAINTHM}),
	we set the stage for the possible future study of the maximal development
	by proving a ``one-parameter family of results,''
	indexed by $U_0 \in (0,1]$; one would need to vary $U_0$
	to study the maximal development. Here and throughout, 
	$U_0$ corresponds to an initial data region $\Sigma_0^{U_0}$ of thickness $U_0$;
	see Fig.\ \ref{F:REGION} on pg.~\pageref{F:REGION}
	and Subsubsect.\ \ref{SSS:SPACETIMEREGION} for further discussion.
	For $U_0 = 1$, which is implicitly assumed in Theorem~\ref{T:ROUGHMAINTHM},
	a shock forms in the maximal development 
	of the data given along\footnote{Actually, as we explain in 
	Subsubsect.\ \ref{SSS:SPACETIMEREGION}, we only need to specify the data along the subset
	$\Sigma_0^{U_0} \cup \mathcal{P}_0^{2 \TranminusdatasizeWithFactor^{-1}}$
	of $\Sigma_0^{U_0} \cup \mathcal{P}_0$.} 
	$\Sigma_0^{U_0} \cup \mathcal{P}_0$.
	However, for small $U_0$, a shock does not necessarily form
	in the maximal development of the data given along
	$\Sigma_0^{U_0} \cup \mathcal{P}_0$
	within the amount of time that we attempt to control the solution.
\end{remark}

\subsection{Further discussion on simple plane symmetric waves}
\label{SS:SIMPLEWAVS}
Theorem~\ref{T:ROUGHMAINTHM} shows, roughly, that the well-known stable blowup of
$\partial_x \Psi$ in solutions to the one-space-dimensional Burgers' equation
\begin{align} \label{E:BURGERS1D}
	\partial_t + \Psi \partial_x \Psi
	& = 0
\end{align}
is stable under a full quasilinear coupling of \eqref{E:BURGERS1D} to other hyperbolic subsystems,
under perturbations of the coefficients in the transport equation,
and under increasing the number of spatial dimensions.
We now further explain what we mean by this.
A special case of Theorem~\ref{T:ROUGHMAINTHM} occurs when $\noshock \equiv 0$
and $\Psi$ depends only on $t$ and $x^1$ (plane symmetry).
In this simplified context, 
the blowup of $\max_{\alpha=0,1} |\partial_{\alpha} \Psi|$  
can be proved using a simple argument 
based on the method of characteristics, similar to the argument that is typically
use to prove blowup in the case of Burgers' equation.
Solutions with $\noshock \equiv 0$ are sometimes referred to as \emph{simple waves} since
they can be described by a single non-zero scalar component.
From this perspective, we see that Theorem~\ref{T:ROUGHMAINTHM}
yields the stability of simple plane wave blowup for the transport variable 
in solutions to the system \eqref{E:INTROSHOCKEQN}-\eqref{E:INTRONONSHOCKEQUATION}.

\subsection{The main new ideas behind the proof}
\label{SS:MAINNEWIDEASBEHINDPROOF}
The proof of Theorem~\ref{T:ROUGHMAINTHM} is based in part on ideas used in earlier
works on shock formation in more than one spatial dimension.
We review these works in Subsect.\ \ref{SS:PRIORWORKANDCONTEXT}.
Here we summarize the two most novel aspects behind the proof of Theorem~\ref{T:ROUGHMAINTHM}.

\begin{itemize}
	\item (\textbf{Nonlinear geometric optics for transport equations})	
		As in all prior shock formation results in more than one spatial dimension,
		our proof relies on nonlinear geometric optics, that is, 
		the eikonal function $u$. The use of an eikonal function 
		is essentially the method of characteristics
		implemented in more than one spatial dimension.
		All of the prior works were such that
		the blowup occurred in a solution to a quasilinear wave equation and thus the theory of nonlinear
		geometric optics was adapted to those wave characteristics. In this article,
		we advance the theory of nonlinear geometric optics for transport equations. Although the theory
		is simpler in some ways, compared to the case of wave equations,
		it is also more degenerate in the following sense: 
		\emph{the regularity theory for the eikonal function $u$ is less favorable
		in that $u$ is one degree less differentiable in some directions
		compared to the case of wave equations}.
		We therefore must close the proof of Theorem~\ref{T:ROUGHMAINTHM} under this decreased differentiability.
		We defer further discussion of this point until Subsubsect.\ \ref{SSS:REGULARITYCONSIDERATIONS}.
		Here, we will simply further motivate our use of nonlinear geometric optics in proving shock formation.
		
		First, we note that in more than one spatial dimension,
		it does not seem possible to close the proof using only the Cartesian coordinates;
		indeed, Theorem~\ref{T:ROUGHMAINTHM} shows that the blowup of $\Psi$ 
		precisely corresponds to the vanishing of the inverse foliation density of the characteristics,
		which is equivalent to the blowup of $\partial_t u$. Hence,
		it is difficult to imagine how a sharp, constructive proof of stable blowup
		would work without referencing an eikonal function.
		In view of these considerations, we construct a geometric coordinate system $(t,u,\vartheta^2,\cdots,\vartheta^n)$
		adapted to the transport operator vectorfield $\Lunit$ and prove that 
		$\Psi$, $\noshock$, $\diffnoshockdownarg{\alpha} := \partial_{\alpha} \noshock$,
		and their geometric coordinate partial
		derivatives remain regular all the way up to the singularity in $\max_{\alpha=0,\cdots,n} |\partial_{\alpha} \Psi|$. 
		The blowup of $\max_{\alpha=0,\cdots,n} |\partial_{\alpha} \Psi|$ occurs because
		the change of variables map between geometric and Cartesian coordinates \emph{degenerates}, 
		which is in turn tied to the vanishing of $\upmu$;
		the Jacobian determinant of this map is in fact proportional to $\upmu$; see Lemma~\ref{L:CHOVCALCULATIONS}.
		The coordinate $t$ is the standard Cartesian time function.
		The geometric coordinate function $u$ is the eikonal function described in Theorem~\ref{T:ROUGHMAINTHM}.
		The initial condition 
		$u|_{\Sigma_0} = 1 - x^1$
		is adapted to the approximate plane symmetry of the initial data.
		We similarly construct the ``geometric torus coordinates'' 
		$\lbrace \vartheta^j \rbrace_{j=2,\cdots,n}$ by solving 
		$\Lunit \vartheta^j = 0$ with the initial condition 
		$\vartheta^j|_{\Sigma_0} = x^j$.
		The main challenge is to derive regular estimates 
		relative to the geometric coordinates
		for all quantities, including the solution variables
		and quantities constructed out of the geometric coordinates.
	\item (\textbf{Full quasilinear coupling})
		Because we are able to close the proof with decreased regularity for $u$
		(compared to the case of wave equations),
		we are able to handle full quasilinear coupling between all solution variables.
		This is an interesting advancement over prior works, 
		where the principal coefficients in the evolution equation for the shock-forming variable
		were allowed to depend only on the shock-forming variable itself and on other solution
		variables that satisfy a wave equation with the \emph{same principal part} as the shock-forming variable;
		i.e., in equation \eqref{E:INTROSHOCKEQN}, we allow
		$L^{\alpha} = L^{\alpha}(\Psi,\noshock)$,
		where the principal part of the evolution equation
		\eqref{E:INTRONONSHOCKEQUATION} 
		for $\noshock$ is distinct (by assumption) from $\Lunit$.
	\end{itemize}

\subsection{A more detailed overview of the proof}
\label{SS:PROOFOVERVIEW}
In this subsection, we provide an overview of the proof of our main results.
Our analysis is based in part on some key ideas originating in earlier works,
which we review in Subsect.\ \ref{SS:PRIORWORKANDCONTEXT}.
Our discussion in this subsection is, at times, somewhat loose; 
our rigorous analysis begins in Sect.\ \ref{S:SETUPOFPROBLEM}.

\subsubsection{Setup and geometric constructions}
\label{SSS:INTROSETUPANDCONSTRUCTIONS}
In Sects.\ \ref{S:SETUPOFPROBLEM}-\ref{S:GEOMETRICCONSTRUCTIONS}, we construct the
geometric coordinate system $(t,u,\vartheta^2,\cdots,\vartheta^n)$ described in Subsect.\ \ref{SS:NONLINEARGEOMERICOPTICS},
which is central for all that follows. We also construct
many related geometric objects, 
including the inverse foliation density $\upmu$ 
(see Def.\ \ref{D:MUDEF} for the precise definition)
of the characteristics $\mathcal{P}_u$
of the eikonal function $u$.
As we mentioned earlier, our overall strategy
is to show that the solution remains regular with respect to 
the geometric coordinates, all the way up to the top derivative level,
to show that $\upmu$ vanishes in finite time, 
and to show that the vanishing of $\upmu$
is exactly tied to
the blowup of $\max_{\alpha=0,\cdots,n} |\partial_{\alpha} \Psi|$.
It turns out that when deriving estimates, 
it is important to replace the geometric coordinate partial derivative vectorfield
$
\frac{\partial}{\partial u}
$
with a $\Sigma_t$-tangent vectorfield that we denote by $\Rad$,
which is similar to
$
\frac{\partial}{\partial u}
$
but generally not parallel to it;
see Fig.\ \ref{F:FRAME} for a picture of $\Rad$.
In the context of the present paper, 
the main advantage of $\Rad$ is that it
enjoys the following key property:
the vectorfield $\Radunit = \upmu^{-1} \Rad$
has Cartesian components that remain uniformly bounded, 
all the way up to the shock. 
Put differently, we have
$\Rad = \upmu \Radunit$, where we will show that
$\Radunit$ is a vectorfield of order-unity Euclidean length
and thus the Euclidean length of $\Rad$ is $\mathcal{O}(\upmu)$.
We further explain the significance of this in Subsubsect.\ \ref{SSS:INTROSHOCKFORMS},
when we outline the proof that the shock forms.
In total, when deriving estimates for the derivatives of quantities, 
we differentiate them with respect to elements of the vectorfield frame
\begin{align} \label{E:FRAMEINTRO}
\Fullset
:=
\lbrace
\Lunit,
\Rad,
\CoordAng{2},
\cdots,
\CoordAng{n}
\rbrace,
\end{align}
which spans the tangent space at each point with $\upmu > 0$.
Here, $\CoordAng{i} := \frac{\partial}{\partial \vartheta^i}$,
(where the partial differentiation is with respect to the geometric coordinates),
$\Lunit$ is the vectorfield from \eqref{E:INTROSHOCKEQN}
and, by construction, we have
$
\Lunit = \frac{\partial}{\partial t}
$
(see \eqref{E:LISDDT}).
The vectorfields $\Lunit$ and $\CoordAng{i}$ are tangent to the $\mathcal{P}_u$,
while $\Rad$ is transversal and normalized by $\Rad u = 1$ (see \eqref{E:RADAPPLIEDTOUISONE});
see Fig.\ \ref{F:FRAME} on pg.~\pageref{F:FRAME} for a picture of the frame.
Note that since $\Rad$ is of length $\mathcal{O}(\upmu)$, the uniform boundedness of 
$|\Rad \Psi|$ is consistent with the formation of
a singularity in the Cartesian coordinate partial derivatives of $\Psi$ 
when $\upmu \downarrow 0$; see Subsubsect.\ \ref{SSS:INTROSHOCKFORMS} for further discussion of this point.

We now highlight a crucial ingredient in our proof:
we treat the Cartesian coordinate partial derivatives of $\noshockuparg{J}$
as independent unknowns $\diffnoshockdoublearg{\alpha}{J}$, defined by
\begin{align} \label{E:INTRODIFFERENTIATEDNOSHOCK}
\diffnoshockdoublearg{\alpha}{J}
& 
:= \partial_{\alpha} \noshockuparg{J}.
\end{align}
As we stressed already in Subsect.\ \ref{SS:NONLINEARGEOMERICOPTICS},
our reliance on
$\diffnoshockdoublearg{\alpha}{J}$ 
\emph{allows us to avoid commuting equation
\eqref{E:INTRONONSHOCKEQUATION} up to top order with elements of} $\Fullset$,
which allows us to avoid certain top-order commutator terms that would result in the loss of a derivative.
Moreover, as we noted in
Theorem~\ref{T:ROUGHMAINTHM}, 
a key aspect of our framework is to
show that the quantities $\diffnoshockdownarg{\alpha}$ remain bounded up to the singularity
in $\max_{\alpha=0,\cdots,n} |\partial_{\alpha} \Psi|$.
To achieve this, we will control $\diffnoshockdownarg{\alpha}$
by studying its evolution equation
$A^{\beta} \partial_{\beta} \diffnoshockdownarg{\alpha} 
= - (\partial_{\alpha} A^{\beta}) \diffnoshockdownarg{\beta}
$,
whose inhomogeneous terms
are controllable under the scope of our approach.

\subsubsection{A more precise description of the spacetime regions under study}
\label{SSS:SPACETIMEREGION}
For convenience, we study only the future portion of the solution 
that is completely determined by the data lying in the subset
$\Sigma_0^{U_0} \subset \Sigma_0$
of thickness $U_0$
and on a portion of the characteristic
$\mathcal{P}_0$,
where
$0 < U_0 \leq 1$
is a parameter, fixed until Theorem~\ref{T:MAINTHM}; see Fig.\ \ref{F:REGION} on pg.~\pageref{F:REGION}.
We will study spacetime regions such that $0 \leq u \leq U_0$,
where $u$ is the eikonal function described above.
We have introduced the parameter $U_0$ because one would need to allow $U_0$ to vary 
in order to study the behavior of the solution up the boundary of the maximal development,
as we mentioned in Remark~\ref{R:DESCRIPTIONOFMAXIMALDEVELOPMENT}.

In our analysis, we will use a bootstrap argument in which we
only consider times $t$
with $0 \leq t < 2 \TranminusdatasizeWithFactor^{-1}$, where
$\TranminusdatasizeWithFactor > 0$ is a data-dependent parameter
described in Subsubsect.\ \ref{SSS:INTRODATABOOTSTRAPPOINTWISE}
(see also Def.\ \ref{D:SHOCKFORMATIONQUANTITY}).
Our main theorem shows that if $U_0 = 1$, 
then a shock forms at a time equal to a small perturbation of
$\TranminusdatasizeWithFactor^{-1}$; see Subsubsect.\ \ref{SSS:INTROSHOCKFORMS}
for an outline of the proof.
For this reason, in proving our main results,
we only take into account only the portion
of the data lying in $\Sigma_0^{U_0}$ and
in the subset $\mathcal{P}_0^{2 \TranminusdatasizeWithFactor^{-1}}$
of the characteristic $\mathcal{P}_0$; from domain
of dependence considerations, one can infer that 
only this portion can influence
the solution in the regions under study.

\begin{remark}
For the remainder of Subsect.\ \ref{SS:PROOFOVERVIEW}, 
we will suppress further discussion of $U_0$ by setting $U_0 = 1$.
\end{remark}

\subsubsection{Data-size assumptions, bootstrap assumptions, and pointwise estimates}
\label{SSS:INTRODATABOOTSTRAPPOINTWISE}
In Sect.\ \ref{S:NUMDERIVSDATASIZEANDBOOTSTRAPASSUMPTIONS}, we state our assumptions on the data
and formulate bootstrap assumptions that are useful for deriving estimates.
Our assumptions on the data involve the parameters
$\Psiep > 0$,
$\mathring{\upepsilon} \geq 0$, 
$\Trandatasize > 0$, 
and 
$\TranminusdatasizeWithFactor > 0$,
where, for our proofs to close, 
$\Psiep$ must be chosen to be small in an absolute sense and
$\mathring{\upepsilon}$ must be chosen to be small
in a \emph{relative sense} 
compared to
$\Trandatasize^{-1}$
and 
$\TranminusdatasizeWithFactor$
(see Subsect.\ \ref{SS:SMALLNESSASSUMPTIONS} for a precise description of the required smallness).
The following remarks capture the main ideas behind the data-size parameters.
\begin{enumerate}
	\item $\Psiep = \| \Psi \|_{L^{\infty}(\Sigma_0^1)}$ is the size of $\Psi$.
	\item $\mathring{\upepsilon}$ is the size, in appropriate norms,
		of the derivatives of $\Psi$ up to top order in which 
		\emph{at least one $\mathcal{P}_u$-tangential differentiation occurs},
		and of $\noshock$, $\diffnoshock$
		and \emph{all of their derivatives} up to top order with respect to elements of $\Fullset$ from \eqref{E:FRAMEINTRO}.
		We emphasize that we will study perturbations of plane symmetric shock-forming solutions such that
		$\mathring{\upepsilon} = 0$.
		That is, the case $\mathring{\upepsilon} = 0$ corresponds to a plane symmetric simple wave in which $\noshock \equiv 0$.
		We state the total number of derivatives that we use to close the estimates
		in Subsect.\ \ref{SS:NUMBEROFDERIVATIVES} and Subsubsect.\ \ref{SSS:DATA}.
		We also highlight that to close our proof, we never need to differentiate any quantity
		with more than one copy of the $\mathcal{P}_u$-transversal vectorfield $\Rad$.
		This is possible in part because of the following crucial fact,
		proved in Lemma~\ref{L:VECTORFIELDCOMMUTATORIDENTITIES}:
		commuting the elements of the frame $\Fullset$ with each other
		yields a vectorfield belonging to $\mbox{\upshape span} \lbrace \CoordAng{2},\cdots,\CoordAng{n} \rbrace$.
	\item $\Trandatasize = \| \Rad \Psi \|_{L^{\infty}(\Sigma_0^1)}$ 
		is the size of the $\mathcal{P}_u$-transversal derivative of $\Psi$.
	\item $\TranminusdatasizeWithFactor = \sup_{\Sigma_0^1} [\blowupcoeff \Rad \Psi]_-$,
		is a modified measure of the size of the $\mathcal{P}_u$-transversal derivative of $\Psi$,
		where $\blowupcoeff \neq 0$ is a coefficient determined by the nonlinearities
		and $[f]_- := |\min{f,0}|$.
	\item When $t = 0$, other geometric quantities that we use in studying solutions
			obey similar size estimates, where any
			differentiation of a quantity 
			with respect to a $\mathcal{P}_u$-tangential vectorfield
			leads to $\mathcal{O}(\mathring{\upepsilon})$-smallness;
			see Lemma~\ref{L:ESTIMATESFORNONLINEARGEOMETRICOPTICS}.
			A crucial exception occurs for $\Lunit \upmu$, 
			which initially is of relatively large size $\mathcal{O}(\Trandatasize)$
			in view of its evolution equation
			$\Lunit \upmu \sim \Rad \Psi + \cdots$
			(see \eqref{E:LUNITUPMU} for the precise evolution equation).
	\item The relative smallness of $\mathring{\upepsilon}$
				corresponds to initial data that are close to that of a simple plane symmetric wave,
				as we described in Subsect.\ \ref{SS:SIMPLEWAVS}.
\end{enumerate}

One of the main steps in our analysis is to propagate the above size assumptions
all the way up to the shock.
To this end, 
on a region of the form $(t,u,\vartheta) \in [0,T_{(Boot)}) \times [0,U_0] \times \mathbb{T}^{n-1}$,
we make $L^{\infty}$-type bootstrap assumptions
that capture the expectation that the above size assumptions hold.
In particular, \emph{the bootstrap assumptions capture our expectation that no singularity will form
in any quantity relative to the geometric coordinates}.
Moreover, since $\diffnoshockdoublearg{\alpha}{J} = \partial_{\alpha} \noshockuparg{J}$,
the bootstrap assumptions for the smallness\footnote{We note that the bootstrap assumptions refer to a parameter $\varepsilon > 0$ that, in our main theorem,
we will show is controlled by $\mathring{\upepsilon}$; for brevity, we will avoid further discussion of
$\varepsilon$ until Subsubsect.\ \ref{SSS:FUNDAMENTALBOOT}. \label{FN:BOOTSTRAPEPSILON}} 
of $\diffnoshock$
\emph{capture our expectation that the Cartesian coordinate partial derivatives of $\noshock$ should remain bounded};
indeed, this is a key aspect of our proof that we use to control various error terms depending on $\diffnoshock$.
As we mentioned earlier, a crucial point is that
we have set the problem up so that the shock forms
at time  
$T_{(Lifespan)}
	< 
	2 \TranminusdatasizeWithFactor^{-1}
$.
Therefore, we make the assumption
\begin{align} \label{E:INTROTBOOTSIZEASSUMPTION}
	0 < T_{(Boot)} < 2 \TranminusdatasizeWithFactor^{-1},
\end{align}
which leaves us with ample margin of error to show that a shock forms.
In particular, in view of \eqref{E:INTROTBOOTSIZEASSUMPTION},
we can bound factors of
$t$, $\exp(t)$,
etc.\ by a constant $C > 0$ depending on $\TranminusdatasizeWithFactor^{-1}$,
and the estimates will
close as long as $\mathring{\upepsilon}$ is sufficiently small;
see Subsect.\ \ref{SS:NOTATIONANDINDEXCONVENTIONS} for further discussion
on our conventions regarding the dependence of constants $C$.

In Sect.\ \ref{S:POINTWISEESTIMATESANDIMPROVEMENTOFAUX}, 
with the help of the bootstrap assumptions and data-size assumptions described above,
we commute all evolution equations, including
\eqref{E:INTROSHOCKEQN}-\eqref{E:INTRONONSHOCKEQUATION} and evolution equations for
$\upmu$ and related geometric quantities,
with elements of the $\Fullset$ up to top order and derive pointwise estimates
for the error terms. Actually,
due to the special structures of the equations relative to the geometric coordinates,
\emph{we never need to commute the evolution equations
verified by $\noshock$, $\diffnoshock$, or $\upmu$ with the transversal vectorfield $\Rad$}.
Moreover, for the other geometric quantities, 
we need to commute their evolution equations \emph{at most once} with $\Rad$.
We clarify, however, that we commute all equations many times with the elements of the
$\mathcal{P}_u$-tangential subset
$
\Tanset
:=
\lbrace
\Lunit,
\CoordAng{2},
\cdots,
\CoordAng{n}
\rbrace
$.

\subsubsection{Sketch of the formation of the shock}
\label{SSS:INTROSHOCKFORMS}
Let us assume that the bootstrap assumptions and pointwise estimates described in Subsubsect.\ \ref{SSS:INTRODATABOOTSTRAPPOINTWISE}
hold for a sufficiently long amount of time. We will sketch how they can be used to give a simple proof
of shock formation, that is, that $\upmu \downarrow 0$ and $\partial \Psi$ blows up. 
The main estimates in this regard are provided by Lemma~\ref{L:CRUCIALESTIMATESFORUPMUANDRADPSI};
here we sketch them.
First, using equation \eqref{E:LUNITUPMU},
the bootstrap assumptions, and the pointwise estimates,
we deduce the following evolution equation for the inverse foliation density:
$\Lunit \upmu 
= 
[\blowupcoeff \Rad \Psi](t,u,\vartheta)
+ 
\cdots
$, where the ``blowup coefficient'' $\blowupcoeff \neq 0$ was described in Subsubsect.\ \ref{SSS:INTRODATABOOTSTRAPPOINTWISE}
and $\cdots$ denotes small error terms, which we ignore here.
Next, we note the following pointwise estimate, which falls under the scope of the discussion in 
Subsubsect.\ \ref{SSS:INTRODATABOOTSTRAPPOINTWISE}:
$\Lunit (\blowupcoeff \Rad \Psi) = \cdots$ 
(smallness is gained since $\Lunit$ is a $\mathcal{P}_u$-tangential differentiation).
Recalling that $\Lunit = \frac{\partial}{\partial t}$,
we use the fundamental theorem of calculus to 
deduce $[\blowupcoeff \Rad \Psi](t,u,\vartheta) = [\blowupcoeff \Rad \Psi](0,u,\vartheta) + \cdots$.
Inserting this estimate into the above one for $\Lunit \upmu$,
we obtain
$\Lunit \upmu(t,u,\vartheta)
= 
[\blowupcoeff \Rad \Psi](0,u,\vartheta)
+ 
\cdots
$.
From the fundamental theorem of calculus and the initial condition 
$\upmu(0,u,\vartheta) = 1 + \cdots$, we obtain
$\upmu(t,u,\vartheta) = 1 + t [\blowupcoeff \Rad \Psi](0,u,\vartheta) + \cdots$.
From this estimate and the definition of $\TranminusdatasizeWithFactor$, we obtain
$\min_{(u,\vartheta) \in [0,1] \times \mathbb{T}^{n-1}} \upmu(t,u,\vartheta) = 1 - t \TranminusdatasizeWithFactor + \cdots$.
Hence, $\upmu$ vanishes for the first time at $T_{(Lifespan)} = \TranminusdatasizeWithFactor^{-1} + \cdots$,
as desired. Moreover, the above reasoning can easily be extended to show that
$|\Rad \Psi|(t,u,\vartheta) \gtrsim 1$ at any point $(t,u,\vartheta)$ 
such that $\upmu(t,u,\vartheta) < 1/4$.
Recalling that $\Rad = \upmu \Radunit$ where $\Radunit$ has order-unity Euclidean length,
we see the following: 
\begin{quote}
$|\Radunit \Psi|$ must blow up like $C/\upmu$ as $\upmu \downarrow 0$. 
\end{quote}
This argument shows,
in particular, that the vanishing of $\upmu$ \emph{exactly coincides with the blowup of}
$\max_{\alpha=0,\cdots,n} |\partial_{\alpha} \Psi|$.

\subsubsection{Considerations of regularity}
\label{SSS:REGULARITYCONSIDERATIONS}
This subsubsection is an interlude in which we highlight some issues tied 
to considerations of regularity.
Our discussion will distinguish the problem of shock formation for transport equations
from the (by now) well-understood case of wave equations, 
which we further describe in Subsubsect.\ \ref{SSS:MULTIDRESULTS}.
To illustrate the issues, we will highlight some features of our analysis,
with a focus on derivative counts.
In Lemma~\ref{L:COORDANGCOMPONENTRECTANGULAR}, we derive the following evolution
equation for the Cartesian components of $\CoordAng{i}$:
$\Lunit \CoordAngcomp{i}{j} = \CoordAng{i} \Lunit^j$,
where $\CoordAng{i} = \frac{\partial}{\partial \vartheta^i}$.
Recalling that
$
\Lunit = \frac{\partial}{\partial t}
$,
that
$
\diffnoshockdoublearg{\alpha}{J}
= \partial_{\alpha} \noshockuparg{J}
$,
and that
$\Lunit^j$ is a smooth function of $(\Psi,\noshock)$,
we infer, from standard energy estimates for transport equations,
that $\CoordAngcomp{i}{j}$ should have the same degree of Sobolev differentiability
as $\partial \Psi$ and $\diffnoshock$. In particular, we expect that
$\CoordAngcomp{i}{j}$ should be \emph{one degree less differentiable} than $\Psi$.
For similar reasons, $\upmu$, $\diffnoshock$,
and some other geometric quantities that play a role in our analysis are also
one degree less differentiable than $\Psi$. The following point is crucial for our approach:
\begin{quote}
We are able to close the energy estimates for $\Psi$ up to top order
even though, upon commuting $\Psi$'s transport equation, we
generate error terms that depend on the ``less differentiable'' quantities.
\end{quote}
That is, in controlling $\Psi$, we must carefully ensure that all error terms
feature an allowable amount of regularity. Moreover, 
the same careful care must be taken throughout the paper, 
by which we mean that we must ensure 
that we can close the estimates for all quantities 
using a consistent number of derivatives.
In particular, we stress that it is precisely 
due to considerations of the regularity of the Cartesian components of $\CoordAng{i}$ and $\Rad$ that 
we have introduced the quantities
$\diffnoshockdoublearg{\alpha}{J}
= \partial_{\alpha} \noshockuparg{J}$,
as we explained in Subsubsect.\ \ref{SSS:INTROSETUPANDCONSTRUCTIONS}.

In the case of wave equations, the derivative counts are different.
For example, the inverse foliation density $\upmu$
enjoys the \emph{same} Sobolev regularity
as the wave equation solution variable in directions tangent to the characteristics, 
a gain of one tangential derivative compared to the present work. 
For wave equations, a similar gain in tangential differentiability 
also holds for some other key geometric objects,
which we will not describe here.
The gain is available because certain special combinations of 
quantities constructed out of the eikonal equation
and the wave equation solution variable satisfy an unexpectedly good evolution equation,
with source terms that have better than expected regularity; 
see Subsubsect.\ \ref{SSS:MULTIDRESULTS}
or the survey article \cite{gHsKjSwW2016}
for further discussion.
Moreover, this gain seems \emph{essential} for closing some of the top-order energy estimates
in the wave equation case, the reason being that one must commute the geometric vectorfields
through the \emph{second-order} wave operator, which eats up the gain.
As we explain in Subsubsect.\ \ref{SSS:MULTIDRESULTS}, one pays a steep price in gaining back the derivative:
the resulting energy estimates allow for possible energy blowup at the high derivative levels,
a difficulty which we do not encounter in the present work.

We close this subsubsection by again highlighting that we are able to handle systems with full 
quasilinear coupling (in the sense explained in the second paragraph of Subsect.\ \ref{SS:NONLINEARGEOMERICOPTICS})
precisely because we are able to close our estimates
using geometric quantities that are one degree less differentiable than $\Psi$.
In contrast, the special combinations of quantities mentioned in the previous paragraph,
which are needed to close the wave equation energy estimates, 
seem to be unstable under a full quasilinear coupling of multiple speed wave systems.
Here is one representative manifestation of this issue: 
the problem of multi-space-dimensional shock formation for
covariant wave equation systems (see Footnote~\ref{FN:COVWAVOP} on pg.~\pageref{FN:COVWAVOP} regarding the notation)
of the form
\begin{align*}
	\square_{g_1(\Psi_1,\Psi_2)} \Psi_1 
	& = 0,
		\\
	\square_{g_2(\Psi_1,\Psi_2)} \Psi_2 
	& = 0
\end{align*}
is open whenever $g_1 \neq g_2$, even though
shock formation for systems with $g_1 = g_2$ and for
scalar equations
$\square_{g(\Psi)} \Psi = 0$
is well-understood.

\subsubsection{Energy estimates}
\label{SSS:INTROENERGYESTIMATES}
In Sect.\ \ref{S:ENERGYESTIMATES}, we derive the main technical estimates of the article:
energy estimates up to top order for $\Psi$, $\noshock$, $\diffnoshock$, $\upmu$, and related geometric quantities.
Energy estimates are an essential ingredient in the basic regularity theory of quasilinear hyperbolic systems in multiple spatial dimensions,
and in this article, they are also important because they yield improvements of our bootstrap assumptions
described in Subsubsect.\ \ref{SSS:INTRODATABOOTSTRAPPOINTWISE}.
We now describe the energies, which we construct in Sect.\ \ref{S:ENERGYID}.
To control the transport variable $\Psi$,
we construct geometric energies along $\Sigma_t$.
To control the symmetric hyperbolic variables $\noshock$ and $\diffnoshock$,
we construct $\upmu$-weighted energies along $\Sigma_t$
as well as \emph{non-$\upmu$-weighted} energies along the characteristics $\mathcal{P}_u$.
With $\Sigma_t^u$ defined to be the subset of $\Sigma_t$ in which the eikonal function takes on values in between $0$ and $u$ 
and $\mathcal{P}_u^t$ defined to be the subset of $\mathcal{P}_u$ corresponding to times in between $0$ and $t$,
we have, with 
$\Singletan \in 
\Tanset
=
\lbrace
\Lunit,
\CoordAng{2},
\cdots,
\CoordAng{n}
\rbrace
$,
\begin{subequations}
\begin{align} \label{E:INTROSHOCKENERGY}
	\shocken[\Singletan \Psi](t,u)
		& := 
		\int_{\Sigma_t^u} 
			(\Singletan \Psi)^2
		\, d \torusvol du',
	&&
			\\
	\noshocken[\noshock](t,u)
		& \approx 
		\int_{\Sigma_t^u} 
			\upmu |\noshock|^2
		d \torusvol du',
	&
	\noshockfl[\noshock](t,u)
	 & \approx
		\int_{\mathcal{P}_u^t} 
			|\noshock|^2 
		\, d \torusvol dt',
		\label{E:INTRONOSHOCKENERGYFLUX}
			\\
	\noshocken[\diffnoshock](t,u)
		& \approx 
		\int_{\Sigma_t^u} 
			\upmu |\diffnoshock|^2
		d \torusvol du',
	&
	\noshockfl[\diffnoshock](t,u)
	 & \approx
		\int_{\mathcal{P}_u^t} 
			|\noshock|^2 
		\, d \torusvol dt'.
		\label{E:INTRODIFFNOSHOCKENERGYFLUX}
\end{align}
\end{subequations}
In our analysis, we of course must also control various higher-order energies, but
here we ignore this issue.
The degenerate $\upmu$ weights featured in
$\noshocken[\noshock]$ and $\noshocken[\diffnoshock]$ 
arise from expressing the standard energy
for symmetric hyperbolic systems in terms of the geometric coordinates.
For controlling certain error integrals that arise in the energy identities, 
\emph{it is crucial that the characteristic fluxes $\noshockfl[\noshock]$ and $\noshockfl[\diffnoshock]$
do not feature any degenerate $\upmu$ weight}.
These characteristic fluxes are positive definite only because our structural assumptions on the equations
ensure that the
propagation speed of $\noshock$ and $\diffnoshock$ is strictly slower than that
of $\Psi$ (see \eqref{E:POSITIVEDEFMATRICES} for the precise assumptions).
Readers can consult Lemma~\ref{L:COERCIVENESSOFNOSHOCKENERGY} and its proof to better understand
the role of these assumptions.

We now outline the derivation of the energy estimates; see Sect.\ \ref{S:ENERGYESTIMATES} for precise
statements and proofs.
Let us define\footnote{Our definition of $\mathbb{W}(t,u)$ given here is schematic. 
See Def.\ \ref{D:MAINCOERCIVEQUANT} for the precise definition of the controlling quantity,
which we denote by $\totmax(t,u)$. \label{FN:L2CONTROLLING}}
the controlling quantity  
$\mathbb{W}(t,u)$ 
to be the sum of the terms on LHSs~\eqref{E:INTROSHOCKENERGY}-\eqref{E:INTRODIFFNOSHOCKENERGYFLUX}
and their analogs up to the top derivative level 
(corresponding to differentiations with respect to the geometric vectorfields).
The initial data that we treat 
are such that $\mathbb{W}(0,1) \lesssim \mathring{\upepsilon}^2$ and 
$\mathbb{W}(2 \TranminusdatasizeWithFactor^{-1},0) \lesssim \mathring{\upepsilon}^2$,
with $\mathring{\upepsilon}$ the small parameter described in Subsubsect.\ \ref{SSS:INTRODATABOOTSTRAPPOINTWISE}.
We again stress that $\mathbb{W}(t,u) \equiv 0$ for simple plane waves.
Energy identities, based on applying the divergence theorem
on the geometric coordinate region $[0,t] \times [0,u] \times \mathbb{T}^{n-1}$,
together with the
pointwise estimates for error terms mentioned in Subsubsect.\ \ref{SSS:INTRODATABOOTSTRAPPOINTWISE},
lead to the following inequality:
\begin{align} \label{E:INTROSTANDARDERRORTERM}
	\mathbb{W}(t,u)
	& \leq 
		C \mathring{\upepsilon}^2
		+
		C
		\int_{t'=0}^t
		\int_{u'=0}^u
		\int_{\mathbb{T}^{n-1}}
			\left\lbrace
				|\Singletan \Psi|^2
				+
				|\noshock|^2
				+
				|\diffnoshock|^2
			\right\rbrace
			(t',u',\vartheta)
		\, d \torusvol
		du'
		dt'
		+
		\cdots,
\end{align}
where the terms $\cdots$ depend on other geometric quantities
and can be bounded using similar arguments similar to the ones we sketch here.
In view of the definition of $\mathbb{W}$, we deduce the following
inequality from \eqref{E:INTROSTANDARDERRORTERM}:
\begin{align} \label{E:GRONWALLREADYINTROSTANDARDERRORTERM}
	\mathbb{W}(t,u)
	& \leq 
		C \mathring{\upepsilon}^2
		+
		C
		\int_{t'=0}^t
			\mathbb{W}(t',u)
		\, dt'
		+
		C
		\int_{u'=0}^u
			\mathbb{W}(t,u')
		\, du'
		+
		\cdots.
\end{align}
Then from \eqref{E:GRONWALLREADYINTROSTANDARDERRORTERM} and Gronwall's inequality with respect to $t$ and $u$,
we conclude, ignoring the terms $\cdots$ and taking into account \eqref{E:INTROTBOOTSIZEASSUMPTION}, 
that the following a priori estimate holds for
$(t,u) \in [0,T_{(Boot)}) \times [0,U_0]$
(see Prop.\ \ref{P:MAINAPRIORI} for the details):
\begin{align} \label{E:INTROAPRIORENERGY}
	\mathbb{W}(t,u) 
	& \lesssim \mathring{\upepsilon}^2 \exp\left(C \TranminusdatasizeWithFactor^{-1} \right)
	\lesssim \mathring{\upepsilon}^2.
\end{align}
The estimate \eqref{E:INTROAPRIORENERGY} represents the realization of
our hope that the solution remains regular relative to the geometric coordinates,
up to the top derivative level.

We now stress the following key point: 
the characteristic fluxes $\noshockfl[\noshock]$
and $\noshockfl[\diffnoshock]$
are needed to control the terms
$|\noshock|^2
+
|\diffnoshock|^2$
on RHS~\eqref{E:INTROSTANDARDERRORTERM};
without the characteristic fluxes,
instead of the term
$
C
		\int_{u'=0}^u
			\mathbb{W}(t,u')
		\, du'
$
on RHS~\eqref{E:GRONWALLREADYINTROSTANDARDERRORTERM}, we would instead have the
term
$
C
\int_{t'=0}^t
	\frac{\mathbb{W}(t',u)}{\min_{\Sigma_{t'}^u} \upmu}
\, dt'
$,
whose denominator vanishes as the shock forms. 
Such a term would have led to a priori estimates allowing for the possibility
that at all derivative levels, the geometric energies blow up as the shock forms. This in turn
would have been inconsistent with the bootstrap assumptions described in Subsubsect.\ \ref{SSS:INTRODATABOOTSTRAPPOINTWISE} 
and would have obstructed 
our goal of showing that the solution
remains regular relative to the geometric coordinates.

\subsubsection{Combining the estimates}
\label{SSS:INTROCOMBININGTHEESTIAMTE}
Once we have obtained the a priori energy estimates, we can derive improvements of our $L^{\infty}$-type 
bootstrap assumptions via Sobolev embedding (see Cor.\ \ref{C:IMPROVEMENTOFFUNDAMANETALBOOTSTRAPASSUMPTIONS}).
These steps,
together with the estimates from Subsubsect.\ \ref{SSS:INTROSHOCKFORMS} showing that $\upmu$ vanishes in finite time,
are the main steps in the proof of the main theorem.
We need a few additional technical results to complete the proof, including
some results guaranteeing that the geometric and Cartesian coordinates
are diffeomorphic up to the shock (see Sect.\ \ref{S:ESTIMATESFORCHOV})
and some fairly standard continuation criteria
(see Sect.\ \ref{S:CONTINUATION}),
which in total ensure that the solution survives up to the shock.
We combine all of these results in Sect.\ \ref{S:MAINTTHM},
where we prove the main theorem.

\subsection{Connections to prior work}
\label{SS:PRIORWORKANDCONTEXT}
Many aspects of the approach outlined in Subsect.\ \ref{SS:PROOFOVERVIEW}
have their genesis in earlier works, which we now describe.

\subsubsection{Results in one spatial dimension}
\label{SSS:1DRESULTS}
In one spatial dimension and in symmetry classes whose PDEs are effectively one-dimensional,
there are many results, by now considered classical,
that use the method of characteristics to exhibit 
the formation of shocks in initially smooth solutions to
various quasilinear hyperbolic systems. 
Important examples include
Riemann's work \cite{bR1860} (in which he developed the method of Riemann invariants),
Lax's proof \cite{pL1964} of stable blowup for $2 \times 2$ genuinely nonlinear systems
via the method of Riemann invariants,
Lax's blowup results \cites{pL1972,pL1973}
for scalar conservation laws,
John's extension \cite{fJ1974} of Lax's work to systems in one spatial dimension
with more than two unknowns (which required the development of new ideas since the method of Riemann invariants does not apply),
and the recent work \cite{dCdRP2016} of Christodoulou--Raoul Perez, in which they significantly
sharpened John's work \cite{fJ1974}.
The main obstacle to extending the above results to more than one spatial dimension
is that one must complement the method of characteristics with 
an ingredient that, due to the singularity formation, is often 
accompanied by enormous technical complications:
energy estimates that are adapted to and that hold up to the singularity. 
We further explain these technical complications in the next subsubsection.

  \subsubsection{Results in more than one spatial dimension}
\label{SSS:MULTIDRESULTS}
The first breakthrough results on shock formation in more than one spatial dimension without symmetry assumptions 
were proved by Alinhac \cites{sA1999a,sA1999b,sA2001b} for 
small-data solutions to 
scalar quasilinear wave equations
of the form 
\begin{align} \label{E:ALINHACWAVE}
	(g^{-1})^{\alpha \beta}(\partial \Phi) \partial_{\alpha} \partial_{\beta} \Phi = 0
\end{align}
that fail to satisfy the null condition. Here, 
$g(\partial \Phi)$ is a Lorentzian metric\footnote{That is, the matrix of Cartesian components of $g(\partial \Phi)$
has signature $(-,+,\cdots,+)$.}
equal to the Minkowski metric plus an error term of size $\mathcal{O}(\partial \Phi)$.
As we do in this paper, 
Alinhac constructed a set of geometric coordinates tied to an eikonal function $u$, 
which in the context of his problems
was a solution the fully nonlinear eikonal equation
\begin{align} \label{E:WAVEEIKONAL}
	(g^{-1})^{\alpha \beta}(\partial \Phi) \partial_{\alpha} u \partial_{\beta} u
	& = 0.
\end{align}
Much like in our work here, the level sets of $u$ are characteristic hypersurfaces for equation \eqref{E:ALINHACWAVE}.
They are also known as \emph{null hypersurfaces} in 
the setting of Lorentzian geometry in view of their intimate connection to 
the $g$-null\footnote{That is, if $\Lunit^{\alpha} := - (g^{-1})^{\alpha \beta} \partial_{\beta} u$, then
by \eqref{E:WAVEEIKONAL}, we have $g(\Lunit,\Lunit) = 0$.} 
vectorfield $-(g^{-1})^{\alpha \beta} \partial_{\beta} u$.
In his works, Alinhac identified a set of small compactly supported initial data verifying a non-degeneracy condition
such that $\max_{\alpha,\beta=0,\cdots,n} |\partial_{\alpha} \partial_{\beta} \Phi|$ 
blows up in finite time due to the intersection of the characteristics
while $|\Phi|$ and $\max_{\alpha=0,\cdots,n} |\partial_{\alpha} \Phi|$ remain bounded. Moreover, relative to the geometric coordinates,
$\Phi$ and $\lbrace \partial_{\alpha} \Phi \rbrace_{\alpha = 0,\cdots,n}$ remain smooth, except possibly at the very high derivative levels 
(we will elaborate upon this just below).

In proving his results, Alinhac faced three serious difficulties.
We will focus only on the case of three spatial dimensions though
Alinhac obtained similar results in two spatial dimensions.
The first difficulty is that for small data, solutions to \eqref{E:ALINHACWAVE} 
experience a long period of dispersive decay, 
which seems to work against the formation of a shock and which
necessitated the application of Klainerman's
commuting vectorfield method \cites{sK1985,sK1986} in which the vectorfields
have time and radial weights.
We stress that such dispersive behavior is not exhibited by the solutions that we study in this article
and hence our vectorfields do not feature time or radial weights.
Alinhac showed that after an era\footnote{Roughly for a time interval of length $\exp(c/\epsilon)$, 
with $\epsilon$ the size of the data in a weighted Sobolev norm.} 
of dispersive decay,
the nonlinearity in equation \eqref{E:ALINHACWAVE}  
takes over and drives the formation of the shock.
The second main difficulty faced by Alinhac is that to follow the solution up the singularity, 
it seems necessary to commute the equations with geometric vectorfields
constructed out of the eikonal function, 
and these vectorfields seem to lead to the loss of a derivative 
when commuted through the wave operator.
Specifically, the geometric vectorfields $Z$ 
have Cartesian components that depend on $\partial u$, 
and hence commuting them through the wave equation \eqref{E:WAVEEIKONAL}
leads to an equation of the schematic form
$(g^{-1})^{\alpha \beta}(\partial \Phi) \partial_{\alpha} \partial_{\beta} (Z \Phi) = \partial^2 Z \cdot \partial \Phi + \cdots$.
The difficulty is that standard wave equation energy estimates suggest that, due to the source term
$\partial^2 Z$, $\Phi$ enjoys only the same Sobolev regularity 
as $Z \sim \partial u$, whereas standard energy estimates for the eikonal equation 
\eqref{E:WAVEEIKONAL} only allow one to prove that $\partial u$ enjoys the same Sobolev regularity as $\partial^2 \Phi$;
this suggests that the approach of using vectorfields constructed out of an eikonal function will lead to the loss of a derivative. 
To overcome this difficulty,
Alinhac obtained the nonlinear solution, up to the shock, as the limit of iterates that solve
singular linearized problems, and he used a rather technical Nash--Moser iteration scheme
featuring a free boundary in order to recover the loss
of a derivative. For technical reasons, his reliance on the Nash--Moser iteration allowed him to follow 
``most'' small-data solutions to the constant-time hypersurface of first blowup, and not further.
More precisely, his approach only allowed him to treat ``non-degenerate'' data such that the first singularity is \emph{isolated}
in the constant-time hypersurface of first blowup. We stress that in our work here, 
we encounter a similar difficulty concerning the regularity of the geometric vectorfields, 
but since our PDE systems are first-order, we are able to overcome it in a different way,
without relying on a Nash--Moser iteration scheme; 
see Subsect.\ \ref{SS:NONLINEARGEOMERICOPTICS} and Subsubsect.\ \ref{SSS:REGULARITYCONSIDERATIONS}.
The third and most challenging difficulty encountered by Alinhac is the following: 
when proving energy estimates relative to the geometric coordinates,
it seems necessary to rely on energies that feature degenerate weights 
that vanish as the shock forms; the weights are direct analogs of the inverse foliation density $\upmu$ from Theorem~\ref{T:ROUGHMAINTHM}.
These weights make it difficult to control certain error terms in the energy identities,
which in turn leads to a priori estimates allowing for the following possibility:
as the shock forms,
the high-order energies might blow up at a 
rate tied to $1/\upmu$. 
We stress that the possible high-order energy blowup encountered by Alinhac occurs 
relative to the geometric coordinates and is \emph{distinct} from the formation of the shock singularity
(in which $\max_{\alpha,\beta=0,\cdots,n} |\partial_{\alpha} \partial_{\beta} \Phi|$ blows up).
To close the proof, Alinhac had to show that the possible high-order geometric energy blowup does not propagate
down too far to the lower geometric derivative levels, i.e., that the solution remains smooth relative to the geometric coordinates
at the lower derivative levels. This ``descent scheme'' costs many derivatives,
and for this reason, the data must belong to a Sobolev
space of rather high order for the estimates to close.
We stress that although the energies that we use in the present paper also contain the same degenerate $\upmu$ weights, 
we encounter different kinds of error terms in our energy estimates, 
tied in part to the fact that our systems are first-order
and tied in part to our strategy of estimating the quantity
$\diffnoshockdoublearg{\alpha}{J}$ defined by \eqref{E:INTRODIFFERENTIATEDNOSHOCK}.
For this reason, our a priori estimates energy relative to the geometric coordinates are regular in that
\emph{even the top-order geometric energies remain uniformly bounded up to the shock}.

In Christodoulou's remarkable work \cite{dC2007}, he significantly sharpened Alinhac's 
shock formation results for the quasilinear wave equations of irrotational (i.e., vorticity-free)
relativistic fluid mechanics in three spatial dimensions, which form a sub-class of wave equations of type 
\eqref{E:ALINHACWAVE}. These wave equations arise 
from formulating the relativistic Euler equations
in terms of a fluid potential
$\Phi$, which is possible when the vorticity vanishes. 
The equations studied by Christodoulou enjoy special features that he exploited in his proofs,
such as having an Euler-Lagrange formulation with a Lagrangian 
that is invariant under the Poincar\'{e} group.
The main results proved by Christodoulou are as follows:
\textbf{i)} there is an open (relative to a Sobolev space of high, non-explicit order) 
set of small\footnote{In the context of \cite{dC2007}, ``small'' means
a small perturbation of the non-trivial constant-state solutions,
which take the form $\Phi = k t$, where $k > 0$ is a constant.} 
data such that the only possible singularities that can form in the solution are shocks
driven by the intersection of the characteristics;
\textbf{ii)} there is an open subset of the data from \textbf{i)}, not restricted
by non-degeneracy assumptions of the type imposed by Alinhac, such that
a shock does in fact form in finite time; 
and \textbf{iii)} for those solutions that form shocks, Christodoulou gave a complete description
of the maximal classical development of the data near the singularity,
which intersects the future of the constant-time hypersurface of first blowup.
His sharp description of the maximal development
seems necessary for even properly setting up the \emph{shock development problem}. 
This is the problem of uniquely locally continuing the solution
past the singularity to the Euler equations in a \emph{weak sense}, 
a setting in which one must also construct the ``shock hypersurface,''
across which the solution jumps (being smooth on either side of it). The shock development problem
in relativistic fluid mechanics was solved in spherical symmetry by Christodoulou--Lisibach in \cite{dCaL2016}
and, by Christodoulou in yet another breakthrough work \cite{dC2017}, 
for the non-relativistic compressible Euler equations without symmetry assumptions 
in a restricted case 
(known as the restricted shock development problem)
such that the jump in entropy across the shock hypersurface is ignored.

Compared to Alinhac's approach, 
the main technical improvement afforded by Christodoulou's approach \cite{dC2007} to proving shock formation
is that it avoids the loss of a derivative through a sharper, more direct method;
instead of using Alinhac's Nash--Moser scheme, 
Christodoulou found special combinations of geometric quantities
that satisfy good evolution equations, and he combined them with elliptic estimates on co-dimension two
spacelike hypersurfaces. This approach to avoiding the loss of a derivative in 
wave equation eikonal functions originated in the aforementioned proof \cite{dCsK1993} of the stability of Minkowski spacetime,
and it was extended by Klainerman--Rodnianski \cite{sKiR2003}
to the case of general scalar quasilinear wave equations 
in their study of low-regularity well-posedness for wave equations of
the form $- \partial_t^2 \Psi + g^{ab}(\Psi) \partial_a \partial_b \Psi = 0$.
In total, this allowed Christodoulou to control the solution up to the shock
using a traditional ``forwards'' approach,
without the free boundary found in Alinhac's iteration scheme.
However, as in Alinhac's work,
Christodoulou' energy estimates allowed for the possibility that the high-order energies might blow up.
Christodoulou therefore had to give a separate, technical argument to show 
that any high-order energy singularity does not propagate down too far to the lower geometric derivative levels.

In \cite{jS2016b}, we extended
Christodoulou's sharp shock formation results 
to the case of general quasilinear wave equations
of type \eqref{E:ALINHACWAVE} in three spatial dimensions 
that fail to satisfy the null condition,
to the case of covariant wave equations 
of the type\footnote{Here, $\square_g$ is the covariant wave operator of $g$.
Relative to arbitrary coordinates, $\square_g \Psi = \frac{1}{\sqrt{|\mbox{\upshape{det}} \mbox{$g$}}|} \partial_{\alpha}(\sqrt{|\mbox{\upshape{det}} g|}(g^{-1})^{\alpha \beta} 	\partial_{\beta} \Psi)$. \label{FN:COVWAVOP}} 
$\square_{g(\Psi)} \Psi = 0$ that fail to satisfy the null condition,
and to inhomogeneous versions of these wave equations featuring ``admissible'' semilinear terms.
Similar results were proved in \cite{dCsM2014} for a subset of these equations,
namely those wave equations arising from non-relativistic compressible fluid mechanics
with vanishing vorticity. All of the results mentioned so far in this subsubsection 
are explained in detail in the survey
article \cite{gHsKjSwW2016}.

In the wake of the above results, there have been significant further advancements, 
which we now describe. In \cite{jSgHjLwW2016}, we extended the shock formation results of \cite{jS2016b}
to a new, physically relevant regime of initial conditions in two spatial dimensions
such that the solutions are close to simple outgoing plane symmetric waves,
much like the setup of the present article.
For the initial conditions studied in \cite{jSgHjLwW2016}, 
the solutions do not experience dispersive decay. Hence,
we used a new analytic framework to control the solution up to the shock,
based on ``close-to-simple-plane-wave''-type smallness assumptions on the data that
are similar in spirit to the assumptions that we make on the data in the present article.
For special classes of wave equations
in three spatial dimensions with cubic nonlinearities,
Miao--Yu \cite{sMpY2017} proved similar shock formation results
for a class of large initial data
featuring a single scaling parameter, similar to 
the short pulse ansatz exploited by Christodoulou in his breakthrough work \cite{dC2009}
on the formation of trapped surfaces in solutions to the Einstein-vacuum equations.
For the same wave equations studied in \cite{sMpY2017}, 
Miao \cite{sM2016} recently
used a related but distinct ansatz for the initial data
to prove the existence of an open set of solutions 
that blow up at time $T_{(Shock)} \approx - 1$
but exist classically on the time interval $(-\infty,T_{(Shock)})$.

All of the above works concern systems that feature relatively simple characteristics:
those corresponding to a single wave operator. We now describe some recent shock formation
results in which the systems have more complicated principal parts, leading to multiple speeds
of propagation and distinct families of characteristics.
The first result of this type without symmetry assumptions 
was our joint work \cite{jLjS2016b} with J.~Luk, which
concerned the compressible Euler equations 
in two spatial dimensions under an arbitrary\footnote{There is one exceptional equation of state, known as that of the Chaplygin gas, to which the results of \cite{jLjS2016b} do not apply. In one spatial dimension, the resulting PDE system is \emph{totally linearly degenerate}, and many
experts believe that shocks do not form in solutions to such systems.} 
barotropic\footnote{A barotropic equation of state is such that the pressure is a function of the density.} 
equation of state. Specifically, in \cite{jLjS2016b},
we extended the shock formation results of
\cite{dCsM2014} for the compressible Euler equations to allow for the presence of
small amounts of vorticity at the location of the singularity. The vorticity satisfies
a transport equation and, as it turns out, remains Lipschitz with respect to the Cartesian coordinates,
all the way up to the shock. More precisely, the shock occurs in the ``sound wave part'' of the system rather than in the vorticity,
and, as in all prior works, it is driven by the intersection of a family of characteristic hypersurfaces
corresponding to a Lorentzian metric (known as the \emph{acoustical metric} in the context of fluid mechanics).
In particular, \cite{jLjS2016b} yielded the first proof of stable shock formation without symmetry assumptions
in solutions to a hyperbolic system featuring multiple speeds, where all solution variables were allowed to interact
up to the singularity. 

The results proved in \cite{jLjS2016b} were based on a new wave-transport-div-curl 
formulation of the compressible Euler equations under a barotropic equation of state,
which we derived in \cite{jLjS2016a}.
The new formulation exhibits remarkable null structures and regularity properties, tied in part 
to the availability of elliptic estimates for the vorticity 
in three spatial dimensions 
(vorticity stretching does not occur in two spatial dimensions, 
and in its absence,
one does not need elliptic estimates to control the vorticity).
In a forthcoming work, we will extend the shock formation results of 
\cite{jLjS2016b} to the much more difficult case of three spatial dimensions,
where to control the vorticity up to top order in a manner compatible with the wave part of the system,
one must rely on the elliptic estimates,
which allow one to show that the vorticity is exactly as differentiable as the
velocity with respect to geometric vectorfields adapted to the sound wave characteristics.
In \cite{jS2017a}, we extended the results of \cite{jLjS2016a} to allow for an arbitrary equation of state
in which the pressure depends on the density and entropy. The formulation of the equations in
\cite{jS2017a} exhibits further remarkable properties that, in our forthcoming work,
we will use to prove a stable shock formation result in three spatial dimensions in which the vorticity and entropy
are allowed to be non-zero at the singularity.
In \cite{jS2017b}, in two spatial dimensions,
we proved the first stable shock formation result for systems of quasilinear wave equations
featuring \underline{multiple wave speeds} of propagation, i.e., the systems featured more than one distinct 
quasilinear wave operator.
The main result provided an open set of data
such that the ``fastest'' wave forms a shock in finite time while the remaining solution variables
remain regular up to the singularity in the fast wave, much like in Theorem~\ref{T:ROUGHMAINTHM}.
The initial conditions were perturbations of simple plane waves, similar to the setup for the case of the scalar wave equations
studied in \cite{jSgHjLwW2016} and similar to the setup of the present article. 
The main new difficulty that we faced in \cite{jS2017b} is that 
the geometric vectorfields adapted to the shock-forming fast wave,
which seem to be an essential ingredient for following the fast wave all the way to its singularity,
exhibit very poor commutation properties
with the slow wave operator. Indeed, commuting the geometric vectorfields all the way through the slow wave
operator produces error terms that are uncontrollable both from the point of view of regularity
and from the point of view of the strength of the singular commutator terms that this generates.
To overcome this difficulty, we relied on a first-order reformulation of the 
slow wave equation which, though somewhat limiting in the precision it affords,
allows us to avoid commuting all the way through the slow wave operator and 
hence to avoid the uncontrollable error terms.

\subsection{Notation, index conventions, and conventions for ``constants''}
\label{SS:NOTATIONANDINDEXCONVENTIONS}
We now summarize some our notation. Some of the concepts referred to here
are defined later in the article.
Throughout, $\lbrace x^{\alpha} \rbrace_{\alpha =0,1,\cdots,n}$
denote the standard Cartesian coordinates
on spacetime $\mathbb{R} \times \Sigma$,
where $x^0 \in \mathbb{R}$ is the time variable and 
$(x^1,x^2,\cdots,x^n) \in \Sigma = \mathbb{R} \times \mathbb{T}^{n-1}$ are the space variables.
We denote the corresponding Cartesian partial derivative vectorfields by
$
\displaystyle
\partial_{\alpha}
=:
\frac{\partial}{\partial x^{\alpha}}
$
(which are globally defined and smooth even though $\lbrace x^i \rbrace_{i=2}^n$ are only locally defined)
and we often use the alternate notation $t := x^0$ and $\partial_t := \partial_0$.

\begin{itemize}
	\item Lowercase Greek spacetime indices 
	$\alpha$, $\beta$, etc.\
	correspond to the Cartesian spacetime coordinates 
	and vary over $0,1,\cdots,n$.
	Lowercase Latin spatial indices
	$a$,$b$, etc.\ 
	correspond to the Cartesian spatial coordinates and vary over $1,2,\cdots,n$.
	An exception to the latter rule occurs for the
	geometric torus coordinate vectorfields
	$\CoordAng{i}$
	from
	\eqref{E:GEOMETRCICOORDINATEPARTIALDERIVATIVEVECTORFIELDS},
	in which the labeling index $i$ varies over $2,\cdots,n$.
	Uppercase Latin indices such as $J$ correspond to the components
	$\noshockuparg{J}$ of the array of symmetric hyperbolic variables and
	typically vary from $1$ to $M$.
\item We use Einstein's summation convention in that repeated indices are summed over their respective ranges.
\item Unless otherwise indicated, 
	all quantities in our estimates that are not explicitly under
	an integral are viewed as functions of 
	the geometric coordinates $(t,u,\vartheta)$
	of Def.\ \ref{D:GEOMETRCICOORDINATES}.
	Unless otherwise indicated, quantities
	under integrals have the functional dependence 
	established below in
	Def.\ \ref{D:NONDEGENERATEVOLUMEFORMS}.
\item If $Q_1$ and $Q_2$ are two operators, then
	$[Q_1,Q_2] = Q_1 Q_2 - Q_2 Q_1$ denotes their commutator.
\item $A \lesssim B$ means that there exists $C > 0$ such that $A \leq C B$.
\item $A \approx B$ means that $A \lesssim B$ and $B \lesssim A$.
\item $A = \mathcal{O}(B)$ means that $|A| \lesssim |B|$.
\item Constants such as $C$ and $c$ are free to vary from line to line.
	\textbf{Explicit and implicit constants are allowed to depend in an increasing, 
	continuous fashion on the data-size parameters 
	$\Trandatasize$
	and $\TranminusdatasizeWithFactor^{-1}$
	from
	Subsect.\ \ref{SS:DATASIZE}.
	However, the constants can be chosen to be 
	independent of the parameters 
	$\Psiep$,
	$\mathring{\upepsilon}$,
	and $\varepsilon$ whenever the following conditions hold:
	\textbf{i)}
	$\mathring{\upepsilon}$
	and $\varepsilon$
	are sufficiently small relative to 
	$1$,
	relative to
	$\Trandatasize^{-1}$,
	and relative to $\TranminusdatasizeWithFactor$,
	and 
	\textbf{ii)}
	$\Psiep$ is sufficiently small relative to $1$}
	in the sense described in Subsect.\ \ref{SS:SMALLNESSASSUMPTIONS}.
\item Constants $C_{\star}$ are \textbf{universal 
		in that, as long as $\Psiep$ and $\mathring{\upepsilon}$ are sufficiently small relative to $1$,
		they do not depend on
		$\varepsilon$,
		$\mathring{\upepsilon}$,
		$\Trandatasize$,
		or $\TranminusdatasizeWithFactor$.}
\item $A = \mathcal{O}_{\star}(B)$ means that $|A| \leq C_{\star} |B|$,
	with $C_{\star}$ as above.
\item $\lfloor \cdot \rfloor$
	and $\lceil \cdot \rceil$
	respectively denote the standard floor and ceiling functions. 
\end{itemize}

\section{Rigorous setup of the problem and fundamental definitions}
\label{S:SETUPOFPROBLEM}
In this section, we state the equations that we will study
and state our basic assumptions on the nonlinearities.

\subsection{Statement of the equations}
\label{SS:EQUATIONS}
Our main results concern systems in 
$1 + n$ spacetime dimensions 
and
$1 + M$ unknowns of the following form:
\begin{align} 
	\Lunit \Psi & = 0,
		\label{E:SHOCKEQN} \\
	A^{\alpha} \partial_{\alpha} \noshock & = 0,
	\label{E:NONSHOCKEQUATION} 
\end{align}
where the scalar function $\Psi$ will eventually form a shock,
$M \geq 1$ is an integer,\footnote{Our results also apply in the case $M=0$, though we omit discussion of this simpler case.}
\begin{align} \label{E:NONSHOCKARRAY}
	\noshock := (\noshockuparg{J})_{J=1,\cdots,M}
\end{align}
denotes the ``symmetric hyperbolic variables'' 
(which will remain regular up to the singularity in $\max_{\alpha=0,\cdots,n} |\partial_{\alpha} \Psi|$),
$\Lunit$ is a vectorfield whose Cartesian components
are given smooth functions of $\Psi$ and $\noshock$, that is, $\Lunit^{\alpha} = \Lunit^{\alpha}(\Psi,\noshock)$,
and $A^{\alpha}$ are \emph{symmetric} $M \times M$ matrices
whose components $A_I^{\alpha;J} = A_J^{\alpha;I}$ are given smooth functions of $\Psi$ and $\noshock$.
Note that equation \eqref{E:NONSHOCKEQUATION} is equivalent to the $M$ scalar equations
$A_J^{\alpha;I} \partial_{\alpha} \noshockuparg{J} = 0$,
where $1 \leq I \leq M$ and with summation over $\alpha$ and $J$.
For convenience, we assume the normalization conditions
\begin{subequations}
\begin{align} \label{E:LUNIT0ISONE}
	\Lunit^0 & \equiv 1,
		\\
	\Lunit^1|_{(\Psi,\noshock)=(0,0)} 
	& = 1.
	\label{E:LUNIT1ISONEFORBACKGROUND}
\end{align}
\end{subequations}
More generally, if
$(\Lunit^0|_{(\Psi,\noshock)=(0,0)},\Lunit^1|_{(\Psi,\noshock)=(0,0)}) \neq (0,0)$,
then \eqref{E:LUNIT0ISONE}-\eqref{E:LUNIT1ISONEFORBACKGROUND}
can be achieved by performing a linear change of coordinates in the $(t,x^1)$ plane and 
then dividing equation \eqref{E:SHOCKEQN} by a scalar.

As we stressed in the introduction, an essential aspect of our analysis is that we
treat the Cartesian coordinate partial derivatives of
$\noshockuparg{J}$ as independent quantities. For this reason, we define
\begin{align} \label{E:CARTESIANDIFFNOSHOCK}
	\diffnoshockdoublearg{\alpha}{J}
	& := \partial_{\alpha} \noshockuparg{J},
	&
	\diffnoshockdownarg{\alpha} 
	&:= (\diffnoshockdoublearg{\alpha}{J})_{1 \leq J \leq M},
	&
	\diffnoshock 
	& := (\diffnoshockdoublearg{\alpha}{J})_{0 \leq \alpha \leq n, 1 \leq J \leq M}.
\end{align}
As a straightforward consequence of equation \eqref{E:NONSHOCKEQUATION} 
and definition \eqref{E:CARTESIANDIFFNOSHOCK},
we obtain the following evolution equation for $\diffnoshockdownarg{\alpha}$:
\begin{align} \label{E:CARTESIANDIFFERENTIATEDNONSHOCKEQUATION} 
	A^{\beta} \partial_{\beta} \diffnoshockdownarg{\alpha} 
	& = - (\partial_{\alpha} A^{\beta}) \diffnoshockdownarg{\beta}.
\end{align}

\subsection{The genuinely nonlinear-type assumption}
\label{SS:GENUINELYNONLINEAR}
To ensure that shocks can form in nearly plane symmetric solutions,
we assume that for $|\Psi| + |\noshock|$ sufficiently small, we have
\begin{align} \label{E:GENUINELYNONLINEAR}
	 \frac{\partial \Lunit^1}{\partial \Psi} \neq 0.
\end{align}

\subsection{Assumptions on the speed of propagation for the symmetric hyperbolic subsystem}
\label{SS:ASSUMPTIONSONTHESPEEDSOFPROPAGATION}
In this subsection, we state our assumptions on the speed of propagation
for the symmetric hyperbolic subsystem \eqref{E:NONSHOCKEQUATION}.
Specifically, we assume that the matrices
\begin{align} \label{E:POSITIVEDEFMATRICES}
	&A^0|_{(\Psi,\noshock)=(0,0)}
	\mbox{ and }
	A^0|_{(\Psi,\noshock)=(0,0)}
	-
	A^1|_{(\Psi,\noshock)=(0,0)}
	\mbox{ are positive definite}.
\end{align}
We now explain the significance of \eqref{E:POSITIVEDEFMATRICES}.
The positivity of $A^0|_{(\Psi,\noshock)=(0,0)}$
ensures that for solution values near the ``background state'' $(\Psi,\noshock)=(0,0)$,
the hypersurfaces $\Sigma_t$ are spacelike for equation \eqref{E:NONSHOCKEQUATION},
that is, for the evolution equation verified by the non-shock-forming variable $\noshock$.
By \eqref{E:LUNIT0ISONE}, the
$\Sigma_t$ are also spacelike for equation \eqref{E:SHOCKEQN},
i.e., $\Lunit$ is transversal to $\Sigma_t$.
The positivity of 
$
A^0|_{(\Psi,\noshock)=(0,0)}
-
A^1|_{(\Psi,\noshock)=(0,0)}
$
will ensure that for solution values near the background state,
hypersurfaces close to the flat planes $\lbrace t - x^1 = \mbox{\upshape} const \rbrace$
are spacelike for equation \eqref{E:NONSHOCKEQUATION}. This assumption is significant because
for the solutions that we will study, we will construct 
(in Subsect.\ \ref{SS:EIKONALFUNCTION})
a family $\lbrace \mathcal{P}_u \rbrace_{u \in [0,1]}$ of hypersurfaces that are characteristic
for equation \eqref{E:SHOCKEQN} (that is, for the operator $\Lunit$)
and that are close to the flat planes $\lbrace t - x^1 = \mbox{\upshape} const \rbrace$.
Put differently, the $\mathcal{P}_u$ will be characteristic for the evolution equation for $\Psi$ 
but spacelike for the evolution equation for $\noshock$,
which essentially means that for solution values near the background state,
$\Psi$ propagates at a strictly faster speed than $\noshock$
(and also strictly faster than $\diffnoshock$, since the principal coefficients 
in the evolution equations for $\noshock$ and $\diffnoshockdownarg{\alpha}$ are the same).

\section{Geometric constructions}
\label{S:GEOMETRICCONSTRUCTIONS}
In this section, we define/construct most of the geometric objects that we
use to analyze solutions. We defer the construction of the energies until
Sect.\ \ref{S:ENERGYID}.

\subsection{The eikonal function and the geometric coordinates}
\label{SS:EIKONALFUNCTION}
In this subsection, we construct the geometric coordinates that we use
to follow the solution all the way to the shock. The most important of these 
is the eikonal function.

\begin{definition}[\textbf{Eikonal function}]
The eikonal function is the solution $u$ to the following transport initial value problem,
where $\Lunit$ is the transport operator vectorfield from equation \eqref{E:SHOCKEQN}:
\begin{align} \label{E:EIKONAL}
		\Lunit u 
		& = 0, 
		&& u|_{\Sigma_0} = 1 - x^1.
\end{align}
\end{definition}

For reasons described in Remark~\ref{R:DESCRIPTIONOFMAXIMALDEVELOPMENT} and Subsubsect.\ \ref{SSS:SPACETIMEREGION},
we now fix a real parameter $U_0$ verifying
\begin{align} \label{E:FIXEDPARAMETER}
0 < U_0 \leq 1.
\end{align}
We will restrict out attention to spacetime regions
with $0 \leq u \leq U_0$.

Our analysis will take place on the following subsets of
spacetime, which are tied to the eikonal function;
see Fig.~\ref{F:REGION} for a picture of the setup.

\begin{definition} [\textbf{Subsets of spacetime}]
\label{D:HYPERSURFACESANDCONICALREGIONS}
We define the following subsets of spacetime:
\begin{subequations}
\begin{align}
	\Sigma_{t'} & := \lbrace (t,x^1,x^2,\cdots,x^n) \in \mathbb{R} \times \mathbb{R} \times \mathbb{T}^{n-1}  
		\ | \ t = t' \rbrace, 
		\label{E:SIGMAT} \\
	\Sigma_{t'}^{u'} & := \lbrace (t,x^1,x^2,\cdots,x^n) \in \mathbb{R} \times \mathbb{R} \times \mathbb{T}^{n-1} 
		 \ | \ t = t', \ 0 \leq u(t,x^1,x^2,\cdots,x^n) \leq u' \rbrace, 
		\label{E:SIGMATU} 
		\\
	\mathcal{P}_{u'}
	& := 
		\lbrace (t,x^1,x^2,\cdots,x^n) \in \mathbb{R} \times \mathbb{R} \times \mathbb{T}^{n-1} 
			\ | \ u(t,x^1,x^2,\cdots,x^n) = u' 
		\rbrace, 
		\label{E:PU} \\
	\mathcal{P}_{u'}^{t'} 
	& := 
		\lbrace (t,x^1,x^2,\cdots,x^n) \in \mathbb{R} \times \mathbb{R} \times \mathbb{T}^{n-1} 
			\ | \ 0 \leq t \leq t', \ u(t,x^1,x^2,\cdots,x^n) = u' 
		\rbrace, 
		\label{E:PUT} \\
	\mathcal{T}_{t',u'} 
		&:= \mathcal{P}_{u'}^{t'} \cap \Sigma_{t'}^{u'}
		= \lbrace (t,x^1,x^2,\cdots,x^n) \in \mathbb{R} \times \mathbb{R} \times \mathbb{T}^{n-1} 
			\ | \ t = t', \ u(t,x^1,x^2,\cdots,x^n) = u' \rbrace, 
			\label{E:LTU} \\
	\mathcal{M}_{t',u'} & := \cup_{u \in [0,u']} \mathcal{P}_u^{t'} \cap 
		\lbrace (t,x^1,x^2,\cdots,x^n) \in \mathbb{R} \times \mathbb{R} \times \mathbb{T}^{n-1}  \ | \ 0 \leq t < t' \rbrace.
		\label{E:MTUDEF}
\end{align}
\end{subequations}
\end{definition}
We refer to the $\Sigma_t$ and $\Sigma_t^u$ as ``constant time slices,'' 
the $\mathcal{P}_u^t$ as ``characteristics,'' 
and the $\mathcal{T}_{t,u}$ as ``tori.'' 
Note that $\mathcal{M}_{t,u}$ is ``open-at-the-top'' by construction.

\begin{center}
\begin{overpic}[scale=.15]{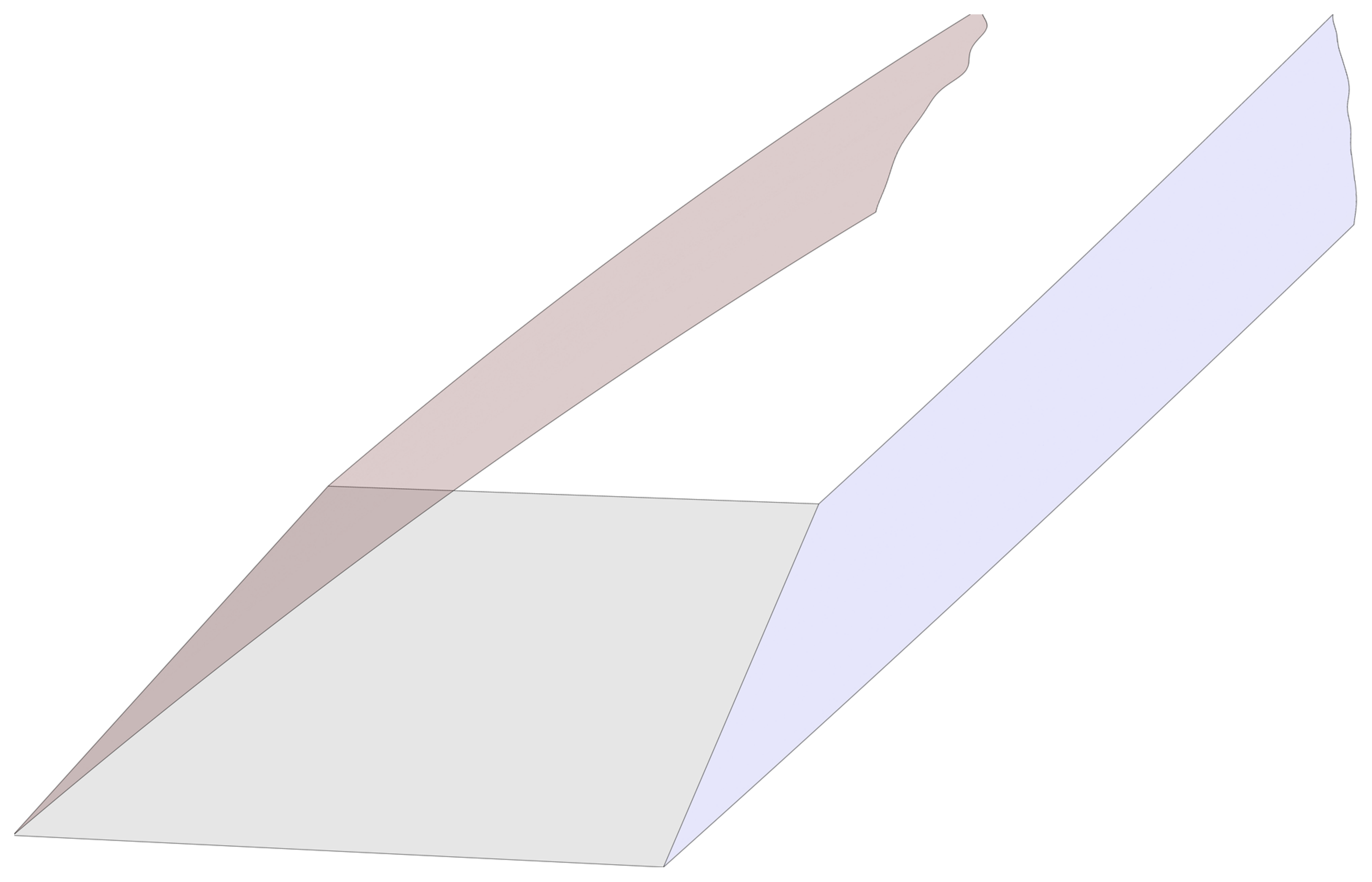} 
\put (50,35) {\large$\displaystyle \mathcal{M}_{t,u}$}
\put (37,37.3) {\large$\displaystyle \mathcal{P}_{U_0}^t$}
\put (59,18) {\large$\displaystyle \mathcal{P}_0^t$}
\put (64,54) {\large$\displaystyle \mathcal{T}_{t,u}$}
\put (97,53) {\large$\displaystyle \mathcal{T}_{t,0}$}
\put (6.5,8.5) {\large$\displaystyle \mathcal{T}_{0,u}$}
\put (50,8.5) {\large$\displaystyle \mathcal{T}_{0,0}$}
\put (8,3.5) {\large$\displaystyle \mbox{``interesting'' data}$}
\put (63,24) {\large \rotatebox{45}{$\displaystyle \Psi \mbox{ and } \noshock \mbox{ very small}$}}
\put (35,18) {\large$\Sigma_0^{U_0}$}
\put (31,9) {\large$\displaystyle U_0$}
%
\put (-4.8,16) {\large$\displaystyle x^2 \in \mathbb{T}$}
\put (24,-4.5) {\large$\displaystyle x^1 \in \mathbb{R}$}
\thicklines
\put (-1.1,3){\vector(.9,1){22}}
\put (.5,1.8){\vector(100,-4.5){48}}
\put (10.5,13.9){\line(.9,1){2}}
\put (53,11.9){\line(.43,1){1}}
\put (11.5,15){\line(100,-4.5){42}}
\end{overpic}
\captionof{figure}{The spacetime region under study in the case $n=2$.}
\label{F:REGION}
\end{center}

To complete the geometric coordinate system, 
we now construct local coordinates on the tori $\mathcal{T}_{t,u}$.

\begin{definition}[\textbf{Geometric torus coordinates}]
	We define the local geometric torus coordinates
	$(\vartheta^2,\cdots,\vartheta^n)$
	to be the solutions to the following initial value problems,
	where $\Lunit$ is the transport operator vectorfield from equation \eqref{E:SHOCKEQN}:
	\begin{align}
		\Lunit \vartheta^i
		& = 0,
		&& \vartheta^i|_{\Sigma_0} = x^i,
		&& (i=2,3,\cdots,n).
	\end{align}
	Note that we can view $(\vartheta^2,\cdots,\vartheta^n)$ as locally defined coordinates 
	on $\mathcal{T}_{t,u} \simeq \mathbb{T}^{n-1}$.
\end{definition}

\begin{definition}[\textbf{Geometric coordinates and the corresponding partial derivative vectorfields}]
	\label{D:GEOMETRCICOORDINATES}
		We refer to
		$(t,u,\vartheta^2,\cdots,\vartheta^n)$
		as the geometric coordinates,
		and we set $\vartheta := (\vartheta^2,\cdots,\vartheta^n)$.
		We denote the corresponding partial derivative vectorfields by
		\begin{align} \label{E:GEOMETRCICOORDINATEPARTIALDERIVATIVEVECTORFIELDS}
			\frac{\partial}{\partial t},
			\frac{\partial}{\partial u},
			\CoordAng{i} := \frac{\partial}{\partial \vartheta^i},
			\qquad
			(i=2,\cdots,n).
		\end{align}
\end{definition}

Note that the $\CoordAng{i}$ are $\mathcal{T}_{t,u}$-tangent by construction.
Moreover, we note even though the coordinate functions $\vartheta^i$ are 
only locally defined on $\mathcal{T}_{t,u}$, the vectorfields
$\lbrace \CoordAng{i} \rbrace_{i=2,\cdots,n}$ 
can be defined so as to form a
smooth (relative to the geometric coordinates) global positively oriented frame on $\mathcal{T}_{t,u}$.

\subsection{The inverse foliation density}
\label{SS:MUDEF}
We now define $\upmu > 0$, the inverse foliation density of the characteristics $\mathcal{P}_u$.
When $\upmu$ goes to $0$,
the characteristics intersect and, as our main theorem shows,
$\max_{\alpha=0,\cdots,n} |\partial_{\alpha} \Psi|$ blows up. That is, 
\emph{$\upmu \downarrow 0$ signifies the formation of a shock singularity}.

\begin{definition}[\textbf{Inverse foliation density}]
	\label{D:MUDEF}
	We define $\upmu > 0$ as follows:
	\begin{align} \label{E:MUDEF}
		\upmu 
		& := \frac{1}{\partial_t u}.
	\end{align}
\end{definition}

We observe that from \eqref{E:LUNIT0ISONE}-\eqref{E:LUNIT1ISONEFORBACKGROUND}
and \eqref{E:EIKONAL}, it follows that when
$|\Psi| + |\noshock|$ is sufficiently small (as will be the case in our main theorem), 
we have
\begin{align} \label{E:MUINITIALCONDITION}
	\upmu|_{\Sigma_0} = 1 + \mathcal{O}_{\star}(|\Psi|) + \mathcal{O}_{\star}(|\noshock|).
\end{align}
In particular, if $\Psi$ and $\noshock$ are initially small,
then $\upmu$ is initially close to $1$.

\subsection{Vectorfields and one-forms adapted to the characteristics and the blowup-coefficient}
\label{SS:VECTORFIELDSANDONEFORMSADAPTEDTOCHARACTERISTICS}
In this subsection, we construct various vectorfields and one-forms that are
adapted to the characteristics $\mathcal{P}_u$. We also derive some of their basic properties.
We also define the blowup-coefficient, which captures the 
genuinely nonlinear nature of the transport equation \eqref{E:SHOCKEQN}.

\begin{definition}[\textbf{The eikonal function gradient one-forms}]
	\label{D:EIKONALFUNCTIONONEFORMS}
	We define $\uplambda$ and $\upxi$ to be the one-forms with the following Cartesian components,
	$(0 \leq \alpha \leq n)$, $(1 \leq j \leq n)$:
	\begin{subequations}
	\begin{align} \label{E:EIKONALFUNCTIONONEFORMS}
		\uplambda_{\alpha}
		& := \upmu \partial_{\alpha} u,
		&& \\
		\upxi_0
		& := 0,
		&&
		\upxi_j 
		:= \upmu \partial_j u.
		\label{E:EIKONALFUNCTIONSPATIALONEFORM}
	\end{align}
	\end{subequations}
\end{definition}

\begin{remark}
	From \eqref{E:MUDEF} and \eqref{E:EIKONALFUNCTIONONEFORMS}, we deduce that
	\begin{align} \label{E:UPLAMBDA0ISONE}
		\uplambda_0 
		& = 1.
	\end{align}
\end{remark}

The following definition captures the strength of the coefficient of the
main term that drives the shock formation
(as is evidenced by the estimates \eqref{E:MUNOTCOMMUTED}-\eqref{E:LUNITMUNOTCOMMUTED}).
The definition is adapted to the $x^1$
direction since in our main theorem, we study solutions with approximate plane symmetry
(where by plane symmetric solutions, we mean ones that depend only on $t$ and $x^1$).

\begin{definition}[\textbf{The blowup-coefficient}]
	\label{D:GENIUNELYNONLINEARCONSTANT}
	Viewing $\Lunit^1 = \Lunit^1(\Psi,\noshock)$,
	we define the coefficient $\blowupcoeff$ as
	\begin{align} \label{E:BLOWUPCONSTANT}
		\blowupcoeff
		& := \frac{\partial \Lunit^1}{\partial \Psi} \upxi_1.
	\end{align}
\end{definition}

\begin{remark}[$\blowupcoeff \neq 0$]
	\label{R:BLOWUPCOEFFICIENTISNONZERO}
	The solutions that we will study 
	will be such that $\upxi_1$ is a small perturbation of $-1$;
	see definition \eqref{E:UPXIJSMALL} 
	and the estimate \eqref{E:LINFTYUPXIJITSELF}.
	Hence, by \eqref{E:GENUINELYNONLINEAR},
	it follows that
	$\blowupcoeff \neq 0$
	for the solutions under study.
\end{remark}

In the next definition, we define a pair $\mathcal{P}_u$-transversal
vectorfields that we use to study the solution.

\begin{definition}[$\mathcal{P}_u$-\textbf{transversal vectorfields}]
We define the Cartesian components of the $\Sigma_t$-tangent
vectorfields $\Radunit$ and $\Rad$ as follows, 
$(1 \leq j \leq n)$:
\begin{subequations}
\begin{align}
	\Radunit^j
	& := - \Lunit^j,
		\label{E:RADUNITJ} \\
	\Rad^j
	&:= \upmu \Radunit^j
	= - \upmu \Lunit^j.
	\label{E:RADJ}
\end{align}
\end{subequations}
\end{definition}

We now derive some basic properties of $\Lunit$ and $\Rad$.

\begin{lemma}[\textbf{Basic properties of} $\Lunit$ \textbf{and} $\Rad$]
Relative to the geometric coordinates, we have
\begin{align} \label{E:LISDDT}
	\Lunit = \frac{\partial}{\partial t}.
\end{align}

Moreover, the following identity holds:
\begin{align} \label{E:RADAPPLIEDTOUISONE}
	\Rad u & = 1.
\end{align}

Finally, there exists an $\mathcal{T}_{t,u}$-tangent vectorfield
$\Xi$ such that
\begin{align} \label{E:RADDECOMPINTOPARTIALUPLUSANGULAR}
	\Rad
	& = \frac{\partial}{\partial u}
		- \Xi.
\end{align}
\end{lemma}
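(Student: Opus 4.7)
The plan is to verify each of the three claims by evaluating the relevant vectorfields on the geometric coordinate functions $t$, $u$, $\vartheta^2,\ldots,\vartheta^n$, exploiting the normalization $\Lunit^0 \equiv 1$ from \eqref{E:LUNIT0ISONE}, the eikonal equation \eqref{E:EIKONAL}, the definition \eqref{E:MUDEF} of $\upmu$, and the Cartesian-component definition \eqref{E:RADJ} of $\Rad$.

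For \eqref{E:LISDDT}, I would compute $\Lunit$ applied to each geometric coordinate. Since $\Lunit t = \Lunit^\alpha \partial_\alpha t = \Lunit^0 = 1$ by \eqref{E:LUNIT0ISONE}, while $\Lunit u = 0$ by \eqref{E:EIKONAL} and $\Lunit \vartheta^i = 0$ by construction of the geometric torus coordinates, the vectorfield $\Lunit$ has geometric-coordinate components $(1,0,0,\ldots,0)$. This gives \eqref{E:LISDDT} immediately.

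For \eqref{E:RADAPPLIEDTOUISONE}, the key input is that $\Rad$ is $\Sigma_t$-tangent with $\Rad^0 = 0$ and $\Rad^j = -\upmu \Lunit^j$ for $j=1,\ldots,n$. First, expanding $\Lunit u = 0$ in Cartesian coordinates using $\Lunit^0 = 1$ yields $\partial_t u + \Lunit^j \partial_j u = 0$, so $\Lunit^j \partial_j u = -\partial_t u = -1/\upmu$ by \eqref{E:MUDEF}. Then
\begin{align*}
\Rad u = \Rad^j \partial_j u = -\upmu \Lunit^j \partial_j u = -\upmu \cdot (-1/\upmu) = 1,
\end{align*}
as claimed.

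For the decomposition \eqref{E:RADDECOMPINTOPARTIALUPLUSANGULAR}, I would consider the vectorfield $\Xi := \frac{\partial}{\partial u} - \Rad$ and show it annihilates both $t$ and $u$, so that its expansion in the geometric frame $\{\partial/\partial t, \partial/\partial u, \CoordAng{2},\ldots,\CoordAng{n}\}$ has no $\partial/\partial t$ or $\partial/\partial u$ component, which is precisely the condition for $\Xi$ to be $\mathcal{T}_{t,u}$-tangent. Indeed, $\frac{\partial t}{\partial u} = 0$ and $\Rad t = \Rad^0 = 0$ since $\Rad$ is $\Sigma_t$-tangent, giving $\Xi t = 0$; and $\frac{\partial u}{\partial u} = 1$ while $\Rad u = 1$ by the step above, giving $\Xi u = 0$. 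Rearranging yields \eqref{E:RADDECOMPINTOPARTIALUPLUSANGULAR}. No step here presents a genuine obstacle; the only mild subtlety is being careful that statements such as $\Rad u = 1$ refer to the directional derivative of the scalar $u$ by the vectorfield $\Rad$ (per the convention in Footnote~\ref{FN:SCALARVECTORFIELDIFFNOTATION}), rather than to a coordinate-component relation.
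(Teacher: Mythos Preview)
Your proposal is correct and follows essentially the same approach as the paper's own proof: both verify \eqref{E:LISDDT} by evaluating $\Lunit$ on the geometric coordinates, derive \eqref{E:RADAPPLIEDTOUISONE} from the eikonal equation together with \eqref{E:LUNIT0ISONE} and \eqref{E:MUDEF}, and obtain \eqref{E:RADDECOMPINTOPARTIALUPLUSANGULAR} by observing that $\frac{\partial}{\partial u} - \Rad$ annihilates both $t$ and $u$. Your write-up simply spells out a few more details than the paper does.
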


\begin{proof}
	To prove \eqref{E:LISDDT}, we note that 
	$\Lunit u = \Lunit \vartheta^j = 0$ by construction.
	Also taking into account \eqref{E:LUNIT0ISONE}, we conclude \eqref{E:LISDDT}.
	
	To prove \eqref{E:RADAPPLIEDTOUISONE}, we first use
	the eikonal equation \eqref{E:EIKONAL} and the assumption \eqref{E:LUNIT0ISONE} 
	to deduce the identity $\partial_t u = - \Lunit^a \partial_a u$.
	Multiplying this identity by $\upmu$ and
	appealing to definition \eqref{E:MUDEF},
	we deduce that
	$1 = - \upmu \Lunit^a \partial_a u$
	which, in view of definition \eqref{E:RADJ},
	yields \eqref{E:RADAPPLIEDTOUISONE}.
	The existence of an $\mathcal{T}_{t,u}$-tangent vectorfield
	such that 
	\eqref{E:RADDECOMPINTOPARTIALUPLUSANGULAR}
	holds then follows as a simple consequence 
	of \eqref{E:RADAPPLIEDTOUISONE}
	and the identity $\Rad t = 0$ (that is, the fact that $\Rad$ is $\Sigma_t$-tangent).
\end{proof}

\begin{lemma}[\textbf{Basic identities for the eikonal function gradient one-forms}]
	The following identities hold
	\begin{subequations}
	\begin{align}
		\Lunit^{\alpha} \uplambda_{\alpha}
		& = 0,
		&
		\Lunit^a \upxi_a
		& = -1,
			\label{E:LUNITONEFORMCONTRACTIONS} \\
		\Radunit^{\alpha} \uplambda_{\alpha}
		& = 1,
		&
		\Radunit^a \upxi_a
		& = 1.
		\label{E:RADUNITONEFORMCONTRACTIONS}
	\end{align}
	Moreover, if $Y$ is an $\mathcal{T}_{t,u}$-tangent vectorfield, then
	\begin{align} \label{E:TORUSTANGENTVECTORFIELDONEFORMCONTRACTIONS}
		Y^{\alpha} \uplambda_{\alpha}
		& = 0,
		&
		Y^a \upxi_a
		& = 0.
	\end{align}
	\end{subequations}
\end{lemma}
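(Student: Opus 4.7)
The proof is a direct unpacking of the definitions of $\uplambda$ and $\upxi$ together with the eikonal equation $\Lunit u = 0$, the normalization $\Lunit^0 \equiv 1$, and the definition $\upmu = 1/\partial_t u$. My plan is to verify the six contractions in a convenient order, reusing intermediate computations.

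First I would handle the $\uplambda$-contractions because they require only \eqref{E:EIKONALFUNCTIONONEFORMS} and the eikonal equation. For \eqref{E:LUNITONEFORMCONTRACTIONS} I write $\Lunit^{\alpha} \uplambda_{\alpha} = \upmu \Lunit^{\alpha} \partial_{\alpha} u = \upmu \, \Lunit u = 0$. For the $\Radunit$-contraction in \eqref{E:RADUNITONEFORMCONTRACTIONS} I note that $\Radunit$ is $\Sigma_t$-tangent so $\Radunit^0 = 0$, then $\Radunit^{\alpha} \uplambda_{\alpha} = \Radunit^a \uplambda_a = -\upmu \Lunit^a \partial_a u$ by \eqref{E:RADUNITJ}; combining the eikonal equation with $\Lunit^0 = 1$ gives $\Lunit^a \partial_a u = -\partial_t u$, so $\Radunit^{\alpha} \uplambda_{\alpha} = \upmu \, \partial_t u = 1$ by \eqref{E:MUDEF}. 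For a $\mathcal{T}_{t,u}$-tangent vectorfield $Y$ in \eqref{E:TORUSTANGENTVECTORFIELDONEFORMCONTRACTIONS}, tangency to the level sets of $u$ gives $Y u = 0$, hence $Y^{\alpha} \uplambda_{\alpha} = \upmu \, Y u = 0$.

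Next I would deduce the three $\upxi$-contractions from the $\uplambda$-contractions using the simple relation $\upxi_a = \uplambda_a$ for $a = 1,\dots,n$ and $\upxi_0 = 0$, together with \eqref{E:UPLAMBDA0ISONE}. Concretely, for any vectorfield $V$,
\begin{equation*}
V^a \upxi_a = V^{\alpha} \uplambda_{\alpha} - V^0 \uplambda_0 = V^{\alpha} \uplambda_{\alpha} - V^0.
\end{equation*}
Applying this identity: with $V = \Lunit$ and $\Lunit^0 = 1$ I get $\Lunit^a \upxi_a = 0 - 1 = -1$; with $V = \Radunit$, using $\Radunit^0 = 0$, I get $\Radunit^a \upxi_a = 1 - 0 = 1$; with $V = Y$ $\mathcal{T}_{t,u}$-tangent, using $Y^0 = Y t = 0$, I get $Y^a \upxi_a = 0 - 0 = 0$.

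There is no genuine obstacle here; the only thing to be careful about is keeping track of the distinction between the spacetime index $\alpha$ (which includes $0$) and the spatial index $a$, and remembering that $\Radunit$ and $Y$ are $\Sigma_t$-tangent so their time components vanish, which is what makes the $\uplambda$ and $\upxi$ contractions agree for these vectorfields (while they differ by $-1$ for $\Lunit$, since $\Lunit^0 = 1$).
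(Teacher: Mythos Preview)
Your proof is correct and takes essentially the same approach as the paper. The only cosmetic difference is that the paper invokes the already-established identity $\Rad u = 1$ (equation \eqref{E:RADAPPLIEDTOUISONE}) for the $\Radunit$-contractions, whereas you re-derive that fact inline; your unified formula $V^a \upxi_a = V^{\alpha}\uplambda_{\alpha} - V^0$ is a clean way to package the $\upxi$-identities but amounts to the same computation.
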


\begin{proof}
	The identities in \eqref{E:LUNITONEFORMCONTRACTIONS}
	are a straightforward consequence of equation \eqref{E:EIKONAL},
	definitions \eqref{E:EIKONALFUNCTIONONEFORMS}-\eqref{E:EIKONALFUNCTIONSPATIALONEFORM},
	\eqref{E:LUNIT0ISONE}, 
	and \eqref{E:UPLAMBDA0ISONE}.
	The identities in \eqref{E:RADUNITONEFORMCONTRACTIONS}
	follow from 
	\eqref{E:RADJ},
	\eqref{E:RADAPPLIEDTOUISONE},
	definitions \eqref{E:EIKONALFUNCTIONONEFORMS}-\eqref{E:EIKONALFUNCTIONSPATIALONEFORM},
	and the fact that $\Radunit^0 = 0$.
	To obtain 
	\eqref{E:TORUSTANGENTVECTORFIELDONEFORMCONTRACTIONS}, 
	we first note that 
	for $\mathcal{T}_{t,u}$-tangent vectorfields $Y$, we have
	$Y \in \mbox{\upshape span} \lbrace \CoordAng{i} \rbrace_{i=2,\cdots,n}$ and thus
	$Y^{\alpha} \partial_{\alpha} u = 0$.
	The identities in \eqref{E:TORUSTANGENTVECTORFIELDONEFORMCONTRACTIONS} 
	follow from this fact,
	definitions \eqref{E:EIKONALFUNCTIONONEFORMS}-\eqref{E:EIKONALFUNCTIONSPATIALONEFORM},
	and the fact that $Y^0 = 0$.
	
\end{proof}

To obtain estimates for the solution's derivatives,
we will commute the equations with the
vectorfields belonging to the following sets.

\begin{definition}[\textbf{Sets of geometric commutation vectorfields}]
	\label{D:COMMUTATIONVECTORFIELDS}
	We define the following sets of commutation vectorfields:
	\begin{subequations}
	\begin{align} \label{E:COMMUTATIONVECTORFIELDS}
		\Fullset 
		& := \left\lbrace	
					\Lunit,
					\Rad,
					\CoordAng{2},
					\CoordAng{3},
					\cdots,
					\CoordAng{n}
			\right\rbrace,
				\\
			\Tanset 
		& := \left\lbrace	
					\Lunit,
					\CoordAng{2},
					\CoordAng{3},
					\cdots,
					\CoordAng{n}
			\right\rbrace.
		\label{E:TANGENTIALCOMMUTATIONVECTORFIELDS}
	\end{align}
	\end{subequations}
\end{definition}

\begin{remark}
	Note that $\Tanset$ consists of the $\mathcal{P}_u$-tangent elements of $\Fullset$.
\end{remark}

\subsection{Perturbed parts of various scalar functions}
\label{SS:PERTURBEDPARTS}
In this subsection, we define the perturbed parts of
various scalar functions that we have constructed.
The perturbed quantities, which are decorated with the subscript ``$Small$,''
vanish for the background solution $(\Psi,\noshock) = (0,0)$.

\begin{definition}[\textbf{The perturbed parts of various scalar functions}]
	\label{D:PERTURBEDPART}
	Let $\Lunit$ be the vectorfield from equation \eqref{E:SHOCKEQN},
	let $\lbrace \CoordAng{i} \rbrace_{i=2,\cdots,n}$ 
	be the geometric torus vectorfields from \eqref{E:GEOMETRCICOORDINATEPARTIALDERIVATIVEVECTORFIELDS},
	and let $\upxi$ be the one-form defined in \eqref{E:EIKONALFUNCTIONSPATIALONEFORM}.
	We define the following ``background'' quantities, which are constants, $(j=1,\cdots,n)$:
	\begin{subequations}
	\begin{align}
		\Lunitback^j
		&:= \Lunit^j|_{(\Psi,\noshock)=(0,0)},
			\label{E:LUNITBACKJ} \\
		\Radunitback^j
		&:= \Radunit^j|_{(\Psi,\noshock)=(0,0)}.
		\label{E:RADUNITBACKJ}
	\end{align}
	\end{subequations}
	In \eqref{E:LUNITBACKJ}-\eqref{E:RADUNITBACKJ}, we are viewing
	$\Lunit^j$ and $\Radunit^j$ to be functions of $(\Psi,\noshock)$
	(this is possible for $\Radunit^j$ by \eqref{E:RADUNITJ}).
	Note that by \eqref{E:LUNIT1ISONEFORBACKGROUND} and \eqref{E:RADUNITJ},
	we have
	\begin{align} \label{E:BACKGROUND1COMPONENTS}
		\Lunitback^1 
		& = 1,
		&
		\Radunitback^1
		& = -1.
	\end{align}
	
	We also define the following perturbed quantities:
	\begin{subequations}
	\begin{align}
		\Lunit_{(Small)}^j
		& := \Lunit^j - \Lunitback^j,
			\label{E:LUNITSMALLJ} \\
		\Radunit_{(Small)}^j
		& := \Radunit^j - \Radunitback^j
			= - \Lunit_{(Small)}^j,
			\label{E:RADUNITSMALLJ} \\	
		\CoordAngSmallcomp{i}{j}
		& := \CoordAngcomp{i}{j}
			- 
			\delta^{ij},
			\label{E:COORDANGJKSMALL} \\
		\upxi_j^{(Small)}
		& := \upxi_j + \delta_j^1,
			\label{E:UPXIJSMALL}
	\end{align}
	\end{subequations}
	where the second equality in \eqref{E:RADUNITSMALLJ} follows from \eqref{E:RADUNITJ}
	and $\delta^{ij}$ and $\delta_j^1$ are standard Kronecker deltas.
\end{definition}

\subsection{Arrays of unknowns and schematic notation}
\label{SS:ARRAYS}
We use the following arrays for convenient shorthand notation.

\begin{definition}[\textbf{Shorthand notation for various solution variables}]
\label{D:SHORTHANDARRAYS}
We define the following arrays $\GdVar$ and $\BadVar$ of scalar functions:
\begin{subequations}
	\begin{align}
		\GdVar
		& := (\Psi,\noshockuparg{J},\diffnoshockdoublearg{\alpha}{J},
		\upxi_i^{(Small)},\CoordAngSmallcomp{j}{k})_{0 \leq \alpha \leq n,\, 1 \leq i,\, k \leq n,\, 2\leq j \leq n,\, 1 \leq J \leq M},
			\label{E:GOODARRAY} \\
		\BadVar
		& := (\upmu,\Psi,\noshockuparg{J},\diffnoshockdoublearg{\alpha}{J},\upxi_i^{(Small)},\CoordAngSmallcomp{j}{k})_{0 \leq \alpha \leq n,
		\, 1 \leq i,\, k \leq n,\, 2 \leq j \leq n,\, 1 \leq J \leq M}.
		\label{E:BADARRAY}
	\end{align}
	\end{subequations}
\end{definition}

\begin{remark}[\textbf{Schematic functional dependence}]
\label{R:SCHEMATICTENSORFIELDPRODUCTS}
In the remainder of the article, we use the notation
$\smoothfunction(s_1,s_2,\cdots,s_m)$ to schematically depict
an expression 
that depends smoothly on the scalar functions $s_1,s_2,\cdots,s_m$.
Note that in general, $\smoothfunction(0) \neq 0$.
\end{remark}

\begin{remark}[\textbf{The meaning of the symbol} $\Singletan$]
	\label{R:MEANINGOFSINGLETAN}
	Throughout, $\Singletan$ schematically denotes a 
	differential operator that is tangent to the characteristics 
	$\mathcal{P}_u$, typically $\Lunit$ or $\CoordAng{i}$.
	We use such notation when
	the precise details of $\Singletan$ are not important.
\end{remark}

\subsection{Cartesian partial derivatives in terms of geometric vectorfields}
\label{SS:CARTESIANVECTORFIELDSINTERMSOFGEOMETRIC}
In the next lemma, we expand the vectorfields $\lbrace \partial_{\alpha} \rbrace_{\alpha = 0,\cdots,n}$
in terms of the geometric commutation vectorfields.

\begin{lemma}[\textbf{Cartesian partial derivatives in terms of geometric vectorfields}]
	\label{L:CARTESIANDERIVATIVESINTERMSOFGEOMETRICDERIVATIVES}
	There exist smooth scalar functions
	$\smoothfunction_{ij}(\GdVar)$ such that
	the vectorfields $\partial_{\alpha}$
	can be expanded as follows 
	in terms of the elements of the set
	$\Fullset$ defined in \eqref{E:COMMUTATIONVECTORFIELDS}
	whenever $|\GdVar|$ is sufficiently small,
	where $\upxi_j$ is defined in \eqref{E:EIKONALFUNCTIONSPATIALONEFORM}:
	\begin{subequations}
	\begin{align}
		\partial_t
		& = \Lunit 
				+ 
				\Radunit,
				&&
			\label{E:PARTIALTINTERMSOFGEO} \\
		\partial_j
		& = 	
				\upxi_j
				\Radunit
				+
				\sum_{i=2}^n
				\smoothfunction_{ij}(\GdVar)
				\CoordAng{i},
				\label{E:PARTIALJINTERMSOFGEO}
				&& (1 \leq j \leq n).
	\end{align}
	\end{subequations}
\end{lemma}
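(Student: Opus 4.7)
The plan is to establish each identity by expanding the Cartesian partial derivative in the frame $\Fullset$ (equivalently, in $\lbrace \Lunit, \Radunit, \CoordAng{2}, \ldots, \CoordAng{n} \rbrace$, since $\Rad = \upmu \Radunit$), determining the coefficients by evaluating on the geometric coordinate functions, and using invertibility of the spatial frame-change matrix at the background state to extract the remaining coefficients as smooth functions of $\GdVar$.

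For \eqref{E:PARTIALTINTERMSOFGEO}, I will directly match Cartesian components. The vectorfield $\Radunit$ is $\Sigma_t$-tangent by \eqref{E:RADUNITJ}, and each $\CoordAng{i}$ is $\mathcal{T}_{t,u}$-tangent (hence $\Sigma_t$-tangent), so their $\partial_t$-components vanish, while $\Lunit^0 = 1$ by \eqref{E:LUNIT0ISONE}. For the spatial components, $\Lunit^a + \Radunit^a = 0$ by \eqref{E:RADUNITJ}. Thus $\Lunit + \Radunit$ has Cartesian components $(1,0,\ldots,0)$, matching those of $\partial_t$.

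For \eqref{E:PARTIALJINTERMSOFGEO}, I will expand $\partial_j = a_j \Lunit + b_j \Radunit + \sum_{i=2}^n c_{ij} \CoordAng{i}$. Because $\partial_j$ is $\Sigma_t$-tangent and $\Lunit$ is the only frame element with a nonzero time Cartesian component, $a_j = 0$. To identify $b_j$, I apply both sides to the eikonal function $u$: using $\Lunit u = 0$ from \eqref{E:EIKONAL}, $\CoordAng{i} u = 0$ (since $u$ is itself one of the geometric coordinates), and $\Radunit u = \upmu^{-1}$ (which follows from \eqref{E:RADAPPLIEDTOUISONE} combined with $\Rad = \upmu \Radunit$), I get $\partial_j u = b_j \upmu^{-1}$, whence $b_j = \upmu \partial_j u = \upxi_j$ by \eqref{E:EIKONALFUNCTIONSPATIALONEFORM}.

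Finally, to show the $c_{ij}$ are of the claimed form, I will invert the spatial frame matrix $M$ whose rows are the Cartesian spatial components of $\Radunit$ and of the $\CoordAng{i}$, namely $\Radunit^a = -\Lunit^a(\Psi,\noshock)$ and $\CoordAngcomp{i}{a} = \delta^{ia} + \CoordAngSmallcomp{i}{a}$; each entry is smooth in $\GdVar$. At the background state $\GdVar = 0$, using \eqref{E:BACKGROUND1COMPONENTS} and the definition \eqref{E:COORDANGJKSMALL}, $M$ is upper triangular with diagonal $(-1,1,\ldots,1)$, hence $\det M|_{\GdVar = 0} = -1$. By continuity $M$ remains invertible for $|\GdVar|$ sufficiently small, and Cramer's rule produces smooth functions $c_{ij} = \smoothfunction_{ij}(\GdVar)$. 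The only mildly delicate point is confirming that the $\smoothfunction_{ij}$ depend solely on $\GdVar$ and on no ancillary quantities; this is guaranteed precisely because every entry of $M$ is already a smooth function of $\GdVar$, so $M^{-1}$ inherits the same functional dependence. Beyond this bookkeeping, no serious analytic obstacle arises.
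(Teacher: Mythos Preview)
Your proof is correct and follows essentially the same approach as the paper: expand $\partial_j$ in the $\Sigma_t$-tangent frame $\{\Radunit,\CoordAng{2},\ldots,\CoordAng{n}\}$, identify the $\Radunit$-coefficient by applying both sides to $u$, and recover the remaining coefficients by inverting the frame-change matrix near the background. One small correction: the background matrix $M$ need not be upper triangular, since $\Radunitback^a = -\Lunitback^a$ for $a \geq 2$ are not assumed to vanish; however, it is block-triangular with diagonal blocks $(-1)$ and $\mathbb{I}_{(n-1)\times(n-1)}$ (in the paper's convention the corresponding column of unconstrained entries is denoted $*_{(n-1)\times 1}$), so $\det M|_{\GdVar=0} = -1$ and your invertibility argument still goes through.
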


\begin{proof}
	\eqref{E:PARTIALTINTERMSOFGEO} follows from \eqref{E:LUNIT0ISONE} and \eqref{E:RADUNITJ}.
	
	To prove \eqref{E:PARTIALJINTERMSOFGEO}, 
	we first note that for any fixed
	$j$ with $1 \leq j \leq n$, 
	since $\partial_j$ is $\Sigma_t$-tangent
	and since
	$\lbrace \Radunit, \CoordAng{2}, \cdots, \CoordAng{n} \rbrace$
	spans the tangent space of $\Sigma_t$,
	there exist unique ($j$-dependent) scalars $\upalpha_1,\cdots,\upalpha_n$
	such that
	$\partial_j = \upalpha_1 \Radunit + \sum_{i=2}^n \upalpha_i \CoordAng{i}$.
	Using both sides of this expansion to differentiate the eikonal function $u$ 
	and using \eqref{E:RADJ} and \eqref{E:RADAPPLIEDTOUISONE},
	we obtain the identity $\partial_j u = \upalpha_1 \upmu^{-1}$. 
	In view of definition \eqref{E:EIKONALFUNCTIONSPATIALONEFORM},
	we conclude that $\upalpha_1 = \upxi_j$, as is stated on RHS~\eqref{E:PARTIALJINTERMSOFGEO}.
	Next, for $1 \leq j,k \leq n$, we allow both sides of the expansion to differentiate the Cartesian coordinate $x^k$
	to obtain the identity
	$\delta_j^k = \upalpha_1 \Radunit^k + \sum_{i=2}^n \upalpha_i \CoordAngcomp{i}{k}$.
	For fixed $j$, we can view this as an identity whose left-hand side is the $n$-dimensional vector
	with components $(\delta_j^1,\cdots,\delta_j^n)^{\top}$
	and whose right-hand side is equal to the product of a matrix $M_{n \times n}$ and the $n$-dimensional vector
	$(\upalpha_1,\cdots,\upalpha_n)^{\top}$, where $\top$ denotes transpose.
	From Def.\ \ref{D:PERTURBEDPART}, we see that 
	$
	M_{n \times n}
	=
		\left(
		\begin{array}{c|c}
			-1 & \multirow{1}{*}{$\mathbf{0}_{1 \times (n-1)}$} \\
			\hline
			\multicolumn{1}{c|}{*_{(n-1) \times 1}} & \mathbb{I}_{(n-1) \times (n-1)}
		\end{array}
		\right)
	+
	M_{n \times n}^{(Small)}
	$,
	where the entries of $*_{(n-1) \times 1}$ are of the schematic form $\smoothfunction(\GdVar)$
	and the entries of $M_{n \times n}^{(Small)}$ are of the schematic form $\GdVar \smoothfunction(\GdVar)$
	(and thus are small when $|\GdVar|$ is small). Hence, when $|\GdVar|$ is small, 
	we can invert $M_{n \times n}$
	to conclude that the $\upalpha_i$ are smooth functions of $\GdVar$,
	which completes the proof of \eqref{E:PARTIALJINTERMSOFGEO}.
\end{proof}

\subsection{Evolution equations for the Cartesian components of various geometric quantities}
\label{SS:EVOLUTIONEQUATIONS}
In this subsection, we derive evolution equations for 
the Cartesian components of various geometric quantities
that are adapted to the characteristics $\mathcal{P}_u$.
Later, we will use these transport equations 
to derive estimates for these quantities.

\begin{lemma}[\textbf{Transport equations for} $\upmu$, $\upxi_j$, \textbf{and} $\upxi_j^{(Small)}$]
	\label{L:EVOLUTIONFORUPMUANDUPXI}
	The scalar functions $\upmu$,
	$\upxi_j$,
	and $\upxi_j^{(Small)}$,
	which are defined respectively in 
	\eqref{E:MUDEF}
	\eqref{E:EIKONALFUNCTIONSPATIALONEFORM},
	and \eqref{E:UPXIJSMALL},
	verify the following transport equations,
	where the scalar functions $\smoothfunction_{ij}(\GdVar)$ are as in 
	Lemma~\ref{L:CARTESIANDERIVATIVESINTERMSOFGEOMETRICDERIVATIVES},
	$(i=2,\cdots,n)$,
	$(j=1,\cdots,n)$:
	\begin{subequations}
	\begin{align}
		\Lunit \upmu
		& = 
			(\Rad \Lunit^a) \upxi_a
			+
			\upmu (\Lunit \Lunit^a) \upxi_a,
			&&
			\label{E:LUNITUPMU} \\
		\Lunit \upxi_j
		= \Lunit \upxi_j^{(Small)}
		& = (\Lunit \Lunit^a) \upxi_a \upxi_j
				- 
				\sum_{i=2}^n
				\smoothfunction_{ij}(\GdVar)
				(\CoordAng{i} \Lunit^a) \upxi_a.
			\label{E:LUNITUPXIJ}
	\end{align}
	\end{subequations}
	
	Moreover, there exist functions that are smooth whenever
	$|\GdVar|$ is sufficiently small and that are
	schematically denoted by indexed versions of ``$\smoothfunction$'',
	such that the following initial conditions hold along $\Sigma_0$:
	\begin{subequations}
	\begin{align} \label{E:MUIC}
		\upmu|_{\Sigma_0}
		& = 1 + (\Psi,\noshock) \cdot \smoothfunction(\Psi,\noshock),
			\\
		\upxi_j|_{\Sigma_0}
		& =\left\lbrace
				- 1 + (\Psi,\noshock) \cdot \smoothfunction(\Psi,\noshock)
			\right\rbrace
			\delta_j^1,
				\label{E:XIJIC} \\
		\upxi_j^{(Small)}|_{\Sigma_0}
		& = (\Psi,\noshock) \cdot \smoothfunction(\Psi,\noshock) \delta_j^1.
			\label{E:XIJSMALLIC}
	\end{align}
	\end{subequations}
\end{lemma}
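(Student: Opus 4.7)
The plan is to derive all transport equations by starting from the eikonal equation $\Lunit u = 0$ and its Cartesian consequence $\partial_t u = -\Lunit^a \partial_a u$ (which uses $\Lunit^0 = 1$). The key technical identity is obtained by commuting $\partial_\alpha$ through the eikonal equation: from $\Lunit^\beta \partial_\beta u = 0$ and the chain rule, I get
\begin{align*}
\Lunit(\partial_\alpha u) = \Lunit^\beta \partial_\beta \partial_\alpha u = -(\partial_\alpha \Lunit^\beta) \partial_\beta u,
\end{align*}
and since $\Lunit^0 \equiv 1$ gives $\partial_\alpha \Lunit^0 = 0$, the right-hand side simplifies to $-(\partial_\alpha \Lunit^a) \partial_a u$.

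For \eqref{E:LUNITUPMU}, I differentiate $\upmu = 1/\partial_t u$ to obtain $\Lunit \upmu = -\upmu^2 \Lunit(\partial_t u)$. Applying the commutator identity above with $\alpha = 0$ and then converting $\partial_a u = \upxi_a/\upmu$ via \eqref{E:EIKONALFUNCTIONSPATIALONEFORM} yields $\Lunit \upmu = \upmu (\partial_t \Lunit^a) \upxi_a$. I then decompose $\partial_t$ via \eqref{E:PARTIALTINTERMSOFGEO} as $\partial_t = \Lunit + \upmu^{-1}\Rad$, which gives precisely \eqref{E:LUNITUPMU}.

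For \eqref{E:LUNITUPXIJ}, I apply Leibniz to $\upxi_j = \upmu \partial_j u$, plug in the identity for $\Lunit \upmu$ just derived, and use the commutator identity again with $\alpha = j$. This produces the expression
\begin{align*}
\Lunit \upxi_j = (\Lunit \Lunit^a)\upxi_a \upxi_j + \upmu^{-1}(\Rad \Lunit^a)\upxi_a \upxi_j - (\partial_j \Lunit^a)\upxi_a.
\end{align*}
The crucial step, and the part that requires the most care, is expanding $\partial_j$ via \eqref{E:PARTIALJINTERMSOFGEO} as $\partial_j = \upxi_j \upmu^{-1}\Rad + \sum_i \smoothfunction_{ij}(\GdVar) \CoordAng{i}$: this shows that the $\upmu^{-1}(\Rad \Lunit^a)\upxi_a \upxi_j$ contribution exactly cancels the $\Rad$-part hidden inside $(\partial_j \Lunit^a)\upxi_a$, leaving only the desired $\CoordAng{i}$-terms. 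The identity $\Lunit \upxi_j^{(Small)} = \Lunit \upxi_j$ is immediate since the added $\delta_j^1$ in \eqref{E:UPXIJSMALL} is constant. I expect this cancellation to be the main (though very manageable) obstacle; everything else is bookkeeping.

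For the initial conditions \eqref{E:MUIC}--\eqref{E:XIJSMALLIC}, I restrict to $\Sigma_0$ where $u = 1 - x^1$, so $\partial_j u|_{\Sigma_0} = -\delta_j^1$. The eikonal equation combined with $\Lunit^0 = 1$ gives $\partial_t u|_{\Sigma_0} = -\Lunit^a(\Psi,\noshock)(-\delta_a^1) = \Lunit^1(\Psi,\noshock)$, hence $\upmu|_{\Sigma_0} = 1/\Lunit^1(\Psi,\noshock)$ and $\upxi_j|_{\Sigma_0} = -\delta_j^1/\Lunit^1(\Psi,\noshock)$. Since \eqref{E:LUNIT1ISONEFORBACKGROUND} gives $\Lunit^1(0,0) = 1$, a first-order Taylor expansion in $(\Psi,\noshock)$ around the origin yields $\Lunit^1 = 1 + (\Psi,\noshock)\cdot \smoothfunction(\Psi,\noshock)$ for some smooth $\smoothfunction$, and taking reciprocals (valid for $|(\Psi,\noshock)|$ small) produces \eqref{E:MUIC}; \eqref{E:XIJIC} follows by multiplying by $-\delta_j^1$, and \eqref{E:XIJSMALLIC} follows by adding $\delta_j^1$ and observing that $1 - 1/\Lunit^1 = (\Psi,\noshock)\cdot \smoothfunction(\Psi,\noshock)$ for a (possibly different) smooth $\smoothfunction$.
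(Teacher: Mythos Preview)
Your proposal is correct and follows essentially the same approach as the paper: both arguments differentiate the eikonal equation to obtain $\Lunit(\partial_\alpha u) = -(\partial_\alpha \Lunit^a)\partial_a u$, specialize to $\alpha = 0$ and $\alpha = j$, and then replace $\partial_t$ and $\partial_j$ by their geometric-vectorfield expansions from Lemma~\ref{L:CARTESIANDERIVATIVESINTERMSOFGEOMETRICDERIVATIVES}. Your write-up is in fact slightly more explicit than the paper's at the key step, spelling out the cancellation of the $\upmu^{-1}(\Rad \Lunit^a)\upxi_a\upxi_j$ terms that the paper leaves implicit in the phrase ``from \eqref{E:LUNITUPXIJALMOSTDERIVED} and \eqref{E:PARTIALTINTERMSOFGEO}--\eqref{E:PARTIALJINTERMSOFGEO}, we conclude \eqref{E:LUNITUPXIJ}.''
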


\begin{proof}
	Differentiating the eikonal equation \eqref{E:EIKONAL} with $\partial_{\alpha}$ and using
	\eqref{E:LUNIT0ISONE}, we obtain
	\begin{align} \label{E:EIKONALCARTESIANDIFFERENTIATED}
		\Lunit \partial_{\alpha} u 
		& = 
		- 
		(\partial_{\alpha} \Lunit^a) \partial_a u.
	\end{align}
	Setting $\alpha = 0$ in \eqref{E:EIKONALCARTESIANDIFFERENTIATED} and appealing to definition \eqref{E:MUDEF},
	we deduce
	\begin{align} \label{E:UPMUEVOLUTIONALMOSTDERIVED}
		\Lunit \upmu
		& = \upmu (\partial_t \Lunit^a) (\upmu \partial_a u).
	\end{align}
	From \eqref{E:UPMUEVOLUTIONALMOSTDERIVED},
	\eqref{E:PARTIALTINTERMSOFGEO},
	\eqref{E:RADJ},
	and definition \eqref{E:EIKONALFUNCTIONSPATIALONEFORM}, 
	we conclude \eqref{E:LUNITUPMU}.
	 
	Next, we set $\alpha = j$ in \eqref{E:EIKONALCARTESIANDIFFERENTIATED},
	multiply the equation by $\upmu$,
	and use definition \eqref{E:EIKONALFUNCTIONSPATIALONEFORM} 
	and \eqref{E:UPMUEVOLUTIONALMOSTDERIVED} to compute that
	\begin{align} \label{E:LUNITUPXIJALMOSTDERIVED}
		\Lunit (\upmu \partial_j u)
		& = 
		- (\partial_j \Lunit^a) (\upmu \partial_a u)
		+
		(\partial_t \Lunit^a) (\upmu \partial_a u) (\upmu \partial_j u)
			\\
		& = - (\partial_j \Lunit^a) \upxi_a
			+
				(\partial_t \Lunit^a) \upxi_a \upxi_j.
			\notag
	\end{align}
	From \eqref{E:LUNITUPXIJALMOSTDERIVED}
	and \eqref{E:PARTIALTINTERMSOFGEO}-\eqref{E:PARTIALJINTERMSOFGEO},
	we conclude \eqref{E:LUNITUPXIJ}.
	
	To prove \eqref{E:MUIC},
	we use \eqref{E:LUNIT0ISONE}-\eqref{E:LUNIT1ISONEFORBACKGROUND},
	\eqref{E:EIKONAL},
	and definition \eqref{E:MUDEF}
	to obtain
	$(1/\upmu)|_{\Sigma_0} = \partial_t u|_{\Sigma_0} = - \Lunit^a \partial_a u|_{\Sigma_0} = \Lunit^1|_{\Sigma_0}
	= 1 + (\Psi,\noshock) \cdot \smoothfunction(\Psi,\noshock)
	$,
	from which \eqref{E:MUIC} easily follows (when $|\Psi|$ and $|\noshock|$ are small).
	To prove \eqref{E:XIJIC}, we use definition \eqref{E:EIKONALFUNCTIONSPATIALONEFORM}
	and the above argument to deduce that
	$\upxi_j|_{\Sigma_0} = - (\upmu \delta_j^1)|_{\Sigma_0} 
	= 
	\left\lbrace
		- 1 + (\Psi,\noshock) \cdot \smoothfunction(\Psi,\noshock)
	\right\rbrace
	\delta_j^1$,
	as desired.
	\eqref{E:XIJSMALLIC} then follows from 
	\eqref{E:XIJIC} and definition \eqref{E:UPXIJSMALL}.
\end{proof}

In the next lemma, we derive transport equations for the Cartesian components of the
geometric torus coordinate partial derivative vectorfields.

\begin{lemma}[\textbf{Transport equations for the Cartesian components of} $\CoordAng{i}$]
	\label{L:COORDANGCOMPONENTRECTANGULAR}
	The Cartesian components 
	$\CoordAngcomp{i}{j}$ of the
	$\mathcal{T}_{t,u}$-tangent vectorfields
	from
	\eqref{E:GEOMETRCICOORDINATEPARTIALDERIVATIVEVECTORFIELDS}
	and their perturbed parts $\CoordAngSmallcomp{i}{j}$ defined in \eqref{E:COORDANGJKSMALL}
	are solutions to the following transport equation initial value problem:
	\begin{subequations}
	\begin{align} 
		\Lunit \CoordAngcomp{i}{j}
		& = \CoordAng{i} \Lunit^j,
		&& 
		\CoordAngcomp{i}{j}|_{\Sigma_0}
		= \delta^{ij},
		\label{E:COORDANGCOMPONENTRECTANGULAR}
			\\
		\Lunit \CoordAngSmallcomp{i}{j}
		& = \CoordAng{i} \Lunit^j,
		&& 
		\CoordAngcomp{i}{j}|_{\Sigma_0}
		= 0.
		\label{E:COORDANGSMALLCOMPONENTRECTANGULAR}
	\end{align}	
	\end{subequations}
	where $\delta^{ij}$ is the standard Kronecker delta.
\end{lemma}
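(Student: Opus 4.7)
The plan is to exploit the fact that both $\Lunit$ and $\CoordAng{i}$ are coordinate partial derivative vectorfields relative to the geometric coordinate system $(t,u,\vartheta^2,\ldots,\vartheta^n)$. By \eqref{E:LISDDT}, $\Lunit = \frac{\partial}{\partial t}$, and by definition \eqref{E:GEOMETRCICOORDINATEPARTIALDERIVATIVEVECTORFIELDS}, $\CoordAng{i} = \frac{\partial}{\partial \vartheta^i}$. Since mixed partial derivatives with respect to a coordinate system commute, we have the Lie bracket identity $[\Lunit,\CoordAng{i}] = 0$.

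To derive the evolution equation, I would express the Cartesian component as $\CoordAngcomp{i}{j} = \CoordAng{i} x^j$, i.e., the action of the vectorfield $\CoordAng{i}$ on the Cartesian coordinate function $x^j$. Using the vanishing of the commutator, I compute
\begin{align*}
\Lunit \CoordAngcomp{i}{j}
= \Lunit \CoordAng{i} x^j
= \CoordAng{i} \Lunit x^j
= \CoordAng{i} \Lunit^j,
\end{align*}
where in the last step I used $\Lunit x^j = \Lunit^\alpha \partial_\alpha x^j = \Lunit^j$. This yields the evolution equation in \eqref{E:COORDANGCOMPONENTRECTANGULAR}.

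For the initial condition, I would use the definition of the geometric torus coordinates: $\vartheta^i|_{\Sigma_0} = x^i$ for $i = 2,\ldots,n$, and $u|_{\Sigma_0} = 1 - x^1$. Restricted to $\Sigma_0$, the map $(x^1,\ldots,x^n) \mapsto (u,\vartheta^2,\ldots,\vartheta^n)$ is thus explicit, so the vectorfield $\CoordAng{i} = \frac{\partial}{\partial \vartheta^i}$ coincides on $\Sigma_0$ with the Cartesian partial derivative $\partial_i$. Consequently, $\CoordAngcomp{i}{j}|_{\Sigma_0} = \partial_i x^j = \delta^{ij}$, as claimed.

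For the perturbed quantity $\CoordAngSmallcomp{i}{j}$, I would simply subtract the constant $\delta^{ij}$ from $\CoordAngcomp{i}{j}$ as in \eqref{E:COORDANGJKSMALL}. Since $\Lunit \delta^{ij} = 0$ (constants are annihilated), the evolution equation $\Lunit \CoordAngSmallcomp{i}{j} = \CoordAng{i} \Lunit^j$ follows immediately, and the initial condition $\CoordAngSmallcomp{i}{j}|_{\Sigma_0} = \delta^{ij} - \delta^{ij} = 0$ is immediate. The only step requiring any care is the invocation of $[\Lunit,\CoordAng{i}] = 0$; strictly speaking, this is a standard fact about coordinate frames, but one should verify that the geometric coordinate system is genuinely a smooth coordinate system on the relevant spacetime region, which is guaranteed by the construction of $(u,\vartheta^2,\ldots,\vartheta^n)$ via transport along $\Lunit$ from $\Sigma_0$ and will hold as long as $\upmu > 0$. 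No real obstacle is anticipated — this lemma is essentially a bookkeeping computation tying together the definitions from the preceding subsections.
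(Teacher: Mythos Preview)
Your proposal is correct and follows essentially the same approach as the paper: both use that $\Lunit$ and $\CoordAng{i}$ are geometric coordinate partial derivative vectorfields, hence commute, and then read off the evolution equation $\Lunit \CoordAngcomp{i}{j} = \CoordAng{i} \Lunit^j$ from the Cartesian components of $[\Lunit,\CoordAng{i}]=0$; the initial conditions and the perturbed version are handled identically.
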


\begin{proof}
	$\Lunit$ and $\CoordAng{i}$ are geometric coordinate partial derivative vectorfields
	and they therefore commute: $[\Lunit,\CoordAng{i}] = 0$. Relative to Cartesian coordinates,
	the vanishing commutator can be expressed as
	$
	\Lunit \CoordAngcomp{i}{j}
	= \CoordAng{i} \Lunit^j
	$,
	which is the desired evolution equation in \eqref{E:COORDANGCOMPONENTRECTANGULAR}.
	Next, we observe that along $\Sigma_0$, $\CoordAng{i} = \partial_i$ by construction.
	Hence, $\CoordAngcomp{i}{j}|_{\Sigma_0} = \CoordAng{i}|_{\Sigma_0} x^j = \partial_i x^j = \delta^{ij}$,
	which yields the initial condition \eqref{E:COORDANGCOMPONENTRECTANGULAR}.
	\eqref{E:COORDANGSMALLCOMPONENTRECTANGULAR} then follows from definition \eqref{E:COORDANGJKSMALL}
	and \eqref{E:COORDANGCOMPONENTRECTANGULAR}.
\end{proof}

\subsection{Vectorfield commutator properties}
\label{SS:COMMUTATIONIDENTITEIS}
In this subsection, we derive some basic properties of various vectorfield commutators.

\begin{lemma}[\textbf{Vectorfield commutator properties}]
	\label{L:VECTORFIELDCOMMUTATORIDENTITIES}
		The following vectorfields are $\mathcal{T}_{t,u}$-tangent, $(i=2,\cdots,n)$:
	\begin{align} \label{E:COMMUTATORARELTUTANGENT}
		[\Lunit, \Rad],
			\qquad
		[\Lunit, \CoordAng{i}],
			\qquad
		[\Rad, \CoordAng{i}],
		&&
		(i=2,\cdots,n).
	\end{align}
	
	Moreover, there exist smooth functions,
	denoted by subscripted versions of ``$\smoothfunction$'', such that 
	the following identities hold 
	whenever $|\GdVar|$ is sufficiently small
	(see Remark~\ref{R:MEANINGOFSINGLETAN} regarding the notation):
	\begin{subequations}
	\begin{align}
			[\Lunit, \CoordAng{i}]
			=
			[\CoordAng{i_2},\CoordAng{i_2}]
		& = 0,
				\label{E:LUNITANGULARCOMMUTATOR} \\
		[\Lunit, \Rad]
		& = \sum_{i=2}^n \smoothfunction_i(\BadVar,\Lunit \Psi, \Rad \Psi) \CoordAng{i},
				\label{E:LUNITRADCOMMUTATOR} \\
		[\Rad, \CoordAng{i}]
		& = \sum_{j=2}^n \smoothfunction_{ij}(\BadVar,\Rad \GdVar,\Singletan \Psi,\Singletan \upmu) \CoordAng{j}.
		\label{E:RADCOORDANGCOMMUTATOR}
\end{align}
\end{subequations}	
\end{lemma}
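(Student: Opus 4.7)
The plan is to first establish the tangency statements in \eqref{E:COMMUTATORARELTUTANGENT} by applying the commutators to the defining functions $t$ and $u$ of $\mathcal{T}_{t,u}$, and then to extract the schematic expansions by computing each commutator's action on the Cartesian coordinates $x^k$ and reading off the coefficients with respect to the frame $\{\CoordAng{2},\ldots,\CoordAng{n}\}$ of the tangent space of $\mathcal{T}_{t,u}$.

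For tangency, recall that a vectorfield $V$ is $\mathcal{T}_{t,u}$-tangent precisely when $Vt = 0$ and $Vu = 0$. Using $\Lunit = \partial/\partial t$ from \eqref{E:LISDDT} and the eikonal equation \eqref{E:EIKONAL}, together with $\Rad t = 0$ (since $\Rad$ is $\Sigma_t$-tangent) and $\Rad u = 1$ from \eqref{E:RADAPPLIEDTOUISONE}, I compute $[\Lunit,\Rad]t = \Lunit(0) - \Rad(1) = 0$ and $[\Lunit,\Rad]u = \Lunit(1) - \Rad(0) = 0$. The analogous verification for $[\Rad,\CoordAng{i}]$ uses $\CoordAng{i} t = \CoordAng{i} u = 0$. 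The vanishing \eqref{E:LUNITANGULARCOMMUTATOR} is immediate since $\Lunit$ and $\CoordAng{i} = \partial/\partial \vartheta^i$ are partial derivatives with respect to coordinates of the geometric coordinate system, hence commute.

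For \eqref{E:LUNITRADCOMMUTATOR}, since $[\Lunit,\Rad]$ is $\mathcal{T}_{t,u}$-tangent, I may write $[\Lunit,\Rad] = \sum_{j=2}^n c^j \CoordAng{j}$, and to identify $c^j$ I apply both sides to $x^k$: $\sum_{j=2}^n c^j \CoordAngcomp{j}{k} = \Lunit(\Rad^k) - \Rad(\Lunit^k)$. Expanding $\Rad^k = -\upmu \Lunit^k$ yields $\Lunit \Rad^k = -(\Lunit\upmu)\Lunit^k - \upmu\,\Lunit \Lunit^k$, where $\Lunit\upmu$ is given by \eqref{E:LUNITUPMU} as a smooth function of $\upmu$, $\upxi$, and $\Lunit\Lunit^a, \Rad\Lunit^a$. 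Since $\Lunit^k = \Lunit^k(\Psi,\noshock)$, the chain rule produces $\Lunit\Lunit^k$ and $\Rad\Lunit^k$ in terms of $\Lunit\Psi, \Lunit\noshock, \Rad\Psi, \Rad\noshock$; the crucial observation is that $\Lunit\noshockuparg{J} = \Lunit^\alpha \diffnoshockdoublearg{\alpha}{J}$ and $\Rad\noshockuparg{J} = -\upmu\,\Lunit^a \diffnoshockdoublearg{a}{J}$ are smooth functions of $\BadVar$, while $\Lunit\Psi$ and $\Rad\Psi$ remain as the only independent scalar arguments. Thus $[\Lunit,\Rad]x^k$ is a smooth function of $(\BadVar,\Lunit\Psi,\Rad\Psi)$. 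Finally, by \eqref{E:COORDANGJKSMALL} the submatrix $(\CoordAngcomp{i}{j})_{i,j=2,\ldots,n} = \mathbb{I} + (\CoordAngSmallcomp{i}{j})_{i,j=2,\ldots,n}$ is invertible (via Neumann series) when $|\GdVar|$ is small, so the $c^j$ can be solved for as smooth functions of the same arguments, yielding \eqref{E:LUNITRADCOMMUTATOR}.

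The argument for \eqref{E:RADCOORDANGCOMMUTATOR} is analogous: write $[\Rad,\CoordAng{i}] = \sum_j c_i^j \CoordAng{j}$ and compute $\sum_j c_i^j \CoordAngcomp{j}{k} = \Rad\CoordAngcomp{i}{k} - \CoordAng{i}(-\upmu\Lunit^k) = \Rad\CoordAngSmallcomp{i}{k} + (\CoordAng{i}\upmu)\Lunit^k + \upmu\,\CoordAng{i}\Lunit^k$. The term $\Rad\CoordAngSmallcomp{i}{k}$ is a component of $\Rad\GdVar$; $\CoordAng{i}\upmu$ is of $\Singletan\upmu$ type; and $\CoordAng{i}\Lunit^k$ expands by the chain rule into $\CoordAng{i}\Psi$ (of $\Singletan\Psi$ type) together with $\CoordAng{i}\noshockuparg{J} = \CoordAngcomp{i}{a}\diffnoshockdoublearg{a}{J}$, which is smooth in $\GdVar$. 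Inverting the torus Jacobian as above delivers the desired form. The main bookkeeping obstacle will be confirming that no intermediate term requires differentiation of $\upmu$ or of $\CoordAngSmall$ in a direction not accounted for by the schematic arguments $(\BadVar,\Rad\GdVar,\Singletan\Psi,\Singletan\upmu)$; this is ensured by the favorable structure $\Rad^k = -\upmu\Lunit^k$ together with the smooth dependence $\Lunit^k = \Lunit^k(\Psi,\noshock)$, which prevents any stray transversal derivatives of $\upmu$ or of $\CoordAngSmall$ from appearing.
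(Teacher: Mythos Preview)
Your proposal is correct and follows essentially the same approach as the paper's proof: establish $\mathcal{T}_{t,u}$-tangency by applying the commutators to $t$ and $u$, compute the Cartesian components of each commutator using $\Rad^k = -\upmu \Lunit^k$ and $\Lunit^k = \Lunit^k(\Psi,\noshock)$, and then invert the near-identity $(n-1)\times(n-1)$ matrix $(\CoordAngcomp{i}{j})_{i,j=2,\ldots,n}$ to solve for the coefficients. Your write-up in fact supplies the details for \eqref{E:RADCOORDANGCOMMUTATOR} that the paper omits, and your bookkeeping of which derivatives of $\noshock$ are absorbed into $\BadVar$ via $\diffnoshock$ is exactly the mechanism the paper uses.
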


\begin{proof}
	Since \eqref{E:LISDDT} implies that $\Lunit$ is a geometric coordinate partial derivative vectorfield
	and since, by definition, the same is true of $\CoordAng{i}$,
	we conclude \eqref{E:LUNITANGULARCOMMUTATOR}.
	
	To prove \eqref{E:LUNITRADCOMMUTATOR}, we first use
	\eqref{E:LISDDT},
	\eqref{E:RADAPPLIEDTOUISONE},
	and the fact that $\Rad$ is $\Sigma_t$-tangent
	to deduce that
	$[\Lunit, \Rad] t = [\Lunit, \Rad] u = 0$. Hence, $[\Lunit, \Rad]$ is $\mathcal{T}_{t,u}$-tangent.
	Therefore, there exist unique scalars $\upalpha_i$ such that the following identity holds
	for $j=1,2,\cdots,n$:
	$[\Lunit, \Rad]^j = \sum_{i=2}^n \upalpha_i \CoordAngcomp{i}{j}$.
	Next, we use the fact that $\Lunit^a = \smoothfunction(\Psi,\noshock)$,
	\eqref{E:RADUNITJ}-\eqref{E:RADJ},
	and the evolution equation \eqref{E:LUNITUPMU}
	to deduce the schematic identity
	$
	[\Lunit, \Rad]^j = \Rad \Lunit^j - \Lunit(\upmu \Radunit^j)
	= \smoothfunction(\BadVar,\Lunit^{\alpha} \diffnoshockdownarg{\alpha},\Rad^a \diffnoshockdownarg{a},\Lunit \Psi,\Rad \Psi)
	= \smoothfunction(\BadVar,\Lunit \Psi,\Rad \Psi)
	$.
	Next, for $i,j=2,\cdots,n$, we view the identity
	$[\Lunit, \Rad]^j = \sum_{i=2}^n \upalpha_i \CoordAngcomp{i}{j}$
	as an identity whose left-hand side is the $n-1$ dimensional vector
	with Cartesian components equal to 
	$([\Lunit, \Rad]^2,\cdots,[\Lunit, \Rad]^n)^{\top}$
	and whose right-hand side is the product of the
	$(n-1) \times (n-1)$ matrix 
	$M_{(n-1) \times (n-1)} := (\CoordAngcomp{i}{j})_{i,j=2,\cdots,n}$
	and the $n-1$ dimensional vector $(\upalpha_2,\cdots,\upalpha_n)^{\top}$,
	where $\top$ denotes transpose.
	From definition \eqref{E:COORDANGJKSMALL},
	we see that $M_{(n-1) \times (n-1)}$ is equal to the identity 
	matrix plus an error matrix whose components are of the schematic
	form $\GdVar \smoothfunction(\GdVar)$. 
	In particular, $M_{(n-1) \times (n-1)}$ is invertible whenever
	$|\GdVar|$ is sufficiently small. 
	Hence,
	$(\upalpha_2,\cdots,\upalpha_n)^{\top}$ 
	is the product of a matrix whose components are of the form $\smoothfunction(\GdVar)$
	and the vector $([\Lunit, \Rad]^2,\cdots,[\Lunit, \Rad]^n)^{\top}$, whose components 
	are of the form $\smoothfunction(\BadVar,\Lunit \Psi,\Rad \Psi)$.
	This completes the proof of \eqref{E:LUNITRADCOMMUTATOR}.
	The identity \eqref{E:RADCOORDANGCOMMUTATOR} can be proved in a similar fashion
	and we omit the details.
	\end{proof}

\begin{corollary}[\textbf{Evolution equation for} $\Xi^j$]
	\label{C:BIGXIJEVOLUTIONEQUATION}
	There exist functions that are smooth whenever
	$|\GdVar|$ is sufficiently small and that are
	schematically denoted by indexed versions of ``$\smoothfunction$'',
	such that the Cartesian components $\Xi^j$, $(j=1,\cdots,n)$, of the $\mathcal{T}_{t,u}$-tangent
	vectorfield $\Xi$ from \eqref{E:RADDECOMPINTOPARTIALUPLUSANGULAR}
	verify the evolution equation
	\begin{align} \label{E:BIGXIJEVOLUTIONEQUATION}
		\Lunit \Xi^j
		& = 
				\sum_{i=2}^n
				\Xi^a
				\smoothfunction_{ia}(\GdVar)
				\CoordAng{i} \Lunit^j
				-
				\sum_{i=2}^n \smoothfunction_i(\BadVar,\Lunit \Psi, \Rad \Psi) \CoordAngcomp{i}{j}
	\end{align}
	and the initial condition
	\begin{align} \label{E:BIGXIJINITIALCONDITION}
		\Xi^j|_{\Sigma_0},
		& = \smoothfunction^j(\Psi,\noshock) 
	\end{align}
	where the $\smoothfunction_{ia}$ on RHS~\eqref{E:BIGXIJEVOLUTIONEQUATION}
	are as in equation \eqref{E:PARTIALJINTERMSOFGEO},
	and the second term on RHS~\eqref{E:BIGXIJEVOLUTIONEQUATION} is the negative of the
	term on RHS~\eqref{E:LUNITRADCOMMUTATOR}.
\end{corollary}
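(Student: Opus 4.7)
The plan is to derive the evolution equation by exploiting the decomposition \eqref{E:RADDECOMPINTOPARTIALUPLUSANGULAR}, which gives $\Xi = \frac{\partial}{\partial u} - \Rad$. Since $\Lunit = \frac{\partial}{\partial t}$ by \eqref{E:LISDDT} and $\frac{\partial}{\partial u}$ are both geometric coordinate partial derivative vectorfields, they commute, so $[\Lunit, \Xi] = -[\Lunit, \Rad]$. Relative to Cartesian coordinates this reads
\begin{align*}
\Lunit \Xi^j = \Xi \Lunit^j - [\Lunit, \Rad]^j.
\end{align*}

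Next I would handle each term on the right. For $[\Lunit, \Rad]^j$, I would simply invoke \eqref{E:LUNITRADCOMMUTATOR}, which expresses the commutator as $\sum_{i=2}^n \smoothfunction_i(\BadVar, \Lunit \Psi, \Rad \Psi) \CoordAng{i}$; taking the Cartesian $j$-component gives precisely the second term on RHS~\eqref{E:BIGXIJEVOLUTIONEQUATION}. For $\Xi \Lunit^j$, I would expand $\Xi$ in terms of the $\CoordAng{i}$. Since $\Xi$ is $\mathcal{T}_{t,u}$-tangent, \eqref{E:TORUSTANGENTVECTORFIELDONEFORMCONTRACTIONS} yields $\Xi^a \upxi_a = 0$, so substituting \eqref{E:PARTIALJINTERMSOFGEO} for $\partial_a$ gives
\begin{align*}
\Xi = \Xi^a \partial_a = \Xi^a \upxi_a \Radunit + \sum_{i=2}^n \Xi^a \smoothfunction_{ia}(\GdVar) \CoordAng{i} = \sum_{i=2}^n \Xi^a \smoothfunction_{ia}(\GdVar) \CoordAng{i},
\end{align*}
from which $\Xi \Lunit^j$ reproduces the first term on RHS~\eqref{E:BIGXIJEVOLUTIONEQUATION}.

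For the initial condition, on $\Sigma_0$ the coordinate change is $x^1 = 1 - u$, $x^i = \vartheta^i$ for $i \geq 2$, so $\frac{\partial}{\partial u}\big|_{\Sigma_0} = -\partial_1$; equivalently, $\frac{\partial}{\partial u} x^j\big|_{\Sigma_0} = -\delta_1^j$. Combining this with \eqref{E:RADJ}, the initial condition \eqref{E:MUIC} for $\upmu$, and the smooth dependence $\Lunit^j = \Lunit^j(\Psi, \noshock)$ with $\Lunit^1|_{(\Psi,\noshock) = (0,0)} = 1$ from \eqref{E:LUNIT1ISONEFORBACKGROUND}, I obtain
\begin{align*}
\Xi^j|_{\Sigma_0} = -\delta_1^j - \Rad^j|_{\Sigma_0} = -\delta_1^j + \upmu \Lunit^j|_{\Sigma_0},
\end{align*}
which is a smooth function of $(\Psi, \noshock)$, yielding \eqref{E:BIGXIJINITIALCONDITION}.

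The steps are all essentially bookkeeping on top of the already-established identities, so there is no serious obstacle. The only point requiring minor care is verifying that the schematic function types advertised in the statement actually match, namely confirming that the first term has the form $\Xi^a \smoothfunction_{ia}(\GdVar) \CoordAng{i} \Lunit^j$ with the same $\smoothfunction_{ia}$ that appear in \eqref{E:PARTIALJINTERMSOFGEO}, and that the second term is an exact copy of the $[\Lunit, \Rad]$ expansion with its sign flipped. Both are immediate from the derivation above.
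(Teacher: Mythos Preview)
Your proposal is correct and follows essentially the same approach as the paper's own proof: both use $[\Lunit,\Xi] = -[\Lunit,\Rad]$ from the decomposition \eqref{E:RADDECOMPINTOPARTIALUPLUSANGULAR}, expand the Cartesian components via \eqref{E:LUNITRADCOMMUTATOR} and \eqref{E:PARTIALJINTERMSOFGEO} together with \eqref{E:TORUSTANGENTVECTORFIELDONEFORMCONTRACTIONS}, and compute the initial condition from $\frac{\partial}{\partial u}\big|_{\Sigma_0} = -\partial_1$ and \eqref{E:MUIC}. Your write-up is, if anything, slightly more explicit than the paper's in tracking the initial-condition computation.
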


\begin{proof}
	From 
	\eqref{E:LISDDT}
	and
	\eqref{E:RADDECOMPINTOPARTIALUPLUSANGULAR}, we deduce that
	$[\Lunit,\Xi]^j = - [\Lunit,\Rad]^j$.
	Considering the 
	Cartesian components of both sides of this equation
	and using \eqref{E:LUNITRADCOMMUTATOR},
	we obtain
	$
	\Lunit \Xi^j
	= \Xi^a \partial_a \Lunit^j
	-
				\sum_{i=2}^n \smoothfunction_i(\BadVar,\Lunit \Psi, \Rad \Psi) \CoordAngcomp{i}{j}
	$.
	Finally, we use \eqref{E:PARTIALJINTERMSOFGEO}
	to substitute for $\partial_a$ in the expression
	$\Xi^a \partial_a \Lunit^j$, and 
	we use \eqref{E:TORUSTANGENTVECTORFIELDONEFORMCONTRACTIONS}
	to deduce that the component $\Xi^a \upxi_a \Radunit \Lunit^j$ vanishes.
	In total, this yields equation \eqref{E:BIGXIJEVOLUTIONEQUATION}.
	
	To prove \eqref{E:BIGXIJINITIALCONDITION}, we use
	\eqref{E:RADDECOMPINTOPARTIALUPLUSANGULAR}
	to deduce that $\Xi^j = \Xi x^j = \frac{\partial}{\partial u} x^j - \Rad^j$.
	In view of the way in which the geometric coordinates were constructed, 
	along $\Sigma_0$, we have $\frac{\partial}{\partial u} = - \partial_1$.
	Moreover, in view of \eqref{E:RADUNITJ}-\eqref{E:RADJ} and \eqref{E:MUIC},
	we deduce that
	$\Rad^j|_{\Sigma_0} 
	= \Rad x^j|_{\Sigma_0}  
	= (\upmu \Radunit^j)|_{\Sigma_0} 
	= \upmu|_{\Sigma_0} \smoothfunction(\Psi,\noshock) 
	= \smoothfunction(\Psi,\noshock)$,
	where $\smoothfunction$ depends on $j$.
	Combining the above, we conclude \eqref{E:BIGXIJINITIALCONDITION}.
\end{proof}	

\subsection{The change of variables map}
\label{SS:CHOV}
In this subsection, we define the change of variables map from geometric to 
Cartesian coordinates and derive some of its basic properties.

\begin{definition}[\textbf{Change of variables map}]
	\label{D:CHOV}
	We define 
	$\Upsilon:\mathbb{R} \times \mathbb{R} \times \mathbb{T}^{n-1} \rightarrow \mathbb{R} \times \mathbb{R} \times \mathbb{T}^{n-1}$
	to be the change of variables map from geometric to Cartesian coordinates,
	i.e., $\Upsilon^{\alpha}(t,u,\vartheta^2,\cdots,\vartheta^n) = x^{\alpha}$.
\end{definition}

\begin{lemma}[\textbf{Basic properties of the change of variables map}]
	\label{L:CHOVCALCULATIONS}
	The following identities hold,
	where $\Lunit$ is the vectorfield from \eqref{E:SHOCKEQN},
	the $\CoordAng{i}$ are the vectorfields from \eqref{E:GEOMETRCICOORDINATEPARTIALDERIVATIVEVECTORFIELDS}, 
	$\Rad$ is the vectorfield from \eqref{E:RADJ}, 
	and $\Xi$ is the vectorfield from \eqref{E:RADDECOMPINTOPARTIALUPLUSANGULAR}:
	\begin{align} \label{E:CHOVCALCULATIONS}
	\frac{\partial \Upsilon}{\partial (t,u,\vartheta^2,\cdots,\vartheta^n)}
	:=
		\frac{\partial (x^0,x^1,x^2,\cdots,x^n)}{\partial (t,u,\vartheta^2,\cdots,\vartheta^n)}
	& = \begin{pmatrix}
				1 & 0 & 0 & 0 & \cdots & 0 \\
				\Lunit^1 &  \upmu \Radunit^1 + \Xi^1  & \CoordAngcomp{2}{1} & \CoordAngcomp{3}{1} & \cdots & \CoordAngcomp{n}{1} \\
				\Lunit^2 & \upmu \Radunit^2 + \Xi^2 & \CoordAngcomp{2}{2} & \CoordAngcomp{3}{2} & \cdots & \CoordAngcomp{2}{2} \\
				\cdots & \cdots & \cdots & \cdots & \cdots & \cdots \\
				\Lunit^n & \upmu \Radunit^n + \Xi^n & \CoordAngcomp{2}{n} & \CoordAngcomp{3}{n} & \cdots & \CoordAngcomp{n}{n} 
			\end{pmatrix}.
	\end{align}
	
	Moreover, there exists a smooth function of $\GdVar$ vanishing at $\GdVar = 0$,
	schematically denoted by
	$\GdVar \smoothfunction(\GdVar)$, 
	such that
	\begin{align} \label{E:DETERMINANTOFCHOV}
	\mbox{\upshape{det}}
	\frac{\partial (x^0,x^1,x^2,\cdots,x^n)}{\partial (t,u,\vartheta^2,\cdots,\vartheta^n)}
	= \frac{\partial (x^1,x^2,\cdots,x^n)}{\partial (u,\vartheta^2,\cdots,\vartheta^n)}
	& = 
			- \upmu 
			\left\lbrace
				1 + \GdVar \smoothfunction(\GdVar)
			\right\rbrace.
	\end{align}
	
	Similarly, the following identity holds:
	\begin{align} \label{E:DETERMINANTOFANGULARPARTCHOV}
	\mbox{\upshape{det}}
	\frac{\partial (x^2,\cdots,x^n)}{\partial (\vartheta^2,\cdots,\vartheta^n)}
	& = 1 + \GdVar \smoothfunction(\GdVar).
	\end{align}
	
\end{lemma}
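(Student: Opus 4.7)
The plan is to read off the Jacobian matrix directly from the definitions of the vectorfields and then reduce each determinant computation to that of a small perturbation of an essentially triangular matrix.

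For identity \eqref{E:CHOVCALCULATIONS}: by definition, the columns of $\partial \Upsilon/\partial(t,u,\vartheta^2,\ldots,\vartheta^n)$ are the Cartesian components of the geometric coordinate partial derivative vectorfields $\partial/\partial t$, $\partial/\partial u$, and $\CoordAng{i}$ for $i = 2,\ldots,n$. By \eqref{E:LISDDT} together with \eqref{E:LUNIT0ISONE}, $\partial/\partial t = \Lunit$ has Cartesian components $(1,\Lunit^1,\ldots,\Lunit^n)^{\top}$. By \eqref{E:RADDECOMPINTOPARTIALUPLUSANGULAR}, $\partial/\partial u = \Rad + \Xi$; since $\Rad$ is $\Sigma_t$-tangent (see \eqref{E:RADJ}) and $\Xi$ is $\mathcal{T}_{t,u}$-tangent, the $0$-component of this column vanishes while the spatial components are $\upmu \Radunit^j + \Xi^j$. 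Each $\CoordAng{i}$ is $\mathcal{T}_{t,u}$-tangent with vanishing time component and spatial components $\CoordAngcomp{i}{j}$, supplying the remaining columns.

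For the determinant identity \eqref{E:DETERMINANTOFCHOV}, cofactor expansion along the top row, whose only nonzero entry is the $1$ in the top-left corner, collapses the $(n+1) \times (n+1)$ determinant to the $n \times n$ spatial Jacobian; this already yields the second equality of \eqref{E:DETERMINANTOFCHOV}. The crucial simplification is that $\Xi$, being $\mathcal{T}_{t,u}$-tangent, lies in $\mbox{\upshape span} \lbrace \CoordAng{2},\ldots,\CoordAng{n} \rbrace$, so the $\Xi^j$ contribution in the column corresponding to $\partial/\partial u$ can be eliminated by a column operation (subtracting off the appropriate linear combination of the other columns) without changing the determinant. The resulting first column is $(\upmu \Radunit^j)_{j=1,\ldots,n} = -\upmu(\Lunit^j)_{j=1,\ldots,n}$; pulling out the factor $-\upmu$, I reduce to computing the determinant of the $n \times n$ matrix whose first column is $(\Lunit^j)_{j=1,\ldots,n}$ and whose remaining columns are $(\CoordAngcomp{i}{j})_{i=2,\ldots,n;\, j=1,\ldots,n}$. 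Applying the decompositions \eqref{E:LUNITSMALLJ} and \eqref{E:COORDANGJKSMALL} together with the normalization $\Lunitback^1 = 1$ from \eqref{E:BACKGROUND1COMPONENTS}, this matrix equals a lower triangular matrix with unit diagonal (determinant $1$, regardless of the undetermined $\Lunitback^j$ for $j \geq 2$, which lie strictly below the diagonal) plus a perturbation whose entries are smooth functions of $\GdVar$ vanishing at $\GdVar = 0$. Since $\det$ is a polynomial in the entries, the full determinant is a smooth function of $\GdVar$ equal to $1$ at the origin, hence of the schematic form $1 + \GdVar \smoothfunction(\GdVar)$.

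Finally, \eqref{E:DETERMINANTOFANGULARPARTCHOV} follows from essentially the same mechanism applied to the $(n-1) \times (n-1)$ matrix $(\CoordAngcomp{i}{j})_{i,j=2,\ldots,n}$, which by \eqref{E:COORDANGJKSMALL} equals the identity plus a perturbation with entries in $\GdVar$; hence its determinant is smooth in $\GdVar$ and equals $1$ at the origin. I do not anticipate any real obstacle: the only mild care required is in the schematic bookkeeping that allows subdominant contributions to be absorbed into the shorthand $\GdVar \smoothfunction(\GdVar)$, and in recognizing that the entire simplification in \eqref{E:DETERMINANTOFCHOV} rests on the single structural fact that $\Xi$ is $\mathcal{T}_{t,u}$-tangent.
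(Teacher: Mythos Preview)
Your proof is correct and follows essentially the same approach as the paper: identify the columns of the Jacobian from the definitions of the geometric vectorfields, use the $\mathcal{T}_{t,u}$-tangency of $\Xi$ to eliminate it from the determinant, and then recognize the remaining matrix as a perturbation of a unit-determinant (lower triangular) matrix via the decompositions in Def.~\ref{D:PERTURBEDPART}. The only cosmetic difference is that the paper leaves the first column as $\Radunit^j$ (referencing the matrix $M_{n\times n}$ from the proof of Lemma~\ref{L:CARTESIANDERIVATIVESINTERMSOFGEOMETRICDERIVATIVES}) rather than pulling out an extra sign to write it as $\Lunit^j$.
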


\begin{proof}
	The first column of \eqref{E:CHOVCALCULATIONS}
	is a simple consequence of \eqref{E:LISDDT} and the fact that 
	$\Lunit x^{\alpha} = \Lunit^{\alpha}$. 
	The second column of \eqref{E:RADJ} follows similarly from the fact that
	$\Rad$ is $\Sigma_t$-tangent (i.e., $\Rad t = 0$),
	\eqref{E:RADJ}, and \eqref{E:RADDECOMPINTOPARTIALUPLUSANGULAR}.
	The remaining $n-1$ columns of \eqref{E:RADJ} follow similarly from the fact that
	the vectorfields $\CoordAng{i}$ are $\Sigma_t$-tangent.
	
	The first equality in \eqref{E:DETERMINANTOFCHOV} is a simple consequence of \eqref{E:CHOVCALCULATIONS}.
	To derive the second equality in
	\eqref{E:DETERMINANTOFCHOV}, we first note that since 
	$\Xi \in \mbox{\upshape span} \lbrace \CoordAng{i} \rbrace_{i=2,\cdots,n}$,
	we can delete $\Xi$ from the matrix on RHS~\eqref{E:CHOVCALCULATIONS} without changing its determinant.
	It follows that
	$
	\mbox{\upshape LHS~\eqref{E:DETERMINANTOFCHOV}}
	=
	\upmu
	\mbox{\upshape det}
	\begin{pmatrix}
				\Radunit^1 & \CoordAngcomp{2}{1} & \CoordAngcomp{3}{1} & \cdots & \CoordAngcomp{n}{1} \\
				\Radunit^2 & \CoordAngcomp{2}{2} & \CoordAngcomp{3}{2} & \cdots & \CoordAngcomp{2}{2} \\
				\cdots & \cdots & \cdots & \cdots & \cdots \\
				\Radunit^n & \CoordAngcomp{2}{n} & \CoordAngcomp{3}{n} & \cdots & \CoordAngcomp{n}{n} 
			\end{pmatrix}
	$.
	In view of Def.\ \ref{D:PERTURBEDPART}
	and definition \eqref{E:GOODARRAY},
	we see that the previous expression is equal to 
	$\upmu$ times the determinant of $M_{n \times n} + M_{n \times n}^{(Small)}$,
	where $M_{n \times n}$ and $M_{n \times n}^{(Small)}$
	are the matrices from the proof of Lemma~\ref{L:CARTESIANDERIVATIVESINTERMSOFGEOMETRICDERIVATIVES}.
	Using arguments similar to the ones given in the proof of Lemma~\ref{L:CARTESIANDERIVATIVESINTERMSOFGEOMETRICDERIVATIVES},
	we conclude the identity \eqref{E:DETERMINANTOFCHOV}. 
	The identity \eqref{E:DETERMINANTOFANGULARPARTCHOV}
	can be proved via a similar argument, and we omit the details.
\end{proof}

\subsection{Integration forms and integrals}
\label{SS:INTEGRATIONFORMSANDINTEGRALS}
In this subsection, we define quantities connected to
the two kinds of integration that we use in our analysis:
integration with respect to the geometric coordinates and integration
with respect to the Cartesian coordinates. 
In Remark~\ref{R:ROLEOFCARTESIANFORMS}, we clarify
why both kinds of integration play a role in our analysis and why 
geometric integration is the most important for our analysis.
In Lemma~\ref{L:VOLFORMRELATION}, 
we quantify the relationship between the two kinds of integration.

\subsubsection{Geometric integration}
\label{SSS:GEOMETRICINTEGRATION}

\begin{definition}[\textbf{Geometric forms and related integrals}]
	\label{D:NONDEGENERATEVOLUMEFORMS}
	Relative to the geometric coordinates of Def.\ \ref{D:GEOMETRCICOORDINATES},
	we define the following forms:\footnote{Throughout the paper,
		we blur the distinction between 
		(non-negative) integration measure $d \torusvol$
		and the corresponding form $d \vartheta^2 \wedge \cdots \wedge d \vartheta^n$,
		and similarly for the other quantities appearing in \eqref{E:RESCALEDVOLUMEFORMS}.
		The precise meaning will be clear from context. \label{FN:BLURFORMS}} 
	\begin{align} \label{E:RESCALEDVOLUMEFORMS}
			d \torusvol
			& : = d \vartheta^2 \cdots d \vartheta^n,
				&&
			d \tvol
			:= d \torusvol du',
				\\
			d \conevol 
			&:= d \torusvol dt',
				&&
			d \vol 
			:= d \torusvol du' dt'.
				\notag
	\end{align}
	
	If $f$ is a scalar function, then we define
	\begin{subequations}
	\begin{align}
	\int_{\mathcal{T}_{t,u}}
			f
		\, d \torusvol
		& 
		:=
		\int_{\vartheta \in \mathbb{T}}
			f(t,u,\vartheta)
		\, d \torusvol,
			\label{E:LINEINTEGRALDEF} \\
		\int_{\Sigma_t^u}
			f
		\, d \tvol
		& 
		:=
		\int_{u'=0}^u
		\int_{\vartheta \in \mathbb{T}}
			f(t,u',\vartheta)
		\, d \torusvol du',
			\label{E:SIGMATUINTEGRALDEF} \\
		\int_{\mathcal{P}_u^t}
			f
		\, d \conevol
		& 
		:=
		\int_{t'=0}^t
		\int_{\vartheta \in \mathbb{T}}
			f(t',u,\vartheta)
		\, d \torusvol dt',
			\label{E:PUTINTEGRALDEF} \\
		\int_{\mathcal{M}_{t,u}}
			f
		\, d \vol
		& 
		:=
		\int_{t'=0}^t
		\int_{u'=0}^u
		\int_{\vartheta \in \mathbb{T}}
			f(t',u',\vartheta)
		\, d \torusvol du' dt'.
		\label{E:MTUTUINTEGRALDEF}
	\end{align}
	\end{subequations}
\end{definition}

\subsubsection{Cartesian integration}
\label{SSS:INTEGRATIONWITHRESPECTTOCARTESIAN}

\begin{definition}[\textbf{The one-form} $\covL$]
	\label{D:EUCLIDEANNORMALTONULLHYPERSURFACE}
	Let $\uplambda$ be the one-form from Def.\ \ref{D:EIKONALFUNCTIONONEFORMS}.
 	We define $\covL$ to be the one-form with the following Cartesian components:
	\begin{align} \label{E:EUCLIDEANNORMALTONULLHYPERSURFACE}
		\covL_{\nu}
		& := \frac{1}{(\delta^{\alpha \beta} \uplambda_{\alpha} \uplambda_{\beta})^{1/2}} \uplambda_{\nu},
	\end{align}	
	where $\delta^{\alpha \beta}$ is the standard inverse Euclidean metric on $\mathbb{R} \times \Sigma$
	(that is, $\delta^{\alpha \beta} = \mbox{\upshape diag} (1,1,\cdots,1)$ relative to the Cartesian coordinates). 
	Note that $\covL$ is the Euclidean-unit-length co-normal to $\mathcal{P}_u$.
\end{definition}

\begin{definition}[\textbf{Cartesian coordinate volume and area forms and related integrals}]
	\label{D:CARTESIANFORMS}
	We define
	\begin{align} \label{E:CARTESIANFORMS}
	d \mathcal{M} := dx^1 dx^2 \cdots dx^n dt,
	\qquad
	d \Sigma := dx^1 dx^2 \cdots dx^n, 
	\qquad
	d \mathcal{P}
	\end{align}
	to be, respectively, the standard volume form on
	$\mathcal{M}_{t,u}$ induced by the Euclidean metric\footnote{By definition, the Euclidean metric
	has the components $\mbox{\upshape diag}(1,1,\cdots,1)$ relative to the standard
	Cartesian coordinates $(t,x^1,x^2,\cdots,x^n)$ on $\mathbb{R} \times \Sigma$.} 
	on $\mathbb{R} \times \Sigma$,
	the standard area form induced on 
	$\Sigma_t^u$ by the Euclidean metric on $\mathbb{R} \times \Sigma$,
	and the standard area form induced on
	$\mathcal{P}_u^t$ by the Euclidean metric on $\mathbb{R} \times \Sigma$.

	We define the integrals of functions $f$ with respect to the above forms in analogy with the way
	that we defined the integrals \eqref{E:LINEINTEGRALDEF}-\eqref{E:MTUTUINTEGRALDEF}.
	For example,
	\[
	\int_{\Sigma_t^U}
			f
		\, d \Sigma
	:=
		\int_{\lbrace (x^1,\cdots,x^n) \ | \ 0 \leq u(t,x^1,\cdots,x^n) \leq U \rbrace}
			f(t,x^1,\cdots,x^n)
		\, d x^1 \cdots dx^n,
\]
where $u(t,x^1,\cdots,x^n)$ is the eikonal function.
	
\end{definition}

\begin{remark}[\textbf{The role of the Cartesian forms}]
	\label{R:ROLEOFCARTESIANFORMS}
	We never \emph{estimate} integrals involving the Cartesian forms;
	before deriving estimates,
	we will always use Lemma~\ref{L:VOLFORMRELATION} below
	order to replace the Cartesian forms with the geometric ones of
	Def.\ \ref{D:NONDEGENERATEVOLUMEFORMS};
	we use the Cartesian forms only when deriving energy \emph{identities}
	relative to the Cartesian coordinates, in which the Cartesian forms naturally appear.
\end{remark}

\subsubsection{Comparison between the Cartesian integration measures and the geometric integration measures}
\label{SSS:CARTESIANEUCLIDEANFORMCOMPARISON}
In the next lemma, we quantify the relationship between 
the Cartesian integration measures and the geometric integration measures.

\begin{lemma}[\textbf{Comparison between Euclidean and geometric integration measures}]
	\label{L:VOLFORMRELATION}
	There exist scalar functions, schematically denoted by
	$\smoothfunction(\GdVar)$, 
	that are smooth for $|\GdVar|$ sufficiently small
	and such that
	the following relationship holds
	between the geometric integration measures corresponding to Def.\ \ref{D:NONDEGENERATEVOLUMEFORMS}
	and the Euclidean integration measures corresponding to Def.\ \ref{D:CARTESIANFORMS},
	where all of the measures are non-negative (see Footnote~\ref{FN:BLURFORMS}):
	\begin{align} \label{E:VOLFORMRELATION}
		d \mathcal{M}
		& = 
				\upmu 
				\left\lbrace 
					1 + \upgamma \smoothfunction(\upgamma)
				\right\rbrace 
				d \vol,
		&
		d \Sigma 
		& = 
				\upmu
				\left\lbrace 
					1 + \upgamma \smoothfunction(\upgamma)
				\right\rbrace 
				d \tvol,
		&
		d \mathcal{P}
		& =  
			\left\lbrace 
				\sqrt{2} + \GdVar \smoothfunction(\GdVar)
			\right\rbrace 
			d \conevol. 	
	\end{align}
\end{lemma}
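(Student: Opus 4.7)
The proof decomposes into two essentially distinct computations: one for the top-dimensional forms $d\mathcal{M}$ and $d\Sigma$, which are pullbacks of standard Cartesian volume forms, and one for the induced hypersurface form $d\mathcal{P}$, which involves the ambient Euclidean metric restricted to $\mathcal{P}_u$.

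For $d\Sigma$ and $d\mathcal{M}$, the plan is to apply the standard Jacobian change-of-variables formula together with the determinant computations from Lemma~\ref{L:CHOVCALCULATIONS}. Since $t$ is simultaneously a Cartesian and geometric coordinate (by \eqref{E:LISDDT}, $\Lunit = \partial/\partial t$, so $\Upsilon$ preserves the $t$-slicing), the full spacetime Jacobian $\det \frac{\partial(x^0,\ldots,x^n)}{\partial(t,u,\vartheta^2,\ldots,\vartheta^n)}$ collapses to the spatial Jacobian $\det \frac{\partial(x^1,\ldots,x^n)}{\partial(u,\vartheta^2,\ldots,\vartheta^n)}$ via Laplace expansion along the top row of the matrix in \eqref{E:CHOVCALCULATIONS}. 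By \eqref{E:DETERMINANTOFCHOV}, this spatial Jacobian equals $-\upmu\{1 + \GdVar \smoothfunction(\GdVar)\}$, and taking absolute values (using $\upmu > 0$ and the positivity of $1 + \GdVar \smoothfunction(\GdVar)$ for small $|\GdVar|$, absorbing the sign into a redefined $\smoothfunction$) yields the formula for $d\Sigma$; multiplying by $dt$ then delivers the formula for $d\mathcal{M}$.

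The form $d\mathcal{P}$ requires slightly more geometric care, since it is the area element induced on $\mathcal{P}_u$ by the ambient Euclidean metric $\delta_{\mu\nu}$ on $\mathbb{R} \times \Sigma$ rather than the restriction of an ambient top-dimensional form. The plan is to parameterize $\mathcal{P}_u$ by the geometric coordinates $(t,\vartheta^2,\ldots,\vartheta^n)$ via $\Upsilon$ restricted to $u = \mathrm{const}$; the pushforwards of the geometric coordinate vectorfields are precisely the Cartesian-component vectors $\Lunit$ and $\lbrace \CoordAng{i} \rbrace_{i=2,\ldots,n}$ displayed in the appropriate columns of \eqref{E:CHOVCALCULATIONS}. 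The induced Gram matrix $h$ then has entries $h_{tt} = \delta_{\mu\nu} \Lunit^\mu \Lunit^\nu$, $h_{ti} = \delta_{\mu\nu} \Lunit^\mu \CoordAngcomp{i}{\nu}$, and $h_{ij} = \delta_{\mu\nu} \CoordAngcomp{i}{\mu} \CoordAngcomp{j}{\nu}$, and the standard formula for the induced area form gives $d\mathcal{P} = \sqrt{\det h}\, d\conevol$. Evaluating at the background $\GdVar = 0$, where $\Lunit^0 = \Lunit^1 = 1$ (all other Cartesian components of $\Lunit$ vanishing by Def.~\ref{D:PERTURBEDPART}) and $\CoordAngcomp{i}{j} = \delta^{ij}$, the Gram matrix reduces to $\mathrm{diag}(2,1,\ldots,1)$ with determinant $2$, so $\sqrt{\det h}|_{\GdVar = 0} = \sqrt{2}$.

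The only remaining technical point, which I expect to present no genuine obstacle, is to confirm that the dependence of $\sqrt{\det h}$ on $\GdVar$ is smooth near $\GdVar = 0$. Since $\det h$ is bounded away from $0$ for small $|\GdVar|$ (by continuity from its background value $2$), the square root is smooth there; and since each entry of $h$ is polynomial in $\Lunit^\mu = \Lunit^\mu(\Psi,\noshock)$ and $\CoordAngcomp{i}{j} = \delta^{ij} + \CoordAngSmallcomp{i}{j}$, each of which depends smoothly on the components of $\GdVar$, a first-order Taylor expansion of $\sqrt{\det h}$ about $\GdVar = 0$ produces the desired form $\sqrt{2} + \GdVar \smoothfunction(\GdVar)$, completing the verification.
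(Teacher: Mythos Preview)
Your treatment of $d\Sigma$ and $d\mathcal{M}$ is exactly what the paper does. For $d\mathcal{P}$ you take a different but equally valid route: you compute $\sqrt{\det h}$ for the Gram matrix of the tangent frame $(\Lunit,\CoordAng{2},\ldots,\CoordAng{n})$ in the Euclidean metric, whereas the paper writes $d\mathcal{P}$ as the contraction $\iota_W(dx^0\wedge\cdots\wedge dx^n)$ with the Euclidean unit normal $W^\alpha=\delta^{\alpha\beta}\covL_\beta$ and then evaluates $(dx^0\wedge\cdots\wedge dx^n)(W,\Lunit,\CoordAng{2},\ldots,\CoordAng{n})$ as the determinant of an $(n{+}1)\times(n{+}1)$ matrix $N$. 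At background this $N$ is block lower-triangular with upper-left $2\times 2$ block $\begin{pmatrix}\sqrt{2}/2&1\\-\sqrt{2}/2&1\end{pmatrix}$ and lower-right block $\mathbb{I}_{(n-1)\times(n-1)}$, so $\det N=\sqrt{2}$ drops out immediately. Your Gram-matrix computation is an equivalent packaging of the same linear algebra.

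One slip to flag: you claim that at $\GdVar=0$ the components $\Lunit^j$ vanish for $j\geq 2$, citing Def.~\ref{D:PERTURBEDPART}. The paper does not normalize these; only $\Lunit^0\equiv 1$ and $\Lunitback^1=1$ are imposed, while the constants $\Lunitback^j$ for $j\geq 2$ are left arbitrary (indeed the paper's proofs of Lemmas~\ref{L:CARTESIANDERIVATIVESINTERMSOFGEOMETRICDERIVATIVES} and~\ref{L:VOLFORMRELATION} carry them as generic $\smoothfunction(\GdVar)$ entries in a sub-diagonal block). So your background Gram matrix is not $\mathrm{diag}(2,1,\ldots,1)$: one actually has $h_{tt}=2+\sum_{j\geq 2}(\Lunitback^j)^2$, $h_{ti}=\Lunitback^i$, and $h_{ij}=\delta^{ij}$. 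Fortunately the Schur complement of the identity block is $h_{tt}-\sum_{j\geq 2}(\Lunitback^j)^2=2$, so $\det h|_{\GdVar=0}=2$ anyway and your conclusion stands. The paper's normal-contraction approach avoids this wrinkle because the $\Lunitback^j$ sit in the strictly-lower block of $N$ and do not enter its determinant.
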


\begin{proof}
	We prove only the identity
	$
	d \mathcal{P}
	=  
			\left\lbrace 
				\sqrt{2} + \GdVar \smoothfunction(\GdVar)
			\right\rbrace 
			d \conevol
	$
	since the other two 
	identities in \eqref{E:VOLFORMRELATION}
	are a straightforward consequence of
	Lemma~\ref{L:CHOVCALCULATIONS} 
	(in particular, the Jacobian determinant\footnote{Note that
	the minus sign in equation \eqref{E:DETERMINANTOFCHOV}
	does not appear in equation \eqref{E:VOLFORMRELATION}
	since we are viewing \eqref{E:VOLFORMRELATION} as a relationship between integration measures.} 
	expressions in \eqref{E:DETERMINANTOFCHOV}).
	Throughout this proof, 
	we view $d \conevol$ 
	(see \eqref{E:RESCALEDVOLUMEFORMS})
	to be the $n$-form $dt \wedge d \vartheta^2 \wedge \cdots \wedge d \vartheta^n$ on $\mathcal{P}_u$,
	where $dt \wedge d \vartheta^2 = dt \otimes d \vartheta^2 - d \vartheta^2 \otimes dt$, etc.
	Similarly, we view $d \mathcal{P}$ to be the $n$-form induced on 
	$\mathcal{P}_u$ by the standard Euclidean metric on $\mathbb{R} \times \Sigma$.
	Then relative to Cartesian coordinates, 
	we have $d \mathcal{P} = (dx^0 \wedge dx^1 \wedge \cdots \wedge dx^n) \cdot W$,
	where $W$ is the future-directed Euclidean normal to 
	$\mathcal{P}_u$ and $(dx^0 \wedge dx^1 \wedge \cdots \wedge dx^n) \cdot W$ denotes contraction 
	of $W$ against the first slot of $dx^0 \wedge dx^1 \wedge \cdots \wedge dx^n$.
	Note that $W^{\alpha} = \delta^{\alpha \beta} \covL_{\beta}$,
	where $\covL_{\alpha}$ is defined in \eqref{E:EUCLIDEANNORMALTONULLHYPERSURFACE}
	and $\delta^{\alpha \beta} = \mbox{\upshape diag} (1,1,\cdots,1)$ is the standard inverse Euclidean metric
	on $\mathbb{R} \times \Sigma$.
	Since $d \conevol$ and $d \mathcal{P}$ are proportional
	and since $(dt \wedge d \vartheta^2 \wedge \cdots \wedge d \vartheta^n) 
	\cdot (\Lunit \otimes \CoordAng{2} \otimes \cdots \otimes \CoordAng{n}) = 1$,
	it suffices to show that
	$
	\left\lbrace 
		\sqrt{2} + \GdVar \smoothfunction(\GdVar)
	\right\rbrace 
	=  
	(dx^0 \wedge dx^1 \wedge \cdots \wedge dx^n) \cdot (W \otimes \Lunit \otimes \CoordAng{2} \otimes \cdots \otimes \CoordAng{n})
	$.
	To proceed, we note that
	$(dx^0 \wedge dx^1 \wedge \cdots \wedge dx^n) \cdot (W \otimes \Lunit \otimes \CoordAng{2} \otimes \cdots \otimes \CoordAng{n})$
	is equal to the determinant of the $(1+n) \times (1+n)$ matrix
	$
	N:
	=
	\left(
		\begin{array}{ccccc}
			W^0 & \Lunit^0 & 0 & \cdots & 0 \\
			W^1 & \Lunit^1 & \CoordAngcomp{2}{1} & \cdots & \CoordAngcomp{n}{1} \\
			\cdots & \cdots & \cdots & \cdots & \cdots \\
			W^n & \Lunit^n & \CoordAngcomp{2}{n} & \cdots & \CoordAngcomp{n}{n} 
		\end{array}
		\right)
	$.
	From 
	\eqref{E:LUNIT0ISONE}-\eqref{E:LUNIT1ISONEFORBACKGROUND},
	Def.\ \ref{D:EIKONALFUNCTIONONEFORMS},
	\eqref{E:UPLAMBDA0ISONE},
	Def.\ \ref{D:PERTURBEDPART},
	definition \eqref{E:GOODARRAY},
	definition \eqref{E:EUCLIDEANNORMALTONULLHYPERSURFACE},
	and the relation $W^{\alpha} = \delta^{\alpha \beta} \covL_{\beta}$,
	it follows that
	$
	N=
	\left(
		\begin{array}{cc|c}
			\frac{\sqrt{2}}{2} & 1 & \multirow{2}{*}{$\mathbf{0}_{2 \times (n-1)}$} \\
			- \frac{\sqrt{2}}{2} & 1 & \\
			\hline
			\multicolumn{2}{c|}{*_{(n-1) \times 2}} & \mathbb{I}_{(n-1) \times (n-1)}
		\end{array}
		\right)
	+ N^{(Small)}
	$,
	where the entries of the submatrix $*_{(n-1) \times 2}$
	are of the schematic form $\smoothfunction(\GdVar)$,
	$\mathbb{I}_{(n-1) \times (n-1)}$ is the identity matrix,
	and $N^{(Small)}$ is a matrix whose entries are all of the schematic form $\GdVar \smoothfunction(\GdVar)$,
	where $\smoothfunction$ is smooth. From these facts and the basic properties 
	of the determinant, we conclude that 
	$\mbox{\upshape det} N = \sqrt{2} + \GdVar \smoothfunction(\GdVar)$,
	which is the desired identity.
	
\end{proof}

\subsection{Notation for repeated differentiation}
\label{SS:NOTATIONFORREPEATED}
In this subsection, we define some notation that we use when performing repeated differentiation.

\begin{definition}[\textbf{Notation for repeated differentiation}]
\label{D:REPEATEDDIFFERENTIATIONSHORTHAND}
Recall that the commutation vectorfield sets $\Fullset$ and $\Tanset$ are defined in Def.\ \ref{D:COMMUTATIONVECTORFIELDS}.
We label the $n+1$ vectorfields in 
$\Fullset$ as follows: $Z_{(1)} = \Lunit, Z_{(2)} = \CoordAng{2}, Z_{(3)} = \CoordAng{3}, \cdots, Z_{(n)} = \CoordAng{n}, Z_{(n+1)} = \Rad$.
Note that $\Tanset = \lbrace Z_{(1)}, Z_{(2)},\cdots, Z_{(n)} \rbrace$.
We define the following vectorfield operators:
\begin{itemize}
	\item If $\vec{I} = (\iota_1, \iota_2, \cdots, \iota_N)$ is a multi-index
		of order $|\vec{I}| := N$
		with $\iota_1,\iota_2,\cdots,\iota_N \in \lbrace 1,2,\cdots,n+1 \rbrace$,
		then $\Fullset^{\vec{I}} := Z_{(\iota_1)} Z_{(\iota_2)} \cdots Z_{(\iota_N)}$ 
		denotes the corresponding $N^{th}$ order differential operator.
		We write $\Fullset^N$ rather than $\Fullset^{\vec{I}}$
		when we are not concerned with the structure of $\vec{I}$,
		and we sometimes omit the superscript when $N=1$.
	\item If $\vec{I} = (\iota_1,\iota_2,\cdots,\iota_N)$,
		then 
		$\vec{I}_1 + \vec{I}_2 = \vec{I}$ 
		means that
		$\vec{I}_1 = (\iota_{k_1},\iota_{k_2},\cdots,\iota_{k_m})$
		and
		$\vec{I}_2 = (\iota_{k_{m+1}},\iota_{k_{m+2}},\cdots,\iota_{k_N})$,
		where $1 \leq m \leq N$ and
		$k_1, k_2, \cdots, k_N$ is a permutation of 
		$1,2,\cdots,N$. 
	\item Sums such as $\vec{I}_1 + \vec{I}_2 + \cdots + \vec{I}_K = \vec{I}$
		have an analogous meaning.
	\item $\mathcal{P}_u$-tangent operators such as 
		$\Tanset^{\vec{I}}$ are defined analogously,  
		except in this case we have
		$\iota_1,\iota_2,\cdots,\iota_N \in \lbrace 1,2,\cdots,n \rbrace$.
		We write $\Tanset^N$ rather than $\Tanset^{\vec{I}}$
		when we are not concerned with the structure of $\vec{I}$,
		and we sometimes omit the superscript when $N=1$.
\end{itemize}
\end{definition}

\subsection{Notation involving multi-indices}
\label{SS:MULTIINDICES}
In defining our main $L^2$-controlling quantity
(see Def.\ \ref{D:MAINCOERCIVEQUANT}), 
we will refer to the following set of multi-indices.

\begin{definition}[\textbf{A set of $\Fullset$-multi-indices}]
\label{D:COMMUTATORMULTIINDICES}
We define
$\mathcal{I}_*^{[1,N];1}$
to be the set of $\Fullset$ multi-indices $\vec{I}$
(in the sense of Def.\ \ref{D:REPEATEDDIFFERENTIATIONSHORTHAND})
such that 
\textbf{i)} $1 \leq |\vec{I}| \leq N$,
\textbf{ii)} $\Fullset^{\vec{I}}$ contains \emph{at least one factor} belonging to 
$\Tanset = \lbrace \Lunit, \CoordAng{2},\CoordAng{3},\cdots,\CoordAng{n} \rbrace$,
and 
\textbf{iii)} $\Fullset^{\vec{I}}$ contains no more than $1$ factor of $\Rad$.
\end{definition}

\subsection{Norms}
\label{SS:NORMS}
In this subsection, we define the norms that we use in studying the solution.

\begin{definition}[\textbf{Pointwise norms}]
	\label{D:POINTWISENORM}
	We define the following pointwise norms for 
	arrays $\noshock = (\noshockuparg{J})_{1 \leq J \leq M}$
	and 
	$\diffnoshock = (\diffnoshockdoublearg{\alpha}{J})_{0 \leq \alpha \leq n, 1 \leq J \leq M}$:
	\begin{align} \label{E:POINTWISENORMOFARRAYS}
		|\noshock|
		& : = \sum_{J=1}^M
		|\noshockuparg{J}|,
		&
		|\diffnoshockdownarg{\alpha}|
		& : = \sum_{J=1}^M
		|\diffnoshockdoublearg{\alpha}{J}|,
		&
		|\diffnoshock|
		& : = \sum_{J=1}^M
					\sum_{\alpha=0}^n
		|\diffnoshockdoublearg{\alpha}{J}|.
	\end{align}
\end{definition}

We will use the following $L^2$ and $L^{\infty}$ norms in our analysis.

\begin{definition}[$L^2$ \textbf{and} $L^{\infty}$ \textbf{norms}]
In terms of the non-degenerate forms of Def.\ \ref{D:NONDEGENERATEVOLUMEFORMS},
we define the following norms for 
scalar or array-valued functions $w$:
\label{D:SOBOLEVNORMS}
	\begin{subequations}
	\begin{align}  \label{E:L2NORMS}
			\left\|
				w
			\right\|_{L^2(\mathcal{T}_{t,u})}^2
			& :=
			\int_{\mathcal{T}_{t,u}}
				|w|^2
			\, d \torusvol,
				\qquad
			\left\|
				w
			\right\|_{L^2(\Sigma_t^u)}^2
			:=
			\int_{\Sigma_t^u}
				|w|^2
			\, d \tvol,
				\\
			\left\|
				w
			\right\|_{L^2(\mathcal{P}_u^t)}^2
			& :=
			\int_{\mathcal{P}_u^t}
				|w|^2
			\, d \conevol,
			\notag
	\end{align}

	\begin{align} 
			\left\|
				w
			\right\|_{L^{\infty}(\mathcal{T}_{t,u})}
			& :=
				\mbox{ess sup}_{\vartheta \in \mathbb{T}^{n-1}}
				|w|(t,u,\vartheta),
			\qquad
			\left\|
				w
			\right\|_{L^{\infty}(\Sigma_t^u)}
			:=
			\mbox{ess sup}_{(u',\vartheta) \in [0,u] \times \mathbb{T}^{n-1}}
				|w|(t,u',\vartheta),
			\label{E:LINFTYNORMS}
				\\
			\left\|
				w
			\right\|_{L^{\infty}(\mathcal{P}_u^t)}
			& :=
			\mbox{ess sup}_{(t',\vartheta) \in [0,t] \times \mathbb{T}^{n-1}}
				|w|(t',u,\vartheta).
			\notag
	\end{align}
	\end{subequations}
\end{definition}

\subsection{Strings of commutation vectorfields and vectorfield seminorms}
\label{SS:STRINGSOFCOMMUTATIONVECTORFIELDS}
We will use
the following shorthand notation to capture the relevant structure
of our vectorfield differential operators and to schematically depict estimates.

\begin{remark}
	Some operators in Def.\ \ref{D:VECTORFIELDOPERATORS} are decorated with a $*$.
	These operators involve $\mathcal{P}_u$-tangent differentiations that often
	lead to a gain in smallness in the estimates.
	More precisely, the operators $\Tanset_*^N$ always lead to a gain in smallness while
	the operators $\Fullset_*^{N;1}$ lead to a gain in smallness except 
	perhaps when they are applied to $\upmu$
	(because $\Lunit \upmu$ and its $\Rad$ derivatives are not generally small).
\end{remark}

\begin{definition}[\textbf{Strings of commutation vectorfields and vectorfield seminorms}] \label{D:VECTORFIELDOPERATORS}
	\ \\
	\begin{itemize}
		\item $\Fullset^{N;1} f$ 
			denotes an arbitrary string of $N$ commutation
			vectorfields in $\Fullset$ (see \eqref{E:COMMUTATIONVECTORFIELDS})
			applied to $f$, where the string contains \emph{at most} $1$ factor of the $\mathcal{P}_u^t$-transversal
			vectorfield $\Rad$. 
			We sometimes write $Z f$ instead of $\Fullset^{1;1} f$.
		\item $\Tanset^N f$
			denotes an arbitrary string of $N$ commutation
			vectorfields in $\Tanset$ (see \eqref{E:TANGENTIALCOMMUTATIONVECTORFIELDS})
			applied to $f$. Consistent with Remark~\ref{R:MEANINGOFSINGLETAN}, 
			we sometimes write $\Singletan f$ instead of $\Tanset^1 f$.
		\item 
			For $N \geq 1$,
			$\Fullset_*^{N;1} f$
			denotes an arbitrary string of $N$ commutation
			vectorfields in $\Fullset$ 
			applied to $f$, where the string contains \emph{at least} one $\mathcal{P}_u$-tangent factor 
			and \emph{at most} $1$ factor of $\Rad$.
			We also set  $\Fullset_*^{0;0} f := f$.
		\item For $N \geq 1$,
					$\Tanset_*^N f$ 
					denotes an arbitrary string of $N$ commutation
					vectorfields in $\Tanset$ 
					applied to $f$, where the string contains
					\emph{at least one factor} belonging to the geometric torus coordinate partial derivative vectorfield set
					$\lbrace 
						\CoordAng{2},
						\CoordAng{3},
						\cdots,
						\CoordAng{n}
					\rbrace$
					or \emph{at least two factors} of $\Lunit$.
\end{itemize}

	\begin{remark}[\textbf{Another way to think about operators} $\Tanset_*^N$]
		\label{R:ANOTHERWAYTOTHINKABOUTPSTARDERIVATIVES}
		For exact simple plane wave solutions,
		if $N \geq 1$ and $f$ is \emph{any} of the quantities that we must estimate,
		then we have $\Tanset_*^N f \equiv 0$. 
	\end{remark}
	
	We also define seminorms constructed out of sums of the above strings of vectorfields:
	\begin{itemize}
		\item $|\Fullset^{N;1} f|$ 
		simply denotes the magnitude of one of the $\Fullset^{N;1} f$ as defined above
		(there is no summation).
	\item $|\Fullset^{\leq N;1} f|$ is the \emph{sum} over all terms of the form $|\Fullset^{N';1} f|$
			with $N' \leq N$ and $\Fullset^{N';1} f$ as defined above.
			We sometimes write $|\Fullset^{\leq 1} f|$ instead of $|\Fullset^{\leq 1;1} f|$.
		\item $|\Fullset^{[1,N];1} f|$ is the sum over all terms of the form $|\Fullset^{N';1} f|$
			with $1 \leq N' \leq N$ and $\Fullset^{N';1} f$ as defined above.
		\item Sums such as 
			$|\Tanset^{\leq N} f|$,
			$|\Tanset_*^{[1,N]} f|$,
			etc.\
			are defined analogously.
		\item Seminorms such as
			$\| \Fullset_*^{[1,N];1} f \|_{L^{\infty}(\Sigma_t^u)}$
			and
			$\| \Tanset_*^{[1,N]} f \|_{L^{\infty}(\Sigma_t^u)}$
			(see Def.\ \ref{D:SOBOLEVNORMS})
			are defined analogously.
	\end{itemize}

\end{definition}

\section{Energy identities}
\label{S:ENERGYID}
In this section, we define the building block energies and characteristic fluxes that we use to control the solution in $L^2$
and derive their basic coerciveness properties.
We then derive energy identities involving the building block energies and characteristic fluxes.
Later in the article, in Def.\ \ref{D:MAINCOERCIVEQUANT}, we will use the building blocks to define the main $L^2$-controlling quantity.
  
\subsection{Energies and characteristic flux definitions}
\label{SS:ENERGYFLUXDEFS}

\begin{definition}[\textbf{Energies and characteristics fluxes}]
\label{D:ENERGYFLUX}
In terms of the geometric forms of Def.\ \ref{D:NONDEGENERATEVOLUMEFORMS},
we define the energy $\shocken[\cdot]$, which is a functional
of scalar-valued functions $f$, as follows:
\begin{align} \label{E:SHOCKENERGY}
	\shocken[f](t,u)
		& := 
		\int_{\Sigma_t^u} 
			f^2
		\, d \tvol.
\end{align}

In terms of the Cartesian forms of Def.\ \ref{D:CARTESIANFORMS}
and the Euclidean-unit one-form $\covL_{\alpha}$
defined in \eqref{E:EUCLIDEANNORMALTONULLHYPERSURFACE},
we define the energy $\noshocken[\cdot]$ and characteristic flux 
$\noshockfl[\cdot]$, which are functionals of $\mathbb{R}^M$-valued functions
$w$, as follows:
\begin{subequations}
\begin{align} \label{E:NOSHOCKENERGY}
	\noshocken[w](t,u)
		& := 
		\int_{\Sigma_t^u} 
			\delta_{JK} A_I^{0;J}(\Psi,\noshock) w^I w^K
		\, d \Sigma,
		\\
	\noshockfl[w](t,u)
	 & := 
		\int_{\mathcal{P}_u^t} 
			\delta_{JK} A_I^{\alpha;J}(\Psi,\noshock) \covL_{\alpha} w^I w^K 
		\, d \mathcal{P},
		\label{E:NOSHOCKNULLFLUX}
\end{align}
\end{subequations}
where $\delta_{JK}$ is a standard Kronecker delta.

\end{definition}

\begin{lemma}[\textbf{Coerciveness of the energies and null fluxes for the symmetric hyperbolic variables}]
	\label{L:COERCIVENESSOFNOSHOCKENERGY}
	If $|\GdVar|$ is sufficiently small, then the energy and the characteristic flux 
	from Def.\ \ref{D:ENERGYFLUX}
	enjoy the following coerciveness:
	\begin{subequations}
	\begin{align} \label{E:COERCIVENESSOFNOSHOCKENERGY}
		\noshocken[w](t,u)
		& \approx 
		\int_{\Sigma_t^u} 
			\upmu \delta_{JK} w^J w^K
		\, d \tvol,
			\\
		\noshockfl[w](t,u)
		& \approx 
		\int_{\mathcal{P}_u^t} 
			\delta_{JK} w^J w^K
		\, d \conevol,
		\label{E:COERCIVENESSOFNOSHOCKNULLFLUX}
	\end{align}
	\end{subequations}
	where $\delta_{JK}$ is a standard Kronecker delta.
\end{lemma}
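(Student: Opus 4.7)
The plan is to prove the two comparisons in \eqref{E:COERCIVENESSOFNOSHOCKENERGY}--\eqref{E:COERCIVENESSOFNOSHOCKNULLFLUX} by a direct pointwise analysis of the integrands, combined with the integration measure comparisons from Lemma~\ref{L:VOLFORMRELATION} and the positivity hypotheses \eqref{E:POSITIVEDEFMATRICES}.

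First, I would handle the energy \eqref{E:COERCIVENESSOFNOSHOCKENERGY}. Starting from Def.~\ref{D:ENERGYFLUX}, I would rewrite $d\Sigma$ in terms of the geometric form using Lemma~\ref{L:VOLFORMRELATION}, which gives $d\Sigma = \upmu\{1 + \upgamma \smoothfunction(\upgamma)\}\,d\tvol$; for $|\GdVar|$ sufficiently small the factor in braces lies in $[1/2, 2]$, and in particular is bounded above and below by positive constants. Consequently the $\upmu$ weight appearing on the right of \eqref{E:COERCIVENESSOFNOSHOCKENERGY} is produced directly by the change of measure. What remains is to show the pointwise comparison $\delta_{JK} A_I^{0;J}(\Psi,\noshock) w^I w^K \approx \delta_{JK} w^J w^K$. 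By the first half of assumption~\eqref{E:POSITIVEDEFMATRICES}, the matrix $A^0|_{(\Psi,\noshock)=(0,0)}$ is symmetric positive definite, so there exist constants $c_0, C_0 > 0$ with $c_0 |w|^2 \leq \delta_{JK} A_I^{0;J}|_{(0,0)} w^I w^K \leq C_0 |w|^2$. Since the components of $A^0$ are smooth functions of $(\Psi,\noshock)$ and since $(\Psi,\noshock)$ are part of the array $\GdVar$, continuity yields the same two-sided bound (with slightly worse constants) whenever $|\GdVar|$ is sufficiently small. Combining the measure comparison with this pointwise quadratic form comparison gives \eqref{E:COERCIVENESSOFNOSHOCKENERGY}.

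The characteristic flux estimate \eqref{E:COERCIVENESSOFNOSHOCKNULLFLUX} is similar in structure but is the step I expect to be slightly more delicate, since it is here that the second positivity hypothesis in \eqref{E:POSITIVEDEFMATRICES} (namely that $A^0 - A^1$ is positive definite at the background) enters in an essential way. I would first compute $\covL_\alpha$ at the background: from Def.~\ref{D:EIKONALFUNCTIONONEFORMS}, \eqref{E:UPLAMBDA0ISONE}, and the initial/evolution analysis of $\upxi_j^{(Small)}$ encoded in \eqref{E:UPXIJSMALL} (and the smallness of $\GdVar$), one has $\uplambda_0 \equiv 1$ and $\uplambda_j = -\delta_j^1 + \GdVar \smoothfunction(\GdVar)$, so that
\begin{align*}
\delta^{\alpha\beta}\uplambda_\alpha \uplambda_\beta
& = 2 + \GdVar \smoothfunction(\GdVar),
\qquad
\covL_\alpha = \frac{1}{\sqrt{2}}\bigl(\delta_\alpha^0 - \delta_\alpha^1\bigr) + \GdVar \smoothfunction(\GdVar).
\end{align*}
Therefore, at the background state and modulo $\GdVar$-small corrections,
\[
A_I^{\alpha;J}(\Psi,\noshock) \covL_\alpha
= \tfrac{1}{\sqrt{2}}\bigl(A_I^{0;J} - A_I^{1;J}\bigr)\big|_{(0,0)} + \GdVar\smoothfunction(\GdVar).
\]
By the second part of assumption~\eqref{E:POSITIVEDEFMATRICES}, the symmetric matrix $A^0 - A^1$ is positive definite at the background, and smoothness in $(\Psi,\noshock)$ together with smallness of $|\GdVar|$ propagates this positive definiteness uniformly, yielding a two-sided pointwise bound $\delta_{JK} A_I^{\alpha;J}\covL_\alpha w^I w^K \approx \delta_{JK} w^J w^K$. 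Finally, applying the third measure comparison in \eqref{E:VOLFORMRELATION}, namely $d\mathcal{P} = \{\sqrt{2} + \GdVar\smoothfunction(\GdVar)\}\,d\conevol$, and absorbing the order-unity factor into the implicit constants, I obtain \eqref{E:COERCIVENESSOFNOSHOCKNULLFLUX}.

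The only genuine obstacle is verifying that the corrections $\GdVar\smoothfunction(\GdVar)$ in both the measure comparisons and the coefficient matrices are indeed controllable by the single smallness hypothesis on $|\GdVar|$. This is ensured by the fact that in both cases the error terms vanish at $\GdVar = 0$ and are smooth, so choosing $|\GdVar|$ small enough (in an absolute sense, independent of the other parameters) makes them strictly smaller than the background lower bounds on the eigenvalues of $A^0|_{(0,0)}$ and $(A^0 - A^1)|_{(0,0)}$, which is exactly what is needed to preserve the two-sided bounds.
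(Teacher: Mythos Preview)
Your proposal is correct and follows essentially the same approach as the paper: decompose $\covL_\alpha$ about its background value, invoke the positivity hypotheses \eqref{E:POSITIVEDEFMATRICES} for the pointwise quadratic-form comparison, and then apply the measure relations from Lemma~\ref{L:VOLFORMRELATION}. Your computation of $\covL_\alpha = \tfrac{1}{\sqrt{2}}(\delta_\alpha^0 - \delta_\alpha^1) + \GdVar\smoothfunction(\GdVar)$ is in fact slightly more precise than the paper's statement (which omits the $\tfrac{1}{\sqrt{2}}$), though of course this factor is absorbed into the implicit constants either way.
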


\begin{proof}
	From the arguments given in the proof of Lemma~\ref{L:VOLFORMRELATION},
	it follows that the one-form $\covL_{\alpha}$ defined in \eqref{E:EUCLIDEANNORMALTONULLHYPERSURFACE}
	can be decomposed as $\covL_{\alpha} = \delta_{\alpha}^0 - \delta_{\alpha}^1 + \covL_{\alpha}^{(Small)}$,
	where $\covL_{\alpha}^{(Small)} = \GdVar \smoothfunction(\GdVar)$.
	Hence, from \eqref{E:POSITIVEDEFMATRICES}, it follows that
	when $|\GdVar|$ is sufficiently small, we have
	$\delta_{JK} A_I^{0;J} w^I w^K \approx \delta_{JK} w^J w^K$
	and $\delta_{JK} A_I^{\alpha;J} \covL_{\alpha} w^I w^K \approx \delta_{JK} w^J w^K$.
	Appealing to definitions \eqref{E:NOSHOCKENERGY}-\eqref{E:NOSHOCKNULLFLUX}
	and using the integration measure relationships stated in \eqref{E:VOLFORMRELATION},
	we conclude \eqref{E:COERCIVENESSOFNOSHOCKENERGY}-\eqref{E:COERCIVENESSOFNOSHOCKNULLFLUX}.
\end{proof}

\subsection{Energy-characteristic flux identities}
\label{SS:ENERGYIDENTITIES}
The integral identities in the following
proposition form the starting point for our $L^2$ analysis of solutions.
A crucial point is that LHS~\eqref{E:NONSHOCKVARIABLEBASICENERGYID}
features the characteristic flux $\noshockfl[\cdot](t,u)$,
which by \eqref{E:COERCIVENESSOFNOSHOCKNULLFLUX}
can be used to control $\noshock$ and $\diffnoshock$
on the characteristic hypersurfaces $\mathcal{P}_u^t$
\emph{without any degenerate $\upmu$ weight}.

\begin{proposition}[\textbf{Energy-characteristic flux identities}]
	\label{P:ENERGYIDENTITIES}
	Let $\Lunit = \Lunit^{\alpha}(\Psi,\noshock) \partial_{\alpha}$
	be the vectorfield from equation \eqref{E:SHOCKEQN} and 
	let $f$ be a solution to the inhomogeneous transport equation
	\begin{align} \label{E:ENERGYIDCOMMUTEDMAINSHOCKEVOLUTION} 
		\Lunit f
		& = \mathfrak{F}.
	\end{align}
	Then the following integral identity holds for the energy defined in \eqref{E:SHOCKENERGY}:
	\begin{align} \label{E:SHOCKVARIABLEBASICENERGYID}
			\shocken[f](t,u)
			& = 
			\shocken[f](0,u)
			+
			2
			\int_{\mathcal{M}_{t,u}}
				f \mathfrak{F}
			\, d \vol.
	\end{align}
	
	Moreover, let
	$A_J^{\alpha;I}(\Psi,\noshock)$ be the components of the symmetric matrices from equation \eqref{E:NONSHOCKEQUATION} 
	and let $w$ be a solution to the (linear-in-$w$) inhomogeneous symmetric hyperbolic system
	\begin{align} \label{E:ENERGYIDCOMMUTEDNONSHOCKEVOLUTION} 
		\upmu A_J^{\alpha;I} \partial_{\alpha} w^J
		& = \mathfrak{F}^I.
	\end{align}
	Then there exist smooth functions,
	schematically denoted by $\smoothfunction$,
	such that the following integral identity holds
	for the energy and characteristic flux defined in \eqref{E:NOSHOCKENERGY}-\eqref{E:NOSHOCKNULLFLUX}:
	\begin{align} \label{E:NONSHOCKVARIABLEBASICENERGYID}
			\noshocken[w](t,u)
			+
			\noshockfl[w](t,u)
			& = 
			\noshocken[w](0,u)
			+
			\noshockfl[w](t,0)
				\\
		& \ \
			+
			2
			\int_{\mathcal{M}_{t,u}}
				\left\lbrace
					1 + \GdVar \smoothfunction(\GdVar)
				\right\rbrace
				\delta_{JK} \mathfrak{F}^J w^K
			\, d \vol
				\notag \\
		& \ \
			+
			\int_{\mathcal{M}_{t,u}}
				\smoothfunction_{JK}(\BadVar,\Rad \Psi, \Singletan \Psi) w^J w^k
			\, d \vol,
			\notag
	\end{align}
	where $\delta_{JK}$ is the standard Kronecker delta.
	
\end{proposition}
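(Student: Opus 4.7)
The plan is to prove the transport identity \eqref{E:SHOCKVARIABLEBASICENERGYID} directly in the geometric coordinates, exploiting \eqref{E:LISDDT}, and to prove the symmetric hyperbolic identity \eqref{E:NONSHOCKVARIABLEBASICENERGYID} by applying the Euclidean divergence theorem to a standard energy current on $\mathcal{M}_{t,u}$ and then converting the resulting spacetime integral to the geometric measure via Lemma~\ref{L:VOLFORMRELATION}. For the transport equation, \eqref{E:LISDDT} turns the hypothesis $\Lunit f = \mathfrak{F}$ into $\frac{\partial}{\partial t}(f^2) = 2 f \mathfrak{F}$, and integration over the geometric coordinate rectangle $[0,t] \times [0,u] \times \mathbb{T}^{n-1}$ with respect to $d\vol = d\torusvol\,du'\,dt'$, combined with the fundamental theorem of calculus in $t'$, immediately produces \eqref{E:SHOCKVARIABLEBASICENERGYID}: the boundary contributions at $t'=0$ and $t'=t$ are exactly $\shocken[f](0,u)$ and $\shocken[f](t,u)$ by \eqref{E:SHOCKENERGY}, and no error terms appear.

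For the symmetric hyperbolic identity, I would introduce the energy current $Q^{\alpha} := \delta_{JK} A_I^{\alpha;J}(\Psi,\noshock) w^I w^K$, which is symmetric in $(I,K)$ by the symmetry of $A^{\alpha}$ and of $\delta_{JK}$. A direct Cartesian computation gives
\begin{align*}
\partial_{\alpha} Q^{\alpha}
= 2 \delta_{JK} A_I^{\alpha;J} (\partial_{\alpha} w^I) w^K
+ (\partial_{\alpha} A_I^{\alpha;J}) \delta_{JK} w^I w^K,
\end{align*}
and rewriting the hypothesis as $A_I^{\alpha;J} \partial_{\alpha} w^I = \upmu^{-1} \mathfrak{F}^J$ converts the first term on the right into $2 \upmu^{-1} \delta_{JK} \mathfrak{F}^J w^K$. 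I would then apply the Euclidean divergence theorem over $\mathcal{M}_{t,u}$, whose boundary consists of $\Sigma_0^u$, $\Sigma_t^u$, $\mathcal{P}_0^t$, and $\mathcal{P}_u^t$, with outward Euclidean-unit normals $-\partial_t$, $+\partial_t$, $-\covL$, and $+\covL$ respectively; the last two follow from \eqref{E:EUCLIDEANNORMALTONULLHYPERSURFACE}, since $\covL$ is parallel to the gradient of $u$ and thus points toward increasing $u$. The four boundary contributions produce exactly $\noshocken[w](t,u) - \noshocken[w](0,u) + \noshockfl[w](t,u) - \noshockfl[w](t,0)$ by definitions \eqref{E:NOSHOCKENERGY}-\eqref{E:NOSHOCKNULLFLUX}. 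Invoking the measure conversion $d\mathcal{M} = \upmu(1 + \GdVar\,\smoothfunction(\GdVar))\,d\vol$ from \eqref{E:VOLFORMRELATION}, the factor $\upmu$ cancels the $\upmu^{-1}$ on the $\mathfrak{F}\,w$ term and produces the first error integral $2\int_{\mathcal{M}_{t,u}} \{1 + \GdVar\,\smoothfunction(\GdVar)\} \delta_{JK} \mathfrak{F}^J w^K\,d\vol$ appearing on the right-hand side of \eqref{E:NONSHOCKVARIABLEBASICENERGYID}.

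The main delicate point is verifying that the residual term $(\partial_{\alpha} A_I^{\alpha;J})\delta_{JK} w^I w^K$, once multiplied by the measure-conversion factor $\upmu(1 + \GdVar\,\smoothfunction(\GdVar))$, collapses to the schematic form $\smoothfunction_{JK}(\BadVar, \Rad\Psi, \Singletan\Psi)\,w^J w^K$ required in \eqref{E:NONSHOCKVARIABLEBASICENERGYID}. By the chain rule, $\partial_{\alpha} A_I^{\alpha;J}$ is a smooth function of $(\Psi,\noshock)$ times $\partial_{\alpha}\Psi$ plus a smooth function of $(\Psi,\noshock)$ times $\diffnoshockdoublearg{\alpha}{L}$. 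The $\diffnoshock$ piece is already a component of $\BadVar$, so multiplication by $\upmu \in \BadVar$ is harmless. For the $\partial_{\alpha}\Psi$ piece, I would combine \eqref{E:PARTIALTINTERMSOFGEO}-\eqref{E:PARTIALJINTERMSOFGEO} with the identity $\Radunit = \upmu^{-1}\Rad$ and the constraint $\Lunit\Psi = 0$ from \eqref{E:SHOCKEQN} to obtain $\partial_{\alpha}\Psi = \upmu^{-1}\,\smoothfunction(\GdVar)\,\Rad\Psi + \smoothfunction(\GdVar)\,\Singletan\Psi$. The $\upmu^{-1}$ is cancelled precisely by the factor of $\upmu$ from the measure change, and the remaining coefficients are smooth functions of $\GdVar$ whose components lie in $\BadVar$, producing the required structure. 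The rest of the argument is routine bookkeeping.
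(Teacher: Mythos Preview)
Your proposal is correct and follows essentially the same route as the paper. The paper likewise proves \eqref{E:SHOCKVARIABLEBASICENERGYID} directly from $\Lunit = \frac{\partial}{\partial t}$ in geometric coordinates, and proves \eqref{E:NONSHOCKVARIABLEBASICENERGYID} by defining the same current $\mathscr{J}^{\alpha} = \delta_{JK} A_I^{\alpha;J} w^I w^K$, applying the Euclidean divergence theorem on $\mathcal{M}_{t,u}$, and using Lemma~\ref{L:VOLFORMRELATION} together with Lemma~\ref{L:CARTESIANDERIVATIVESINTERMSOFGEOMETRICDERIVATIVES} to put the bulk term in schematic form; the only cosmetic difference is that the paper multiplies the divergence identity through by $\upmu$ before integrating (so that $d\mathcal{M}/\upmu$ appears), whereas you carry the factor $\upmu^{-1}$ and cancel it against the $\upmu$ in the measure conversion, and your invocation of $\Lunit \Psi = 0$ is harmless but unnecessary since $\Lunit \Psi$ is already of $\Singletan \Psi$ type.
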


\begin{proof}
The identity \eqref{E:SHOCKVARIABLEBASICENERGYID}
is a simple consequence of equation \eqref{E:ENERGYIDCOMMUTEDMAINSHOCKEVOLUTION} 
since 
$
\displaystyle
\Lunit  
= \frac{\partial}{\partial t}
$
relative to the geometric coordinates $(t,u,\vartheta)$.

To prove \eqref{E:NONSHOCKVARIABLEBASICENERGYID},
we define the following vectorfield relative to the Cartesian coordinates:
$\mathscr{J}^{\alpha} := \delta_{JK} A_I^{\alpha;J} w^I w^K$.
Using equation \eqref{E:ENERGYIDCOMMUTEDNONSHOCKEVOLUTION}
and the symmetry assumption
$ A_J^{\alpha;I} = A_I^{\alpha;J}$,
we derive (relative to the Cartesian coordinates) the following divergence identity:
$
\upmu \partial_{\alpha} \mathscr{J}^{\alpha}
=  
2 \delta_{JK} \mathfrak{F}^J w^K
+
\delta_{JK} (\upmu \partial_{\alpha} A_I^{\alpha;J}) w^I w^K
$.
We now apply the divergence theorem
to the vectorfield $\mathscr{J}$ on the region $\mathcal{M}_{t,u}$,
where we use the Cartesian coordinates,
the Euclidean metric $\delta^{\alpha \beta} := \mbox{\upshape diag} (1,1,\cdots,1)$
on $\mathbb{R} \times \Sigma$, 
and the Cartesian forms of Def.\ \ref{D:CARTESIANFORMS}
in all computations.
Also using that the future-directed Euclidean co-normal to $\Sigma_t$ has Cartesian components $\delta_{\alpha}^0$
and that the future-directed Euclidean co-normal to $\mathcal{P}_u^t$ 
has Cartesian components $\covL_{\alpha}$ (see Def.\ \ref{D:EUCLIDEANNORMALTONULLHYPERSURFACE}),
we deduce
\begin{align}	\label{E:SYMMETRICHYPERBOLICENERYGIDENTIYINDIVERGENCEFORM}
	&
	\int_{\Sigma_t^u}
		\delta_{JK} A_I^{0;J} w^I w^K
	\, d \Sigma
	+
	\int_{\mathcal{P}_u^t}
		\delta_{JK} A_I^{\alpha;J} \covL_{\alpha} w^I w^K
	\, d \mathcal{P}
		\\
	& = \int_{\Sigma_0^u}
		\delta_{JK} A_I^{0;J} w^I w^K
	\, d \Sigma
	+
	\int_{\mathcal{P}_0^t}
		\delta_{JK} A_I^{\alpha;J} \covL_{\alpha} w^I w^K
	\, d \mathcal{P}
		\notag \\
	& \ \
		+
		\int_{\mathcal{M}_{t,u}}
			\left\lbrace
				2 \delta_{JK} \mathfrak{F}^J w^K
				+
				\delta_{JK} (\upmu \partial_{\alpha} A_I^{\alpha;J}) w^I w^K
			\right\rbrace
		\, \frac{d \mathcal{M}}{\upmu}.
		\notag
\end{align}
Next, using Lemma~\ref{L:CARTESIANDERIVATIVESINTERMSOFGEOMETRICDERIVATIVES} and definition \eqref{E:BADARRAY},
we can express
the integrand $\delta_{JK} (\upmu \partial_{\alpha} A_I^{\alpha;J}) w^I w^K$
on RHS~\eqref{E:SYMMETRICHYPERBOLICENERYGIDENTIYINDIVERGENCEFORM} in the following schematic form:
$\smoothfunction_{JK}(\BadVar,\Rad \Psi, \Singletan \Psi) w^J w^k$.
Also using Lemma~\ref{L:VOLFORMRELATION}
to express the integration measure
$
\displaystyle
\frac{d \mathcal{M}}{\upmu}
$
on RHS~\eqref{E:SYMMETRICHYPERBOLICENERYGIDENTIYINDIVERGENCEFORM}
as
$
\left\lbrace
	1 + \GdVar \smoothfunction(\GdVar)
\right\rbrace
\, d \vol
$
and appealing to definitions \eqref{E:NOSHOCKENERGY}-\eqref{E:NOSHOCKNULLFLUX},
we arrive at the desired identity \eqref{E:NONSHOCKVARIABLEBASICENERGYID}.

\end{proof}

\section{The number of derivatives, data-size assumptions, bootstrap assumptions, smallness assumptions, and running assumptions}
\label{S:NUMDERIVSDATASIZEANDBOOTSTRAPASSUMPTIONS}
In this section, we state the number of derivatives that we use to close the estimates,
state our assumptions on the size of the data,
formulate bootstrap assumptions that we use to derive estimates,
and describe our smallness assumptions.
In Subsect.\ \ref{SS:EXISTENCEOFDATA}, we explain why 
there exist data that verify the assumptions.

\subsection{The number of derivatives}
\label{SS:NUMBEROFDERIVATIVES}
Throughout the rest of the paper, 
$\Ntop$ and $\Nmid$ denote two fixed positive integers
verifying the following relations, where $n$ is the number of spatial dimensions:
\begin{align} \label{E:NDEFS}
	\Ntop & \geq n + 5,
	&
	\Nmid
	& := 
		\left\lceil \frac{\Ntop}{2} \right\rceil + 1.
\end{align}
The solutions that we will study are such that, roughly, 
the order $\leq \Ntop$ derivatives of $\Psi$ (with respect to suitable strings of geometric vectorfields)
are uniformly bounded in the norm $\| \cdot \|_{L^2(\Sigma_t^u)}$ 
and the order $\leq \Nmid$ derivatives of $\Psi$ are uniformly bounded in the norm $\| \cdot \|_{L^{\infty}(\Sigma_t^u)}$.
The remaining quantities that we must estimate verify similar bounds
but, in some cases, they are one degree less differentiable. 
The definitions in \eqref{E:NDEFS} are convenient 
in the sense that they will lead to the following:
when we derive $L^2$ estimates for error term products in the commuted equations, 
all factors in the product except at most one will be 
uniformly bounded in the norm $\| \cdot \|_{L^{\infty}(\Sigma_t^u)}$.

\subsection{Data-size assumptions}
\label{SS:DATASIZE}
In this subsection, we state our assumptions on the size of the data.

\subsubsection{The data-size parameter that controls the time of shock formation}
We start with the definition of a data-size parameter 
$\TranminusdatasizeWithFactor$, which is tied to the time of first shock formation.
More precisely, our main theorem shows that $\max_{\alpha=0,\cdots,n} |\partial_{\alpha} \Psi|$ 
blows up at a time approximately equal to $\TranminusdatasizeWithFactor^{-1}$.

\begin{definition}[\textbf{The crucial quantity that controls the time of shock formation}]
	\label{D:SHOCKFORMATIONQUANTITY}
	We define $\TranminusdatasizeWithFactor$ as
	\begin{align} \label{E:SHOCKFORMATIONQUANTITY}
		\TranminusdatasizeWithFactor
		:= \sup_{\Sigma_0^1} [\blowupcoeff \Rad \Psi]_-,
	\end{align}
	where $\blowupcoeff \neq 0$ 
	(see Remark~\ref{R:BLOWUPCOEFFICIENTISNONZERO})
	is the blowup-coefficient from Def.\ \ref{D:GENIUNELYNONLINEARCONSTANT}
	and $[f]_- := |\min \lbrace f,0 \rbrace|$.
\end{definition}

\begin{remark}[\textbf{Functional dependence of } $\blowupcoeff$ \textbf{along} $\Sigma_0$]
	\label{R:FUNCTIONALDEPENDENCEOFBLOWUPCOEFFICIENT}
	Note that by \eqref{E:XIJIC}, along $\Sigma_0$,
	$\blowupcoeff$ can be viewed as a function of $\Psi|_{\Sigma_0}$ and $\noshock|_{\Sigma_0}$.
\end{remark}

\subsubsection{Data-size assumptions}
\label{SSS:DATA}
For technical convenience, we assume that the solution 
is $C^{\infty}$ with respect to the Cartesian coordinates
along the ``data hypersurfaces'' $\Sigma_0^{U_0}$ and 
$\mathcal{P}_u^{2 \TranminusdatasizeWithFactor^{-1}}$.
However, to close our estimates, we only need to make assumptions
on various Sobolev and Lebesgue norms of the data, 
where the norms are in defined terms of the geometric coordinates and commutation vectorfields. 
In this subsubsection, we state the norm assumptions,
which involve additional three parameters, denoted by 
$\Psiep$,
$\mathring{\upepsilon}$, 
and $\Trandatasize$.
We note that $\Trandatasize$ does not need to be small,
and that the same is true for the parameter 
$\TranminusdatasizeWithFactor$ from Def.\ \ref{D:SHOCKFORMATIONQUANTITY}.
We will describe our 
smallness assumptions on $\Psiep$ 
and $\mathring{\upepsilon}$ in Subsect.\ \ref{SS:SMALLNESSASSUMPTIONS}.

We assume that the data verify the following size assumptions
(see Subsect.\ \ref{SS:STRINGSOFCOMMUTATIONVECTORFIELDS} regarding the vectorfield differential operator notation).

\medskip
\noindent \underline{$L^2$ \textbf{Assumptions along} $\Sigma_0^1$}.
\begin{align} 
	\left\|
		\Fullset_*^{[1,\Ntop];1} \Psi
	\right\|_{L^2(\Sigma_0^1)},
		\,
	\left\|
		\Fullset^{\leq \Ntop-1;1} \noshock
	\right\|_{L^2(\Sigma_0^1)},
		\,
	\left\|
		\Fullset^{\leq \Ntop-1;1} \diffnoshock
	\right\|_{L^2(\Sigma_0^1)}
	\leq \mathring{\upepsilon}.
	\label{E:L2SMALLDATAALONGSIGMA0}
\end{align}

\medskip

\noindent \underline{$L^{\infty}$ \textbf{Assumptions along} $\Sigma_0^1$}.
\begin{subequations}
\begin{align} 
	\left\|
		\Psi
	\right\|_{L^{\infty}(\Sigma_0^1)}
	& \leq \Psiep,
	\label{E:PSIITSELFLINFTYSMALLDATAALONGSIGMA0}
		\\
	\left\|
		\Fullset_*^{[1,\Nmid];1} \Psi
	\right\|_{L^{\infty}(\Sigma_0^1)},
		\,
	\left\|
		\Fullset^{\leq \Nmid-1;1} \noshock
	\right\|_{L^{\infty}(\Sigma_0^1)},
		\,
	\left\|
		\Fullset^{\leq \Nmid-1;1} \diffnoshock
	\right\|_{L^{\infty}(\Sigma_0^1)}
	& \leq \mathring{\upepsilon},
	\label{E:LINFTYSMALLDATAALONGSIGMA0}
		\\
	\left\|
		\Rad \Psi
	\right\|_{L^{\infty}(\Sigma_0^1)}
	& \leq \Trandatasize.
		\label{E:LINFTYLARGEDATAALONGSIGMA0} 
\end{align}
\end{subequations}

\medskip

\noindent \underline{\textbf{Assumptions along} $\mathcal{P}_0^{2 \TranminusdatasizeWithFactor^{-1}}$}.
\begin{align} 
	\left\|
		\Fullset^{\leq \Ntop-1;1} \noshock
	\right\|_{L^2(\mathcal{P}_0^{2 \TranminusdatasizeWithFactor^{-1}})},
		\,
	\left\|
		\Fullset^{\leq \Ntop-1;1} \diffnoshock
	\right\|_{L^2(\mathcal{P}_0^{2 \TranminusdatasizeWithFactor^{-1}})}
	\leq \mathring{\upepsilon}.
	\label{E:L2SMALLDATAALONGP0}
\end{align}

\medskip

\noindent \underline{\textbf{Assumptions along} $\mathcal{T}_{0,u}$}.
We assume that for $u \in [0,1]$, we have
\begin{align} 
	\left\|
		\Tanset^{\leq \Ntop-2} \noshock
	\right\|_{L^2(\mathcal{T}_{0,u})},
		\,
	\left\|
		\Tanset^{\leq \Ntop-2} \diffnoshock
	\right\|_{L^2(\mathcal{T}_{0,u})}
	\leq \mathring{\upepsilon}.
	\label{E:L2SMALLDATAALONGELL0U}
\end{align}

\begin{remark}
	Roughly, we will study solutions that are perturbations of
	non-trivial solutions with $\mathring{\upepsilon} = 0$.
	Note that $\mathring{\upepsilon} = 0$ corresponds to
	a simple plane symmetric wave, as we described in Subsect.\ \ref{SS:SIMPLEWAVS}.
	Note also that $\Psiep$, $\TranminusdatasizeWithFactor$, and $\Trandatasize$
	are generally non-zero for simple plane symmetric waves.
\end{remark}

\subsubsection{Estimates for the initial data of the remaining geometric quantities}
\label{SSS:DATAOFOTHERDATASIZE}
To close our proof, we will have to estimate the scalar functions
$\upmu$,
$\upxi_j^{(Small)}$, 
$\CoordAngSmallcomp{i}{j}$,
and
$\Xi^j$
featured in the array \eqref{E:BADARRAY} and definition \eqref{E:RADDECOMPINTOPARTIALUPLUSANGULAR}.
In this subsubsection, as a preliminary step, 
we estimate the size of their data along $\Sigma_0^1$.

\begin{lemma}[\textbf{Estimates for the data of} 
$\upmu$,
$\upxi_j^{(Small)}$, 
$\CoordAngSmallcomp{i}{j}$,
\textbf{and}
$\Xi^j$
]
\label{L:ESTIMATESFORNONLINEARGEOMETRICOPTICS}
Under the data-size assumptions of Subsubsect.\ \ref{SSS:DATA},
there exists a constant $C > 0$ depending on the parameter $\Trandatasize$ from \eqref{E:LINFTYLARGEDATAALONGSIGMA0}
and a constant $C_{\star} > 0$ that \textbf{does not depend on} $\Trandatasize$
such that the following estimates hold for the scalar functions
$\upmu$,
$\upxi_j^{(Small)}$, 
$\CoordAngSmallcomp{i}{j}$
and
$\Xi^j$
defined in 
Defs.~\ref{D:MUDEF} and \ref{D:PERTURBEDPART}
and equation \eqref{E:RADDECOMPINTOPARTIALUPLUSANGULAR},
whenever $\Psiep$ and $\mathring{\upepsilon}$ are sufficiently small
(see Subsect.\ \ref{SS:STRINGSOFCOMMUTATIONVECTORFIELDS} regarding the vectorfield notation):
\begin{subequations}
\begin{align}
	\left\|
		\Tanset_*^{[1,\Ntop-1]} \upmu
	\right\|_{L^2(\Sigma_0^1)}
	& \leq C \mathring{\upepsilon}
		\label{E:MUDATATANGENTIALDERIVATIVESL2SMALL},
			\\
	\left\|
		\upmu - 1
	\right\|_{L^{\infty}(\Sigma_0^1)}
	& \leq C_{\star} (\Psiep + \mathring{\upepsilon}),
		\label{E:MUMINUSONEDATALINFINITYSMALL}
			\\
	\left\|
		\Lunit \upmu
	\right\|_{L^{\infty}(\Sigma_0^1)}
	& \leq C,
		\label{E:LUNITMUDATALINFINITYLARGE}
			\\
	\left\|
		\Tanset_*^{[1,\Nmid-1]} \upmu
	\right\|_{L^{\infty}(\Sigma_0^1)}
	& \leq C \mathring{\upepsilon},
		\label{E:MUDATATANGENTIALLINFINITYDERIVATIVESSMALL}
\end{align}
\end{subequations}

\begin{subequations}
\begin{align}
	\left\|
		\upxi_j^{(Small)}
	\right\|_{L^{\infty}(\Sigma_0^1)}
	& \leq C_{\star} (\Psiep + \mathring{\upepsilon}) \delta_j^1,
		\label{E:XISMALLJITSELFDATATALLDERIVATIVESLINFINITYSMALL}
			\\
	\left\|
		\Fullset_*^{[1,\Ntop-1];1} \upxi_j^{(Small)}
	\right\|_{L^2(\Sigma_0^1)}
	& \leq C \mathring{\upepsilon},
		\label{E:XISMALLJDATATALLDERIVATIVESL2SMALL}
			\\
\left\|
		\Fullset_*^{[1,\Nmid-1];1} \upxi_j^{(Small)}
	\right\|_{L^{\infty}(\Sigma_0^1)}
	& \leq C \mathring{\upepsilon},
		\label{E:XISMALLJDATATALLDERIVATIVESLINFINITYSMALL}
			\\
	\left\|
		\Rad \upxi_j^{(Small)}
	\right\|_{L^{\infty}(\Sigma_0^1)}
	& \leq C,
		\label{E:XISMALLJDATATARADLINFINITYLARGE}
\end{align}
\end{subequations}

\begin{subequations}
\begin{align}
	\left\|
		\Fullset^{\leq \Ntop-1;1} \CoordAngSmallcomp{i}{j}
	\right\|_{L^2(\Sigma_0^1)}
	& \leq C \mathring{\upepsilon},
		\label{E:COORDANGJSMALLDATAALLDERIVATIVEL2SMALL}
			\\
	\left\|
		\Fullset^{\leq \Nmid-1;1} \CoordAngSmallcomp{i}{j}
	\right\|_{L^{\infty}(\Sigma_0^1)}
	& \leq C \mathring{\upepsilon},
		\label{E:COORDANGJSMALLDATAALLDERIVATIVESLINFINITYSMALL}
\end{align}
\end{subequations}

\begin{subequations}
\begin{align}
	\left\|
		\Tanset^{\leq \Ntop-1} \Xi^j
	\right\|_{L^2(\Sigma_0^1)}
	& \leq C,
		\label{E:BIGXIJDATAALLDERIVATIVESL2}
			\\
	\left\|
		\Tanset^{\leq \Nmid-1} \Xi^j
	\right\|_{L^{\infty}(\Sigma_0^1)}
	& \leq C.
			\label{E:BIGXIJDATAALLDERIVATIVESLINFINITY}
\end{align}
\end{subequations}

\end{lemma}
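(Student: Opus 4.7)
The plan is to exploit the explicit initial-condition formulas already derived for the four quantities at $t=0$, namely \eqref{E:MUIC}--\eqref{E:XIJSMALLIC}, \eqref{E:COORDANGSMALLCOMPONENTRECTANGULAR}, and \eqref{E:BIGXIJINITIALCONDITION}, combined with the evolution equations \eqref{E:LUNITUPMU}, \eqref{E:LUNITUPXIJ}, \eqref{E:COORDANGSMALLCOMPONENTRECTANGULAR}, and \eqref{E:BIGXIJEVOLUTIONEQUATION}. The essential observation is that on $\Sigma_0$ the $\mathcal{P}_u$-tangential vectorfields admit explicit Cartesian descriptions: by construction $\vartheta^i|_{\Sigma_0}=x^i$, so $\CoordAng{i}|_{\Sigma_0}=\partial_i$, while $\Rad|_{\Sigma_0}$ is, by \eqref{E:RADJ}, an explicit $\Sigma_0$-tangent combination of $\partial_j$ with coefficients that are smooth functions of $\upmu|_{\Sigma_0}$, $\Psi|_{\Sigma_0}$, $\noshock|_{\Sigma_0}$. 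Meanwhile, $\Lunit$-derivatives of any of the four target quantities are reduced, via their evolution equations, to expressions in their $\mathcal{P}_u$-tangential and $\Rad$-derivatives and those of $\Psi$ and $\noshock$. Iterating, every $\Fullset^{\vec I}$-derivative of each target quantity, restricted to $\Sigma_0$, can be rewritten as a polynomial in Cartesian spatial derivatives of $\Psi, \noshock$ with smooth coefficients, and the desired bounds then follow directly from the data-size assumptions \eqref{E:L2SMALLDATAALONGSIGMA0}--\eqref{E:LINFTYLARGEDATAALONGSIGMA0}.

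I would proceed quantity by quantity. For $\upmu$, the bound \eqref{E:MUMINUSONEDATALINFINITYSMALL} is immediate from \eqref{E:MUIC}, and \eqref{E:LUNITMUDATALINFINITYLARGE} follows from \eqref{E:LUNITUPMU} because the right-hand side contains the factor $\Rad\Psi$, controlled only by $\Trandatasize$ via \eqref{E:LINFTYLARGEDATAALONGSIGMA0}. For the tangential bounds \eqref{E:MUDATATANGENTIALDERIVATIVESL2SMALL} and \eqref{E:MUDATATANGENTIALLINFINITYDERIVATIVESSMALL}, any operator in $\Tanset_*^{[1,N]}$ contains either a $\CoordAng{i}$-factor or at least two $\Lunit$-factors; expanding via \eqref{E:LUNITUPMU} and the commutator identities of Lemma~\ref{L:VECTORFIELDCOMMUTATORIDENTITIES}, each term becomes a product of smooth functions of $\BadVar$ times a derivative of $\Psi$ or $\noshock$ in which at least one $\mathcal{P}_u$-tangent differentiation has been transferred onto $\Psi$ or $\noshock$. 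Such derivatives fall under $\Fullset_*^{[1,\cdot];1}\Psi$ or $\Fullset^{\leq \cdot;1}\noshock$ and are therefore bounded by $\mathring{\upepsilon}$.

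The analysis of $\upxi_j^{(Small)}$ is strictly parallel: \eqref{E:XIJSMALLIC} gives \eqref{E:XISMALLJITSELFDATATALLDERIVATIVESLINFINITYSMALL} immediately; \eqref{E:XISMALLJDATATARADLINFINITYLARGE} is non-small because $\Rad$ differentiates $\Psi$ directly; and \eqref{E:XISMALLJDATATALLDERIVATIVESL2SMALL}, \eqref{E:XISMALLJDATATALLDERIVATIVESLINFINITYSMALL} follow because \eqref{E:LUNITUPXIJ} expresses $\Lunit \upxi_j^{(Small)}$ using only $\Lunit$- and $\CoordAng{i}$-derivatives of $\Lunit^a=\smoothfunction(\Psi,\noshock)$, so every $\Fullset_*^{[1,\cdot];1}$-derivative picks up smallness. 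For $\CoordAngSmallcomp{i}{j}$, the initial condition $\CoordAngSmallcomp{i}{j}|_{\Sigma_0}\equiv 0$ kills all purely $\Sigma_0$-tangential derivatives, so only mixed derivatives containing at least one $\Lunit$ factor contribute; these are handled by \eqref{E:COORDANGSMALLCOMPONENTRECTANGULAR}, which again trades $\Lunit$ for a $\CoordAng{i}$-differentiation of $\smoothfunction(\Psi,\noshock)$, giving smallness. For $\Xi^j$, \eqref{E:BIGXIJINITIALCONDITION} yields an $O(1)$ bound at $t=0$ (not small), and higher $\Tanset^{\leq \Ntop-1}$-derivatives combine $\Sigma_0$-spatial derivatives of $\smoothfunction(\Psi,\noshock)$ with iterated use of \eqref{E:BIGXIJEVOLUTIONEQUATION}; the result is a polynomial in bounded quantities, proving \eqref{E:BIGXIJDATAALLDERIVATIVESL2}--\eqref{E:BIGXIJDATAALLDERIVATIVESLINFINITY}.

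The main obstacle is organizational rather than analytic: one needs an induction on the number of derivatives that keeps careful track of how many $\mathcal{P}_u$-tangential differentiations have been transferred to $\Psi$ and $\noshock$ at each step, and in particular verifies that no term ever acquires more than one $\Rad$-factor hitting $\Psi$ without a compensating $\mathcal{P}_u$-tangent derivative landing somewhere. The commutator identities \eqref{E:LUNITANGULARCOMMUTATOR}--\eqref{E:RADCOORDANGCOMMUTATOR} are what make this bookkeeping work: they ensure that any reordering of a string from $\Fullset$ produces only lower-order terms with the same smallness budget, so that a straightforward induction on $|\vec I|$ establishes the claimed bounds. Sobolev and $L^\infty$ estimates on the resulting polynomial expressions then follow from the hypotheses \eqref{E:L2SMALLDATAALONGSIGMA0}--\eqref{E:LINFTYLARGEDATAALONGSIGMA0} together with the product-rule estimates standard in this setting; constants depending on $\Trandatasize$ arise only when an undifferentiated $\Rad\Psi$ factor survives, which occurs precisely in the non-small estimates \eqref{E:LUNITMUDATALINFINITYLARGE}, \eqref{E:XISMALLJDATATARADLINFINITYLARGE}, \eqref{E:BIGXIJDATAALLDERIVATIVESL2}, \eqref{E:BIGXIJDATAALLDERIVATIVESLINFINITY}.
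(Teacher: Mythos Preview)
Your proposal is correct and follows essentially the same approach as the paper's proof sketch: both use the explicit initial conditions \eqref{E:MUIC}--\eqref{E:XIJSMALLIC}, \eqref{E:COORDANGSMALLCOMPONENTRECTANGULAR}, \eqref{E:BIGXIJINITIALCONDITION} together with the evolution equations \eqref{E:LUNITUPMU}--\eqref{E:LUNITUPXIJ}, \eqref{E:COORDANGSMALLCOMPONENTRECTANGULAR}, \eqref{E:BIGXIJEVOLUTIONEQUATION} to reduce all $\Fullset$-derivatives along $\Sigma_0$ to expressions in the $\Sigma_0$-tangent derivatives of $\Psi$ and $\noshock$, invoking the commutator identities \eqref{E:LUNITANGULARCOMMUTATOR}--\eqref{E:RADCOORDANGCOMMUTATOR} to ensure that commutator terms carry a $\CoordAng{i}$-factor and hence the requisite $\mathring{\upepsilon}$-smallness. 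Your quantity-by-quantity discussion and identification of the non-small cases matches the paper's treatment exactly.
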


\begin{remark}[\textbf{The ``non-small'' quantities}]
Note that the only estimates not featuring the smallness parameters
$\Psiep$ 
or
$\mathring{\upepsilon}$
are 
\eqref{E:LUNITMUDATALINFINITYLARGE}, 
\eqref{E:XISMALLJDATATARADLINFINITYLARGE},
\eqref{E:BIGXIJDATAALLDERIVATIVESL2},
and 
\eqref{E:BIGXIJDATAALLDERIVATIVESLINFINITY}.

\end{remark}

\begin{proof}[Proof sketch]
We only sketch the proof since it is standard but has a tedious component 
that is similar to other analysis that we carry out later: 
commutator estimates of the type proved in Lemma~\ref{L:COMMUTATORESTIMATES},
based on the vectorfield commutator identities \eqref{E:LUNITANGULARCOMMUTATOR}-\eqref{E:RADCOORDANGCOMMUTATOR}.

To proceed, we use
Lemmas~\ref{L:EVOLUTIONFORUPMUANDUPXI}
and
\ref{L:COORDANGCOMPONENTRECTANGULAR},
Cor.\ \ref{C:BIGXIJEVOLUTIONEQUATION}, 
and the fact that $\Lunit^{\alpha}$ and $\Radunit^{\alpha}$ are
smooth functions of $(\Psi,\noshock)$
(the latter by \eqref{E:RADUNITJ})
to deduce the following schematic relationships,
which hold along $\Sigma_0$ (with $\smoothfunction$ smooth and with $\Singletan \in \Tanset$):
\begin{align}
	(\upmu -1)|_{\Sigma_0}
	& = (\Psi,\noshock) \cdot \smoothfunction(\Psi,\noshock),
		\label{E:MUFUNCTIONALFORMALONGSIGMA0} 
		\\
	\upxi_j^{(Small)}|_{\Sigma_0}
	& = (\Psi,\noshock) \cdot \smoothfunction(\Psi,\noshock) \delta_j^1,
		\label{E:XIJSMALLFUNCTIONALFORMALONGSIGMA0} 
		\\
	\CoordAngSmallcomp{i}{j}|_{\Sigma_0}
	& = 0,
		\label{E:CORDANGCOMPSAREINITIALLYZERO} \\
	\Xi^j|_{\Sigma_0}
	& = \smoothfunction(\Psi,\noshock),
		\label{E:BIGXIJFUNCTIONALFORMALONGSIGMA0}
\end{align}
as well as the following evolution equations,
also written in schematic form:
\begin{align}
	\Lunit \upmu
	& = 
	\smoothfunction(\GdVar) \Rad \Psi 
	+ 
	\upmu \smoothfunction(\GdVar) \Lunit \Psi
	+
	\upmu \smoothfunction(\GdVar) \diffnoshock,
		\label{E:LUNITMUDATASCHEMATIC} \\
	\Lunit \upxi_j^{(Small)}
	& = 
	\smoothfunction(\GdVar) \Singletan \Psi 
	+
	\smoothfunction(\GdVar) \diffnoshock,
		\label{E:LUNITXIJSMALLDATASCHEMATIC} \\
	\Lunit \CoordAngSmallcomp{i}{j}
	& = 
	\smoothfunction(\GdVar) \Singletan \Psi 
	+
	\smoothfunction(\GdVar) \diffnoshock,
		\label{E:LUNITCOORDANGJSMALLDATASCHEMATIC} \\
	\Lunit \Xi^j 
	& = 	(\Xi^1,\cdots,\Xi^n)
				\cdot
				\smoothfunction(\GdVar,\Singletan \Psi)
				+ 
				\smoothfunction(\BadVar,\Lunit \Psi,\Rad \Psi).
	\label{E:LUNITBIGXIJSCHEMATIC}
\end{align}
By repeatedly differentiating 
\eqref{E:LUNITMUDATASCHEMATIC}-\eqref{E:LUNITBIGXIJSCHEMATIC}
with the elements of $\Fullset$
and using the commutator identities \eqref{E:LUNITANGULARCOMMUTATOR}-\eqref{E:RADCOORDANGCOMMUTATOR},
we can algebraically express 
all quantities that we need estimate in terms 
of the derivatives of
$\upmu$,
$\upxi_j^{(Small)}$, 
$\CoordAngSmallcomp{i}{j}$,
and
$\Xi^j$
with respect to the ($\Sigma_t$-tangent) 
vectorfields in $\lbrace \Rad,\CoordAng{2},\cdots,\CoordAng{n} \rbrace$
and the $\Fullset$ derivatives of 
$\Psi$,
$\noshock$,
and 
$\diffnoshock$.
Then using
\eqref{E:MUFUNCTIONALFORMALONGSIGMA0}-\eqref{E:BIGXIJFUNCTIONALFORMALONGSIGMA0},
we can can express, along $\Sigma_0$, 
the derivatives of
$\upmu$,
$\upxi_j^{(Small)}$, 
$\CoordAngSmallcomp{i}{j}$
and
$\Xi^j$
with respect to the elements of $\lbrace \Rad,\CoordAng{2},\cdots,\CoordAng{n} \rbrace$
in terms of
the derivatives of
$\Psi$ and $\noshock$
with respect to the elements of $\lbrace \Rad,\CoordAng{2},\cdots,\CoordAng{n} \rbrace$.
The estimates \eqref{E:MUDATATANGENTIALDERIVATIVESL2SMALL}-\eqref{E:BIGXIJDATAALLDERIVATIVESLINFINITY}
then follow from these algebraic expressions,
the data-size assumptions \eqref{E:L2SMALLDATAALONGSIGMA0}-\eqref{E:LINFTYLARGEDATAALONGSIGMA0},
and the standard Sobolev calculus.
We stress that the identities \eqref{E:LUNITANGULARCOMMUTATOR}-\eqref{E:RADCOORDANGCOMMUTATOR}
show that commutator terms contain a factor involving a differentiation with respect to
one of the $\CoordAng{i}$, which, in view of our data-size assumptions from
Subsubsect.\ \ref{SSS:DATA}, 
leads to a gain in $\mathcal{O}(\mathring{\upepsilon})$ 
smallness for all commutator terms.

\end{proof}

\subsection{Bootstrap assumptions}
\label{SS:BOOTSTRAP}
In this subsection, we state the bootstrap assumptions that we use to control the solution.

\subsubsection{\texorpdfstring{$\Tboot$}{The bootstrap time}, the positivity of 
\texorpdfstring{$\upmu$}{the inverse foliation density}, and the diffeomorphism property of 
\texorpdfstring{$\Upsilon$}{the change of variables map}}
\label{SS:SIZEOFTBOOT}
We now state some basic bootstrap assumptions.
We start by fixing a real number $\Tboot$ with
\begin{align} \label{E:TBOOTBOUNDS}
	0 < \Tboot \leq 2 \TranminusdatasizeWithFactor^{-1},
\end{align}
where $\TranminusdatasizeWithFactor$ is the data-dependent parameter
from Def.\ \ref{D:SHOCKFORMATIONQUANTITY}.

We assume that on the spacetime domain $\mathcal{M}_{\Tboot,U_0}$
(see \eqref{E:MTUDEF}), we have
\begin{align} \label{E:BOOTSTRAPMUPOSITIVITY} \tag{$\mathbf{BA} \upmu > 0$}
	\upmu > 0.
\end{align}
Inequality \eqref{E:BOOTSTRAPMUPOSITIVITY} essentially means that no shocks are present in
$\mathcal{M}_{\Tboot,U_0}$.

We also assume that
\begin{align} \label{E:BOOTSTRAPCHOVISDIFFEO}
	& \mbox{The change of variables map $\Upsilon$ from Def.\ \ref{D:CHOV}
	is a diffeomorphism from} \\
	& [0,\Tboot) \times [0,U_0] \times \mathbb{T}^{n-1}
	\mbox{ onto its image}.
	\notag
\end{align}

\subsubsection{Fundamental $L^{\infty}$ bootstrap assumptions}
\label{SSS:FUNDAMENTALBOOT}
In this section, we state our fundamental $L^{\infty}$ bootstrap assumptions.
We will derive strict improvements of the fundamental
bootstrap assumptions in Cor.\ \ref{C:IMPROVEMENTOFFUNDAMANETALBOOTSTRAPASSUMPTIONS},
on the basis of a priori energy estimates and Sobolev embedding.

\medskip

\noindent \underline{\textbf{Fundamental bootstrap assumptions for } $\noshock$ \textbf{and} $\diffnoshock$}.
We assume that the following inequalities hold
for $(t,u) \in [0,\Tboot) \times [0,U_0]$,
$(\alpha = 0,\cdots,n)$,
$(J=1,\cdots,M)$:
\begin{align} \label{E:FUNDAMENTALBOOTSTRAP}
	\left\|
		\Tanset^{\leq \Nmid-1} \noshockuparg{J}
	\right\|_{L^{\infty}(\Sigma_t^u)},
		\,
	\left\|
		\Tanset^{\leq \Nmid-1} \diffnoshockdoublearg{\alpha}{J}
	\right\|_{L^{\infty}(\Sigma_t^u)}
	& \leq \varepsilon,
\end{align}
where $\varepsilon > 0$ is a small bootstrap parameter
(see Subsect.\ \ref{SS:SMALLNESSASSUMPTIONS} for discussion on the required smallness).

\subsubsection{Auxiliary bootstrap assumptions}
\label{SSS:AUXBOOT}
In addition to the fundamental bootstrap assumptions, 
we find it convenient to make auxiliary bootstrap assumptions,
which we state in this subsubsection.
We will derive strict improvements of the auxiliary bootstrap assumptions
in Prop.\ \ref{P:POINTWISEANDIMPROVEMENTOFAUX}.

\noindent \underline{\textbf{Auxiliary bootstrap assumptions for } $\Psi$}.
We assume that the following inequalities hold
for $(t,u) \in [0,\Tboot) \times [0,U_0]$:
\begin{subequations}
\begin{align} 
	\left\|
		\Psi
	\right\|_{L^{\infty}(\Sigma_t^u)}
	& \leq \Psiep + \varepsilon^{1/2},
	\label{E:PSIITSELFAUXBOOT}
		\\
	\left\|
		\Fullset_*^{[1,\Nmid;]1} \Psi
	\right\|_{L^{\infty}(\Sigma_t^u)}
	& \leq \varepsilon^{1/2},
		\label{E:PSIAUXBOOT} \\
	\left\|
		\Rad \Psi
	\right\|_{L^{\infty}(\Sigma_t^u)}
	& \leq 
	\Trandatasize
	+
	\varepsilon^{1/2}.
	\label{E:RADPSIAUXBOOT}
\end{align}
\end{subequations}

\noindent \underline{\textbf{Auxiliary bootstrap assumptions for } $\noshock$ \textbf{and} $\diffnoshock$}.
We assume that the following inequalities hold
for $(t,u) \in [0,\Tboot) \times [0,U_0]$:
\begin{subequations}
\begin{align} \label{E:NOSHOCKAUXBOOT}
	\left\|
		\Fullset^{\leq \Nmid-1;1} \noshock
	\right\|_{L^{\infty}(\Sigma_t^u)}
	& \leq \varepsilon^{1/2},
		\\
	\left\|
		\Fullset^{\leq \Nmid-1;1} \diffnoshock
	\right\|_{L^{\infty}(\Sigma_t^u)}
	& \leq 
	\varepsilon^{1/2}.
	\label{E:DIFFNOSHOCKAUXBOOT}
\end{align}
\end{subequations}

\noindent \underline{\textbf{Auxiliary bootstrap assumptions for } $\upmu$, $\upxi_j^{(Small)}$, \textbf{and} $\CoordAngSmallcomp{j}{k}$}.
We assume that the following inequalities hold
for $(t,u) \in [0,\Tboot) \times [0,U_0]$:
\begin{subequations}
\begin{align}
	\left\|
		\upmu
	\right\|_{L^{\infty}(\Sigma_t^u)}
	& 
		\leq 1
		+
		2
		\TranminusdatasizeWithFactor^{-1}
		\|\blowupcoeff \Rad \Psi \|_{L^{\infty}(\Sigma_0^u)}
		+
		\Psiep^{1/2}
		+ 
		\varepsilon^{1/2},
		\label{E:MUAUXBOOT} \\
	\left\|
		\Lunit \upmu
	\right\|_{L^{\infty}(\Sigma_t^u)}
	& \leq 
			\|\blowupcoeff \Rad \Psi \|_{L^{\infty}(\Sigma_0^u)}
			+
			\varepsilon^{1/2},
		\label{E:LUNITMUAUXBOOT} \\
	\left\|
		\Tanset_*^{[1,\Nmid-1]} \upmu
	\right\|_{L^{\infty}(\Sigma_t^u)}
	& \leq \varepsilon^{1/2},
		\label{E:MUSMALLAUXBOOT}
\end{align}
\end{subequations}
where $\blowupcoeff \neq 0$ 
(see Remark~\ref{R:BLOWUPCOEFFICIENTISNONZERO})
is the blowup-coefficient from Def.\ \ref{D:GENIUNELYNONLINEARCONSTANT}
and $\|\blowupcoeff \Rad \Psi \|_{L^{\infty}(\Sigma_0^u)} \leq C_{\star} \Trandatasize$.

Moreover, we assume that
\begin{subequations}
\begin{align}
	\left\|
		\upxi_j^{(Small)}
	\right\|_{L^{\infty}(\Sigma_t^u)}
	& \leq \Psiep^{1/2} + \mathring{\upepsilon}^{1/2},
		\label{E:CONORMALONEFORMITSELFCOMPONENTSAUXBOOT} 
		\\
	\left\|
		\Fullset_*^{[1,\Nmid-1];1} \upxi_j^{(Small)}
	\right\|_{L^{\infty}(\Sigma_t^u)}
	& \leq \varepsilon^{1/2},
		\label{E:CONORMALONEFORMCOMPONENTSAUXBOOT} 
		\\
	\left\|
		\Rad \upxi_j^{(Small)}
	\right\|_{L^{\infty}(\Sigma_t^u)}
	& \leq 
		\left\|
			\Rad \upxi_j^{(Small)}
		\right\|_{L^{\infty}(\Sigma_0^u)}
		+ \varepsilon^{1/2},
		\label{E:RADDERIVATIVECONORMALONEFORMCOMPONENTSAUXBOOT} 
		\\
	\left\|
		\Fullset^{\leq \Nmid-1;1} \CoordAngSmallcomp{j}{k}
	\right\|_{L^{\infty}(\Sigma_t^u)}
	& \leq \varepsilon^{1/2}.
	\label{E:ANGULARVECTORFIELDCOMPONENTSAUXBOOT}
\end{align}
\end{subequations}

\subsection{Smallness assumptions}
\label{SS:SMALLNESSASSUMPTIONS}
For the remainder of the article, 
when we say that ``$A$ is small relative to $B$,''
we mean that there exists a continuous increasing function 
$f :(0,\infty) \rightarrow (0,\infty)$ 
such that 
$
\displaystyle
A < f(B)
$.
To avoid lengthening the paper, we typically do not 
specify the form of $f$.

We make the following
smallness assumptions, where we will
continually adjust the required smallness
in order to close our estimates.
\begin{itemize}
	\item The bootstrap parameter $\varepsilon$ 
		is small relative to $1$.
	\item $\varepsilon$ 
		is small relative to $\Trandatasize^{-1}$,
		where $\Trandatasize$ is the data-size parameter 
		from Subsubsect.\ \ref{SSS:DATA}.
	\item $\varepsilon$ is small relative to
		the data-size parameter $\TranminusdatasizeWithFactor$ 
		from Def.\ \ref{D:SHOCKFORMATIONQUANTITY}.
	\item The data-size parameter $\Psiep$ 
		from Subsubsect.\ \ref{SSS:DATA} is small relative to $1$.
	\item 
	\begin{align} \label{E:DATAEPSILONVSBOOTSTRAPEPSILON}
		\mathring{\upepsilon} 
		& \leq \varepsilon
		< \Psiep,
	\end{align}
	where $\mathring{\upepsilon}$ is the data smallness parameter from 
	from Subsubsect.\ \ref{SSS:DATA}.
\end{itemize}
The first two assumptions will allow us to treat error terms of size 
$\varepsilon$
and 
$\varepsilon \Trandatasize$ as small quantities. The second assumption 
is relevant because the expected blowup-time is 
approximately $\TranminusdatasizeWithFactor^{-1}$,
and the assumption will allow us to show that various
error products featuring a small factor $\varepsilon$
remain small for $t < 2 \TranminusdatasizeWithFactor^{-1}$, 
which is plenty of time for us to show that a shock forms.
The smallness assumption on $\Psiep$ ensures that the solution remains
within the regime of hyperbolicity of the equations
and that $\blowupcoeff \neq 0$,
where $\blowupcoeff$ is the blowup-coefficient from
Def.\ \ref{D:GENIUNELYNONLINEARCONSTANT}.

\subsection{Existence of data verifying the size assumptions}
\label{SS:EXISTENCEOFDATA}
We now outline a proof that there exists an open set 
of data verifying the size assumptions of Subsect.\ \ref{SS:DATASIZE}
and the smallness assumptions of Subsect.\ \ref{SS:SMALLNESSASSUMPTIONS}. 
Since the assumptions are stable under Sobolev perturbations, 
it is enough to exhibit data corresponding to plane symmetric
solutions, that is, solutions that depend only on $t$ and $x^1$.
This means that along $\Sigma_0$, it is enough to exhibit appropriate 
data that depend only on $x^1$. 
To exhibit data for $\Psi$, we simply let $f(x^1)$ be any smooth non-trivial function 
that is compactly supported in $\Sigma_0^1$,
and we set $\Psi(0,x^1,\cdots,x^n) := \upkappa f(x^1)$,
where $\upkappa$ is a real parameter.
We then take vanishing data for $\noshock$, 
so that, as a consequence of the evolution
equation \eqref{E:INTRONONSHOCKEQUATION}, we have $\noshock \equiv 0$ and $\diffnoshock \equiv 0$.
With the help of these facts, it is straightforward to check that by choosing 
$\upkappa$ to be sufficiently small in magnitude,
we can satisfy all of the desired assumptions. More precisely, 
by construction, we
have $\mathring{\upepsilon} = 0$,
and by choosing $|\upkappa|$ to be small,
we can ensure that the quantity 
$\Psiep > 0$ on RHS~\eqref{E:PSIITSELFLINFTYSMALLDATAALONGSIGMA0} 
is as small as we want.

\subsection{Basic assumptions, facts, and estimates that we use silently}
\label{SS:OFTENUSEDESTIMATES}
In this subsection, we state some basic assumptions and conventions
that we silently use throughout the rest of the paper when deriving
estimates.

\begin{enumerate}
	\item All of the estimates that we derive
		hold on the bootstrap region $\mathcal{M}_{\Tboot,U_0}$.
		Moreover, in deriving estimates,
		we rely on the data-size assumptions and bootstrap assumptions 
		from Subsects.\ \ref{SS:DATASIZE}-\ref{SS:BOOTSTRAP},
		and the smallness assumptions of Subsect.\ \ref{SS:SMALLNESSASSUMPTIONS}.
	\item All quantities that we estimate can be controlled in terms of the quantities
		featured in the array $\BadVar$ from definition \eqref{E:BADARRAY}
		and their $\Fullset$-derivatives.
	\item We typically use the Leibniz rule for vectorfield differentiation
		when deriving pointwise estimates for the $\Fullset$-derivatives
		derivatives of products of the schematic form 
		$\prod_{i=1}^m p_i$. Our derivative counts are such that
		all $p_i$ except at most one are uniformly bounded in $L^{\infty}$
		on $\mathcal{M}_{\Tboot,U_0}$.
	\item The constants $C > 0$ in all of our estimates are allowed to depend on 
		the data-size parameters
		$\Trandatasize$
		and 
		$\TranminusdatasizeWithFactor^{-1}$,
		as we described in Subsect.\ \ref{SS:NOTATIONANDINDEXCONVENTIONS}.
	\item The constants $C_{\star} > 0$ do not depend on
	$\Trandatasize$
	or
	$\TranminusdatasizeWithFactor$,
	as we described in Subsect.\ \ref{SS:NOTATIONANDINDEXCONVENTIONS}.
\end{enumerate}

\subsection{Omission of the independent variables in some expressions}
\label{SS:OMISSION}
We use the following notational conventions in the rest of the article.
\begin{itemize}
	\item
	Many of our pointwise estimates are stated in the form
	\[
		|f_1| \lesssim F(t)|f_2|
	\]
	for some function $F$.
	Unless we otherwise indicate, it is understood that both $f_1$ and $f_2$
	are evaluated at the point with geometric coordinates
	$(t,u,\vartheta)$.
	\item Unless we otherwise indicate,
		in integrals $\int_{\mathcal{T}_{t,u}} f \, d \torusvol$,
		we view the integrand $f$ as a function of $(t,u,\vartheta)$, 
		and $\vartheta$ is the integration variable.
	\item Unless we otherwise indicate,
		in integrals $\int_{\Sigma_t^u} f \, d \tvol$,
		we view the integrand $f$ as a function of $(t,u',\vartheta)$, 
		and $(u',\vartheta)$ are the integration variables.
	\item Unless we otherwise indicate,
		in integrals $\int_{\mathcal{P}_u^t} f \, d \conevol$,
		we view the integrand $f$ as a function of 
		$(t',u,\vartheta)$, and $(t',\vartheta)$ are the integration variables.
	\item Unless we otherwise indicate,
		in integrals $\int_{\mathcal{M}_{t,u}} f \, d \vol$,
		we view the integrand $f$ as a function of $(t',u',\vartheta)$,
		and $(t',u',\vartheta)$ are the integration variables.
\end{itemize}

\section{Pointwise estimates and improvements of the auxiliary bootstrap assumptions}
\label{S:POINTWISEESTIMATESANDIMPROVEMENTOFAUX}
In this section, we use the data-size assumptions and bootstrap assumptions of Sect.\ \ref{S:NUMDERIVSDATASIZEANDBOOTSTRAPASSUMPTIONS}
to derive pointwise and $L^{\infty}$ estimates for various quantities.
The main result is Prop.\ \ref{P:POINTWISEANDIMPROVEMENTOFAUX}.
In particular, the results of this section yield strict improvements of the 
auxiliary bootstrap assumptions of Subsubsect.\ \ref{SSS:AUXBOOT}.

\begin{remark}
	Throughout this section, 
	we silently use the conventions described in Subsect.\ \ref{SS:OFTENUSEDESTIMATES}.
	Moreover, $\Ntop$ and $\Nmid$ denote the integers from Subsect.\ \ref{SS:NUMBEROFDERIVATIVES}.
\end{remark}

\subsection{Commutator estimates}
\label{SS:COMMUTATORESTIMATES}
We start by providing some commutator estimates that we will use throughout the analysis.

\begin{lemma}[\textbf{Commutator estimates}]
	\label{L:COMMUTATORESTIMATES}
	Let $1 \leq N \leq \Ntop$ be an integer,
	let $\vec{I}$ be a multi-index for the set $\Tanset$ of $\mathcal{P}_u$-tangent 
	commutation vectorfields such that $|\vec{I}| = N$,
	and let $\vec{J}$ be any permutation of $\vec{I}$
	(in particular, $|\vec{I}| = |\vec{J}| = N \leq \Ntop$).
	Then the following identity for scalar functions $f$ holds:
	\begin{align} \label{E:TANGENTVECTORFIELDSCOMMUTEWITHEACHOTHER} 
		\Tanset^{\vec{I}} f
		-
		\Tanset^{\vec{J}} f
		& = 0.
		\end{align}
	
	Let $1 \leq N \leq \Ntop$ be an integer.
	Then the following commutator estimate for scalar functions $f$ holds:
	\begin{align} \label{E:COMMUTATORESTIMATEFORLUNITWITHONETRANSVERSAL}
		\left|
			[\Lunit,\Fullset^{N;1}]f
		\right|
		& \lesssim 
				\left|
					\Tanset_*^{[1,N]} f
				\right|
				+
				\underbrace{
				\left|
					\Tanset_*^{[1,\lfloor N/2 \rfloor]} f
				\right|
				\left|
					\Fullset_*^{[1,N];1} \Psi
				\right|
				}_{\mbox{\upshape Absent if $N=1$}}
				+
				\underbrace{
				\left|
					\Tanset_*^{[1,\lfloor N/2 \rfloor]} f
				\right|
				\left|
					\Tanset_*^{[1,N-1]} \BadVar
				\right|}_{\mbox{\upshape Absent if $N=1$}}.
		\end{align}

	Let $2 \leq N \leq \Ntop$ be an integer,
	let $\vec{I} \in \mathcal{I}_*^{[1,N];1}$ (see Def.\ \ref{D:COMMUTATORMULTIINDICES}), 
	and let $\vec{J}$ be any permutation of $\vec{I}$.
	Then the following commutator estimate for scalar functions $f$ holds:
	\begin{align} \label{E:COMMUTATORESTIMATEFORONETRANSVERSAL}
		\left|
			\Fullset^{\vec{I}} f
			-
			\Fullset^{\vec{J}} f
		\right|
		& \lesssim 
				\left|
					\Tanset_*^{[1,N-1]} f
				\right|
				+
				\left|
					\Tanset_*^{[1,\lfloor N/2 \rfloor]} f
				\right|
				\left|
					\Fullset_*^{[1,N-1];1} \GdVar
				\right|
				+
				\left|
					\Tanset_*^{[1,\lfloor N/2 \rfloor]} f
				\right|
				\left|
					\Tanset_*^{[1,N-1]} \BadVar
				\right|.
		\end{align}
\end{lemma}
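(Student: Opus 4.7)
The proof naturally splits into three parts, each handled by induction using the basic commutator identities from Lemma~\ref{L:VECTORFIELDCOMMUTATORIDENTITIES}.

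\textbf{Part 1: the identity \eqref{E:TANGENTVECTORFIELDSCOMMUTEWITHEACHOTHER}.} This is immediate from \eqref{E:LUNITANGULARCOMMUTATOR}, which states that every pair of vectorfields in $\Tanset = \{\Lunit,\CoordAng{2},\dots,\CoordAng{n}\}$ commutes exactly. Therefore any two orderings of the same string produce the same differential operator when applied to a scalar function, so the difference vanishes identically.

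\textbf{Part 2: the bound \eqref{E:COMMUTATORESTIMATEFORLUNITWITHONETRANSVERSAL}.} My plan is induction on $N$. For the base case $N=1$, either $\Fullset^{1;1}\in\Tanset$, in which case $[\Lunit,\Fullset^{1;1}]f=0$ by \eqref{E:LUNITANGULARCOMMUTATOR}, or $\Fullset^{1;1}=\Rad$, in which case \eqref{E:LUNITRADCOMMUTATOR} gives $[\Lunit,\Rad]f=\sum_i \smoothfunction_i(\BadVar,\Lunit\Psi,\Rad\Psi)\CoordAng{i}f$, and the coefficients are $L^\infty$-bounded by the bootstrap assumptions of Subsect.~\ref{SS:BOOTSTRAP}, so this term is controlled by $|\Tanset_*^{[1,1]}f|$ (and the product terms on RHS~\eqref{E:COMMUTATORESTIMATEFORLUNITWITHONETRANSVERSAL} are absent, consistent with the lemma statement). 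For the inductive step, I would decompose $\Fullset^{N;1}=Z\,\Fullset^{N-1;1}$ with $Z\in\Fullset$ and use the elementary algebraic identity
\begin{align*}
[\Lunit,Z\,\Fullset^{N-1;1}]f
=[\Lunit,Z]\Fullset^{N-1;1}f + Z[\Lunit,\Fullset^{N-1;1}]f.
\end{align*}
The first term on the right is expanded via \eqref{E:LUNITANGULARCOMMUTATOR} or \eqref{E:LUNITRADCOMMUTATOR}, producing either zero or a sum of $\CoordAng{i}$ applied to $\Fullset^{N-1;1}f$ with coefficients that are smooth functions of $\BadVar$ and at most one $\Fullset^{1}$-derivative of $\Psi$; this gives a contribution to $|\Tanset_*^{[1,N]}f|$. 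The second term is treated by the inductive hypothesis, with the outer $Z$ distributed via Leibniz: when $Z$ falls on the coefficient factors, derivatives of $\BadVar$ or of $\Psi$ are produced, contributing to the product terms on RHS~\eqref{E:COMMUTATORESTIMATEFORLUNITWITHONETRANSVERSAL}; when $Z$ falls on the $f$-factor, the order of differentiation of $f$ goes up by one but remains within $\Tanset_*^{[1,N]}$ or $\Tanset_*^{[1,\lfloor N/2\rfloor]}$ after splitting by the Leibniz rule so that the larger factor (in terms of derivative count) is the $f$-factor.

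\textbf{Part 3: the permutation estimate \eqref{E:COMMUTATORESTIMATEFORONETRANSVERSAL}.} I would prove this by viewing $\vec J$ as obtained from $\vec I$ by a sequence of adjacent transpositions and bounding the difference by the accumulated commutator terms. Since $\vec I\in\mathcal{I}_*^{[1,N];1}$, at most one factor is $\Rad$, and any adjacent swap either involves two $\Tanset$-vectorfields (contributing nothing, by \eqref{E:LUNITANGULARCOMMUTATOR}) or swaps $\Rad$ past a $\Tanset$-vectorfield. In the latter case, the commutators \eqref{E:LUNITRADCOMMUTATOR} and \eqref{E:RADCOORDANGCOMMUTATOR} are $\mathcal{T}_{t,u}$-tangent and contribute a sum $\sum \smoothfunction\cdot\CoordAng{j}$ whose coefficients are smooth in $\BadVar$ and in first-order $\Singletan$- or $\Rad$-derivatives of $\GdVar$. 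When this replaces a $\Fullset$-vectorfield inside the string, the result is an operator of order $N-1$ that contains no $\Rad$ factor; applied to $f$ it produces $|\Tanset_*^{[1,N-1]}f|$. Applying the outer $(N-1)$ derivatives to the coefficients via Leibniz and splitting as in Part~2 produces the product terms on RHS~\eqref{E:COMMUTATORESTIMATEFORONETRANSVERSAL}, where the $\lfloor N/2\rfloor$ bound on the $f$-factor comes from the standard convention of putting the $L^\infty$ bound on the smaller factor in a Leibniz-type splitting.

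\textbf{Main obstacle.} The key technical difficulty is bookkeeping: carefully tracking that, after iterating the commutators, all error terms fit into the specified schematic form, and in particular that the singleton $\Rad$ factor never multiplies itself after commutation (which is built into the assumption $\vec I\in\mathcal{I}_*^{[1,N];1}$ and the ``$;1$'' class restriction). The derivative-count split $\lfloor N/2\rfloor$ versus $N-1$ relies on placing the $L^\infty$ bound on the smaller factor in each Leibniz split, and one must verify that each commutator produces coefficients of the stated schematic form $\smoothfunction(\BadVar,\dots)$, which is guaranteed by \eqref{E:LUNITRADCOMMUTATOR}--\eqref{E:RADCOORDANGCOMMUTATOR} under the smallness of $|\GdVar|$ enforced by the bootstrap assumptions.
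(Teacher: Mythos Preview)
Your proposal is correct and follows essentially the same approach as the paper: the paper's proof simply states that \eqref{E:TANGENTVECTORFIELDSCOMMUTEWITHEACHOTHER} is trivial from \eqref{E:LUNITANGULARCOMMUTATOR}, that \eqref{E:COMMUTATORESTIMATEFORLUNITWITHONETRANSVERSAL} is a straightforward consequence of \eqref{E:LUNITANGULARCOMMUTATOR}--\eqref{E:LUNITRADCOMMUTATOR} and the bootstrap assumptions, and that \eqref{E:COMMUTATORESTIMATEFORONETRANSVERSAL} follows similarly from \eqref{E:LUNITANGULARCOMMUTATOR}--\eqref{E:RADCOORDANGCOMMUTATOR} and the bootstrap assumptions. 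Your induction on $N$, adjacent-transposition reduction, and Leibniz splitting are exactly the details one would fill in behind the paper's ``straightforward,'' so there is no substantive difference.
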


\begin{proof}
	\eqref{E:TANGENTVECTORFIELDSCOMMUTEWITHEACHOTHER} is a trivial consequence
	of the commutation identity \eqref{E:LUNITANGULARCOMMUTATOR}.
	
	The estimate
	\eqref{E:COMMUTATORESTIMATEFORLUNITWITHONETRANSVERSAL}
	is a straightforward consequence of the commutation
	identities \eqref{E:LUNITANGULARCOMMUTATOR}-\eqref{E:LUNITRADCOMMUTATOR}
	and the bootstrap assumptions.
	
	Similarly, the estimate \eqref{E:COMMUTATORESTIMATEFORONETRANSVERSAL}
	is a straightforward consequence of the commutation
	identities \eqref{E:LUNITANGULARCOMMUTATOR}-\eqref{E:RADCOORDANGCOMMUTATOR}
	and the bootstrap assumptions.
\end{proof}

\subsection{Transversal derivatives in terms of tangential derivatives}
\label{SS:ALGEBRAICEXPRESSIONSFORTRANSVERSALDERIVATIVES}
The next lemma, which is algebraic in nature, plays a crucial role in controlling $\noshock$ and $\diffnoshock$.
Roughly, the lemma shows that the $\Rad$ derivative of these quantities can be expressed in terms
of their $\mathcal{P}_u$-tangential derivatives plus error terms. 
In particular, this means 
that we do not have to commute the evolution equations for
$\noshock$ and $\diffnoshock$ with $\Rad$ in order to control $\Rad \noshock$ and $\Rad \diffnoshockdownarg{\alpha}$;
we can instead use the equations to algebraically solve for the $\Rad$ derivative.
This is important because commuting these equations (which must be weighted with $\upmu$ to avoid singular terms)
with $\Rad$
would generate the error term $\Rad \upmu$, 
which is uncontrollable based on the degree of $\Rad$-differentiability 
that we have imposed on $\Psi$.

\begin{lemma}[\textbf{Algebraic expressions for transversal derivatives in terms of tangential derivatives}]
	\label{L:ALGEBRAICTRANSVERSALINTERMSOFTANGENTIAL}
	There exist smooth functions of $\GdVar$, schematically denoted by
	$\smoothfunction$, such that the following 
	algebraic identities hold whenever $|\GdVar|$ is sufficiently small
	(where $\Singletan \in \Tanset$ and $Z \in \Fullset$):
	\begin{align} \label{E:VTRANSVERSALINTERMSOFTANGENTIAL}
		\Rad \noshock
		& = \upmu \smoothfunction(\GdVar) \diffnoshock,
			\\
		\Rad \diffnoshockdownarg{\alpha}
		& = \smoothfunction(\BadVar) \Singletan \diffnoshock
				+
				\smoothfunction(\BadVar,Z \Psi) \diffnoshock.
				\label{E:VCARTESIANDIFFERENTITEDTRANSVERSALINTERMSOFTANGENTIAL}
	\end{align}
\end{lemma}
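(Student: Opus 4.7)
The plan is to derive both identities directly from the equations satisfied by $\noshock$ and $\diffnoshockdownarg{\alpha}$, combined with the Cartesian-to-geometric vectorfield expansions of Lemma~\ref{L:CARTESIANDERIVATIVESINTERMSOFGEOMETRICDERIVATIVES}.

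The identity \eqref{E:VTRANSVERSALINTERMSOFTANGENTIAL} is almost immediate and does not actually require the symmetric hyperbolic equation. Since $\Rad$ is $\Sigma_t$-tangent, we have $\Rad \noshock = \Rad^j \partial_j \noshock = \upmu \Radunit^j \diffnoshockdownarg{j}$ by \eqref{E:RADJ} and \eqref{E:CARTESIANDIFFNOSHOCK}. Because $\Radunit^j = -\Lunit^j$ is a smooth function of $(\Psi,\noshock)$, and hence of $\GdVar$, the right-hand side is of the schematic form $\upmu \smoothfunction(\GdVar) \diffnoshock$, as claimed.

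For \eqref{E:VCARTESIANDIFFERENTITEDTRANSVERSALINTERMSOFTANGENTIAL}, I would start from the evolution equation \eqref{E:CARTESIANDIFFERENTIATEDNONSHOCKEQUATION} and use \eqref{E:PARTIALTINTERMSOFGEO}--\eqref{E:PARTIALJINTERMSOFGEO} to rewrite $A^\beta \partial_\beta$ in terms of the geometric frame, isolating the $\Radunit$ terms:
\begin{align*}
(A^0 + A^j \upxi_j) \Radunit \diffnoshockdownarg{\alpha}
= -A^0 \Lunit \diffnoshockdownarg{\alpha}
- \sum_{i=2}^n \Bigl(\sum_{j=1}^n A^j \smoothfunction_{ij}(\GdVar)\Bigr) \CoordAng{i} \diffnoshockdownarg{\alpha}
- (\partial_\alpha A^\beta) \diffnoshockdownarg{\beta}.
\end{align*}
The crux is the invertibility of the matrix $A^0 + A^j \upxi_j$. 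By \eqref{E:UPXIJSMALL}, this matrix equals $A^0 - A^1 + A^j \upxi_j^{(Small)}$, a $\GdVar$-small perturbation of $(A^0 - A^1)|_{(\Psi,\noshock)=(0,0)}$, which is positive definite by assumption \eqref{E:POSITIVEDEFMATRICES}. Hence its inverse exists and is a smooth function of $\GdVar$ whenever $|\GdVar|$ is sufficiently small. Multiplying the displayed equation on the left by $\upmu \bigl(A^0 + A^j \upxi_j\bigr)^{-1}$ converts $\upmu \Radunit$ on the left-hand side into $\Rad$. The first two resulting terms on the right are manifestly of the form $\smoothfunction(\BadVar) \Singletan \diffnoshock$ with $\Singletan \in \{\Lunit, \CoordAng{i}\}$, once the overall $\upmu$ weight is absorbed into $\BadVar$ (which contains $\upmu$ by \eqref{E:BADARRAY}).

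For the remaining term, I would expand $\partial_\alpha A^\beta = \smoothfunction(\Psi,\noshock) \partial_\alpha \Psi + \smoothfunction(\Psi,\noshock) \diffnoshockdownarg{\alpha}$ by the chain rule, and then invoke \eqref{E:PARTIALTINTERMSOFGEO}--\eqref{E:PARTIALJINTERMSOFGEO} a second time to rewrite
\begin{align*}
\upmu \partial_t \Psi = \upmu \Lunit \Psi + \Rad \Psi,
\qquad
\upmu \partial_j \Psi = \upxi_j \Rad \Psi + \upmu \sum_{i=2}^n \smoothfunction_{ij}(\GdVar) \CoordAng{i} \Psi.
\end{align*}
After multiplication by the overall $\upmu$ weight, every $\upmu^{-1}$ that would appear in the naive expansion of $\Radunit \Psi = \upmu^{-1} \Rad \Psi$ cancels, and every Cartesian derivative of $\Psi$ becomes a geometric derivative $Z \Psi$ with $Z \in \Fullset$ multiplied by a coefficient smooth in $\BadVar$. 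The resulting contribution has the schematic form $\smoothfunction(\BadVar, Z \Psi) \diffnoshock$, completing the proof. The main subtlety, and really the only obstacle, is the careful tracking of $\upmu$ powers: because $(A^0 + A^j \upxi_j)^{-1}$ is $\upmu$-independent, it is the outer $\upmu$ factor that must absorb all the $\upmu^{-1}$'s generated by the geometric decomposition of $\partial_\alpha \Psi$. This bookkeeping is precisely why the identity is stated for the rescaled vectorfield $\Rad$ rather than for $\Radunit$ itself.
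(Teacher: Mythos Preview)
Your proof is correct. For \eqref{E:VCARTESIANDIFFERENTITEDTRANSVERSALINTERMSOFTANGENTIAL} your argument coincides with the paper's: decompose $A^\beta\partial_\beta$ via Lemma~\ref{L:CARTESIANDERIVATIVESINTERMSOFGEOMETRICDERIVATIVES}, invert the matrix $A^0+A^a\upxi_a = A^0 - A^1 + \mathcal{O}(\GdVar)$ using \eqref{E:POSITIVEDEFMATRICES}, and rewrite $\upmu\,\partial_\alpha A^\beta$ via the geometric frame to produce the $\smoothfunction(\BadVar,Z\Psi)\diffnoshock$ term.

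For \eqref{E:VTRANSVERSALINTERMSOFTANGENTIAL}, however, you take a genuinely shorter route than the paper. The paper multiplies the symmetric hyperbolic equation \eqref{E:NONSHOCKEQUATION} by $\upmu$, decomposes $\upmu A^\alpha\partial_\alpha$ into its $\Rad$- and $\Tanset$-parts, and then inverts the same matrix $A^0 - A^1 + A_{(Small)}$ to isolate $\Rad\noshock$. Your observation that $\Rad\noshock = \upmu\Radunit^a\diffnoshockdownarg{a}$ follows directly from $\Rad = \upmu\Radunit$, the $\Sigma_t$-tangency of $\Rad$, and the definition \eqref{E:CARTESIANDIFFNOSHOCK}, bypassing both the equation and the matrix inversion entirely. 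This is more elementary and shows that the first identity is really a tautology once $\diffnoshock$ is treated as an independent unknown. The paper's approach has the minor advantage of being uniform with the proof of \eqref{E:VCARTESIANDIFFERENTITEDTRANSVERSALINTERMSOFTANGENTIAL}, but yours makes clearer why no structural hypothesis on the subsystem is needed for \eqref{E:VTRANSVERSALINTERMSOFTANGENTIAL}.
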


\begin{proof}
	To prove \eqref{E:VTRANSVERSALINTERMSOFTANGENTIAL},
	we first multiply equation \eqref{E:NONSHOCKEQUATION} by $\upmu$ 
	and use Lemma~\ref{L:CARTESIANDERIVATIVESINTERMSOFGEOMETRICDERIVATIVES}
	to obtain the following identity, whose RHS is written in schematic form:
	$\upmu (A^0 + A^a \upxi_a) \Radunit v 
	= \upmu \smoothfunction(\GdVar) \Singletan v 
	= \upmu \smoothfunction(\GdVar) \Singletan^{\alpha} \diffnoshockdownarg{\alpha} 
	= \upmu \smoothfunction(\GdVar) \diffnoshock$.
	Next, using Def.\ \ref{D:PERTURBEDPART}, we see that
	$\upmu (A^0 + A^a \upxi_a) \Radunit v = (A^0 - A^1 + A_{(Small)}) \Rad v$,
	where 
	$A^0 - A^1$ is a matrix whose entries are of the schematic form $\smoothfunction(\GdVar)$ 
	and
	$A_{(Small)}$ is a matrix whose entries are of the schematic form
	$\GdVar \smoothfunction(\GdVar)$. From these facts and the assumption
	\eqref{E:POSITIVEDEFMATRICES}, it follows that whenever $|\GdVar|$ is sufficiently small,
	the matrix $A^0 - A^1 + A_{(Small)}$ is invertible. From this fact, the desired identity \eqref{E:VTRANSVERSALINTERMSOFTANGENTIAL}
	easily follows.
	
	The proof of \eqref{E:VCARTESIANDIFFERENTITEDTRANSVERSALINTERMSOFTANGENTIAL} is 
	based on equation \eqref{E:CARTESIANDIFFERENTIATEDNONSHOCKEQUATION}
	and is similar but requires one new ingredient:
	we use Lemma~\ref{L:CARTESIANDERIVATIVESINTERMSOFGEOMETRICDERIVATIVES} to (schematically) express
	RHS~\eqref{E:CARTESIANDIFFERENTIATEDNONSHOCKEQUATION} 
	as $\smoothfunction(\BadVar,Z \Psi) \diffnoshock$.
	
\end{proof}

With the help of Lemmas~\ref{L:COMMUTATORESTIMATES} and
\ref{L:ALGEBRAICTRANSVERSALINTERMSOFTANGENTIAL}, we now 
derive pointwise estimates showing that
the derivatives of
$\noshock$ and $\diffnoshock$
involving up to one $\Rad$ differentiation
can be controlled in terms of quantities that
do not depend on the $\Rad$ derivatives of $\noshock$ and $\diffnoshock$.

\begin{lemma}[\textbf{Pointwise estimates for transversal derivatives in terms of tangential derivatives}]
	\label{L:CONTROLOFTRANSVERSALINTERMSOFTANGENTIAL}
	The following estimates hold for $1 \leq N \leq \Ntop$:
	\begin{align} \label{E:POINTWISEESTIMATESTRANSVERSALDERIVATIVESOFNOSHOCKCONTROLLEDBYTANGENTAIL} 
		\left|
			\Fullset^{N;1} \noshock
		\right|
		& \lesssim 
		\left|
			\Fullset_*^{[1,N];1} \Psi
		\right|
		+
		\left|
			\Tanset^{\leq N-1} \noshock
		\right|
		+
		\left|
			\Tanset^{\leq N-1} \diffnoshock
		\right|
			\\
	& \ \
		+
		\sum_{j=1}^n
		\left|
			\Fullset_*^{[1,N-1];1} \upxi_j^{(Small)}
		\right|
		+
		\sum_{i=2}^n
		\sum_{j=1}^n
		\left|
			\Fullset_*^{[1,N-1];1} \CoordAngSmallcomp{i}{j}
		\right|
		+
		\left|
			\Tanset_*^{[1,N-1]} \upmu
		\right|.
		\notag
	\end{align}
	
	Moreover, the following estimates hold for $1 \leq N \leq \Ntop-1$:
	\begin{align}
		\left|
			\Fullset^{N;1} \diffnoshock
		\right|
		& \lesssim 
		\left|
			\Fullset_*^{[1,N];1} \Psi
		\right|
		+
		\left|
			\Tanset^{\leq N} \noshock
		\right|
		+
		\left|
			\Tanset^{\leq N} \diffnoshock
		\right|
			\label{E:POINTWISEESTIMATESTRANSVERSALDERIVATIVESOFCARTESIANDIFFERENTIATEDNOSHOCKCONTROLLEDBYTANGENTAIL} \\
	& \ \
		+
		\sum_{j=1}^n
		\left|
			\Fullset_*^{[1,N];1} \upxi_j^{(Small)}
		\right|
		+
		\sum_{i=2}^n
		\sum_{j=1}^n
		\left|
			\Fullset_*^{[1,N];1} \CoordAngSmallcomp{i}{j}
		\right|
		+
		\left|
			\Tanset_*^{[1,N-1]} \upmu
		\right|.
			\notag 
	\end{align}
	\end{lemma}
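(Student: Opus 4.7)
The plan is to reduce both estimates to a combination of the algebraic identities of Lemma~\ref{L:ALGEBRAICTRANSVERSALINTERMSOFTANGENTIAL} and the commutator bound \eqref{E:COMMUTATORESTIMATEFORONETRANSVERSAL}, distinguishing whether the string $\Fullset^{N;1}$ carries zero or one factor of $\Rad$. If no $\Rad$ appears, then $\Fullset^{N;1} = \Tanset^N$ and the desired pointwise control is already provided by the $\Tanset$-derivative terms on the right-hand side (or is a tautology after identifying the appropriate leading factor). The substantive case is when exactly one $\Rad$ is present, and I address it by commuting that $\Rad$ inward and then eliminating it via the algebraic identity.

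Concretely, in the one-$\Rad$ case I write $\Fullset^{N;1} = \Tanset^A \Rad \Tanset^B$ with $A+B = N-1$, apply \eqref{E:COMMUTATORESTIMATEFORONETRANSVERSAL} to obtain $\Fullset^{N;1} f = \Tanset^{N-1} \Rad f + \mathrm{Comm}$, and observe that the commutator remainder $\mathrm{Comm}$ is schematically of the form $|\Tanset_*^{[1,N-1]} f| + |\Tanset_*^{[1,\lfloor N/2 \rfloor]} f|\bigl(|\Fullset_*^{[1,N-1];1}\GdVar| + |\Tanset_*^{[1,N-1]}\BadVar|\bigr)$. For $f = \noshock$ or $f = \diffnoshockdownarg{\alpha}$, every such factor is of a type already present on the right-hand sides of \eqref{E:POINTWISEESTIMATESTRANSVERSALDERIVATIVESOFNOSHOCKCONTROLLEDBYTANGENTAIL}--\eqref{E:POINTWISEESTIMATESTRANSVERSALDERIVATIVESOFCARTESIANDIFFERENTIATEDNOSHOCKCONTROLLEDBYTANGENTAIL}, after one uses the $L^\infty$ bootstrap assumptions of Subsect.\ \ref{SSS:AUXBOOT} to absorb the lower-order factors in each product into a universal constant.

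For the leading piece $\Tanset^{N-1}\Rad f$ I substitute the algebraic identities. For $f = \noshock$, identity \eqref{E:VTRANSVERSALINTERMSOFTANGENTIAL} gives $\Tanset^{N-1}\Rad \noshock = \Tanset^{N-1}\bigl(\upmu\,\smoothfunction(\GdVar)\,\diffnoshock\bigr)$; Leibniz expansion produces, at top order, either $\Tanset^{\leq N-1}\diffnoshock$, $\Tanset^{\leq N-1}\upmu$ (which is then $\Tanset_*^{[1,N-1]}\upmu$ since the zeroth-order piece contributes the benign factor $\upmu$ itself in a lower-order product), or $\Tanset^{\leq N-1}$ applied to one of the scalar components of $\GdVar$, namely $\Psi$, $\noshockuparg{J}$, $\diffnoshockdoublearg{\alpha}{J}$, $\upxi_i^{(Small)}$, or $\CoordAngSmallcomp{j}{k}$; each such factor appears on RHS~\eqref{E:POINTWISEESTIMATESTRANSVERSALDERIVATIVESOFNOSHOCKCONTROLLEDBYTANGENTAIL}. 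For $f = \diffnoshockdownarg{\alpha}$, I use \eqref{E:VCARTESIANDIFFERENTITEDTRANSVERSALINTERMSOFTANGENTIAL}, which raises the $\diffnoshock$-order by one (because of the $\Singletan \diffnoshock$ on its right-hand side) and introduces a $Z\Psi$ slot; Leibniz expansion then yields top-order factors of the form $\Tanset^{\leq N}\diffnoshock$, $\Tanset^{\leq N}\noshock$, $\Fullset_*^{[1,N];1}\Psi$ (the single $\Rad$ allowance being consumed by the $Z\Psi$ factor), together with $\Tanset^{\leq N-1}$-derivatives of $\BadVar$ components, all of which match RHS~\eqref{E:POINTWISEESTIMATESTRANSVERSALDERIVATIVESOFCARTESIANDIFFERENTIATEDNOSHOCKCONTROLLEDBYTANGENTAIL}.

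The main obstacle is not conceptual but combinatorial: one must verify that every term produced by the Leibniz expansion, together with every commutator remainder, fits within the precise seminorm structure on the right-hand sides, without inadvertently inflating either the number of $\Rad$ factors beyond one or the total order beyond $N$ (resp.\ $N$ for the $\diffnoshock$ estimate). This requires a careful accounting that (i) the $\Singletan$ already present in \eqref{E:VCARTESIANDIFFERENTITEDTRANSVERSALINTERMSOFTANGENTIAL} is responsible for the shift from $\Tanset^{\leq N-1}$ in the first estimate to $\Tanset^{\leq N}$ in the second, and (ii) the single $\Rad$ slot must migrate into the $Z\Psi$ factor rather than duplicating into the $\BadVar$ differentiations; the absence of high-order $\Rad$-derivatives of $\upmu$ on the right-hand sides (only $\Tanset_*^{[1,N-1]}\upmu$ appears) is precisely what allows the bookkeeping to close and reflects the structural reason one never needs to commute $\Rad$ through the $\noshock$/$\diffnoshock$ evolution equations.
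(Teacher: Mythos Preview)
Your strategy matches the paper's: distinguish whether $\Fullset^{N;1}$ contains a $\Rad$, commute any $\Rad$ to the far right, invoke the algebraic identities \eqref{E:VTRANSVERSALINTERMSOFTANGENTIAL}--\eqref{E:VCARTESIANDIFFERENTITEDTRANSVERSALINTERMSOFTANGENTIAL}, and expand via Leibniz with bootstrap control on the low-order factors. There is, however, a genuine gap in your handling of the commutator remainder.

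When you apply \eqref{E:COMMUTATORESTIMATEFORONETRANSVERSAL} with $f=\noshock$ or $f=\diffnoshockdownarg{\alpha}$, the product $|\Tanset_*^{[1,\lfloor N/2 \rfloor]} f|\cdot|\Fullset_*^{[1,N-1];1}\GdVar|$ appears. After absorbing the low-order factor $|\Tanset_*^{[1,\lfloor N/2 \rfloor]} f| \lesssim \varepsilon^{1/2}$ into the constant, you are left with $|\Fullset_*^{[1,N-1];1}\GdVar|$. But $\GdVar$ contains $\noshockuparg{J}$ and $\diffnoshockdoublearg{\alpha}{J}$, so this seminorm includes $|\Fullset_*^{[1,N-1];1}\noshock|$ and $|\Fullset_*^{[1,N-1];1}\diffnoshock|$, each potentially carrying one $\Rad$. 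These are \emph{not} on the right-hand sides of \eqref{E:POINTWISEESTIMATESTRANSVERSALDERIVATIVESOFNOSHOCKCONTROLLEDBYTANGENTAIL}--\eqref{E:POINTWISEESTIMATESTRANSVERSALDERIVATIVESOFCARTESIANDIFFERENTIATEDNOSHOCKCONTROLLEDBYTANGENTAIL}, which feature only pure $\Tanset$-derivatives of $\noshock$ and $\diffnoshock$. Your assertion that ``every such factor is of a type already present on the right-hand sides'' is therefore incorrect as stated.

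The paper closes this gap by arguing by induction on $N$, proving the two estimates simultaneously: the commutator remainder at level $N$ produces terms controlled by $|\Fullset_*^{\leq N-1;1}\noshock| + |\Fullset_*^{\leq N-1;1}\diffnoshock|$, and the induction hypothesis then bounds these by the right-hand side at level $N-1$, hence at level $N$. You should add this inductive structure. Separately, your treatment of the no-$\Rad$ case for $\noshock$ is too vague; the right-hand side of \eqref{E:POINTWISEESTIMATESTRANSVERSALDERIVATIVESOFNOSHOCKCONTROLLEDBYTANGENTAIL} only carries $|\Tanset^{\leq N-1}\noshock|$, not $|\Tanset^{\leq N}\noshock|$, so this is not a tautology. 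The paper writes $\Tanset^N \noshock = \Tanset^{N-1}(\Singletan^{\alpha}\diffnoshockdownarg{\alpha}) = \Tanset^{N-1}\bigl(\smoothfunction(\GdVar)\diffnoshockdownarg{\alpha}\bigr)$ explicitly, which is precisely what converts the top-order $\Tanset^N\noshock$ into $\Tanset^{\leq N-1}\diffnoshock$ terms that do appear on the right-hand side.
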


\begin{proof}
	We will prove \eqref{E:POINTWISEESTIMATESTRANSVERSALDERIVATIVESOFNOSHOCKCONTROLLEDBYTANGENTAIL}-\eqref{E:POINTWISEESTIMATESTRANSVERSALDERIVATIVESOFCARTESIANDIFFERENTIATEDNOSHOCKCONTROLLEDBYTANGENTAIL} simultaneously
	by using induction in $N$.
	The base case $N=1$ can be handled using
	the same arguments given below and we omit these details.
	We therefore assume the induction hypothesis that 
	\eqref{E:POINTWISEESTIMATESTRANSVERSALDERIVATIVESOFNOSHOCKCONTROLLEDBYTANGENTAIL}-\eqref{E:POINTWISEESTIMATESTRANSVERSALDERIVATIVESOFCARTESIANDIFFERENTIATEDNOSHOCKCONTROLLEDBYTANGENTAIL}
	have been proved with $N-1$ in the role of $N$; 
	To prove \eqref{E:POINTWISEESTIMATESTRANSVERSALDERIVATIVESOFNOSHOCKCONTROLLEDBYTANGENTAIL}-\eqref{E:POINTWISEESTIMATESTRANSVERSALDERIVATIVESOFCARTESIANDIFFERENTIATEDNOSHOCKCONTROLLEDBYTANGENTAIL}
	in the case $N$, we first consider an order $N$ operator of the form $\Tanset^{N-1} \Rad$.
	Using \eqref{E:VCARTESIANDIFFERENTITEDTRANSVERSALINTERMSOFTANGENTIAL},
	we deduce that
	$\Tanset^{N-1} \Rad V_{\alpha}
		=
		\Tanset^{N-1}
		\left\lbrace
			\smoothfunction(\BadVar) \Singletan \diffnoshock
				+
				\smoothfunction(\BadVar,Z \Psi) \diffnoshock
		\right\rbrace
	$.			
	From this expression and the bootstrap assumptions,
	we deduce that
	$
	\left|
		\Tanset^{N-1} \Rad V_{\alpha}
	\right|
	\lesssim \mbox{RHS~\eqref{E:POINTWISEESTIMATESTRANSVERSALDERIVATIVESOFCARTESIANDIFFERENTIATEDNOSHOCKCONTROLLEDBYTANGENTAIL}}
	$
	as desired.
	Then using the commutator estimate \eqref{E:COMMUTATORESTIMATEFORONETRANSVERSAL} and the bootstrap assumptions,
	we can arbitrarily permute the vectorfield factors in
	$\Tanset^{N-1} \Rad V_{\alpha}$
	up to error terms that are pointwise bounded in magnitude by
	$
	\lesssim \mbox{RHS~\eqref{E:POINTWISEESTIMATESTRANSVERSALDERIVATIVESOFCARTESIANDIFFERENTIATEDNOSHOCKCONTROLLEDBYTANGENTAIL}}
	$
	plus error terms of the form
	$\left|
			\Fullset_*^{\leq N-1;1} \noshock
		\right|
		+
		\left|
			\Fullset_*^{\leq N-1;1} \diffnoshock
		\right|
$,
which (by the induction hypothesis) have already been shown to be bounded by
$
\lesssim \mbox{RHS~\eqref{E:POINTWISEESTIMATESTRANSVERSALDERIVATIVESOFCARTESIANDIFFERENTIATEDNOSHOCKCONTROLLEDBYTANGENTAIL}}
$. We have therefore obtained the desired bounds for $\diffnoshock$ in the case that $\Fullset^{N;1}$
contains a factor of $\Rad$. 
	In the case that the operator
	$\Fullset^{N;1}$ contains a factor of $\Rad$,
	the estimate \eqref{E:POINTWISEESTIMATESTRANSVERSALDERIVATIVESOFNOSHOCKCONTROLLEDBYTANGENTAIL} for $\noshock$
	follows similarly with the help of equation \eqref{E:VTRANSVERSALINTERMSOFTANGENTIAL}.
	To prove \eqref{E:POINTWISEESTIMATESTRANSVERSALDERIVATIVESOFNOSHOCKCONTROLLEDBYTANGENTAIL}
	in the case that the operator
	$\Fullset^{N;1}$ does not contain a factor of $\Rad$, that is, that $\Fullset^{N;1} = \Tanset^N$,
	we first write 
	$\Tanset^N \noshock 
	= \Tanset^{N-1}(\Singletan^{\alpha} \partial_{\alpha} \noshock)
	= \Tanset^{N-1}(\Singletan^{\alpha} \diffnoshockdownarg{\alpha})
	= \Tanset^{N-1}(\smoothfunction(\GdVar) \diffnoshockdownarg{\alpha})
	$.
	From this expression and the bootstrap assumptions, we
	bound the magnitude of the RHS of this equation by
	$\lesssim \mbox{RHS~\eqref{E:POINTWISEESTIMATESTRANSVERSALDERIVATIVESOFNOSHOCKCONTROLLEDBYTANGENTAIL}}$
	as desired. 
	In the case that
$\Fullset^{N;1}$ does not contain a factor of $\Rad$, that is, that $\Fullset^{N;1} = \Tanset^N$,
the estimate \eqref{E:POINTWISEESTIMATESTRANSVERSALDERIVATIVESOFCARTESIANDIFFERENTIATEDNOSHOCKCONTROLLEDBYTANGENTAIL} 
is trivial.
	We have therefore closed the induction. We clarify that in the final step,
	we allow $N = \Ntop$ in
	\eqref{E:POINTWISEESTIMATESTRANSVERSALDERIVATIVESOFNOSHOCKCONTROLLEDBYTANGENTAIL},
	but not in \eqref{E:POINTWISEESTIMATESTRANSVERSALDERIVATIVESOFCARTESIANDIFFERENTIATEDNOSHOCKCONTROLLEDBYTANGENTAIL}.
\end{proof}

\subsection{Pointwise estimates and improvements of the auxiliary bootstrap assumptions}
\label{SS:POINTWISEANDIMPROVEMENTOFAUX}
We now state and prove the main result of this section.

\begin{proposition}[\textbf{Pointwise estimates and improvements of the auxiliary bootstrap assumptions}]	
	\label{P:POINTWISEANDIMPROVEMENTOFAUX}
	Let $\Ntop$ and $\Nmid$ be the integers fixed in Subsect.\ \ref{SS:NUMBEROFDERIVATIVES}.
	If $N \leq \Ntop$, then the following estimates hold
	(see Subsect.\ \ref{SS:STRINGSOFCOMMUTATIONVECTORFIELDS} regarding the vectorfield differential operator notation).
	
	\noindent \underline{\textbf{Pointwise estimates for the commuted evolution equations of} $\Psi$, $\noshock$ \textbf{and} $\diffnoshock$}.
	\begin{align}
		\left|
			\Lunit \Fullset^{N;1} \Psi
		\right|
		& \lesssim 
		\left|
			\Fullset_*^{[1,N];1} \Psi
		\right|
		+
		\left|
			\Fullset_*^{[1,N-1];1} \GdVar
		\right|
		+
		\left|
			\Tanset_*^{[1,N-1]} \BadVar
		\right|.
			\label{E:SHOCKVARIABLEPOINTWISEINHOM}
	\end{align}
	
	Similarly, if $1 \leq N \leq \Ntop$, then the following pointwise estimates hold:
	\begin{subequations}
	\begin{align}
		\left|
			\upmu A^{\alpha} \partial_{\alpha} \Tanset^{N-1} v
		\right|
		& \lesssim 
		\left|
			\Fullset_*^{[1,N];1} \Psi
		\right|
		+
		\left|
			\Fullset_*^{[1,N-1];1} \GdVar
		\right|
		+
		\left|
			\Tanset_*^{[1,N-1]} \BadVar
		\right|,
			\label{E:NONSHOCKVARIABLEPOINTWISEINHOM}
				\\
		\left|
			\upmu A^{\alpha} \partial_{\alpha} \Tanset^{N-1} V_{\alpha}
		\right|
		& \lesssim 
		\left|
			\Fullset_*^{[1,N];1} \Psi
		\right|
		+
		\left|
			\Fullset_*^{[1,N-1];1} \GdVar
		\right|
		+
		\left|
			\Tanset_*^{[1,N-1]} \BadVar
		\right|
		+
		|\diffnoshock|.
			\label{E:NONSHOCKCARTESIANDIFFERENTIATEDVARIABLEPOINTWISEINHOM}	
\end{align}
\end{subequations}

\medskip
\noindent \underline{\textbf{Pointwise estimates for the commuted evolution equations of} 
$\upxi_j^{(Small)}$, $\CoordAngSmallcomp{i}{j}$, \textbf{and} $\upmu$}.
If $1 \leq N \leq \Ntop$,
then the following estimates hold:
\begin{subequations}
	\begin{align}
		\left|
			\Lunit \Fullset^{N-1;1} \upxi_j^{(Small)}
		\right|
		& \lesssim 
			\left|
				\Fullset_*^{[1,N];1} \Psi
			\right|
			+
			\left|
				\Fullset_*^{[1,N-1];1} \GdVar
			\right|
			+
			\left|
				\Tanset_*^{[1,N-1]} \BadVar
			\right|
			+
			|\diffnoshock|,
			\label{E:COMMUTEDLUNITUPXIJ}
				\\
		\left|
			\Lunit \Fullset^{N-1;1} \CoordAngSmallcomp{i}{j}
		\right|
		& \lesssim 
			\left|
				\Fullset_*^{[1,N];1} \Psi
			\right|
			+
			\left|
				\Fullset_*^{[1,N-1];1} \GdVar
			\right|
			+
			\left|
				\Tanset_*^{[1,N-1]} \BadVar
			\right|
			+
			|\diffnoshock|.
		\label{E:COMMUTEDLUNITCOORDANGIJ}
	\end{align}
	\end{subequations}
	
	Furthermore, if $2 \leq N \leq \Ntop$, then the following estimates hold:
	\begin{align}
		\left|
			\Lunit \Tanset^{N-1} \upmu
		\right|
		& \lesssim 
		\left|
			\Fullset_*^{[1,N];1} \Psi
		\right|
		+
		\left|
			\Tanset^{[1,N-1]} \GdVar
		\right|
		+
		\left|
			\Tanset_*^{[1,N-1]} \BadVar
		\right|.
			\label{E:COMMUTEDLUNITUPMU} 
	\end{align}
	
	\medskip
	
\noindent \underline{$L^{\infty}$ \textbf{estimates for} $\Psi$}.
	In addition, the following estimates hold:
	\begin{subequations}
	\begin{align}  
		\left\|
			\Psi
		\right\|_{L^{\infty}(\Sigma_t^u)}
		& \leq
		\Psiep
		+
		C \varepsilon,
		\label{E:PSIITSELFLINFTY}
			\\
		\left\|
			\Fullset_*^{[1,\Nmid];1} \Psi
		\right\|_{L^{\infty}(\Sigma_t^u)}
		& \leq C \varepsilon,
			\label{E:LINFTYPSICOMMUTED} \\
		\left\|
			\Rad \Psi
		\right\|_{L^{\infty}(\Sigma_t^u)}
		& \leq
		\Trandatasize
		+
		C \varepsilon.
		\label{E:RADPSILINFTY}
	\end{align}
\end{subequations}

\medskip

\noindent \underline{$L^{\infty}$ \textbf{estimates for} $\noshock$ \textbf{and} $\diffnoshock$}.
Moreover, the following estimates hold:
\begin{subequations}
\begin{align} \label{E:NOSHOCKLINFTY}
	\left\|
		\Fullset^{\leq \Nmid-1;1} \noshock
	\right\|_{L^{\infty}(\Sigma_t^u)}
	& \leq C \varepsilon,
		\\
	\left\|
		\Fullset^{\leq \Nmid-1;1} \diffnoshock
	\right\|_{L^{\infty}(\Sigma_t^u)}
	& \leq C \varepsilon.
	\label{E:DIFFNOSHOCKLINFTY}
\end{align}
\end{subequations}

\medskip

\noindent \underline{$L^{\infty}$ \textbf{estimates for} $\upxi_j^{(Small)}$, $\CoordAngSmallcomp{i}{j}$, \textbf{and} $\upmu$}.
The following estimates hold:
\begin{subequations}
	\begin{align}
		\left\|
			\upxi_j^{(Small)}
		\right\|_{L^{\infty}(\Sigma_t^u)}
		& \leq C_{\star} \Psiep \delta_j^1 + C \varepsilon,
			\label{E:LINFTYUPXIJITSELF}
				\\
		\left\|
			\Fullset_*^{[1,\Nmid-1];1} \upxi_j^{(Small)}
		\right\|_{L^{\infty}(\Sigma_t^u)}
		& \leq C \varepsilon,
			\label{E:LINFTYUPXIJCOMMUTED}
				\\
		\left\|
			\Rad \upxi_j^{(Small)}
		\right\|_{L^{\infty}(\Sigma_t^u)}
		& \leq 
			\left\|
				\Rad \upxi_j^{(Small)}
			\right\|_{L^{\infty}(\Sigma_0^u)}
			+ 
			C \varepsilon,
			\label{E:LINFTYUPXIJRADCOMMUTED}
				\\
		\left\|
			\Fullset^{\leq \Nmid-1;1} \CoordAngSmallcomp{i}{j}
		\right\|_{L^{\infty}(\Sigma_t^u)}
		& \leq C \varepsilon,
		\label{E:LINFTYCOORDANGIJCOMMUTED}
			\\
		\left\|
			\Tanset_*^{[1,\Nmid-1]} \upmu
		\right\|_{L^{\infty}(\Sigma_t^u)}
		& \leq C \varepsilon.
			\label{E:LINFTYUPMUCOMMUTED}
	\end{align}
	\end{subequations}
	
\medskip
		
\noindent \underline{\textbf{Sharp estimates for} $\upmu$ \textbf{and} $\Lunit \upmu$}.
In addition, the following pointwise estimates hold:
\begin{subequations}
\begin{align} 
	\upmu(t,u,\vartheta)
	& = 1
		+
		t [\blowupcoeff \Rad \Psi](0,u,\vartheta)
		+
		\mathcal{O}_{\star}(\Psiep) 
		+ 
		\mathcal{O}(\varepsilon)
			\label{E:MUNOTCOMMUTED} \\
	& = 1
		+
		t \blowupcoeff(0,u,\vartheta) \Rad \Psi(t,u,\vartheta)
		+
		\mathcal{O}_{\star}(\Psiep) 
		+ 
		\mathcal{O}(\varepsilon),
						\notag
			\\
	\Lunit \upmu(t,u,\vartheta)
	& = 
		[\blowupcoeff \Rad \Psi](0,u,\vartheta)
		+
		\mathcal{O}(\varepsilon)
		\label{E:LUNITMUNOTCOMMUTED}
			\\
	& = 
		\left\lbrace
			\blowupcoeff|_{(\Psi,\noshock) = (0,0)}
			+
			\mathcal{O}_{\star}(\Psiep)
		\right\rbrace
		\Rad \Psi(t,u,\vartheta)
		+
		\mathcal{O}(\varepsilon),
		\notag
\end{align}
\end{subequations}
where the blowup-coefficient $\blowupcoeff$ is defined in Def.\ \ref{D:GENIUNELYNONLINEARCONSTANT}.

Moreover,
\begin{subequations}
\begin{align} 
	\left\|
		\upmu
	\right\|_{L^{\infty}(\Sigma_t^u)}
	& \leq
		1
		+
		2
		\TranminusdatasizeWithFactor^{-1}
		\|\blowupcoeff \Rad \Psi \|_{L^{\infty}(\Sigma_0^u)}
		+
		C_{\star} \Psiep
		+ 
		C \varepsilon,
		\label{E:UPMULINFINTYIMPROVED}
			\\
	\left\|
		\Lunit \upmu
	\right\|_{L^{\infty}(\Sigma_t^u)}
	& \leq
		\|\blowupcoeff \Rad \Psi \|_{L^{\infty}(\Sigma_0^u)}
		+ 
		C \varepsilon.
		\label{E:LUPMULINFINTYIMPROVED}
\end{align}	
\end{subequations}

	\medskip
	\noindent \underline{\textbf{Estimates for} $\Xi^j$}.
	Finally, 
	if $1 \leq N \leq \Ntop$,
	then the following estimates hold
	for the Cartesian components $\Xi^j$ of the $\mathcal{T}_{t,u}$-tangent
	vectorfield $\Xi$ from \eqref{E:RADDECOMPINTOPARTIALUPLUSANGULAR}:
	\begin{subequations}
	\begin{align} \label{E:LUNITBIGXIJPOINTWISE}
		\left|
			\Lunit \Tanset^{\leq N-1} \Xi^j
		\right|
		& \lesssim 
			\left|
			\Tanset^{\leq N-1} \Xi^j
			\right|
			+ 
			\left|
				\Fullset_*^{\leq N;1} \Psi
			\right|
			+
			\left|
				\Tanset^{[1,N-1]} \BadVar
			\right|
			+
			1,
			\\
		\left\|
			\Tanset^{\leq \Nmid-1} \Xi^j
		\right\|_{L^{\infty}(\Sigma_t^u)}
		& \lesssim 1.
		\label{E:BIGXIJLINFITY}
	\end{align}
	\end{subequations}
\end{proposition}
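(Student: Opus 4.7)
\medskip

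\noindent \textbf{Proof plan for Proposition~\ref{P:POINTWISEANDIMPROVEMENTOFAUX}.}
The plan is to prove the pointwise commutator estimates first, then integrate them along the integral curves of $\Lunit = \frac{\partial}{\partial t}$ (see \eqref{E:LISDDT}) to obtain the $L^{\infty}$ improvements, bootstrapping throughout on the auxiliary assumptions of Subsubsect.\ \ref{SSS:AUXBOOT}. For the pointwise bound \eqref{E:SHOCKVARIABLEPOINTWISEINHOM}, I would write $\Lunit \Fullset^{N;1} \Psi = [\Lunit, \Fullset^{N;1}] \Psi$ (since $\Lunit \Psi = 0$) and apply the commutator estimate \eqref{E:COMMUTATORESTIMATEFORLUNITWITHONETRANSVERSAL}, absorbing $L^{\infty}$-bounded factors into the implicit constant. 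For \eqref{E:NONSHOCKVARIABLEPOINTWISEINHOM}-\eqref{E:NONSHOCKCARTESIANDIFFERENTIATEDVARIABLEPOINTWISEINHOM}, I would multiply equations \eqref{E:NONSHOCKEQUATION} and \eqref{E:CARTESIANDIFFERENTIATEDNONSHOCKEQUATION} by $\upmu$, commute with $\Tanset^{N-1}$, use the Leibniz rule together with the algebraic transversal-in-terms-of-tangential identities of Lemma~\ref{L:ALGEBRAICTRANSVERSALINTERMSOFTANGENTIAL}, and finally replace factors $\partial_{\alpha}$ via Lemma~\ref{L:CARTESIANDERIVATIVESINTERMSOFGEOMETRICDERIVATIVES}. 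The estimates \eqref{E:COMMUTEDLUNITUPXIJ}-\eqref{E:COMMUTEDLUNITCOORDANGIJ} for the geometric quantities follow the same template applied to the schematic evolution equations \eqref{E:LUNITUPXIJ}, \eqref{E:COORDANGSMALLCOMPONENTRECTANGULAR}, and \eqref{E:COMMUTEDLUNITUPMU} from \eqref{E:LUNITUPMU}, where now the $\Rad \Lunit^a$ factor on RHS~\eqref{E:LUNITUPMU} explains why we must exclude the pure-$\Rad$ case (i.e.\ we only control $\Tanset^{N-1} \upmu$).

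For the $L^{\infty}$ estimates, I would use the identity $\Lunit = \frac{\partial}{\partial t}$ and the fundamental theorem of calculus to write, for any scalar function $f$, $f(t,u,\vartheta) = f(0,u,\vartheta) + \int_0^t (\Lunit f)(t',u,\vartheta) \, dt'$. Applied to $f = \Fullset_*^{[1,\Nmid];1} \Psi$, combining the pointwise bound \eqref{E:SHOCKVARIABLEPOINTWISEINHOM}, the data assumption \eqref{E:LINFTYSMALLDATAALONGSIGMA0}, the inequality $t < 2 \TranminusdatasizeWithFactor^{-1}$, and Lemma~\ref{L:CONTROLOFTRANSVERSALINTERMSOFTANGENTIAL} to handle the $\Rad$-differentiated $\noshock, \diffnoshock$ factors, yields \eqref{E:LINFTYPSICOMMUTED}; \eqref{E:PSIITSELFLINFTY} and \eqref{E:RADPSILINFTY} are similar, with the former needing only the $N=0$ version and the latter using that $[\Lunit,\Rad]$ is $\mathcal{T}_{t,u}$-tangent by \eqref{E:LUNITRADCOMMUTATOR}. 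The estimates \eqref{E:NOSHOCKLINFTY}-\eqref{E:DIFFNOSHOCKLINFTY} and \eqref{E:LINFTYUPXIJITSELF}-\eqref{E:LINFTYUPMUCOMMUTED} follow the same pattern, using in each case the corresponding commuted evolution equation and the initial-data estimates of Lemma~\ref{L:ESTIMATESFORNONLINEARGEOMETRICOPTICS}; the estimate \eqref{E:LINFTYUPXIJRADCOMMUTED} requires one extra step in which I use \eqref{E:LUNITRADCOMMUTATOR} to handle $[\Lunit,\Rad] \upxi_j^{(Small)}$, which is $\mathcal{T}_{t,u}$-tangent and hence already estimated. The key point throughout is that all pointwise RHSs involve only \emph{starred} quantities (i.e.\ $\Fullset_*^{[1,\cdot];1}$ or $\Tanset_*^{[1,\cdot]}$) or $\diffnoshock$, each of which is already $\mathcal{O}(\varepsilon)$ in $L^{\infty}$ by the bootstrap assumptions.

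For the sharp estimates \eqref{E:MUNOTCOMMUTED}-\eqref{E:LUNITMUNOTCOMMUTED}, the crucial observation is that equation \eqref{E:LUNITUPMU} combined with Def.\ \ref{D:GENIUNELYNONLINEARCONSTANT} and the decompositions of Lemmas~\ref{L:CARTESIANDERIVATIVESINTERMSOFGEOMETRICDERIVATIVES} and \ref{L:EVOLUTIONFORUPMUANDUPXI} yields
\[
	\Lunit \upmu = \blowupcoeff \Rad \Psi + \mathrm{Err},
\]
where $\mathrm{Err}$ involves only factors of the form $\Lunit \Psi$, $\Lunit \noshock$ (which are $\mathcal{P}_u$-tangential and hence $\mathcal{O}(\varepsilon)$ by \eqref{E:LINFTYPSICOMMUTED} and \eqref{E:NOSHOCKLINFTY}), times smooth coefficients that are $\mathcal{O}_{\star}(\Psiep) + \mathcal{O}(\varepsilon)$-close to their values at the background. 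Then, since $\Lunit(\blowupcoeff \Rad \Psi)$ is a $\mathcal{P}_u$-tangential derivative (note $\Lunit \blowupcoeff$ involves $\Lunit \Psi, \Lunit \noshock$ by Remark~\ref{R:FUNCTIONALDEPENDENCEOFBLOWUPCOEFFICIENT}, and $\Lunit \Rad \Psi = \Rad \Lunit \Psi + [\Lunit,\Rad] \Psi = [\Lunit,\Rad] \Psi \in \mathcal{T}_{t,u}$-tangential derivatives by \eqref{E:LUNITRADCOMMUTATOR}), integrating along $\Lunit$-integral curves gives $[\blowupcoeff \Rad \Psi](t,u,\vartheta) = [\blowupcoeff \Rad \Psi](0,u,\vartheta) + \mathcal{O}(\varepsilon)$, which yields the first equalities in \eqref{E:MUNOTCOMMUTED}-\eqref{E:LUNITMUNOTCOMMUTED} after one more integration using \eqref{E:MUIC}. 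The second equalities follow by Taylor-expanding $\blowupcoeff$ around $(\Psi,\noshock) = (0,0)$, using \eqref{E:PSIITSELFLINFTY} and \eqref{E:NOSHOCKLINFTY}. The $L^{\infty}$ bounds \eqref{E:UPMULINFINTYIMPROVED}-\eqref{E:LUPMULINFINTYIMPROVED} then follow by combining with the definition \eqref{E:SHOCKFORMATIONQUANTITY} of $\TranminusdatasizeWithFactor$ and $t < 2 \TranminusdatasizeWithFactor^{-1}$.

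For $\Xi^j$, I would use Cor.\ \ref{C:BIGXIJEVOLUTIONEQUATION}: \eqref{E:LUNITBIGXIJPOINTWISE} follows by commuting \eqref{E:BIGXIJEVOLUTIONEQUATION} with $\Tanset^{\leq N-1}$, using the vanishing commutator identity \eqref{E:LUNITANGULARCOMMUTATOR} and the $L^{\infty}$-bounded background coefficients, and \eqref{E:BIGXIJLINFITY} follows by integrating this Grönwall-style pointwise inequality along $\Lunit$ together with the initial bound \eqref{E:BIGXIJDATAALLDERIVATIVESLINFINITY}. The main obstacle across the whole proposition is organizational: one must carefully verify that every term produced by commutation falls into the starred/small category (so that it is controllable by the auxiliary bootstrap assumptions with room to spare), while tracking exactly which quantities are allowed one $\Rad$ factor and which are not; Lemma~\ref{L:CONTROLOFTRANSVERSALINTERMSOFTANGENTIAL} is the key device that permits us to trade a $\Rad$ derivative of $\noshock, \diffnoshock$ for a $\Tanset$ derivative plus lower-order starred terms, thereby preventing the appearance of uncontrollable quantities like $\Rad \upmu$.
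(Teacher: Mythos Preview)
Your overall architecture is correct and matches the paper's: commutator estimates first, then integrate along $\Lunit = \partial_t$, with Lemma~\ref{L:CONTROLOFTRANSVERSALINTERMSOFTANGENTIAL} handling the $\Rad$-derivatives of $\noshock,\diffnoshock$. The treatments of \eqref{E:SHOCKVARIABLEPOINTWISEINHOM}, \eqref{E:NONSHOCKVARIABLEPOINTWISEINHOM}--\eqref{E:NONSHOCKCARTESIANDIFFERENTIATEDVARIABLEPOINTWISEINHOM}, the sharp $\upmu$ estimates, and $\Xi^j$ are all essentially the paper's arguments.

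There is, however, a genuine gap in your $L^\infty$ step. You write that the starred right-hand side terms ``are already $\mathcal{O}(\varepsilon)$ in $L^\infty$ by the bootstrap assumptions.'' They are not: the auxiliary bootstrap assumptions \eqref{E:PSIAUXBOOT}, \eqref{E:CONORMALONEFORMCOMPONENTSAUXBOOT}, \eqref{E:ANGULARVECTORFIELDCOMPONENTSAUXBOOT}, \eqref{E:MUSMALLAUXBOOT} only give $\varepsilon^{1/2}$ for the starred derivatives of $\Psi$, $\upxi_j^{(Small)}$, $\CoordAngSmallcomp{i}{j}$, $\upmu$. If you simply plug those into the time integral you obtain $C\varepsilon^{1/2}$, which does not strictly improve the auxiliary bootstrap and so does not close. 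The only bootstrap input that is genuinely $\mathcal{O}(\varepsilon)$ is the \emph{fundamental} assumption \eqref{E:FUNDAMENTALBOOTSTRAP} on $\Tanset^{\leq \Nmid-1}\noshock$ and $\Tanset^{\leq \Nmid-1}\diffnoshock$.

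The paper closes this by treating \eqref{E:LINFTYPSICOMMUTED}, \eqref{E:LINFTYUPXIJCOMMUTED}, \eqref{E:LINFTYCOORDANGIJCOMMUTED}, \eqref{E:LINFTYUPMUCOMMUTED} \emph{simultaneously}: set
\[
q(t,u,\vartheta) := \left|\Fullset_*^{[1,\Nmid];1}\Psi\right| + \sum_j\left|\Fullset_*^{[1,\Nmid-1];1}\upxi_j^{(Small)}\right| + \sum_{i,j}\left|\Fullset^{\leq \Nmid-1;1}\CoordAngSmallcomp{i}{j}\right| + \left|\Tanset_*^{[1,\Nmid-1]}\upmu\right|,
\]
observe from \eqref{E:SHOCKVARIABLEPOINTWISEINHOM}, \eqref{E:COMMUTEDLUNITUPXIJ}--\eqref{E:COMMUTEDLUNITUPMU}, Lemma~\ref{L:CONTROLOFTRANSVERSALINTERMSOFTANGENTIAL}, and the fundamental bootstrap \eqref{E:FUNDAMENTALBOOTSTRAP} that $|q(t)| \leq |q(0)| + c\int_0^t |q(s)|\,ds + C\varepsilon$, and then apply Gronwall. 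This coupled Gronwall is the missing ingredient in your plan; once it is in place, the remaining estimates \eqref{E:PSIITSELFLINFTY}, \eqref{E:RADPSILINFTY}, \eqref{E:LINFTYUPXIJITSELF}, \eqref{E:LINFTYUPXIJRADCOMMUTED}, \eqref{E:NOSHOCKLINFTY}--\eqref{E:DIFFNOSHOCKLINFTY} follow exactly as you describe.
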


\begin{remark}[\textbf{Strict improvements of the auxiliary bootstrap assumptions}]
	\label{R:STRICTIMPROVEMENTOFAUX}
	The $L^{\infty}$ estimates of Prop.\ \ref{P:POINTWISEANDIMPROVEMENTOFAUX} provide,
	in particular, strict improvements of the auxiliary bootstrap assumptions
	of Subsubsect.\ \ref{SSS:AUXBOOT} whenever $\Psiep$ and $\varepsilon$ are sufficiently small.
\end{remark}

\begin{proof}[Proof of Prop.\ \ref{P:POINTWISEANDIMPROVEMENTOFAUX}]
	See Subsect.\ \ref{SS:OFTENUSEDESTIMATES} for some comments on the analysis.
	We start by noting that the order in which we prove estimates is important.
	Throughout the proof, we use the phrase
	``conditions on the data''
	to mean the assumptions from Subsubsect.\ \ref{SSS:DATA} for the data of $\Psi$, $\noshock$, and $\diffnoshock$,
	as well as the estimates from Subsubsect.\ \ref{SSS:DATAOFOTHERDATASIZE}
	for the data of 
	$\upmu$,
	$\upxi_j^{(Small)}$,
	$\CoordAngSmallcomp{i}{j}$,
	and
	$\Xi^j$.
	We also silently use \eqref{E:DATAEPSILONVSBOOTSTRAPEPSILON}.

	\medskip	
	
\noindent \underline{\textbf{Proof of \eqref{E:SHOCKVARIABLEPOINTWISEINHOM}}}:
	The estimate 
	follows from the evolution equation \eqref{E:SHOCKEQN},
	the commutator estimate \eqref{E:COMMUTATORESTIMATEFORLUNITWITHONETRANSVERSAL},
	and the bootstrap assumptions.
	
	\medskip
	
	\noindent \underline{\textbf{Proof of \eqref{E:COMMUTEDLUNITUPMU}}}:
	We first schematically write \eqref{E:LUNITUPMU} as
	$\Lunit \upmu 
	= 
	\smoothfunction(\GdVar) \Rad \Psi
	+
	\smoothfunction(\BadVar) \Lunit \Psi
	+
	\smoothfunction(\BadVar) \diffnoshock
	$.
	Hence, using \eqref{E:TANGENTVECTORFIELDSCOMMUTEWITHEACHOTHER},
	we deduce
	$\Lunit \Tanset^{N-1} \upmu 
	= 
	\Tanset^{N-1}
	\left\lbrace
			\smoothfunction(\GdVar) \Rad \Psi
			+
			\smoothfunction(\BadVar) \Lunit \Psi
			+
			\smoothfunction(\BadVar) \diffnoshock
	\right\rbrace
	$.
	The desired bound \eqref{E:COMMUTEDLUNITUPMU} then follows from this equation and the bootstrap assumptions
	(we stress that the assumption $N \geq 2$ is needed for this estimate).
	
	\medskip
	
	\noindent \underline{\textbf{Proof of \eqref{E:COMMUTEDLUNITUPXIJ} and \eqref{E:COMMUTEDLUNITCOORDANGIJ}}}:
	We first schematically write \eqref{E:LUNITUPXIJ}
	as
	$\Lunit \upxi_j^{(Small)}
	= 
	\smoothfunction(\GdVar) \Singletan \Psi 
	+
	\smoothfunction(\GdVar) \diffnoshock
	$.
	Hence,
	$\Lunit \Fullset^{N-1;1} \upxi_j^{(Small)}
	= 
	[\Lunit,\Fullset^{N-1;1}] \upxi_j^{(Small)}
	+
	\Fullset^{N-1;1}
	\left\lbrace
		\smoothfunction(\GdVar) \Singletan \Psi 
		+
		\smoothfunction(\GdVar) \diffnoshock
	\right\rbrace
	$.
	To bound the magnitude of the term
	$\Fullset^{N-1;1}
	\left\lbrace
		\cdots
	\right\rbrace$
	by
	$\lesssim \mbox{RHS~\eqref{E:COMMUTEDLUNITUPXIJ}}$,
	we use the bootstrap assumptions. To bound the commutator term
	$[\Lunit,\Fullset^{N-1;1}] \upxi_j^{(Small)}$, we also use
	\eqref{E:COMMUTATORESTIMATEFORLUNITWITHONETRANSVERSAL}.
	The estimate \eqref{E:COMMUTEDLUNITCOORDANGIJ} can be proved in the same way 
	as the estimate \eqref{E:COMMUTEDLUNITUPXIJ},
	since by \eqref{E:COORDANGSMALLCOMPONENTRECTANGULAR},
	$\CoordAngSmallcomp{i}{j}$ obeys a schematically identical evolution equation:
	$\Lunit \CoordAngSmallcomp{i}{j}
	=
	\smoothfunction(\GdVar) \Singletan \Psi 
	+
	\smoothfunction(\GdVar) \diffnoshock
	$.
	
	\medskip
	
	\noindent \underline{\textbf{Proof of \eqref{E:LINFTYPSICOMMUTED}, 
	\eqref{E:LINFTYUPXIJCOMMUTED},
	\eqref{E:LINFTYCOORDANGIJCOMMUTED},
	and \eqref{E:LINFTYUPMUCOMMUTED}}}:
	We set
	\begin{align}
		q = q(t,u,\vartheta) 
		& := 
		\left|
			\Fullset_*^{[1,\Nmid];1} \Psi
		\right|
			\\
	& \ \
		+
		\sum_{j=1}^n
		\left|
			\Fullset_*^{[1,\Nmid-1];1} \upxi_j^{(Small)}
		\right|
		+
		\sum_{i=2}^n
		\sum_{j=1}^n
		\left|
			\Fullset^{\leq \Nmid-1;1} \CoordAngSmallcomp{i}{j}
		\right|
		+
		\left|
			\Tanset_*^{[1,\Nmid-1]} \upmu
		\right|.
		\notag
	\end{align}
	From 
	\eqref{E:SHOCKVARIABLEPOINTWISEINHOM},
	\eqref{E:COMMUTEDLUNITUPXIJ}-\eqref{E:COMMUTEDLUNITUPMU},
	the pointwise estimates of Lemma~\ref{E:POINTWISEESTIMATESTRANSVERSALDERIVATIVESOFNOSHOCKCONTROLLEDBYTANGENTAIL},
	the fundamental bootstrap assumptions \eqref{E:FUNDAMENTALBOOTSTRAP},
	and the fundamental theorem of calculus,
	we deduce,
	in view of the fact that 
	$\Lunit = \frac{\partial}{\partial t}$,
	that
	$
	|q(t,u,\vartheta)|
		\leq
		|q(0,u,\vartheta)|
		+
		c
		\int_{s=0}^t 
			|q(s,u,\vartheta)|
		\, ds
		+
		C \varepsilon
	$.
	Moreover, the conditions on the data imply that 
	$|q(0,u,\vartheta)| \leq C \varepsilon$.
	Hence, from Gronwall's inequality, we deduce that
	$|q(t,u,\vartheta)| \lesssim \varepsilon \exp(ct) \lesssim \varepsilon$,
	which implies all four of the desired bounds.
	
	\medskip
	\noindent \underline{\textbf{Proof of \eqref{E:PSIITSELFLINFTY}, \eqref{E:RADPSILINFTY}, \eqref{E:LINFTYUPXIJITSELF},
	and \eqref{E:LINFTYUPXIJRADCOMMUTED}}}:
	To prove \eqref{E:PSIITSELFLINFTY},
	we first use the
	fundamental theorem of calculus to obtain
	$|\Psi|(t,u,\vartheta)
	\leq |\Psi|(0,u,\vartheta)
	+ 
	\int_{s=0}^t
		|\Lunit \Psi |(s,u,\vartheta)
	\, ds
	$.
	The estimate \eqref{E:LINFTYPSICOMMUTED} implies that
	the time integral in the previous inequality is $\lesssim \varepsilon$.
	In view of the conditions on the data, we conclude
	\eqref{E:PSIITSELFLINFTY}.
	The remaining three estimates can be proved similarly
	with the help of the estimates \eqref{E:LINFTYPSICOMMUTED} and \eqref{E:LINFTYUPXIJCOMMUTED}.
	
	\medskip
	
	\noindent \underline{\textbf{Proof of \eqref{E:NOSHOCKLINFTY}-\eqref{E:DIFFNOSHOCKLINFTY}}}:
	These estimates follow from the pointwise estimates 
	\eqref{E:POINTWISEESTIMATESTRANSVERSALDERIVATIVESOFNOSHOCKCONTROLLEDBYTANGENTAIL}-\eqref{E:POINTWISEESTIMATESTRANSVERSALDERIVATIVESOFCARTESIANDIFFERENTIATEDNOSHOCKCONTROLLEDBYTANGENTAIL},
	the fundamental bootstrap assumptions \eqref{E:FUNDAMENTALBOOTSTRAP},
	and the estimates
	\eqref{E:LINFTYPSICOMMUTED},
	\eqref{E:LINFTYUPXIJCOMMUTED},
	\eqref{E:LINFTYCOORDANGIJCOMMUTED},
	and
	\eqref{E:LINFTYUPMUCOMMUTED}.
	
	\medskip
	
	\noindent \underline{\textbf{Proof of 
	\eqref{E:NONSHOCKVARIABLEPOINTWISEINHOM}-\eqref{E:NONSHOCKCARTESIANDIFFERENTIATEDVARIABLEPOINTWISEINHOM}}}:
	We first use Lemma~\ref{L:CARTESIANDERIVATIVESINTERMSOFGEOMETRICDERIVATIVES}
	to deduce the schematic relation
	\begin{align} \label{E:SCHEMATICUPMUTIMESCARTESIANDERIVATIVEINTERMSOFGEOMETRICDERIVATIVES}
		\upmu \partial_{\alpha}
		& = \smoothfunction(\GdVar) \Rad
			+
			\upmu \smoothfunction(\GdVar) \Singletan
			= \smoothfunction(\GdVar) \Rad
			+
			\smoothfunction(\BadVar) \Singletan.
	\end{align}
	Next, using \eqref{E:SCHEMATICUPMUTIMESCARTESIANDERIVATIVEINTERMSOFGEOMETRICDERIVATIVES},
	the definition $\partial_{\alpha} \noshock = \diffnoshockdownarg{\alpha}$,
	and the fact that 
	for $Z \in \Fullset$ we have $Z^{\alpha} = \smoothfunction(\BadVar)$,
	we deduce that 
	$\upmu \times \mbox{RHS~\eqref{E:CARTESIANDIFFERENTIATEDNONSHOCKEQUATION}} 
	= 
	\smoothfunction(\BadVar,Z \Psi) \diffnoshock
	$.
	Therefore, commuting $\upmu \times \mbox{\eqref{E:CARTESIANDIFFERENTIATEDNONSHOCKEQUATION}}$ with
	$\Tanset^{N-1}$, we obtain
	\begin{align} \label{E:TANGENTCOMMUTEDIDCARTESIANDIFFERENTATIEDNONSHOCKVARIABLEEQUATION}
		\upmu A^{\alpha} \partial_{\alpha} \Tanset^{N-1} V_{\alpha}
		& =
			[\smoothfunction(\GdVar) \Rad, \Tanset^{N-1}] V_{\alpha}
			+ 
			[\smoothfunction(\BadVar) \Singletan, \Tanset^{N-1}] V_{\alpha}
			+
			\Tanset^{N-1} 
			\left\lbrace
				\smoothfunction(\BadVar,Z \Psi) \diffnoshock
			\right\rbrace.
	\end{align}
	Using the bootstrap assumptions,
	we deduce that 
	$
	\left|\Tanset^{N-1} 
			\left\lbrace
				\smoothfunction(\BadVar,Z \Psi) \diffnoshock
			\right\rbrace
	\right|
	\lesssim \mbox{RHS~\eqref{E:NONSHOCKCARTESIANDIFFERENTIATEDVARIABLEPOINTWISEINHOM}}
	$
	as desired. To bound the commutator term
	$\left|
		[\smoothfunction(\BadVar) \Singletan, \Tanset^{N-1}] V_{\alpha}
	\right|$,
	we use the bootstrap assumptions and the commutator identity \eqref{E:TANGENTVECTORFIELDSCOMMUTEWITHEACHOTHER}.
	To bound the commutator term
	$\left|
		[\smoothfunction(\GdVar) \Rad, \Tanset^{N-1}] V_{\alpha}
	\right|$,
	we use the bootstrap assumptions,
	the commutator estimate \eqref{E:COMMUTATORESTIMATEFORONETRANSVERSAL},
	and the pointwise estimate \eqref{E:POINTWISEESTIMATESTRANSVERSALDERIVATIVESOFCARTESIANDIFFERENTIATEDNOSHOCKCONTROLLEDBYTANGENTAIL}.
	We have therefore proved \eqref{E:NONSHOCKCARTESIANDIFFERENTIATEDVARIABLEPOINTWISEINHOM}.
	The estimate \eqref{E:NONSHOCKVARIABLEPOINTWISEINHOM} can be proved in a similar fashion
	starting from equation \eqref{E:NONSHOCKEQUATION} and with the help of 
	\eqref{E:POINTWISEESTIMATESTRANSVERSALDERIVATIVESOFNOSHOCKCONTROLLEDBYTANGENTAIL}; we omit the details.
	
	\medskip
	
	\noindent \underline{\textbf{Proof of \eqref{E:LUNITMUNOTCOMMUTED}}}:
	A special case of \eqref{E:LINFTYUPMUCOMMUTED} is the estimate 
	$\Lunit \Lunit \upmu(t,u,\vartheta) = \mathcal{O}(\varepsilon)$.
	From this bound and the fundamental theorem of calculus, we deduce
	$
	\Lunit \upmu(t,u,\vartheta) 
	= \Lunit \upmu(0,u,\vartheta)
	+ \mathcal{O}(\varepsilon)
	$.
	Next, we use the identity
	$
	(\Rad \Lunit^a) \upxi_a
	=
	- (\Rad \Lunit^1) \upxi_1
	+
	\sum_{a=2}^n (\Rad \Lunit^a) \upxi_a^{(Small)}
	$,
	definition \eqref{E:BLOWUPCONSTANT},
	and the conditions on the data
	to decompose \eqref{E:LUNITUPMU} at time $0$ as
	\begin{align} \label{E:FIRSTSTEPLUNITMUNOTCOMMUTED}
		\Lunit \upmu(0,u,\vartheta)
		& = - [\blowupcoeff \Rad \Psi](0,u,\vartheta)
				+
				\mathcal{O}(\varepsilon).
	\end{align}
	We next note that fundamental theorem of calculus yields 
	\begin{align} \label{E:FTCID}
		[\blowupcoeff \Rad \Psi](t,u,\vartheta)
		& = [\blowupcoeff \Rad \Psi](0,u,\vartheta)
			+
			\int_{s=0}^t
				\Lunit [\blowupcoeff \Rad \Psi](s,u,\vartheta)
			\, ds.
	\end{align}
	Since the estimates \eqref{E:LINFTYPSICOMMUTED} and \eqref{E:LINFTYUPXIJCOMMUTED} 
	and the bootstrap assumptions
	imply that
	$\Lunit [\blowupcoeff \Rad \Psi] = \mathcal{O}(\varepsilon)$,
	we find from \eqref{E:FTCID} that
	$
	[\blowupcoeff \Rad \Psi](t,u,\vartheta) 
	= 
	[\blowupcoeff \Rad \Psi](0,u,\vartheta) 
	+ 
	\mathcal{O}(\varepsilon)
	$.
	Combining these estimates,
	we arrive at both of the bounds stated in \eqref{E:LUNITMUNOTCOMMUTED}.
	
	\medskip
	
	\noindent \underline{\textbf{Proof of \eqref{E:MUNOTCOMMUTED}}}:
	Using the fundamental theorem of calculus (as in \eqref{E:FTCID})
	and the initial condition $\upmu|_{\Sigma_0} = 1 + \mathcal{O}_{\star}(\Psiep) + \mathcal{O}(\varepsilon)$, 
	which follows from \eqref{E:MUINITIALCONDITION} and the conditions on the data,
	we obtain 
	$\upmu(t,u,\vartheta) 
	= 
	1 
	+ \mathcal{O}_{\star}(\Psiep)
	+
	\mathcal{O}(\varepsilon)
	+ 
	\int_{s=0}^t
		\Lunit \upmu(s,u,\vartheta)
	\, ds
	$.
	Substituting RHS~\eqref{E:LUNITMUNOTCOMMUTED} (evaluated at $(s,u,\vartheta)$)
	for the integrand $\Lunit \upmu(s,u,\vartheta)$,
	we arrive at the first estimate stated 
	in \eqref{E:MUNOTCOMMUTED}. To obtain the second estimate stated in \eqref{E:MUNOTCOMMUTED},
	we use the first estimate and the bound
	$
	[\blowupcoeff \Rad \Psi](t,u,\vartheta) 
	= 
	[\blowupcoeff \Rad \Psi](0,u,\vartheta) 
	+ 
	\mathcal{O}(\varepsilon)
	$
	noted in the previous paragraph.
	
	\medskip
	
	\noindent \underline{\textbf{Proof of \eqref{E:UPMULINFINTYIMPROVED} and \eqref{E:LUPMULINFINTYIMPROVED}}}:
	\eqref{E:UPMULINFINTYIMPROVED} follows easily from \eqref{E:MUNOTCOMMUTED}
	and the fact that $0 < t < 2 \TranminusdatasizeWithFactor^{-1}$.
	Similarly, \eqref{E:LUPMULINFINTYIMPROVED} follows easily from \eqref{E:LUNITMUNOTCOMMUTED}.
	
	\medskip
	
	\noindent \underline{\textbf{Proof of \eqref{E:LUNITBIGXIJPOINTWISE}-\eqref{E:BIGXIJLINFITY}}}:
	Using \eqref{E:BIGXIJEVOLUTIONEQUATION} and \eqref{E:TANGENTVECTORFIELDSCOMMUTEWITHEACHOTHER},
	we deduce the following schematic identity:
	$\Lunit \Tanset^{N-1} \Xi^j 
	=
	\Tanset^{N-1}
	\left\lbrace
		\Xi^a 
		\left[
			\smoothfunction(\GdVar) \diffnoshock
			+
			\smoothfunction(\GdVar) \Singletan \Psi
		\right]
	\right\rbrace
	+
	\Tanset^{N-1}
	\left\lbrace
		\smoothfunction(\BadVar,Z \Psi)
	\right\rbrace
	$.
	From this identity and the bootstrap assumptions, we deduce
	\begin{align} \label{E:ALMOSTLUNITBIGXIJPOINTWISE}
		\max_{1 \leq j \leq n}
		\left|
			\Lunit \Tanset^{\leq N-1} \Xi^j
		\right|
		& \lesssim
			\max_{1 \leq j \leq n}
			\left|
				\Tanset^{\leq N-1} \Xi^j
			\right|
				\\
	&  \ \
			+
			\max_{1 \leq j \leq n}
			\left|
				\Tanset^{\leq \lfloor (N-1)/2 \rfloor} \Xi^j
			\right|
			\left\lbrace
				\left|
					\Fullset_*^{\leq N;1} \Psi
				\right|
				+
				\left|
					\Tanset_*^{[1,N-1]} \BadVar
				\right|
				+
				1
			\right\rbrace
			\notag	\\
		& \ \
			+
			\left|
					\Fullset_*^{\leq N;1} \Psi
				\right|
				+
				\left|
					\Tanset_*^{[1,N-1]} \BadVar
				\right|
				+
				1.
			\notag
	\end{align}
	In particular, from \eqref{E:ALMOSTLUNITBIGXIJPOINTWISE}
	and the bootstrap assumptions, we deduce
	\begin{align} \label{E:LOWDERIVATIVESALMOSTLUNITBIGXIJPOINTWISE}
		\max_{1 \leq j \leq n}
		\left|
			\Lunit \Tanset^{\leq \Nmid-1} \Xi^j
		\right|
		& \lesssim
			\max_{1 \leq j \leq n}
			\left|
				\Tanset^{\leq \Nmid-1} \Xi^j
			\right|
			+
			1.
	\end{align}
	Moreover, from the conditions on the data, we deduce that
	$
	\max_{1 \leq j \leq n}
	\left|
		\Tanset^{\leq \Nmid-1} \Xi^j
	\right|(0,u,\vartheta)
	\lesssim 1
	$.
	Recalling that $\Lunit = \frac{\partial}{\partial t}$,
	we now use this data bound, 
	\eqref{E:LOWDERIVATIVESALMOSTLUNITBIGXIJPOINTWISE},
	and Gronwall's inequality in 
	$
	\max_{1 \leq j \leq n}
		\left|
			\Tanset^{\leq \Nmid-1} \Xi^j
		\right|
	$ to deduce
	that
	$
		\max_{1 \leq j \leq n}
		\left\|
			\Tanset^{\leq \Nmid-1} \Xi^j
		\right\|_{L^{\infty}(\Sigma_t^u)}
		\lesssim 1
	$,
	which is the desired bound \eqref{E:BIGXIJLINFITY}.
	Finally, from \eqref{E:ALMOSTLUNITBIGXIJPOINTWISE} and 
	\eqref{E:BIGXIJLINFITY},
	we conclude \eqref{E:LUNITBIGXIJPOINTWISE}. 
	
\end{proof}

\subsection{Estimates closely tied to the formation of the shock}
\label{SS:ESTIMATESTIEDTOSHOCKFORMATION}
In this subsection, we prove a lemma that lies at the heart of
showing that $\upmu$ vanishes in finite time and that 
its vanishing coincides with the blowup of
$\max_{\alpha=0,\cdots,n} |\partial_{\alpha} \Psi|$.
Roughly, the lemma shows that when $\upmu$ is small,
$\Rad \Psi$ must be quantitatively large in magnitude
and that $\Rad \Psi$ has a sign that forces $\upmu$
to continue shrinking
(the latter fact is important in that 
$\Rad \Psi$ is the dominant term in the evolution equation \eqref{E:LUNITUPMU} for $\upmu$).

We start by defining a quantity that captures the 
``worst-case'' behavior of $\upmu$ along $\Sigma_t^u$.

\begin{definition} \label{D:MUSTARDEF}
	We define the following quantity, where $\upmu$
	is the inverse foliation density from Def.\ \ref{D:MUDEF}:
	\begin{align} \label{E:MUSTARDEF}
		\upmu_{\star}(t,u)
		& := \min_{\Sigma_t^u} \upmu.
	\end{align}
\end{definition}

We now prove the main result of this subsection.

\begin{lemma}[$|\Rad \Psi|$ \textbf{ is large when } $\upmu$ \textbf{is small}]
	\label{L:CRUCIALESTIMATESFORUPMUANDRADPSI}
	The following implication holds:
	\begin{align}	\label{E:CRUCIALESTIMATESFORUPMUANDRADPSI}
		\upmu(t,u,\vartheta)
		< \frac{1}{4}
		\implies
		[\blowupcoeff \Rad \Psi](t,u,\vartheta)
		<
		-
		\frac{1}{4}
		\TranminusdatasizeWithFactor,
	\end{align}
	where the blowup-coefficient $\blowupcoeff \neq 0$ 
	(see Remark~\ref{R:BLOWUPCOEFFICIENTISNONZERO})
	is defined in Def.\ \ref{D:GENIUNELYNONLINEARCONSTANT}
	and the data-size parameter $\TranminusdatasizeWithFactor$
	is defined in Def.\ \ref{D:SHOCKFORMATIONQUANTITY}.
	
	In addition,
	\begin{align}	\label{E:ABSOLUTEVALUECRUCIALESTIMATESFORUPMUANDRADPSI}
		\upmu(t,u,\vartheta)
		< \frac{1}{4}
		\implies
		|\Radunit \Psi|(t,u,\vartheta)
		>
		\frac{1}{8 |\widetilde{\blowupcoeff}|}
		\frac{1}{\upmu(t,u,\vartheta)}
		\TranminusdatasizeWithFactor,
	\end{align}
	where the constant $\widetilde{\blowupcoeff} := \blowupcoeff|_{(\Psi,\noshock) = (0,0)}$
	is the blowup-coefficient evaluated at the background value of $(\Psi,\noshock) = (0,0)$
	(this makes sense in view of Remark~\ref{R:FUNCTIONALDEPENDENCEOFBLOWUPCOEFFICIENT}).
	
	Finally, when $U_0 = 1$,
	the quantity $\upmu_{\star}$ defined in \eqref{E:MUSTARDEF} verifies the following estimate:
	\begin{align} \label{E:MUSTARKEYESTIMATE}
			\upmu_{\star}(t,1)
			& = 1 - t \TranminusdatasizeWithFactor + \mathcal{O}_{\star}(\Psiep) + \mathcal{O}(\varepsilon).
	\end{align}
\end{lemma}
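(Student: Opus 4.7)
\medskip

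\noindent \textbf{Proof proposal.} The entire lemma is a direct algebraic consequence of the sharp pointwise formulas \eqref{E:MUNOTCOMMUTED}-\eqref{E:LUNITMUNOTCOMMUTED} for $\upmu$ and $\Lunit \upmu$ already established in Proposition~\ref{P:POINTWISEANDIMPROVEMENTOFAUX}, combined with the definitions of $\TranminusdatasizeWithFactor$, $\blowupcoeff$, and the bootstrap bound $0 < t < 2\TranminusdatasizeWithFactor^{-1}$. No further evolution-equation analysis is required; the plan is simply to rearrange the existing identities and track signs carefully.

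For \eqref{E:CRUCIALESTIMATESFORUPMUANDRADPSI}, I would start from the second form of \eqref{E:MUNOTCOMMUTED}, namely
\[
\upmu(t,u,\vartheta) = 1 + t\, \blowupcoeff(0,u,\vartheta)\, \Rad \Psi(t,u,\vartheta) + \mathcal{O}_{\star}(\Psiep) + \mathcal{O}(\varepsilon).
\]
Assuming $\upmu(t,u,\vartheta) < 1/4$, solving for the product gives $t\, \blowupcoeff(0,u,\vartheta)\, \Rad \Psi(t,u,\vartheta) < -\tfrac{3}{4} + \mathcal{O}_{\star}(\Psiep) + \mathcal{O}(\varepsilon)$. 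Dividing by $t$ and using $t < 2\TranminusdatasizeWithFactor^{-1}$ together with the smallness assumptions on $\Psiep$ and $\varepsilon$ (relative to $\TranminusdatasizeWithFactor$) yields $\blowupcoeff(0,u,\vartheta)\, \Rad \Psi(t,u,\vartheta) < -\tfrac{3}{8}\TranminusdatasizeWithFactor + \mathcal{O}(\varepsilon)\TranminusdatasizeWithFactor$. Finally, since $\blowupcoeff = (\partial_{\Psi} \Lunit^1)\, \upxi_1$ is a smooth function of $(\Psi,\noshock,\upxi_1^{(Small)})$ whose $\Lunit$-derivative is $\mathcal{O}(\varepsilon)$ by \eqref{E:LINFTYPSICOMMUTED}, \eqref{E:NOSHOCKLINFTY}, and \eqref{E:LINFTYUPXIJCOMMUTED}, the fundamental theorem of calculus gives $\blowupcoeff(t,u,\vartheta) = \blowupcoeff(0,u,\vartheta) + \mathcal{O}(\varepsilon)$; substituting this in and absorbing error terms into $-\tfrac{1}{4}\TranminusdatasizeWithFactor$ via the smallness assumptions of Subsect.~\ref{SS:SMALLNESSASSUMPTIONS} yields \eqref{E:CRUCIALESTIMATESFORUPMUANDRADPSI}.

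For \eqref{E:ABSOLUTEVALUECRUCIALESTIMATESFORUPMUANDRADPSI}, the idea is to use \eqref{E:CRUCIALESTIMATESFORUPMUANDRADPSI} together with the identity $\Rad = \upmu \Radunit$ from \eqref{E:RADJ}, which yields $\Radunit \Psi = \upmu^{-1} \Rad \Psi$. Writing $\blowupcoeff(t,u,\vartheta) = \widetilde{\blowupcoeff} + \mathcal{O}_{\star}(\Psiep) + \mathcal{O}(\varepsilon)$ (which follows from the $L^{\infty}$ estimates \eqref{E:PSIITSELFLINFTY}, \eqref{E:NOSHOCKLINFTY}, and \eqref{E:LINFTYUPXIJITSELF} and the smoothness of $\blowupcoeff$ as a function of $(\Psi,\noshock,\upxi_1^{(Small)})$), one transfers the lower bound $|\blowupcoeff \Rad \Psi| > \tfrac{1}{4}\TranminusdatasizeWithFactor$ into $|\Rad \Psi| > \tfrac{1}{4|\widetilde{\blowupcoeff}|}\TranminusdatasizeWithFactor \cdot (1 + \mathcal{O}_{\star}(\Psiep) + \mathcal{O}(\varepsilon))^{-1}$; dividing by $\upmu$ and choosing $\Psiep$ and $\varepsilon$ small enough to replace the prefactor by $\tfrac{1}{8|\widetilde{\blowupcoeff}|}$ completes the argument.

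For \eqref{E:MUSTARKEYESTIMATE}, I would use the \emph{first} form of \eqref{E:MUNOTCOMMUTED}, which decouples the factor $[\blowupcoeff \Rad \Psi](0,u,\vartheta)$ from the point $(t,u,\vartheta)$ at which $\upmu$ is evaluated. Taking the minimum over $(u,\vartheta) \in [0,1] \times \mathbb{T}^{n-1}$ (recall $U_0 = 1$) commutes with the $t$-dependent affine terms, giving
\[
\upmu_{\star}(t,1) = 1 + t \min_{(u,\vartheta) \in [0,1] \times \mathbb{T}^{n-1}} [\blowupcoeff \Rad \Psi](0,u,\vartheta) + \mathcal{O}_{\star}(\Psiep) + \mathcal{O}(\varepsilon).
\]
By Def.~\ref{D:SHOCKFORMATIONQUANTITY}, $\TranminusdatasizeWithFactor = \sup_{\Sigma_0^1}[\blowupcoeff \Rad \Psi]_{-}$, and since $\TranminusdatasizeWithFactor > 0$ implies the supremum is attained where $\blowupcoeff \Rad \Psi$ is negative, we have $\min_{\Sigma_0^1}[\blowupcoeff \Rad \Psi] = -\TranminusdatasizeWithFactor$, which produces \eqref{E:MUSTARKEYESTIMATE}. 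There is no genuine obstacle here; the only delicate point is sign/constant bookkeeping to ensure the $\mathcal{O}_{\star}(\Psiep)$ and $\mathcal{O}(\varepsilon)$ terms from \eqref{E:MUNOTCOMMUTED} pass through the minimization intact, which is automatic since they are uniform in $(u,\vartheta)$.
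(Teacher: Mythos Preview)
Your proposal is correct and follows essentially the same approach as the paper's proof: both arguments read off \eqref{E:CRUCIALESTIMATESFORUPMUANDRADPSI} from the second form of \eqref{E:MUNOTCOMMUTED} together with $t < 2\TranminusdatasizeWithFactor^{-1}$, derive \eqref{E:ABSOLUTEVALUECRUCIALESTIMATESFORUPMUANDRADPSI} by expanding $\blowupcoeff(t,u,\vartheta) = \widetilde{\blowupcoeff} + \mathcal{O}_{\star}(\Psiep) + \mathcal{O}(\varepsilon)$ and dividing by $\upmu$, and obtain \eqref{E:MUSTARKEYESTIMATE} by minimizing the first form of \eqref{E:MUNOTCOMMUTED} over $\Sigma_0^1$ and invoking Def.~\ref{D:SHOCKFORMATIONQUANTITY}. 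Your write-up is slightly more explicit about the fundamental-theorem-of-calculus step for $\blowupcoeff$, but the substance is identical.
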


\begin{proof}
	From the second estimate stated in \eqref{E:MUNOTCOMMUTED},
	we deduce that if $\upmu < 1/4$, then
	$
	\displaystyle
	t [\blowupcoeff \Rad \Psi](t,u,\vartheta) 
	= - \frac{3}{4}
	+ \mathcal{O}_{\star}(\Psiep)
	+ 
	\mathcal{O}(\varepsilon)
	$.
	From this bound and the fact that
	$0 \leq t < \Tboot < 2 \TranminusdatasizeWithFactor^{-1}$,
	we conclude \eqref{E:CRUCIALESTIMATESFORUPMUANDRADPSI}.
	
	To prove \eqref{E:ABSOLUTEVALUECRUCIALESTIMATESFORUPMUANDRADPSI}, we
	first use the fundamental theorem of calculus to deduce
	\begin{align} \label{E:ANOTHERFTCID}
		\blowupcoeff(t,u,\vartheta)
		& = \blowupcoeff(0,u,\vartheta)
			+
			\int_{s=0}^t
				\Lunit \blowupcoeff (s,u,\vartheta)
			\, ds.
	\end{align}
	Since the estimates \eqref{E:LINFTYPSICOMMUTED} and \eqref{E:LINFTYUPXIJCOMMUTED} 
	and the bootstrap assumptions
	imply that
	$\Lunit \blowupcoeff = \mathcal{O}(\varepsilon)$,
	we find from \eqref{E:ANOTHERFTCID} that
	$
	\blowupcoeff(t,u,\vartheta) 
	= 
	\blowupcoeff(0,u,\vartheta) 
	+ 
	\mathcal{O}(\varepsilon)
	$.
	Moreover, in view of Remark~\ref{R:FUNCTIONALDEPENDENCEOFBLOWUPCOEFFICIENT}
	and our data assumptions \eqref{E:PSIITSELFLINFTYSMALLDATAALONGSIGMA0}-\eqref{E:LINFTYSMALLDATAALONGSIGMA0},
	we have, by Taylor expanding, the following estimate:
	$\blowupcoeff(0,u,\vartheta) 
	:= 
	\blowupcoeff|_{(\Psi(0,u,\vartheta),\noshock(0,u,\vartheta))}
	=
	\widetilde{\blowupcoeff}
	+ \mathcal{O}_{\star}(\Psiep)
	+ \mathcal{O}(\varepsilon)
	$.	
	It follows that
	$
	\blowupcoeff(t,u,\vartheta) 
	= 
	\widetilde{\blowupcoeff}
	+ \mathcal{O}_{\star}(\Psiep)
	+ \mathcal{O}(\varepsilon)
	$.
	Using this estimate to substitute for the factor $\blowupcoeff(t,u,\vartheta)$
	in the second inequality in \eqref{E:CRUCIALESTIMATESFORUPMUANDRADPSI}
	and then taking the absolute value of the resulting inequality,
	we deduce that if $\upmu(t,u,\vartheta) < \frac{1}{4}$, then
	$
	|\Rad \Psi|(t,u,\vartheta)
	>
	\frac{1}{4 \left\lbrace |\widetilde{\blowupcoeff}| 
	+ \mathcal{O}_{\star}(\Psiep)
	+ \mathcal{O}(\varepsilon) \right\rbrace}
	\TranminusdatasizeWithFactor
	$.
	Dividing both sides of this inequality by $\upmu(t,u,\vartheta)$
	and appealing to \eqref{E:RADJ},
	we arrive at \eqref{E:ABSOLUTEVALUECRUCIALESTIMATESFORUPMUANDRADPSI}.
	
	To prove \eqref{E:MUSTARKEYESTIMATE}, we use the first line of \eqref{E:MUNOTCOMMUTED} to deduce 
	$
	\upmu(t,u,\vartheta)
	= 1 
		+ t [\blowupcoeff \Rad \Psi](0,u,\vartheta)
		+ \mathcal{O}_{\star}(\Psiep)
		+ \mathcal{O}(\varepsilon) 
	$.
	Taking the min of both sides of this estimate over $(u,\vartheta) \in [0,1] \times \mathbb{T}^{n-1}$
	and appealing to 
	Def.\ \ref{D:SHOCKFORMATIONQUANTITY} and
	Def.\ \ref{D:MUSTARDEF}, we conclude \eqref{E:MUSTARKEYESTIMATE}.
\end{proof}

\section{Estimates for the change of variables map}
\label{S:ESTIMATESFORCHOV}
In this section, we derive estimates for
the change of variables map $\Upsilon$ from Def.\ \ref{D:CHOV}.
The main result is Prop.\ \ref{P:CHOVREMAINSADIFFEOMORPHISM}, 
which will serve as a technical ingredient in our proof
that the solution exists up until the first shock. Roughly, the proposition
shows that if $\upmu$ remains bounded from below strictly away from $0$,
then $\Upsilon$ can be extended to a
diffeomorphism on the closure of the bootstrap domain.

\subsection{Control of the components of the change of variables map}
\label{SS:CHOVTECHNICALPRELIMINARY}
In this subsection, we provide two preliminary lemmas
that yield estimates for the components of $\Upsilon$.

\begin{lemma}[\textbf{Bounds for geometric coordinate partial derivatives of functions in terms of geometric vectorfield derivatives}]
	\label{L:POINTWISEGEOMETRICPARTIALDERIVATIVESINTERMSOFGEOMETRICVECTORFIELDS}
	For $K \in \lbrace 0, 1 \rbrace$, the following estimate holds for scalar functions $f$:
	\begin{align} \label{E:POINTWISEGEOMETRICPARTIALDERIVATIVESINTERMSOFGEOMETRICVECTORFIELDS}
		\sum_{i_0 + \cdots i_n \leq 1}
		\left\|
			\left(\frac{\partial}{\partial t}\right)^{i_0 + K}
			\left(\frac{\partial}{\partial u}\right)^{i_1}
			\left(\frac{\partial}{\partial \vartheta^2}\right)^{i_2}
			\cdots
			\left(\frac{\partial}{\partial \vartheta^n}\right)^{i_n}
			f
		\right\|_{L^{\infty}(\Sigma_t^u)}
		& \lesssim 
			\left\|
				\Fullset^{\leq 1+K;1} f
			\right\|_{L^{\infty}(\Sigma_t^u)}.
	\end{align}
\end{lemma}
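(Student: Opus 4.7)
\medskip

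\noindent\textbf{Proof plan.} The plan is to express each geometric coordinate partial derivative as a combination of elements of $\Fullset = \lbrace \Lunit, \Rad, \CoordAng{2}, \ldots, \CoordAng{n} \rbrace$ with coefficients whose $L^\infty(\Sigma_t^u)$ norms are already controlled by Prop.~\ref{P:POINTWISEANDIMPROVEMENTOFAUX}, and then to invoke the Leibniz rule. Three of the geometric coordinate partial derivatives are already in $\Fullset$ (or equal up to sign to elements of $\Fullset$): identity \eqref{E:LISDDT} gives $\frac{\partial}{\partial t} = \Lunit$, while by the construction of the geometric coordinates we have $\frac{\partial}{\partial \vartheta^i} = \CoordAng{i}$ for $i = 2, \ldots, n$. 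The only remaining one is $\frac{\partial}{\partial u}$, which by \eqref{E:RADDECOMPINTOPARTIALUPLUSANGULAR} satisfies
\begin{align*}
\frac{\partial}{\partial u}
= \Rad + \Xi,
\end{align*}
where $\Xi$ is $\mathcal{T}_{t,u}$-tangent. Since $\lbrace \CoordAng{2}, \ldots, \CoordAng{n} \rbrace$ spans the tangent space of $\mathcal{T}_{t,u}$, there exist unique scalar coefficients $Y^2, \ldots, Y^n$ such that $\Xi = \sum_{i=2}^n Y^i \CoordAng{i}$. By the same linear-algebra argument used in the proofs of Lemma~\ref{L:CARTESIANDERIVATIVESINTERMSOFGEOMETRICDERIVATIVES} and Lemma~\ref{L:VECTORFIELDCOMMUTATORIDENTITIES}, one can solve for the $Y^i$ and express them as $Y^i = \smoothfunction^i(\GdVar, \Xi^1, \ldots, \Xi^n)$ whenever $|\GdVar|$ is sufficiently small (which holds by the $L^\infty$ bootstrap/improved estimates of Prop.~\ref{P:POINTWISEANDIMPROVEMENTOFAUX}).

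For the case $K = 0$, I would handle the four sub-cases directly: for $i_0 + \cdots + i_n = 0$ the bound is trivial; for $i_0 = 1$ we have $\partial_t f = \Lunit f$; for $i_j = 1$ with $j \geq 2$ we have $\partial_{\vartheta^j} f = \CoordAng{j} f$; and for $i_1 = 1$ we write
\begin{align*}
\frac{\partial f}{\partial u} = \Rad f + \sum_{i=2}^n Y^i\, \CoordAng{i} f.
\end{align*}
The coefficients $Y^i$ are uniformly bounded on $\Sigma_t^u$ because of \eqref{E:BIGXIJLINFITY} together with the $L^\infty$ bounds on $\GdVar$ components from Prop.~\ref{P:POINTWISEANDIMPROVEMENTOFAUX}. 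All four contributions are therefore $\lesssim \|\Fullset^{\leq 1;1} f\|_{L^\infty(\Sigma_t^u)}$.

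For the case $K = 1$, I would differentiate the identities above by one additional $\partial/\partial t = \Lunit$ factor. The only non-immediate term is
\begin{align*}
\frac{\partial}{\partial t}\frac{\partial f}{\partial u}
= \Lunit \Rad f + \sum_{i=2}^n (\Lunit Y^i)\, \CoordAng{i} f + \sum_{i=2}^n Y^i\, \Lunit \CoordAng{i} f.
\end{align*}
The commutator identity \eqref{E:LUNITANGULARCOMMUTATOR} allows me to rewrite $\Lunit \CoordAng{i} f = \CoordAng{i} \Lunit f$, so that term is controlled by $\|\Fullset^{\leq 2;1} f\|_{L^\infty(\Sigma_t^u)}$. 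The term $\Lunit \Rad f$ is directly of the form $\Fullset^{2;1} f$. What remains is to verify that the coefficients $\Lunit Y^i$ are uniformly bounded on $\Sigma_t^u$, which follows from the chain rule applied to $Y^i = \smoothfunction^i(\GdVar, \Xi^1, \ldots, \Xi^n)$ together with the $L^\infty$ bounds on $\Lunit \Xi^j$ provided by \eqref{E:LUNITBIGXIJPOINTWISE}--\eqref{E:BIGXIJLINFITY} and the $L^\infty$ bounds on $\Lunit$-derivatives of the $\GdVar$ components collected in Prop.~\ref{P:POINTWISEANDIMPROVEMENTOFAUX} (in particular the bounds \eqref{E:LINFTYPSICOMMUTED}, \eqref{E:DIFFNOSHOCKLINFTY}, \eqref{E:LINFTYUPXIJCOMMUTED}, and \eqref{E:LINFTYCOORDANGIJCOMMUTED}).

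The main obstacle, if one can call it that, is purely bookkeeping: one must verify that each of the coefficient functions $Y^i$ and $\Lunit Y^i$ has $L^\infty$ norm controlled independently of the quantities being estimated. But this has essentially already been done in Prop.~\ref{P:POINTWISEANDIMPROVEMENTOFAUX}, so the proof ultimately reduces to a short Leibniz-rule computation.
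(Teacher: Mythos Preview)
Your proposal is correct and follows essentially the same approach as the paper: express $\frac{\partial}{\partial u}$ as $\Rad$ plus a $\mathcal{T}_{t,u}$-tangent combination of the $\CoordAng{i}$ with coefficients bounded via \eqref{E:BIGXIJLINFITY} and Prop.~\ref{P:POINTWISEANDIMPROVEMENTOFAUX}, then conclude by Leibniz. The paper obtains the coefficients slightly more directly by writing $\Xi = \Xi^a \partial_a$ and invoking \eqref{E:PARTIALJINTERMSOFGEO} together with \eqref{E:TORUSTANGENTVECTORFIELDONEFORMCONTRACTIONS} (so the $\Radunit$ contribution drops out), rather than your matrix-inversion route, and it leaves the $K=1$ Leibniz computation implicit; your treatment of that case is fine.
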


\begin{proof}
	From \eqref{E:RADDECOMPINTOPARTIALUPLUSANGULAR} 
	and \eqref{E:PARTIALJINTERMSOFGEO},
	the fact that $\Xi$ is $\mathcal{T}_{t,u}$-tangent,
	and \eqref{E:TORUSTANGENTVECTORFIELDONEFORMCONTRACTIONS},
	we deduce the identity
	$
	\displaystyle
	\frac{\partial}{\partial u}
	= \Rad
		+ \Xi^a \partial_a
	= \Rad
		+
		\sum_{i=2}^n
		\Xi^a \smoothfunction_{ia}(\GdVar)
		\CoordAng{i}
	$.
	From this identity
	and the $L^{\infty}$ estimates of Prop.\ \ref{P:POINTWISEANDIMPROVEMENTOFAUX}
	(in particular the estimate \eqref{E:BIGXIJLINFITY}),
	it follows that
	$
	\displaystyle
	\frac{\partial}{\partial u}
	$
	is a linear combination of the elements of
	$\Fullset$ with coefficients that are bounded 
	in the norm $\| \cdot \|_{L^{\infty}(\Sigma_t^u)}$ by $\lesssim 1$.
	The estimate \eqref{E:POINTWISEGEOMETRICPARTIALDERIVATIVESINTERMSOFGEOMETRICVECTORFIELDS}
	is a straightforward consequence of this fact
	and the facts that
	$
	\displaystyle
	\Lunit =
	\frac{\partial}{\partial t}
	\in \Fullset
	$
	and
	$
	\displaystyle
	\CoordAng{i} =
	\frac{\partial}{\partial \vartheta^i}
	\in \Fullset
	$.
\end{proof}

We now show that
$\Upsilon$ can be extended to a function defined
on the closure of the bootstrap domain
that belongs to several function spaces.

\begin{lemma}[\textbf{A preliminary extension result for the change of variables map}]
	\label{L:CHOVMAPREGULARITY}
	The components $\Upsilon^{\alpha}(t,u,\vartheta)$ of 
	the change of variables map from Def.\ \ref{D:CHOV}
	extend to the compact domain 
	$[0,\Tboot] \times [0,U_0] \times \mathbb{T}^{n-1}$
	with the following regularity, $(i=2,\cdots,n)$, $(\alpha = 0,\cdots,n)$:
	\[
	\Upsilon^{\alpha},
		\,
	\frac{\partial}{\partial \vartheta^i} \Upsilon^{\alpha}
	\in
	C\left([0,\Tboot],W^{1,\infty}([0,U_0] \times \mathbb{T}^{n-1}) \right)
	\cap
	C^1\left([0,\Tboot],L^{\infty}([0,U_0] \times \mathbb{T}^{n-1}) \right).
	\]
	
	Moreover, the following estimates\footnote{The $L^{\infty}$ estimate
	for the torus coordinates $x^i \in \mathbb{T}$ (where $i=2,\cdots,n$) stated in 
	\eqref{E:CHOVREGULARITYMAINLINFINITYESTIMATE} should be interpreted as the 
	statement that for each fixed $i \in \lbrace 2,\cdots,n \rbrace$
	and $(u,\vartheta) \in [0,U_0] \times \mathbb{T}^{n-1}$,
	the Euclidean distance traveled by the curves 
	$t \rightarrow x^i(t,u,\vartheta)$,
	$t \in [0,\Tboot]$,
	in the universal covering space
	$\mathbb{R}$ of $\mathbb{T}$ is uniformly bounded. \label{FN:LINFINITYNORMOFCOORDINATES}}
	hold for $(t,u) \in [0,\Tboot] \times [0,U_0]$,
	where $C = C(\Trandatasize)$:
	\begin{subequations}
	\begin{align} \label{E:CHOVREGULARITYMAINLINFINITYESTIMATE}
		\sum_{i_0 + \cdots i_n \leq 1}
		\left\|
			\left(\frac{\partial}{\partial t}\right)^{i_0}
			\left(\frac{\partial}{\partial u}\right)^{i_1}
			\left(\frac{\partial}{\partial \vartheta^2}\right)^{i_2}
			\cdots
			\left(\frac{\partial}{\partial \vartheta^n}\right)^{i_n}
			\Upsilon^{\alpha}
		\right\|_{L^{\infty}(\Sigma_t^u)}
		\leq C,
			\\
		\mathop{\sum_{i_0 + \cdots i_n \leq 2}}_{1 \leq i_2 + \cdots i_n}
		\left\|
			\left(\frac{\partial}{\partial t}\right)^{i_0}
			\left(\frac{\partial}{\partial u}\right)^{i_1}
			\left(\frac{\partial}{\partial \vartheta^2}\right)^{i_2}
			\cdots
			\left(\frac{\partial}{\partial \vartheta^n}\right)^{i_n}
			\Upsilon^{\alpha}
		\right\|_{L^{\infty}(\Sigma_t^u)}
		\leq C \varepsilon.
		\label{E:CHOVREGULARITYTORUSDERIVATIVESLINFINITYESTIMATE}
	\end{align}
	\end{subequations}
\end{lemma}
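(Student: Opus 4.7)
The plan is to exploit the explicit identification of the partial derivatives of $\Upsilon$ provided by \eqref{E:CHOVCALCULATIONS} in order to reduce both the regularity assertion and the pointwise bounds \eqref{E:CHOVREGULARITYMAINLINFINITYESTIMATE}-\eqref{E:CHOVREGULARITYTORUSDERIVATIVESLINFINITYESTIMATE} to $L^{\infty}$ estimates already obtained in Prop.~\ref{P:POINTWISEANDIMPROVEMENTOFAUX}. The basic observation is that, since $\Lunit = \partial/\partial t$ in geometric coordinates (see \eqref{E:LISDDT}), the components of $\Upsilon$ satisfy the integral identity
\begin{align*}
\Upsilon^{\alpha}(t,u,\vartheta)
= \Upsilon^{\alpha}(0,u,\vartheta) + \int_0^t \Lunit^{\alpha}(s,u,\vartheta)\, ds,
\end{align*}
with $\Upsilon^{0} \equiv t$, $\Upsilon^{1}(0,u,\vartheta) = 1-u$ (from $u|_{\Sigma_0} = 1 - x^{1}$), and $\Upsilon^{i}(0,u,\vartheta) = \vartheta^{i}$ for $i = 2,\cdots,n$ (from the construction of the geometric torus coordinates). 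Because $\Lunit^{\alpha}$ is a smooth function of $(\Psi,\noshock)$, the bound \eqref{E:PSIITSELFLINFTY} and the fundamental bootstrap assumptions imply that $\|\Lunit^{\alpha}\|_{L^{\infty}(\Sigma_t^u)} \lesssim 1$, so the integral is uniformly bounded on $[0,\Tboot) \times [0,U_0] \times \mathbb{T}^{n-1}$, which yields the bound for $\|\Upsilon^{\alpha}\|_{L^{\infty}(\Sigma_t^u)}$ in \eqref{E:CHOVREGULARITYMAINLINFINITYESTIMATE} (interpreted for $i=2,\cdots,n$ as in Footnote~\ref{FN:LINFINITYNORMOFCOORDINATES}).

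For the remaining first-order derivatives in \eqref{E:CHOVREGULARITYMAINLINFINITYESTIMATE}, I would read directly off \eqref{E:CHOVCALCULATIONS}: $\partial_{t}\Upsilon^{\alpha} = \Lunit^{\alpha}$, $\partial_{u}\Upsilon^{\alpha} = \upmu\,\Radunit^{\alpha} + \Xi^{\alpha}$ (where $\Radunit^{0} = \Xi^{0} = 0$), and $\partial_{\vartheta^{i}}\Upsilon^{\alpha} = \CoordAngcomp{i}{\alpha}$ (with $\CoordAngcomp{i}{0} = 0$). These are all bounded in $L^{\infty}(\Sigma_t^u)$ by constants depending on $\Trandatasize$: $\Lunit^{\alpha}$ and $\Radunit^{\alpha}$ via the smooth functional dependence on $(\Psi,\noshock)$ and \eqref{E:PSIITSELFLINFTY}-\eqref{E:DIFFNOSHOCKLINFTY}; $\upmu$ via \eqref{E:UPMULINFINTYIMPROVED}; $\Xi^{\alpha}$ via \eqref{E:BIGXIJLINFITY}; and $\CoordAngcomp{i}{j} = \delta^{ij} + \CoordAngSmallcomp{i}{j}$ via \eqref{E:LINFTYCOORDANGIJCOMMUTED}. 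This completes the proof of \eqref{E:CHOVREGULARITYMAINLINFINITYESTIMATE}.

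For the smallness bound \eqref{E:CHOVREGULARITYTORUSDERIVATIVESLINFINITYESTIMATE}, every one of the relevant second-order geometric-coordinate derivatives acts by at least one $\CoordAng{k} = \partial/\partial\vartheta^{k}$ with $k\in\{2,\cdots,n\}$ on one of $\Lunit^{\alpha}$, $\upmu\Radunit^{\alpha}+\Xi^{\alpha}$, or $\CoordAngcomp{i}{\alpha}$. Schematically, one obtains expressions of the form $\smoothfunction(\GdVar)\cdot\Tanset_{*}^{1}\GdVar$ (for the first and third) together with the genuinely new contribution $(\CoordAng{k}\upmu)\Radunit^{\alpha} + \upmu\CoordAng{k}\Radunit^{\alpha} + \CoordAng{k}\Xi^{\alpha}$. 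To handle the $\partial/\partial u$ derivative when it appears, I will apply Lemma~\ref{L:POINTWISEGEOMETRICPARTIALDERIVATIVESINTERMSOFGEOMETRICVECTORFIELDS} to replace it by a linear combination of $\Fullset$-vectorfields; the $\CoordAng{k}$-factor ensures that at least one $\mathcal{P}_{u}$-tangent differentiation lands on a quantity controlled by \eqref{E:LINFTYPSICOMMUTED}, \eqref{E:NOSHOCKLINFTY}-\eqref{E:DIFFNOSHOCKLINFTY}, \eqref{E:LINFTYUPXIJCOMMUTED}, \eqref{E:LINFTYCOORDANGIJCOMMUTED}, or \eqref{E:LINFTYUPMUCOMMUTED}, each of which is of size $\mathcal{O}(\varepsilon)$. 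The $\CoordAng{k}\Xi^{\alpha}$ terms are absorbed using \eqref{E:BIGXIJLINFITY} together with the evolution equation from Cor.~\ref{C:BIGXIJEVOLUTIONEQUATION} (or a direct low-order variant). Combining gives \eqref{E:CHOVREGULARITYTORUSDERIVATIVESLINFINITYESTIMATE}.

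Finally, the extension and regularity claims follow from the uniform bounds just established. Because $\Upsilon^{\alpha}$ and all of its first geometric-coordinate partials are uniformly bounded on the open bootstrap slab $[0,\Tboot)\times[0,U_{0}]\times\mathbb{T}^{n-1}$, the mean value theorem gives uniform (in particular Lipschitz-in-$t$) continuity, so $\Upsilon^{\alpha}$ extends continuously to the closure $[0,\Tboot]\times[0,U_{0}]\times\mathbb{T}^{n-1}$; the same reasoning applied to $\partial_{\vartheta^{i}}\Upsilon^{\alpha} = \CoordAngcomp{i}{\alpha}$, whose first partials are likewise uniformly bounded via the identities of Lemma~\ref{L:COORDANGCOMPONENTRECTANGULAR} and the commutator analysis of Lemma~\ref{L:VECTORFIELDCOMMUTATORIDENTITIES}, yields an extension with Lipschitz spatial regularity. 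The continuity statement $\Upsilon^{\alpha}, \partial_{\vartheta^{i}}\Upsilon^{\alpha}\in C([0,\Tboot],W^{1,\infty})$ follows from uniform continuity in $t$ of each first partial (which in turn follows from boundedness of their $\partial_{t}$-derivative, an order-two geometric-coordinate derivative of $\Upsilon$ handled exactly as above), while $C^{1}([0,\Tboot],L^{\infty})$ is immediate from the formula $\partial_{t}\Upsilon^{\alpha} = \Lunit^{\alpha}\in C([0,\Tboot],L^{\infty})$. There is no substantive obstacle here; the main task is simply the organized bookkeeping of which $L^{\infty}$ bound from Prop.~\ref{P:POINTWISEANDIMPROVEMENTOFAUX} controls each term, with the mild subtlety that the $\partial/\partial u$ derivative must always be converted to geometric vectorfield derivatives via Lemma~\ref{L:POINTWISEGEOMETRICPARTIALDERIVATIVESINTERMSOFGEOMETRICVECTORFIELDS} before the gain in $\varepsilon$-smallness becomes visible.
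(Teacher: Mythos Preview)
Your approach is essentially the same as the paper's: both identify $\partial_t\Upsilon^{\alpha}=\Lunit^{\alpha}$, use the fundamental theorem of calculus for $\Upsilon^{\alpha}$ itself, read off the first-order partials, and reduce the second-order bounds to the $L^{\infty}$ estimates of Prop.~\ref{P:POINTWISEANDIMPROVEMENTOFAUX} via Lemma~\ref{L:POINTWISEGEOMETRICPARTIALDERIVATIVESINTERMSOFGEOMETRICVECTORFIELDS}. The paper is slightly more uniform in that it works with $Zx^{\alpha}=Z^{\alpha}=\smoothfunction(\GdVar)$ or $\smoothfunction(\BadVar)$ for $Z\in\Fullset$ and never introduces $\Xi$ explicitly, whereas you use the column $\partial_u\Upsilon^{\alpha}=\upmu\Radunit^{\alpha}+\Xi^{\alpha}$ from \eqref{E:CHOVCALCULATIONS}.

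One small gap: for \eqref{E:CHOVREGULARITYTORUSDERIVATIVESLINFINITYESTIMATE} you need $|\CoordAng{k}\Xi^{j}|\lesssim\varepsilon$, but the estimate you cite, \eqref{E:BIGXIJLINFITY}, only gives $\lesssim 1$. A Gronwall argument from Cor.~\ref{C:BIGXIJEVOLUTIONEQUATION} does yield the smallness (the $\CoordAng{k}$-differentiated source terms are all $\mathcal{O}(\varepsilon)$ and the initial data $\CoordAng{k}\Xi^{j}|_{\Sigma_0}$ are $\mathcal{O}(\mathring{\upepsilon})$), but you did not carry this out. The paper's (omitted) route is cleaner: since $\partial/\partial u$ and $\CoordAng{k}$ commute, write $\CoordAng{k}\partial_u\Upsilon^{\alpha}=\partial_u\CoordAngcomp{k}{\alpha}$ and apply Lemma~\ref{L:POINTWISEGEOMETRICPARTIALDERIVATIVESINTERMSOFGEOMETRICVECTORFIELDS} directly to $f=\CoordAngcomp{k}{\alpha}=\delta^{k\alpha}+\CoordAngSmallcomp{k}{\alpha}$, obtaining $\lesssim\|\Fullset^{\leq 1;1}\CoordAngSmallcomp{k}{\alpha}\|_{L^{\infty}}\lesssim\varepsilon$ from \eqref{E:LINFTYCOORDANGIJCOMMUTED}; this bypasses $\Xi$ entirely.
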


\begin{proof}
	We will show that the following estimates hold for $(t,u) \in [0,\Tboot) \times [0,U_0]$:
	\begin{align} \label{E:CHOVREGULARITYPRELIMINARYLINFINITYESTIMATE}
		\sum_{K=0}^1
		\sum_{i_0 + \cdots i_n \leq 1}
		\left\|
			\left(\frac{\partial}{\partial t}\right)^{i_0+K}
			\left(\frac{\partial}{\partial u}\right)^{i_1}
			\left(\frac{\partial}{\partial \vartheta^2}\right)^{i_2}
			\cdots
			\left(\frac{\partial}{\partial \vartheta^n}\right)^{i_n}
			\Upsilon^{\alpha}
		\right\|_{L^{\infty}(\Sigma_t^u)}
		\lesssim 1,
			\\
		\sum_{K=0}^1
		\mathop{\sum_{i_0 + \cdots i_n \leq 2}}_{1 \leq i_2 + \cdots i_n}
		\left\|
			\left(\frac{\partial}{\partial t}\right)^{i_0+K}
			\left(\frac{\partial}{\partial u}\right)^{i_1}
			\left(\frac{\partial}{\partial \vartheta^2}\right)^{i_2}
			\cdots
			\left(\frac{\partial}{\partial \vartheta^n}\right)^{i_n}
			\Upsilon^{\alpha}
		\right\|_{L^{\infty}(\Sigma_t^u)}
		\lesssim \varepsilon.
		\label{E:CHOVREGULARITYPRELIMINARYTORUSDERIVATIVESLINFINITYESTIMATE}
	\end{align}
	Since 
	$
	\Lunit = \frac{\partial}{\partial t}
	$
	relative to geometric coordinates,
	all results of the lemma then follow as straightforward consequences
	of \eqref{E:CHOVREGULARITYPRELIMINARYLINFINITYESTIMATE}-\eqref{E:CHOVREGULARITYPRELIMINARYTORUSDERIVATIVESLINFINITYESTIMATE},
	the fundamental theorem of calculus,
	and the completeness of the spaces 
	$W^{1,\infty}([0,U_0] \times \mathbb{T}^{n-1})$
	and
	$L^{\infty}([0,U_0] \times \mathbb{T}^{n-1})$.
	
	Using \eqref{E:POINTWISEGEOMETRICPARTIALDERIVATIVESINTERMSOFGEOMETRICVECTORFIELDS}, 
	we see that to establish
	\eqref{E:CHOVREGULARITYPRELIMINARYLINFINITYESTIMATE}, it suffices to show that
	\begin{align} \label{E:CHOVREGULARITYSECONDPRELIMINARYLINFINITYESTIMATE}
		\left\|
			\Fullset^{\leq 2;1} \Upsilon^{\alpha}
		\right\|_{L^{\infty}(\Sigma_t^u)}
		\lesssim 1.
	\end{align}
	To derive \eqref{E:CHOVREGULARITYSECONDPRELIMINARYLINFINITYESTIMATE}, we
	first clarify that $\Upsilon^{\alpha}$ can be identified with
	the Cartesian coordinate $x^{\alpha}$, viewed as a function of $(t,u,\vartheta^2,\cdots,\vartheta^n)$.
	To bound $x^{\alpha}$, we note that $\Lunit x^{\alpha} = \Lunit^{\alpha} = \smoothfunction(\Psi,\noshock)$.
	Hence, the bootstrap assumptions imply that
	$\left\|
		\Lunit x^{\alpha}
	\right\|_{L^{\infty}(\Sigma_t^u)}
	\lesssim 1
	$.
	From this estimate and the fundamental theorem of calculus (as in \eqref{E:FTCID}),
	we conclude (see Footnote~\ref{FN:LINFINITYNORMOFCOORDINATES}) that
	$\left\|
		x^{\alpha}
	\right\|_{L^{\infty}(\Sigma_t^u)}
	\lesssim 1
	$
	as desired.
	Next, we note that for $\Singletan \in \Tanset$,
	we have $\Singletan x^{\alpha} = \Singletan^{\alpha} = \smoothfunction(\GdVar)$
	and $\Rad x^{\alpha} = \Rad^{\alpha} = \smoothfunction(\BadVar)$.
	Hence, to complete the proof of \eqref{E:CHOVREGULARITYSECONDPRELIMINARYLINFINITYESTIMATE},
	we need only to show that
	$
	\left\|
		\Tanset^{\leq 1} \smoothfunction(\BadVar)
	\right\|_{L^{\infty}(\Sigma_t^u)}
	\lesssim 1
	$
	and
	$
	\left\|
		\Fullset^{\leq 1;1} \smoothfunction(\GdVar)
	\right\|_{L^{\infty}(\Sigma_t^u)}
	\lesssim 1
	$.
	These bounds are simple consequences of the bootstrap assumptions.
	We have therefore proved \eqref{E:CHOVREGULARITYPRELIMINARYLINFINITYESTIMATE}.
	The estimate \eqref{E:CHOVREGULARITYPRELIMINARYTORUSDERIVATIVESLINFINITYESTIMATE}
	can be proved using a similar argument and we omit the details.
\end{proof}

\subsection{The diffeomorphism properties of the change of variables map}
\label{SS:CHOVDIFFEOMORPHISM}
We now derive the main result of Sect.\ \ref{S:ESTIMATESFORCHOV}.

\begin{proposition}[\textbf{Sufficient conditions for $\Upsilon$ to be a global diffeomorphism}]
\label{P:CHOVREMAINSADIFFEOMORPHISM}
	If
	\begin{align} \label{E:MUUNIFORMLYBOUNDEDFROMBELOW}
		\inf_{(t,u) \in [0,\Tboot) \times [0,U_0]} \upmu_{\star}(t,u) > 0,
	\end{align}
	then the change of variables map $\Upsilon$ extends to a global diffeomorphism 
	from $[0,\Tboot] \times [0,U_0] \times \mathbb{T}^{n-1}$ onto its image
	with the following regularity, $(i=2,\cdots,n)$, $(\alpha = 0,\cdots,n)$:
	\begin{align} \label{E:CHOVGLOBALDIFFEOMORPHISMREGULARITY}
	\Upsilon^{\alpha},
		\,
	\CoordAng{i} \Upsilon^{\alpha}
	\in
	C\left([0,\Tboot],W^{1,\infty}([0,U_0] \times \mathbb{T}^{n-1}) \right)
	\cap
	C^1\left([0,\Tboot],L^{\infty}([0,U_0] \times \mathbb{T}^{n-1}) \right).
	\end{align}
\end{proposition}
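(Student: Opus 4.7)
\medskip

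\noindent \textbf{Proof proposal.}
The plan is to combine the regularity already obtained in Lemma~\ref{L:CHOVMAPREGULARITY} with the non-vanishing of the Jacobian (which the hypothesis \eqref{E:MUUNIFORMLYBOUNDEDFROMBELOW} provides via \eqref{E:DETERMINANTOFCHOV}) and to upgrade the ``open-region'' bootstrap injectivity \eqref{E:BOOTSTRAPCHOVISDIFFEO} to injectivity on the closed box $[0,\Tboot] \times [0,U_0] \times \mathbb{T}^{n-1}$. The claimed regularity \eqref{E:CHOVGLOBALDIFFEOMORPHISMREGULARITY} of $\Upsilon$ itself has already been established in Lemma~\ref{L:CHOVMAPREGULARITY}, so the proof reduces to verifying that the extension is a homeomorphism onto its image whose inverse enjoys the required smoothness.

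First, I would use formula \eqref{E:DETERMINANTOFCHOV},
\[
\mbox{\upshape{det}}\frac{\partial \Upsilon}{\partial(t,u,\vartheta^2,\ldots,\vartheta^n)} = - \upmu\{1 + \GdVar \smoothfunction(\GdVar)\},
\]
together with the bootstrap smallness of $|\GdVar|$ (so that $|1 + \GdVar \smoothfunction(\GdVar)| \geq 1/2$) and the hypothesis \eqref{E:MUUNIFORMLYBOUNDEDFROMBELOW} to conclude that the Jacobian of the continuous extension of $\Upsilon$ is uniformly bounded away from $0$ on the \emph{closed} domain $[0,\Tboot]\times[0,U_0]\times\mathbb{T}^{n-1}$. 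By the standard inverse function theorem, $\Upsilon$ is therefore a local $C^1$ diffeomorphism at every point of the closed domain.

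Next comes the main step, global injectivity on the closed box. Because $\Lunit t = 1$ and $\Rad t = \CoordAng{i} t = 0$, the first Cartesian component satisfies $\Upsilon^0(t,u,\vartheta) = t$, so any two points with the same image automatically share the same $t$-coordinate. Together with \eqref{E:BOOTSTRAPCHOVISDIFFEO}, this already yields injectivity on all slices with $t < \Tboot$, and it reduces the remaining task to ruling out two distinct preimages of a single point lying in the top slice $\{t = \Tboot\}$. Suppose to the contrary that $\Upsilon(\Tboot,u_1,\vartheta_1) = \Upsilon(\Tboot,u_2,\vartheta_2)$ with $(u_1,\vartheta_1)\neq(u_2,\vartheta_2)$. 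Pick disjoint open neighborhoods $B_1, B_2$ of these two points on which $\Upsilon$ is a diffeomorphism (possible by the local diffeomorphism property). Then $\Upsilon(B_1) \cap \Upsilon(B_2)$ is an open neighborhood of the common image, and since $\Upsilon^0 = t$, it contains points whose $t$-coordinate is strictly less than $\Tboot$. Any such point has preimages $q_1 \in B_1$ and $q_2 \in B_2$ which are distinct (as $B_1\cap B_2 = \emptyset$) yet both lie in the open region $[0,\Tboot)\times[0,U_0]\times\mathbb{T}^{n-1}$, contradicting the bootstrap assumption \eqref{E:BOOTSTRAPCHOVISDIFFEO}.

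Finally, $\Upsilon$ restricted to the compact domain $[0,\Tboot]\times[0,U_0]\times\mathbb{T}^{n-1}$ is a continuous injection into the Hausdorff space $\mathbb{R}\times\Sigma$, hence a homeomorphism onto its image. Combined with the inverse function theorem at every interior point and the boundary regularity from Lemma~\ref{L:CHOVMAPREGULARITY}, this yields that $\Upsilon$ is a global diffeomorphism with the regularity stated in \eqref{E:CHOVGLOBALDIFFEOMORPHISMREGULARITY}. The only genuinely delicate point in the whole argument is the top-slice injectivity, and the key feature that makes it painless is the identity $\Upsilon^0 = t$, which allows one to perturb any hypothetical double point on $\{t = \Tboot\}$ back into the region where bootstrap injectivity is available.
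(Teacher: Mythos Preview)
Your overall strategy is correct and takes a genuinely different route from the paper. The paper proves injectivity on the top slice in two separate steps: (a) for $u_1\neq u_2$ it invokes the bounds on $\upmu$ and $\upxi_j^{(Small)}$ to show that $\sum_a|\partial_a u|$ is uniformly bounded above and below, hence the closed characteristic portions $\mathcal{P}_{u_1}^{\Tboot}$ and $\mathcal{P}_{u_2}^{\Tboot}$ cannot meet; (b) for $u_1=u_2$ it runs a degree-theory argument on $\mathbb{T}^{n-1}$, showing that $\vartheta\mapsto(x^2,\dots,x^n)$ is a degree-one map (homotopic to the identity at $t=0$) whose Jacobian is uniformly close to the identity by \eqref{E:CHOVREGULARITYTORUSDERIVATIVESLINFINITYESTIMATE}, hence a bijection. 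Your perturbation argument is more elementary---no degree theory---and, where it applies, cleaner.

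The gap is in the sentence ``$\Upsilon(B_1)\cap\Upsilon(B_2)$ is an open neighborhood of the common image.'' This is fine when both $u_i\in(0,U_0)$: then each $B_i$ can be taken open in the half-space $\{t\le\Tboot\}$, and since $\Upsilon^0=t$, each $\Upsilon(B_i)$ is open in $\{x^0\le\Tboot\}$; the intersection is then a relatively open neighborhood of the boundary point $p$ and so contains points with $x^0<\Tboot$. But if a preimage sits on the $u$-boundary $\{0,U_0\}$, the corresponding $B_i$ is only a \emph{corner} neighborhood in the box, and $\Upsilon(B_i)$ need not be open in $\{x^0\le\Tboot\}$. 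In the extreme case $u_1=0$, $u_2=U_0$, the two corner images could a priori meet only along a set contained in $\{x^0=\Tboot\}$, and your contradiction evaporates. This is exactly the scenario the paper's step (a) rules out. Once you add that step (equivalently, once you observe that the eikonal function $u$ extends as a single-valued function up to $t=\Tboot$, forcing $u_1=u_2$), the residual boundary case $u_1=u_2\in\{0,U_0\}$ can be handled by rerunning your perturbation argument inside the $n$-dimensional hypersurface $\mathcal{P}_{u_1}$, where there is no $u$-boundary to worry about.
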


\begin{proof}
	By the bootstrap assumption \eqref{E:BOOTSTRAPCHOVISDIFFEO}, 
	$\Upsilon$ is a diffeomorphism from 
	$[0,\Tboot) \times [0,U_0] \times \mathbb{T}^{n-1}$
	onto its image $\mathcal{M}_{\Tboot,U_0}$.
	In addition, Lemma~\ref{L:CHOVMAPREGULARITY} implies
	that each component $\Upsilon^{\alpha}$ extends to a function of 
	the geometric coordinates satisfying \eqref{E:CHOVGLOBALDIFFEOMORPHISMREGULARITY}.
	Next, we use \eqref{E:DETERMINANTOFCHOV},
	the $L^{\infty}$ estimates of Prop.\ \ref{P:POINTWISEANDIMPROVEMENTOFAUX},
	and the assumption \eqref{E:MUUNIFORMLYBOUNDEDFROMBELOW}
	to deduce that the Jacobian determinant of $\Upsilon$
	is uniformly bounded in magnitude from above and below away from
	$0$ on $[0,\Tboot] \times [0,U_0] \times \mathbb{T}^{n-1}$.
	Hence, from the inverse function theorem, we deduce that
	$\Upsilon$ extends as a local diffeomorphism from
	$[0,\Tboot] \times [0,U_0] \times \mathbb{T}^{n-1}$
	onto its image. 
	Therefore, to complete the proof of the lemma, 
	we need only to show that $\Upsilon$ is injective on the domain 
	$[0,\Tboot] \times [0,U_0] \times \mathbb{T}^{n-1}$.
	Since $\Upsilon$ is
	a diffeomorphism on the domain
	$[0,\Tboot) \times [0,U_0] \times \mathbb{T}^{n-1}$,
	it suffices to show that
	$\Upsilon(\Tboot,u_1,\vartheta_1) \neq \Upsilon(\Tboot,u_2,\vartheta_2)$
	whenever
	$(u_i,\vartheta_i) \in [0,U_0] \times \mathbb{T}^{n-1}$
	and
	$(u_1,\vartheta_1) \neq (u_2,\vartheta_2)$.
	
	We first show that if $u_1 \neq u_2$, then
	$\Upsilon(\Tboot,u_1,\vartheta_1) \neq \Upsilon(\Tboot,u_2,\vartheta_2)$.
	To this end, we observe that from definitions \eqref{E:EIKONALFUNCTIONSPATIALONEFORM} and \eqref{E:UPXIJSMALL},
	the estimates \eqref{E:UPMULINFINTYIMPROVED} and \eqref{E:LINFTYUPXIJITSELF},
	and the assumption \eqref{E:MUUNIFORMLYBOUNDEDFROMBELOW},
	it follows that $\sum_{a=1}^n|\partial_a u|$ is uniformly bounded from above and from below,
	strictly away from $0$. It follows that no two distinct (closed) characteristic hypersurface portions
	$\mathcal{P}_{u_1}^{\Tboot}$ and $\mathcal{P}_{u_2}^{\Tboot}$ can intersect,
	which yields the desired result.
	
	To finish the proof of the lemma, we must show that 
	$\Upsilon(\Tboot,u,\vartheta_1) \neq \Upsilon(\Tboot,u,\vartheta_2)$
	whenever 
	$u \in [0,U_0]$
	and
	$\vartheta_1 \neq \vartheta_2$.
	That is, we must show that for each fixed $u \in [0,U_0]$,
	the map $\upsilon$
	defined by $\upsilon(\vartheta) := \Upsilon(\Tboot,u,\vartheta)$ is an injection from $\mathbb{T}^{n-1}$ onto its image.
	To this end, for each fixed $u \in [0,U_0]$,
	we consider the family of $t$-parameterized maps $\widetilde{\upsilon}(t;\cdot)$ 
	(where $t \in [0,\Tboot]$)
	defined to be the last $n-1$ components of $\Upsilon(t,u,\cdot)$, 
	that is,
	$\widetilde{\upsilon}(t;\vartheta)
	:=
	\left(\Upsilon^2(t,u,\cdot),\Upsilon^3(t,u,\cdot),\cdots,\Upsilon^n(t,u,\cdot)\right)
	$
	(recall that $\Upsilon^i$ can be identified with the local Cartesian coordinate $x^i$).
	Note that $\widetilde{\upsilon}(t;\cdot)$ can be viewed as a map with the domain
	$\mathbb{T}^{n-1}$ (equipped with the geometric coordinates $(\vartheta^2,\cdots,\vartheta^n)$) 
	and the target $\mathbb{T}^{n-1}$ (equipped with the Cartesian coordinates $(x^2,\cdots,x^n)$).
	Since $\Upsilon$ is continuous on $[0,\Tboot] \times [0,U_0] \times \mathbb{T}^{n-1}$,
	it follows that $\upsilon$
	is homotopic to the degree-one\footnote{$\widetilde{\upsilon}(0,\cdot)$ 
	is degree-one because $x^i(0,u,\vartheta^2,\cdots,\vartheta^n) = \vartheta^i$ for $i=2,\cdots,n$ by construction.} 
	map 
	$\widetilde{\upsilon}(0,\cdot)$
	by the homotopy 
	$\widetilde{\upsilon}(t;\vartheta)$.
	Hence, it is a basic result of degree theory that
	$\widetilde{\upsilon}(t,\cdot)$ is also a degree-one map.
	In particular, $\widetilde{\upsilon}(\Tboot,\cdot)$ is degree-one.
	Next, we note that Lemma~\ref{L:CHOVMAPREGULARITY}
	implies that 	$\Upsilon^j(\Tboot,u,\cdot)$ can be viewed as a $C^1$ function
	of $(\vartheta^2,\cdots,\vartheta^n) \in \mathbb{T}^{n-1}$
	and that by
	\eqref{E:COORDANGJKSMALL} 
	and \eqref{E:CHOVREGULARITYTORUSDERIVATIVESLINFINITYESTIMATE},
	for $i,j=2,\cdots,n$, we have
	$\CoordAng{i} \Upsilon^j(\Tboot,u,\vartheta^2,\cdots,\vartheta^n) 
	= \delta^{ij} + \CoordAngSmallcomp{i}{j}(\Tboot,u,\vartheta^2,\cdots,\vartheta^n)
	= \delta^{ij} + \mathcal{O}(\varepsilon)
	$,
	where $\delta^{ij}$ is the standard Kronecker delta.
	From this estimate and the degree-one property of $\upsilon(\cdot) = \widetilde{\upsilon}(\Tboot,\cdot)$, 
	we deduce\footnote{Recall that if $f: \mathbb{T}^{n-1} \rightarrow \mathbb{T}^{n-1}$ 
	is a $C^1$ surjective map without critical points, then $f$ is degree-one if for $p,q \in \mathbb{T}^{n-1}$,
	$1 = \sum_{p \in f^{-1}(q)} \mbox{\upshape sign det} (d_p f)$,
	where $d_p f$ denotes the differential of $f$ at $p$
	and the $d_p f$ are computed relative to an atlas corresponding to the 
	smooth orientation on $\mathbb{T}^{n-1}$ chosen at the beginning of the article.
	It is a basic fact of degree theory that the sum is independent of $q$.
	Note that in the context of the present argument, 
	the components of the $(n-1) \times (n-1)$ matrix $d f(\cdot)$
	are $\CoordAng{i} \Upsilon^j(\Tboot,u,\cdot)$, $(i,j=2,3,\cdots,n)$.}
	that for sufficiently small $\varepsilon$,
	$\upsilon(\cdot)$ is a bijection\footnote{The surjective property of this map is easy to deduce.} 
	from $\mathbb{T}^{n-1}$ to $\mathbb{T}^{n-1}$.
	In particular, $\upsilon$ is injective, 
	which is the desired result.
	
\end{proof}

\section{Energy estimates and strict improvements of the fundamental bootstrap assumptions}
\label{S:ENERGYESTIMATES}
In this section, we derive the main estimates of the paper: a priori energy estimates that hold up to top order
on the bootstrap region. The main ingredients in the proofs 
are the energy identities of Sect.\ \ref{S:ENERGYID}
and the pointwise estimates of Prop.\ \ref{P:POINTWISEANDIMPROVEMENTOFAUX}. 
As a corollary, we also derive strict improvements of the fundamental $L^{\infty}$ bootstrap assumptions
of Subsubsect.\ \ref{SSS:FUNDAMENTALBOOT}.

\subsection{Definition of the fundamental \texorpdfstring{$L^2$-}{square integral-}controlling quantity}
\label{SS:DEFSOFL2CONTROLLINGQUANTITIES}
We start by defining the coercive quantity that we used to control the
solution in $L^2$ up to top order.

\begin{definition}[\textbf{The main coercive} $L^2$-\textbf{controlling quantity}]
\label{D:MAINCOERCIVEQUANT}
In terms of the energy-characteristic flux quantities of Def.\ \ref{D:ENERGYFLUX}
and the multi-index set $\mathcal{I}_*^{[1,\Ntop];1}$
of Def.\ \ref{D:COMMUTATORMULTIINDICES}, 
we define
\begin{align}
	&
	\totmax(t,u)
		:= 
		\max
		\Big\lbrace
		\max_{\vec{I} \in \mathcal{I}_*^{[1,\Ntop];1}}
		\sup_{(t',u') \in [0,t] \times [0,u]} 
		\shocken[\Fullset^{\vec{I}} \Psi](t',u'),
			\\
	& \ \ 
		\mathop{\max_{|\vec{I}| \leq \Ntop-1}}_{f \in 
		\lbrace \noshockuparg{J} \rbrace_{1 \leq J \leq M} \cup 
			\lbrace \diffnoshockdoublearg{\alpha}{J} \rbrace_{0 \leq \alpha \leq n; 1 \leq J \leq M}}
		\sup_{(t',u') \in [0,t] \times [0,u]} 
		\left\lbrace
			\noshocken[\Tanset^{\vec{I}} f](t',u')
			+ 
			\noshockfl[\Tanset^{\vec{I}} f](t',u')
		\right\rbrace
		\Big\rbrace.
		\notag
	\end{align}
\end{definition}

\subsection{Coerciveness of the fundamental \texorpdfstring{$L^2$-}{square integral-}controlling quantity}
\label{SS:COERCIVENESSOFTHEFUNDAMENTALCONTROLLING}
In the next lemma, we exhibit the coerciveness properties of $\totmax(t,u)$.

\begin{lemma}[\textbf{Coerciveness of} $\totmax(t,u)$]
\label{L:COERCIVENESSOFFUNDAMENTALCONTROLLING}
The following estimates hold:
\begin{align}
	\sup_{(t',u') \in [0,t] \times [0,u]}
	\left\|
		\Fullset_*^{[1,\Ntop];1} \Psi
	\right\|_{L^2(\Sigma_{t'}^{u'})}
	& \leq \totmax^{1/2}(t,u),
		\label{E:PSIINTERMSOFFUNDAMENTALCONTROLLING}
\end{align}

\begin{subequations}
\begin{align}
\sup_{(t',u') \in [0,t] \times [0,u]}
\left\|
	\sqrt{\upmu} \Tanset^{\leq \Ntop-1} \noshock
\right\|_{L^2(\Sigma_{t'}^{u'})}
	& \leq C \totmax^{1/2}(t,u),
		\label{E:SIGMATNOSHOCKINTERMSOFFUNDAMENTALCONTROLLING}
		\\
	\sup_{(t',u') \in [0,t] \times [0,u]}
	\left\|
		\sqrt{\upmu} \Tanset^{\leq \Ntop-1} \diffnoshock
	\right\|_{L^2(\Sigma_{t'}^{u'})}
	& \leq C \totmax^{1/2}(t,u),
		\label{E:SIGMATDIFFNOSHOCKINTERMSOFFUNDAMENTALCONTROLLING}
\end{align}
\end{subequations}

\begin{subequations}
\begin{align}
\sup_{(t',u') \in [0,t] \times [0,u]}
\left\|
	\Tanset^{\leq \Ntop-1} \noshock
\right\|_{L^2(\mathcal{P}_{u'}^{t'})}
	& \leq C \totmax^{1/2}(t,u),
		\label{E:CHARACTERISTICSNOSHOCKINTERMSOFFUNDAMENTALCONTROLLING}
		\\
	\sup_{(t',u') \in [0,t] \times [0,u]}
	\left\|
		\Tanset^{\leq \Ntop-1} \diffnoshock
	\right\|_{L^2(\mathcal{P}_{u'}^{t'})}
	& \leq C \totmax^{1/2}(t,u).
		\label{E:CHARACTERISTICSDIFFNOSHOCKINTERMSOFFUNDAMENTALCONTROLLING}
\end{align}
\end{subequations}
\end{lemma}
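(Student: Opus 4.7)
The plan is to prove Lemma~\ref{L:COERCIVENESSOFFUNDAMENTALCONTROLLING} essentially by unfolding definitions: the estimates \eqref{E:PSIINTERMSOFFUNDAMENTALCONTROLLING}--\eqref{E:CHARACTERISTICSDIFFNOSHOCKINTERMSOFFUNDAMENTALCONTROLLING} are pointwise coerciveness statements and do not require any dynamical input beyond the coerciveness of the building-block energies/characteristic fluxes already established in Lemma~\ref{L:COERCIVENESSOFNOSHOCKENERGY}. The only genuine (but very mild) hypothesis we need to verify along the way is the smallness of $|\GdVar|$ required by Lemma~\ref{L:COERCIVENESSOFNOSHOCKENERGY}; this is supplied by the $L^\infty$ bootstrap assumptions of Subsubsect.\ \ref{SSS:FUNDAMENTALBOOT} and their sharper consequences in Prop.\ \ref{P:POINTWISEANDIMPROVEMENTOFAUX}.

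First I would handle \eqref{E:PSIINTERMSOFFUNDAMENTALCONTROLLING}. Unpacking the seminorm notation from Subsect.\ \ref{SS:STRINGSOFCOMMUTATIONVECTORFIELDS}, the quantity $|\Fullset_*^{[1,\Ntop];1}\Psi|^2$ is (up to a combinatorial constant absorbed into the convention) the sum of $|\Fullset^{\vec I}\Psi|^2$ taken over the finite index set $\vec I\in\mathcal{I}_*^{[1,\Ntop];1}$ from Def.\ \ref{D:COMMUTATORMULTIINDICES}. By the definition \eqref{E:SHOCKENERGY} of $\shocken[\cdot]$ we have, for each such $\vec I$, the pointwise-in-$(t',u')$ identity $\|\Fullset^{\vec I}\Psi\|_{L^2(\Sigma_{t'}^{u'})}^2=\shocken[\Fullset^{\vec I}\Psi](t',u')$. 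Summing in $\vec I$ and appealing to the definition of $\totmax(t,u)$ then yields \eqref{E:PSIINTERMSOFFUNDAMENTALCONTROLLING}.

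Next I would address the $\Sigma_{t'}^{u'}$ bounds \eqref{E:SIGMATNOSHOCKINTERMSOFFUNDAMENTALCONTROLLING}--\eqref{E:SIGMATDIFFNOSHOCKINTERMSOFFUNDAMENTALCONTROLLING}. Here the key input is the equivalence \eqref{E:COERCIVENESSOFNOSHOCKENERGY} from Lemma~\ref{L:COERCIVENESSOFNOSHOCKENERGY}, which gives $\int_{\Sigma_{t'}^{u'}}\upmu\,\delta_{JK}w^Jw^K\,d\tvol\lesssim \noshocken[w](t',u')$ whenever $|\GdVar|$ is sufficiently small. The required smallness is guaranteed by the improved $L^\infty$ estimates \eqref{E:PSIITSELFLINFTY}, \eqref{E:NOSHOCKLINFTY}, \eqref{E:DIFFNOSHOCKLINFTY}, \eqref{E:LINFTYUPXIJITSELF}, \eqref{E:LINFTYCOORDANGIJCOMMUTED} together with the smallness assumptions of Subsect.\ \ref{SS:SMALLNESSASSUMPTIONS}. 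I would apply this coerciveness componentwise with $w^J$ chosen to be $\Tanset^{\vec I}\noshockuparg{J}$ (or $\Tanset^{\vec I}\diffnoshockdoublearg{\alpha}{J}$), sum the resulting inequalities over $J$, over $\alpha$, and over $|\vec I|\leq \Ntop-1$, and then invoke the definition of $\totmax(t,u)$ to conclude. Finally, I would treat the characteristic flux estimates \eqref{E:CHARACTERISTICSNOSHOCKINTERMSOFFUNDAMENTALCONTROLLING}--\eqref{E:CHARACTERISTICSDIFFNOSHOCKINTERMSOFFUNDAMENTALCONTROLLING} in the identical manner, but using \eqref{E:COERCIVENESSOFNOSHOCKNULLFLUX} in place of \eqref{E:COERCIVENESSOFNOSHOCKENERGY}; this is where the absence of the degenerating weight $\upmu$ on the left-hand sides is crucial, and it is inherited directly from \eqref{E:COERCIVENESSOFNOSHOCKNULLFLUX}.

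There is no real obstacle: the proof is purely algebraic once \eqref{E:COERCIVENESSOFNOSHOCKENERGY}--\eqref{E:COERCIVENESSOFNOSHOCKNULLFLUX} are in hand. The only bookkeeping step worth noting is that one must pass from the component-level inequalities (each coming from an application of Lemma~\ref{L:COERCIVENESSOFNOSHOCKENERGY} to a scalar component $\Tanset^{\vec I}\noshockuparg{J}$ or $\Tanset^{\vec I}\diffnoshockdoublearg{\alpha}{J}$) to the array-level $L^2$ norms defined via the pointwise norm \eqref{E:POINTWISENORMOFARRAYS}; since the sums defining $|\noshock|$ and $|\diffnoshock|$ are finite, this costs only a harmless combinatorial constant, which is absorbed into the constant $C$ on the right-hand sides of \eqref{E:SIGMATNOSHOCKINTERMSOFFUNDAMENTALCONTROLLING}--\eqref{E:CHARACTERISTICSDIFFNOSHOCKINTERMSOFFUNDAMENTALCONTROLLING}. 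Taking the supremum over $(t',u')\in[0,t]\times[0,u]$ on the left of each estimate and using the fact that, by definition of $\totmax$, the corresponding supremum on the right is already built in, completes the argument.
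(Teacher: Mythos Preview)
Your proposal is correct and follows essentially the same route as the paper: unfold Def.\ \ref{D:MAINCOERCIVEQUANT} and Def.\ \ref{E:SHOCKENERGY}, invoke the coerciveness statements \eqref{E:COERCIVENESSOFNOSHOCKENERGY}--\eqref{E:COERCIVENESSOFNOSHOCKNULLFLUX} from Lemma~\ref{L:COERCIVENESSOFNOSHOCKENERGY}, and supply the required smallness of $|\GdVar|$ via the $L^{\infty}$ estimates of Prop.\ \ref{P:POINTWISEANDIMPROVEMENTOFAUX}. One small remark: since $\totmax$ is defined as a \emph{max} over $\vec{I}$ while the seminorm on the left of \eqref{E:PSIINTERMSOFFUNDAMENTALCONTROLLING} is a \emph{sum}, your ``summing in $\vec{I}$'' step literally produces a bound by a combinatorial constant times $\totmax^{1/2}$ rather than $\totmax^{1/2}$ on the nose; this is harmless for all later uses and is a notational looseness also present in the paper's statement.
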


\begin{proof}
	Lemma~\ref{L:COERCIVENESSOFFUNDAMENTALCONTROLLING} follows from
	Def.\ \ref{D:MAINCOERCIVEQUANT},
	Def.\ \ref{E:SHOCKENERGY},
	Lemma~\ref{L:COERCIVENESSOFNOSHOCKENERGY},
	and the $L^{\infty}$ estimates of Prop.\ \ref{P:POINTWISEANDIMPROVEMENTOFAUX}
	(which provide the smallness of $\GdVar$ that assumed, for example, 
	in the hypotheses of Lemma~\ref{L:COERCIVENESSOFNOSHOCKENERGY}).
\end{proof}

\subsection{Sobolev embedding}
\label{SS:SOBELEVEMBEDDING}
The main result of this subsection is Lemma~\ref{L:BOOTSTRAPNORMSINTERMSOFCONTROLLINGQUANTITY}, 
a Sobolev embedding result which shows that the norm $\| \cdot \|_{L^{\infty}(\Sigma_t^u)}$ of $\noshock$ and $\diffnoshock$
and their $\mathcal{P}_u$-tangential 
derivatives up to mid-order is controlled by $\totmax$. 
In Cor.\ \ref{C:IMPROVEMENTOFFUNDAMANETALBOOTSTRAPASSUMPTIONS}, 
we will use the lemma as an ingredient 
in our derivation of strict improvements of the fundamental $L^{\infty}$ bootstrap assumptions.
As a preliminary step, we provide the following lemma, 
in which we we derive some $L^2$ 
estimates for $\noshock$, $\diffnoshock$, and their derivatives along 
the co-dimension two tori $\mathcal{T}_{t,u}$.

\begin{lemma}[$L^2$ \textbf{control of the non-shock-forming variables on} $\mathcal{T}_{t,u}$]
	\label{L:CONTROLOFTORUSINTEGRALSINTERMSOFFUNDAMENTALCONTROLLINGQUANTITY}
	The following estimates hold for $0 \leq \alpha \leq n$ and $1 \leq J \leq M$:
	\begin{align} \label{E:CONTROLOFTORUSINTEGRALSINTERMSOFFUNDAMENTALCONTROLLINGQUANTITY}
			\left\|
				\Tanset^{\leq \Ntop-2} \noshockuparg{J}
			\right\|_{L^2(\mathcal{T}_{t,u})},
				\,
			\left\|
				\Tanset^{\leq \Ntop-2} \diffnoshockdoublearg{\alpha}{J}
			\right\|_{L^2(\mathcal{T}_{t,u})}
			& \leq C \mathring{\upepsilon}
				+
				C \totmax^{1/2}(t,u).
		\end{align}
	
\end{lemma}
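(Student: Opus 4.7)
The idea is to compare the torus-integral at time $t$ with the torus-integral at time $0$ using the fundamental theorem of calculus along the integral curves of $\Lunit$, and then to trade the resulting time-integrated error term for the characteristic flux piece of $\totmax$. The crux is that (i) $\Lunit = \partial/\partial t$ in geometric coordinates (by \eqref{E:LISDDT}), so $d \torusvol$ is transported in a particularly simple way, and (ii) $\Lunit$ commutes with every element of $\Tanset$ (by \eqref{E:TANGENTVECTORFIELDSCOMMUTEWITHEACHOTHER}), so applying $\Lunit$ to $\Tanset^{\vec{I}} f$ simply produces $\Tanset^{\vec{I}'} f$ with one additional tangential factor and no commutator error. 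This means the $\Lunit$-derivative of the quantities we are trying to bound sits directly inside the characteristic flux portion of $\totmax$.

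\textbf{Quantitative argument.} Fix $\vec{I}$ with $|\vec{I}| \leq \Ntop - 2$, and let $f$ denote either $\Tanset^{\vec{I}} \noshockuparg{J}$ or $\Tanset^{\vec{I}} \diffnoshockdoublearg{\alpha}{J}$. Since $d \torusvol$ is independent of $t$ relative to geometric coordinates and $\Lunit = \partial/\partial t$, I will compute
\[
\|f\|_{L^2(\mathcal{T}_{t,u})}^2
= \|f\|_{L^2(\mathcal{T}_{0,u})}^2
+ 2 \int_{s=0}^{t} \int_{\mathcal{T}_{s,u}} f \cdot \Lunit f \, d \torusvol \, ds.
\]
Cauchy--Schwarz on the double integral, followed by the definition of $\| \cdot \|_{L^2(\mathcal{P}_u^t)}$ (see \eqref{E:L2NORMS} and \eqref{E:PUTINTEGRALDEF}), will then give
\[
\|f\|_{L^2(\mathcal{T}_{t,u})}^2
\leq
\|f\|_{L^2(\mathcal{T}_{0,u})}^2
+
2 \sqrt{t} \cdot \sup_{s \in [0,t]} \|f\|_{L^2(\mathcal{T}_{s,u})} \cdot \|\Lunit f\|_{L^2(\mathcal{P}_u^t)}.
\]
Setting $X(t,u) := \sup_{s \in [0,t]} \|f\|_{L^2(\mathcal{T}_{s,u})}$, this reads $X^2 \leq A + B X$ with $A = \|f\|_{L^2(\mathcal{T}_{0,u})}^2$ and $B = 2 \sqrt{t} \|\Lunit f\|_{L^2(\mathcal{P}_u^t)}$, whence $X \leq \sqrt{A} + B$.

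\textbf{Closing the estimate.} For the data term, the assumption \eqref{E:L2SMALLDATAALONGELL0U} gives $\sqrt{A} \leq \mathring{\upepsilon}$. For the flux term, the commutation identity \eqref{E:TANGENTVECTORFIELDSCOMMUTEWITHEACHOTHER} shows that $\Lunit f = \Tanset^{\vec{I}'} g$ with $g \in \{\noshockuparg{J}, \diffnoshockdoublearg{\alpha}{J}\}$ and $|\vec{I}'| = |\vec{I}|+1 \leq \Ntop-1$, so by Def.~\ref{D:MAINCOERCIVEQUANT} we have $\noshockfl[\Tanset^{\vec{I}'} g](t,u) \leq \totmax(t,u)$, and the coerciveness estimate \eqref{E:COERCIVENESSOFNOSHOCKNULLFLUX} (together with the $L^\infty$ smallness of $\GdVar$ from Prop.~\ref{P:POINTWISEANDIMPROVEMENTOFAUX}) then yields $\|\Lunit f\|_{L^2(\mathcal{P}_u^t)} \leq C \totmax^{1/2}(t,u)$. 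Since $t \leq \Tboot \leq 2 \TranminusdatasizeWithFactor^{-1}$, the factor $\sqrt{t}$ is absorbed into $C$, and combining yields the desired bound \eqref{E:CONTROLOFTORUSINTEGRALSINTERMSOFFUNDAMENTALCONTROLLINGQUANTITY}.

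\textbf{Main obstacle.} There is no serious difficulty; the proof is a short trace-type argument. The only place where one must be careful is in verifying that applying $\Lunit$ to $\Tanset^{\vec{I}} g$ produces no uncontrollable commutator, which is precisely the content of \eqref{E:TANGENTVECTORFIELDSCOMMUTEWITHEACHOTHER}, and in checking that the resulting multi-index $\vec{I}'$ has length at most $\Ntop - 1$, which forces the restriction $|\vec{I}| \leq \Ntop - 2$ appearing in the statement.
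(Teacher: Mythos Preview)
Your proof is correct and follows essentially the same approach as the paper: differentiate $\|f\|_{L^2(\mathcal{T}_{t,u})}^2$ along $\Lunit = \partial/\partial t$, integrate in time, and convert the resulting error into the characteristic-flux piece of $\totmax$, using the data assumption \eqref{E:L2SMALLDATAALONGELL0U} for the initial term. The only cosmetic difference is that the paper closes via Young's inequality followed by Gronwall (yielding $\|f\|_{L^2(\mathcal{T}_{t,u})}^2 \leq C\|f\|_{L^2(\mathcal{T}_{0,u})}^2 + C\|\Lunit f\|_{L^2(\mathcal{P}_u^t)}^2$), whereas you close via Cauchy--Schwarz and the quadratic inequality $X^2 \leq A + BX$; both are routine and equivalent here.
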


\begin{proof}
	We first note the following estimate for scalar functions $f$,
	which follows from differentiating under the integral
	and using Young's inequality:
	\begin{align} \label{E:TIMEDERIVATIVEOFTORUSINTEGRAL}
		\frac{\partial}{\partial t}
		\| f \|_{L^2(\mathcal{T}_{t,u})}^2
		& = 
		2
		\int_{\mathcal{T}_{t,u}}
			f \Lunit f
		\, d \torusvol 
		\leq
		\| f \|_{L^2(\mathcal{T}_{t,u})}^2
		+
		\| \Lunit f \|_{L^2(\mathcal{T}_{t,u})}^2.
	\end{align}
	Integrating \eqref{E:TIMEDERIVATIVEOFTORUSINTEGRAL} from time $0$ to time $t$, 
	we find that
	\begin{align} \label{E:TORUSINTEGRALGRONWALLREADY}
		\| f \|_{L^2(\mathcal{T}_{t,u})}^2
		& \leq 
		\| f \|_{L^2(\mathcal{T}_{0,u})}^2
		+
		\int_{s=0}^t
			\| f \|_{L^2(\mathcal{T}_{s,u})}^2
		\, ds
		+ 
		\| \Lunit f \|_{L^2(\mathcal{P}_u^t)}^2.
	\end{align}
	From \eqref{E:TORUSINTEGRALGRONWALLREADY} and Gronwall's inequality,
	we deduce that
	\begin{align} \label{E:TORUSINTEGRALGRONWALLED}
		\| f \|_{L^2(\mathcal{T}_{t,u})}^2
		& \leq 
		C
		\| f \|_{L^2(\mathcal{T}_{0,u})}^2
		+
		C
		\| \Lunit f \|_{L^2(\mathcal{P}_u^t)}^2.
	\end{align}
	We now apply \eqref{E:TORUSINTEGRALGRONWALLED}
	with the role of $f$ played by
	$\Tanset^{\leq \Ntop-2} \noshockuparg{J}$
	and $\Tanset^{\leq \Ntop-2}  \diffnoshockdoublearg{\alpha}{J}$.
	In view of the data-size assumptions \eqref{E:L2SMALLDATAALONGELL0U}
	and the bounds 
	$
	\| \Lunit \Tanset^{\leq \Ntop-2} \noshock \|_{L^2(\mathcal{P}_u^t)}^2 
	\lesssim \totmax(t,u)
	$
	and
	$
	\| \Lunit \Tanset^{\leq \Ntop-2} \diffnoshock \|_{L^2(\mathcal{P}_u^t)}^2 
	\lesssim \totmax(t,u)
	$,
	which follow from 
	\eqref{E:CHARACTERISTICSNOSHOCKINTERMSOFFUNDAMENTALCONTROLLING}-\eqref{E:CHARACTERISTICSDIFFNOSHOCKINTERMSOFFUNDAMENTALCONTROLLING},
	we arrive at the desired estimate \eqref{E:CONTROLOFTORUSINTEGRALSINTERMSOFFUNDAMENTALCONTROLLINGQUANTITY}.
\end{proof}

We now prove the main result of this subsection.

\begin{lemma}[$L^{\infty}$ \textbf{control of the non-shock-forming variables up to mid-order in terms of} $\totmax$]
	\label{L:BOOTSTRAPNORMSINTERMSOFCONTROLLINGQUANTITY}
	The following estimates hold:
	\begin{align} \label{E:BOOTSTRAPNORMSINTERMSOFCONTROLLINGQUANTITY}
		\left\|
			\Tanset^{\leq \Nmid-1} v
		\right\|_{L^{\infty}(\Sigma_t^u)},
		\,
		\left\|
			\Tanset^{\leq \Nmid-1} V_{\alpha}
		\right\|_{L^{\infty}(\Sigma_t^u)}
		& \leq C \mathring{\upepsilon} 
			+ 
			C \totmax^{1/2}(t,u).
	\end{align}
\end{lemma}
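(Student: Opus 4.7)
The plan is to combine a standard Sobolev embedding on the co-dimension two tori $\mathcal{T}_{t,u'}$ with the $L^2(\mathcal{T}_{t,u'})$ bounds already established in Lemma~\ref{L:CONTROLOFTORUSINTEGRALSINTERMSOFFUNDAMENTALCONTROLLINGQUANTITY}. Fix any multi-index $\vec{I}$ with $|\vec{I}| \leq \Nmid - 1$ and let $w$ denote any scalar component of $\noshock$ or $\diffnoshock$. For each fixed $(t,u')$ with $u' \in [0,u]$, the torus $\mathcal{T}_{t,u'}$ is diffeomorphic to $\mathbb{T}^{n-1}$ equipped with the flat measure $d\torusvol = d\vartheta^2 \cdots d\vartheta^n$, and the vectorfields $\CoordAng{i} = \partial/\partial \vartheta^i$ for $i = 2,\ldots,n$ are the standard flat coordinate vectorfields on it. Consequently, the standard Sobolev embedding $H^{s_0}(\mathbb{T}^{n-1}) \hookrightarrow L^\infty(\mathbb{T}^{n-1})$ with $s_0 := \lfloor (n-1)/2 \rfloor + 1$ yields
\begin{align*}
\|\Tanset^{\vec{I}} w\|_{L^\infty(\mathcal{T}_{t,u'})}
\lesssim \sum_{|\vec{J}| \leq s_0} \|\CoordAng^{\vec{J}} \Tanset^{\vec{I}} w\|_{L^2(\mathcal{T}_{t,u'})},
\end{align*}
with an implicit constant depending only on $n$.

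Next, I would observe that the composed operator $\CoordAng^{\vec{J}} \Tanset^{\vec{I}}$ is itself a pure string of $\Tanset$-vectorfields of order at most $\Nmid - 1 + s_0$. Using $\Nmid = \lceil \Ntop/2 \rceil + 1$ and $\Ntop \geq n+5$, a short case check on parities shows
\begin{align*}
\Nmid - 1 + s_0 \;=\; \lceil \Ntop/2 \rceil + \lfloor (n-1)/2 \rfloor + 1 \;\leq\; \Ntop - 2,
\end{align*}
so each $L^2(\mathcal{T}_{t,u'})$ norm on the right-hand side falls within the scope of Lemma~\ref{L:CONTROLOFTORUSINTEGRALSINTERMSOFFUNDAMENTALCONTROLLINGQUANTITY} and is therefore bounded by $C\mathring{\upepsilon} + C\totmax^{1/2}(t,u')$.

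Finally, I would take the supremum of both sides over $u' \in [0,u]$. Since Def.~\ref{D:MAINCOERCIVEQUANT} presents $\totmax(t,u)$ as a supremum over the box $[0,t]\times[0,u]$, the function $u' \mapsto \totmax(t,u')$ is non-decreasing, so $\sup_{u' \in [0,u]} \totmax^{1/2}(t,u') = \totmax^{1/2}(t,u)$. Noting also that $\|\cdot\|_{L^\infty(\Sigma_t^u)} = \sup_{u' \in [0,u]} \|\cdot\|_{L^\infty(\mathcal{T}_{t,u'})}$, summing over the finitely many choices of $\vec{I}$ with $|\vec{I}|\leq \Nmid - 1$ and over the $M(n+2)$ components of $(\noshock,\diffnoshock)$ yields the claimed estimate \eqref{E:BOOTSTRAPNORMSINTERMSOFCONTROLLINGQUANTITY}.

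The only subtlety is the derivative count, which is tight and saturates exactly at $\Ntop = n+5$; once one recognizes that the tori are flat $(n-1)$-dimensional manifolds of unit size for which the Sobolev constant is universal, there is no additional obstacle and the proof is otherwise routine.
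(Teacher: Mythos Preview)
Your proof is correct and follows essentially the same route as the paper: Sobolev embedding on the flat tori $\mathcal{T}_{t,u'}$ (your $s_0 = \lfloor (n-1)/2\rfloor + 1$ equals the paper's $\lfloor (n+1)/2\rfloor$), followed by Lemma~\ref{L:CONTROLOFTORUSINTEGRALSINTERMSOFFUNDAMENTALCONTROLLINGQUANTITY} and the derivative count $\Nmid - 1 + s_0 \leq \Ntop - 2$. You are slightly more explicit than the paper about the supremum over $u'$ and the monotonicity of $\totmax$, but the argument is the same.
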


\begin{proof}
	Standard Sobolev embedding on $\mathbb{T}^{n-1}$
	yields the following estimate for scalar functions $f$:
	\begin{align} \label{E:BASICSOBOLEV}
		\left\|
			f
		\right\|_{L^{\infty}(\mathcal{T}_{t,u})}
		& \lesssim
		\left\|
			f
		\right\|_{L^2(\mathcal{T}_{t,u})}
		+
		\sum_{K=1}^{\lfloor \frac{n+1}{2} \rfloor}
		\sum_{Y_{(1)},\cdots, Y_{(K)} \in \lbrace \CoordAng{i} \rbrace_{i=2,3,\cdots,n}}
		\left\|
			Y_{(1)} \cdots Y_{(K)} f
		\right\|_{L^2(\mathcal{T}_{t,u})}.
	\end{align}
	The desired estimate \eqref{E:BOOTSTRAPNORMSINTERMSOFCONTROLLINGQUANTITY}
	now follows from
	\eqref{E:BASICSOBOLEV},
	\eqref{E:CONTROLOFTORUSINTEGRALSINTERMSOFFUNDAMENTALCONTROLLINGQUANTITY},
	and \eqref{E:NDEFS},
	where the last of these equations in particular implies that
	$
	\displaystyle
	\Nmid-1
	+
	\left\lfloor \frac{n+1}{2} \right\rfloor
	\leq
	\Ntop-2
	$.
\end{proof}

\subsection{Preliminary $L^2$ estimates for 
$\upmu$,
$\upxi_j^{(Small)}$, 
and
$\CoordAngSmallcomp{i}{j}$}
In the next lemma, we bound the $L^2$ norms of the derivatives of the quantities
$\upmu$, $\xi_j^{(Small)}$, and $\CoordAngSmallcomp{i}{j}$
in terms of $\totmax$. This serves as a preliminary step for our forthcoming derivation of 
$L^2$ estimates for $\Psi$, $\noshock$, and $\diffnoshock$,
since $\upmu$, $\xi_j^{(Small)}$, and $\CoordAngSmallcomp{i}{j}$
appear as source terms in their commuted evolution equations
(as is shown by RHSs~\eqref{E:SHOCKVARIABLEPOINTWISEINHOM}-\eqref{E:NONSHOCKCARTESIANDIFFERENTIATEDVARIABLEPOINTWISEINHOM}).

\begin{lemma}[$L^2$ \textbf{estimates for} 
$\upmu$,
$\upxi_j^{(Small)}$, 
\textbf{and}
$\CoordAngSmallcomp{i}{j}$
\textbf{in terms of} $\totmax$]
	\label{L:L2ESTIMATESFOREIKONALFUNCTIONQUANTITIES}
	The following estimates hold for 
	$2 \leq i \leq n$,
	$1 \leq j \leq n$,
	and
	$(t,u) \in [0,\Tboot) \times [0,U_0]$,
	where $\totmax$ is defined in Def.\ \ref{D:MAINCOERCIVEQUANT}:
	\begin{subequations}
		\begin{align}
			\left\|
				\Tanset_*^{[1,\Ntop-1]} \upmu
			\right\|_{L^2(\Sigma_t^u)}
			& \leq C \mathring{\upepsilon}
				+
				C \totmax^{1/2}(t,u),
					\label{E:L2UPMUDERIVATIVESBOUNDEDINTERMSOFENERGIES} \\
			\left\|
				\Fullset_*^{[1,\Ntop-1];1} \xi_j^{(Small)}
			\right\|_{L^2(\Sigma_t^u)}
			& \leq C \mathring{\upepsilon}
				+
				C \totmax^{1/2}(t,u),
					\label{E:L2XIJDERIVATIVESBOUNDEDINTERMSOFENERGIES} \\
			\left\|
				\Fullset^{[1,\Ntop-1];1} \CoordAngSmallcomp{i}{j}
			\right\|_{L^2(\Sigma_t^u)}
			& \leq C \mathring{\upepsilon}
				+
				C \totmax^{1/2}(t,u).
				\label{E:L2ANGUALARVECTORFIELDJCOMPONENTDERIVATIVESBOUNDEDINTERMSOFENERGIES}
		\end{align}
	\end{subequations}
\end{lemma}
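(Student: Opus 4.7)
The plan is to apply the basic transport energy identity \eqref{E:SHOCKVARIABLEBASICENERGYID} to each scalar function of the form $\Tanset^{\vec{I}}\upmu$, $\Fullset^{\vec{I}}\upxi_j^{(Small)}$, $\Fullset^{\vec{I}}\CoordAngSmallcomp{i}{j}$ for the multi-indices $\vec{I}$ featuring in the three target norms; since $\Lunit = \partial/\partial t$, this reduces to the fundamental theorem of calculus in $t$. The $\Sigma_0^u$ initial-data contributions are bounded by $C\mathring{\upepsilon}^2$ thanks to \eqref{E:MUDATATANGENTIALDERIVATIVESL2SMALL}, \eqref{E:XISMALLJDATATALLDERIVATIVESL2SMALL}, and \eqref{E:COORDANGJSMALLDATAALLDERIVATIVEL2SMALL} from Lemma~\ref{L:ESTIMATESFORNONLINEARGEOMETRICOPTICS}. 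The inhomogeneous terms arising from applying $\Lunit$ to these scalar quantities are controlled pointwise by \eqref{E:COMMUTEDLUNITUPMU}, \eqref{E:COMMUTEDLUNITUPXIJ}, and \eqref{E:COMMUTEDLUNITCOORDANGIJ} from Prop.~\ref{P:POINTWISEANDIMPROVEMENTOFAUX}; after Cauchy--Schwarz over $\mathcal{M}_{t,u}$, we are led to estimate $L^2(\mathcal{M}_{t,u})$ norms of various higher-order derivatives of $\Psi$, $\noshock$, $\diffnoshock$, $\upmu$, $\upxi_j^{(Small)}$, $\CoordAngSmallcomp{i}{j}$, with lower-order factors in the Leibniz expansions placed in $L^\infty$ via Prop.~\ref{P:POINTWISEANDIMPROVEMENTOFAUX}.

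Introduce the composite controlling quantity
\[
Q(t,u) := \|\Tanset_*^{[1,\Ntop-1]}\upmu\|_{L^2(\Sigma_t^u)}^2 + \sum_{j=1}^n\|\Fullset_*^{[1,\Ntop-1];1}\upxi_j^{(Small)}\|_{L^2(\Sigma_t^u)}^2 + \sum_{i=2}^n\sum_{j=1}^n\|\Fullset^{[1,\Ntop-1];1}\CoordAngSmallcomp{i}{j}\|_{L^2(\Sigma_t^u)}^2.
\]
The source $L^2(\mathcal{M}_{t,u})$ norms split into three types: (i) norms of $\Fullset_*^{[1,\Ntop];1}\Psi$, which contribute $\int_0^t \totmax(t',u)\,dt' \lesssim \totmax(t,u)$ by \eqref{E:PSIINTERMSOFFUNDAMENTALCONTROLLING} and $t < 2\TranminusdatasizeWithFactor^{-1}$; (ii) norms of $\Tanset^{\leq \Ntop-1}\noshock$ and $\Tanset^{\leq \Ntop-1}\diffnoshock$, which enter through the arrays $\GdVar$, $\BadVar$ and the explicit $|\diffnoshock|$ summands in \eqref{E:COMMUTEDLUNITUPXIJ}--\eqref{E:COMMUTEDLUNITCOORDANGIJ}; and (iii) self-coupling norms of the very quantities bundled into $Q$. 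For type (ii), the crucial point is that using $d\vol = d\torusvol\,du'\,dt'$ and slicing $\mathcal{M}_{t,u}$ by the characteristics $\mathcal{P}_{u'}^t$, the non-degenerate characteristic flux bounds \eqref{E:CHARACTERISTICSNOSHOCKINTERMSOFFUNDAMENTALCONTROLLING}--\eqref{E:CHARACTERISTICSDIFFNOSHOCKINTERMSOFFUNDAMENTALCONTROLLING} give
\[
\|\Tanset^{\leq \Ntop-1}(\noshock,\diffnoshock)\|_{L^2(\mathcal{M}_{t,u})}^2 = \int_0^u \|\Tanset^{\leq \Ntop-1}(\noshock,\diffnoshock)\|_{L^2(\mathcal{P}_{u'}^t)}^2\,du' \leq C\totmax(t,u),
\]
yielding a $\totmax(t,u)$ contribution. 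For type (iii), we obtain $\int_0^t Q(t',u)\,dt'$.

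Combining these ingredients with $2ab \leq a^2 + b^2$ produces the closed integral inequality
\[
Q(t,u) \leq C\mathring{\upepsilon}^2 + C\totmax(t,u) + C\int_0^t Q(t',u)\,dt',
\]
and Gronwall's inequality in $t$ at fixed $u$, together with the bound $t < 2\TranminusdatasizeWithFactor^{-1}$, delivers $Q(t,u) \leq C\mathring{\upepsilon}^2 + C\totmax(t,u)$. The three desired bounds then follow by extracting the individual summands comprising $Q$ and taking square roots.

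The main obstacle is the self-coupling in type (iii): at top order $\Ntop-1$, the pointwise estimates \eqref{E:COMMUTEDLUNITUPMU}--\eqref{E:COMMUTEDLUNITCOORDANGIJ} produce, through the arrays $\GdVar$ and $\BadVar$, source contributions of the same order as what is being estimated, forcing us to bundle the three quantities into the single composite $Q$ and to close the system of estimates simultaneously via Gronwall. A secondary technical subtlety is that the $\upmu$-weighted slicewise bound \eqref{E:SIGMATNOSHOCKINTERMSOFFUNDAMENTALCONTROLLING} for $\noshock, \diffnoshock$ is degenerate near the shock and so cannot be used for the top-order $\noshock, \diffnoshock$ source contributions; it is precisely the availability of the non-degenerate $\mathcal{P}_u^t$ flux estimates, effective because the source terms naturally arise in spacetime integrals over $\mathcal{M}_{t,u}$ rather than on single constant-time slices, that makes the argument close.
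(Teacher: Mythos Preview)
Your proposal is correct and follows essentially the same approach as the paper: define the composite quantity (your $Q$, the paper's $q$), apply the transport energy identity \eqref{E:SHOCKVARIABLEBASICENERGYID} to each constituent, use the pointwise bounds \eqref{E:COMMUTEDLUNITUPXIJ}--\eqref{E:COMMUTEDLUNITUPMU} on the $\Lunit$-derivatives, control the $\Psi$ and $\noshock,\diffnoshock$ source terms via Lemma~\ref{L:COERCIVENESSOFFUNDAMENTALCONTROLLING}, and close by Gronwall. One small point you glossed over: the pointwise bounds produce $|\Fullset_*^{[1,\Ntop-1];1}\GdVar|$ terms, and since $\GdVar$ contains $\noshock$ and $\diffnoshock$, you need Lemma~\ref{L:CONTROLOFTRANSVERSALINTERMSOFTANGENTIAL} (as the paper does) to trade the possible $\Rad$-derivative for purely tangential derivatives before invoking the characteristic flux bounds; the extra terms this produces are of type (iii) and get absorbed into $Q$.
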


\begin{proof}
	See Subsect.\ \ref{SS:OFTENUSEDESTIMATES} for some comments on the analysis.
	We set 
	\begin{align} \label{E:EIKONALFUNCTIONQUANTITIESGRONWALLABLEQUANTITY}
	q = q(t,u) 
	& := 
	\left\|
		\Tanset_*^{[1,\Ntop-1]} \upmu
	\right\|_{L^2(\Sigma_t^u)}^2
		\\
	& \ \
	+
	\sum_{j=1}^n
	\left\|
		\Fullset_*^{[1,\Ntop-1];1} \xi_j^{(Small)}
	\right\|_{L^2(\Sigma_t^u)}^2
	+
	\sum_{i=2}^n
	\sum_{j=1}^n
	\left\|
		\Fullset^{[1,\Ntop-1];1} \CoordAngSmallcomp{i}{j}
	\right\|_{L^2(\Sigma_t^u)}^2.
		\notag
	\end{align}
	The estimates from Subsubsect.\ \ref{SSS:DATAOFOTHERDATASIZE}
	for the data of 
	$\upmu$,
	$\upxi_j^{(Small)}$,
	and $\CoordAngSmallcomp{i}{j}$
	imply that 
	$q(0,u) \leq C \mathring{\upepsilon}^2$.
	Hence, from the pointwise estimates 
	\eqref{E:COMMUTEDLUNITUPXIJ}-\eqref{E:COMMUTEDLUNITCOORDANGIJ} and \eqref{E:COMMUTEDLUNITUPMU},
	the pointwise estimates
	\eqref{E:POINTWISEESTIMATESTRANSVERSALDERIVATIVESOFNOSHOCKCONTROLLEDBYTANGENTAIL}-\eqref{E:POINTWISEESTIMATESTRANSVERSALDERIVATIVESOFCARTESIANDIFFERENTIATEDNOSHOCKCONTROLLEDBYTANGENTAIL},
	Def.\ \ref{D:SHORTHANDARRAYS},
	Young's inequality,
	the energy identity \eqref{E:SHOCKVARIABLEBASICENERGYID},
	and Lemma~\ref{L:COERCIVENESSOFFUNDAMENTALCONTROLLING},
	we deduce that
	\begin{align} \label{E:GRONWALLREADYEIKONALFUNCTIONQUANTITIESL2}
		q(t,u)
		& \leq C \mathring{\upepsilon}^2
			+
			C
			\sum_{j=1}^n
			\int_{\mathcal{M}_{t,u}}
				\left|
					\Fullset_*^{[1,\Ntop-1];1} \xi_j^{(Small)}
				\right|^2
			\, d \vol
			\\
		& \ \
			+
			C
			\sum_{i=2}^n
			\sum_{j=1}^n
			\int_{\mathcal{M}_{t,u}}
				\left|
					\Fullset_*^{[1,\Ntop-1];1} \CoordAngSmallcomp{i}{j}
				\right|^2
			\, d \vol
			+
			C
			\int_{\mathcal{M}_{t,u}}
				\left|
					\Tanset_*^{[1,\Ntop-1]} \upmu
				\right|^2
			\, d \vol
			\notag \\
		& \ \
			+
			C
			\int_{\mathcal{M}_{t,u}}
				\left|
					\Fullset_*^{[1,\Ntop];1} \Psi
				\right|^2
			\, d \vol
			+
			C
			\int_{\mathcal{M}_{t,u}}
				\left|
					\Tanset^{\leq \Ntop-1} \noshock
				\right|^2
			\, d \vol
			+
			C
			\int_{\mathcal{M}_{t,u}}
				\left|
					\Tanset^{\leq \Ntop-1} \diffnoshock
				\right|^2
			\, d \vol
			\notag
			 \\
		& \leq C \mathring{\upepsilon}^2
			+ 
			C \int_{s=0}^t
					q(s,u)
				\, ds
			+ C 
				\int_{s=0}^t
					\totmax(s,u)
				\, ds	
			+	C 
				\int_{u'=0}^u
					\totmax(t,u')
				\, du'
				\notag 
				\\
		& \leq C \mathring{\upepsilon}^2
			+ 
			C \int_{s=0}^t
				q(s,u)
			\, ds
			+ 
			C \totmax(t,u).
				\notag
	\end{align}
	From \eqref{E:GRONWALLREADYEIKONALFUNCTIONQUANTITIESL2} and Gronwall's inequality,
	we conclude the bound $q(t,u) \leq C \mathring{\upepsilon}^2 + C \totmax(t,u)$,
	from which the estimates 
	\eqref{E:L2UPMUDERIVATIVESBOUNDEDINTERMSOFENERGIES}-\eqref{E:L2ANGUALARVECTORFIELDJCOMPONENTDERIVATIVESBOUNDEDINTERMSOFENERGIES} easily follow.
\end{proof}

\subsection{The main a priori estimates}
\label{SS:MAINAPRIORIESIMATES}
In the next proposition, we derive our main a priori energy estimates.

\begin{proposition}[\textbf{The main a priori estimates}]
	\label{P:MAINAPRIORI}
	There exists a constant $C > 0$ such 
	that under the data-size assumptions of Subsubsect.\ \ref{SSS:DATA},
	the bootstrap assumptions of Subsubsect.\ \ref{SSS:FUNDAMENTALBOOT},
	and the smallness assumptions of Subsect.\ \ref{SS:SMALLNESSASSUMPTIONS},
	the following estimates hold for $(t,u) \in [0,\Tboot) \times [0,U_0]$:
	\begin{align} \label{E:ENERGYGRONWALLREADY}
		\totmax(t,u)
		& \leq C \mathring{\upepsilon}^2
			+
			C
			\int_{s=0}^t
				\totmax(s,u)
			\, ds
			+
			C
			\int_{u'=0}^u
				\totmax(t,u')
			\, du'.
	\end{align}
	
	Moreover, as a consequence of
	\eqref{E:ENERGYGRONWALLREADY},
	the following estimate holds for $(t,u) \in [0,\Tboot) \times [0,1]$:
	\begin{align} \label{E:MAINAPRIORIENERGYESTIMATE}
		\totmax(t,u)
		& \leq C \mathring{\upepsilon}^2.
	\end{align}
\end{proposition}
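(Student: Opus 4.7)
The plan is to derive the integral inequality \eqref{E:ENERGYGRONWALLREADY} by controlling each of the building-block quantities appearing in Def.\ \ref{D:MAINCOERCIVEQUANT}, and then to close the argument with a two-variable Gronwall argument. The main inputs are the two energy identities from Prop.\ \ref{P:ENERGYIDENTITIES}, the pointwise estimates for the commuted inhomogeneities collected in Prop.\ \ref{P:POINTWISEANDIMPROVEMENTOFAUX}, the $L^2$ bounds for the eikonal-function-related geometric quantities provided by Lemma~\ref{L:L2ESTIMATESFOREIKONALFUNCTIONQUANTITIES}, the coercivity properties of $\totmax$ furnished by Lemma~\ref{L:COERCIVENESSOFFUNDAMENTALCONTROLLING}, and the data bounds from Subsubsect.\ \ref{SSS:DATA}.

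For each $\vec{I} \in \mathcal{I}_*^{[1,\Ntop];1}$, I will commute $\Lunit \Psi = 0$ with $\Fullset^{\vec{I}}$ and apply the identity \eqref{E:SHOCKVARIABLEBASICENERGYID} to $f = \Fullset^{\vec{I}} \Psi$ with $\mathfrak{F} = -[\Lunit,\Fullset^{\vec{I}}]\Psi = \Lunit \Fullset^{\vec{I}} \Psi$. The inhomogeneous integral is estimated via \eqref{E:SHOCKVARIABLEPOINTWISEINHOM} together with Cauchy--Schwarz or Young; the crucial structural point is that because $\vec{I} \in \mathcal{I}_*^{[1,\Ntop];1}$, every term on the right-hand side of \eqref{E:SHOCKVARIABLEPOINTWISEINHOM} is either a $\Fullset_*^{[1,N];1}\Psi$-quantity controlled by $\shocken[\cdot]$-energies, or a term from $\BadVar$ whose $L^2$ norm on $\Sigma_t^u$ is controlled through Lemma~\ref{L:L2ESTIMATESFOREIKONALFUNCTIONQUANTITIES} and \eqref{E:POINTWISEESTIMATESTRANSVERSALDERIVATIVESOFNOSHOCKCONTROLLEDBYTANGENTAIL}--\eqref{E:POINTWISEESTIMATESTRANSVERSALDERIVATIVESOFCARTESIANDIFFERENTIATEDNOSHOCKCONTROLLEDBYTANGENTAIL} by $\mathring{\upepsilon}^2 + \totmax$.

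For each $\vec{I}$ with $|\vec{I}| \leq \Ntop - 1$, I will apply $\Tanset^{\vec{I}}$ to $\upmu$ times equation \eqref{E:NONSHOCKEQUATION} (for $\noshock$) and $\upmu$ times \eqref{E:CARTESIANDIFFERENTIATEDNONSHOCKEQUATION} (for $\diffnoshockdownarg{\alpha}$), obtaining equations in the form \eqref{E:ENERGYIDCOMMUTEDNONSHOCKEVOLUTION}. Here we exploit the crucial fact that the commutators involve only $\mathcal{P}_u$-tangent vectorfields, so no $\Rad\upmu$ is ever generated. Applying \eqref{E:NONSHOCKVARIABLEBASICENERGYID}, the initial-energy and the $\mathcal{P}_0$ flux boundary terms are bounded by $C\mathring{\upepsilon}^2$ by the data assumptions \eqref{E:L2SMALLDATAALONGSIGMA0} and \eqref{E:L2SMALLDATAALONGP0}. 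The inhomogeneous integrals are handled via the pointwise bounds \eqref{E:NONSHOCKVARIABLEPOINTWISEINHOM}--\eqref{E:NONSHOCKCARTESIANDIFFERENTIATEDVARIABLEPOINTWISEINHOM} combined with Cauchy--Schwarz, the coercivity from Lemma~\ref{L:COERCIVENESSOFNOSHOCKENERGY}, and Lemma~\ref{L:L2ESTIMATESFOREIKONALFUNCTIONQUANTITIES} again; the product error terms $\smoothfunction_{JK} w^J w^K$ on RHS~\eqref{E:NONSHOCKVARIABLEBASICENERGYID} are controlled by standard $L^\infty \cdot L^2$-type estimates using Prop.\ \ref{P:POINTWISEANDIMPROVEMENTOFAUX}. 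Summing over multi-indices and absorbing $\upmu$-weighted integrals via \eqref{E:VOLFORMRELATION} converts Cartesian-measure integrals into $d\vol$ integrals of the form $\int_0^t \totmax(s,u)\, ds$ and $\int_0^u \totmax(t,u')\, du'$, yielding \eqref{E:ENERGYGRONWALLREADY}.

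For \eqref{E:MAINAPRIORIENERGYESTIMATE}, I will exploit that $\totmax(t,u)$ is non-decreasing in both arguments by construction. A direct application of Gronwall's inequality in $t$ with $u$ fixed (using $\Tboot \leq 2\TranminusdatasizeWithFactor^{-1}$ so that $e^{Ct}$ is absorbed into $C$) upgrades \eqref{E:ENERGYGRONWALLREADY} to $\totmax(t,u) \leq C\mathring{\upepsilon}^2 + C\int_0^u \totmax(t,u')\,du'$; then Gronwall in $u \in [0,1]$ yields \eqref{E:MAINAPRIORIENERGYESTIMATE}. The principal technical obstacle is organizing the error-term bookkeeping so that every product generated by commutators is simultaneously consistent with the regularity budget (i.e.\ no $\Rad$-derivative beyond what the restriction $\mathcal{I}_*^{[1,\Ntop];1}$ permits and no top-order differentiation of $\upmu$ or $\upxi_j^{(Small)}$ unpaired with smallness) and controlled by $\totmax$ up to the bound of Lemma~\ref{L:L2ESTIMATESFOREIKONALFUNCTIONQUANTITIES}; this is precisely where the asymmetric roles of $\Fullset$ for $\Psi$ versus $\Tanset$ for $\noshock$ and $\diffnoshock$ in Def.\ \ref{D:MAINCOERCIVEQUANT} become essential.
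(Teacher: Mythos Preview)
Your proposal is correct and follows essentially the same route as the paper: the integral inequality \eqref{E:ENERGYGRONWALLREADY} is obtained by combining the energy identities of Prop.~\ref{P:ENERGYIDENTITIES} with the pointwise inhomogeneity bounds \eqref{E:SHOCKVARIABLEPOINTWISEINHOM}--\eqref{E:NONSHOCKCARTESIANDIFFERENTIATEDVARIABLEPOINTWISEINHOM}, Lemma~\ref{L:CONTROLOFTRANSVERSALINTERMSOFTANGENTIAL}, Lemma~\ref{L:L2ESTIMATESFOREIKONALFUNCTIONQUANTITIES}, and the coercivity of Lemma~\ref{L:COERCIVENESSOFFUNDAMENTALCONTROLLING}, exactly as you describe. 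The only difference is in the two-variable Gronwall step for \eqref{E:MAINAPRIORIENERGYESTIMATE}: rather than iterating Gronwall first in $t$ and then in $u$ (which works, as you note, because $\totmax$ is monotone in both arguments), the paper introduces the weighted quantity $\totmax_c(t,u) := \sup_{(\hat t,\hat u)\in[0,t]\times[0,u]} e^{-c\hat t}e^{-c\hat u}\totmax(\hat t,\hat u)$ and chooses $c$ large enough to absorb both integral terms simultaneously.
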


\begin{remark}[\textbf{A top-order} $L^2$ \textbf{estimate for} $\noshock$]
	\label{R:TOPORDERNOSHOCKESTIMATE}
	From the pointwise estimate \eqref{E:POINTWISEESTIMATESTRANSVERSALDERIVATIVESOFNOSHOCKCONTROLLEDBYTANGENTAIL},
	the bootstrap assumptions,
	Lemma~\ref{L:L2ESTIMATESFOREIKONALFUNCTIONQUANTITIES},
	and \eqref{E:MAINAPRIORIENERGYESTIMATE},
	one can easily obtain
	the bound
	$\| \Fullset^{\leq \Ntop;1} \noshock \|_{L^2(\Sigma_t^u)} \leq C \mathring{\upepsilon}$,
	which is a gain of one derivative for $\noshock$ compared to what is directly 
	implied by \eqref{E:MAINAPRIORIENERGYESTIMATE}. 
	Similarly, we could gain a derivative for $\noshock$ in the $L^{\infty}$ estimate \eqref{E:IMPROVEMENTOFFUNDAMANETALBOOTSTRAPASSUMPTIONS}
	below. However, we have no need for these gains of a derivative, so we will ignore them for the remainder of the paper.
	\end{remark}

\begin{proof}[Proof of Prop.\ \ref{P:MAINAPRIORI}]
	\
	
	\medskip
	
	\noindent \underline{\textbf{Proof of \eqref{E:ENERGYGRONWALLREADY}}}:
	We first derive energy inequalities for $\Psi$ and its derivatives.
	Let $\vec{I} \in \mathcal{I}_*^{[1,\Ntop];1}$
	(see Def.\ \ref{D:COMMUTATORMULTIINDICES}).
	From the energy identity \eqref{E:SHOCKVARIABLEBASICENERGYID},
	the data-size assumption \eqref{E:L2SMALLDATAALONGSIGMA0},
	the pointwise estimate \eqref{E:SHOCKVARIABLEPOINTWISEINHOM},
	the estimates 
	\eqref{E:POINTWISEESTIMATESTRANSVERSALDERIVATIVESOFNOSHOCKCONTROLLEDBYTANGENTAIL}-\eqref{E:POINTWISEESTIMATESTRANSVERSALDERIVATIVESOFCARTESIANDIFFERENTIATEDNOSHOCKCONTROLLEDBYTANGENTAIL},
	and Young's inequality,
	we deduce
	\begin{align} \label{E:ENERGYESTIMATEFORPSIFIRSTSTEP}
			\shocken[\Fullset^{\vec{I}} \Psi](t,u)
			& \leq
			C \mathring{\upepsilon}^2
			+
			C
			\int_{\mathcal{M}_{t,u}}
			\left|
				\Fullset_*^{[1,\Ntop];1} \Psi
			\right|^2
			\, d \vol
				\\
	& \ \
		+ C
			\int_{\mathcal{M}_{t,u}}
			\left|
				\Tanset^{\leq \Ntop-1} \noshock
			\right|^2
			\, d \vol
			+
			\int_{\mathcal{M}_{t,u}}
			\left|
				\Tanset^{\leq \Ntop-1} \diffnoshock
			\right|^2
			\, d \vol
			\notag 
				\\
	& \ \
			+
			C
			\int_{\mathcal{M}_{t,u}}
				\left|
					\Tanset_*^{[1,\Ntop-1]} \upmu
				\right|^2
			\, d \vol
			+
			C
			\sum_{j=1}^n
			\int_{\mathcal{M}_{t,u}}
				\left|
					\Fullset_*^{[1,\Ntop-1];1} \xi_j^{(Small)}
				\right|^2
			\, d \vol
				\notag \\
		& \ \
			+
			C
			\sum_{i=2}^n
			\sum_{j=1}^n
			\int_{\mathcal{M}_{t,u}}
				\left|
					\Fullset_*^{[1,\Ntop-1];1} \CoordAngSmallcomp{i}{j}
				\right|^2
			\, d \vol.
			\notag
	\end{align}
	From Lemma~\ref{L:COERCIVENESSOFFUNDAMENTALCONTROLLING},
	Lemma~\ref{L:L2ESTIMATESFOREIKONALFUNCTIONQUANTITIES},
	and \eqref{E:ENERGYESTIMATEFORPSIFIRSTSTEP}, we deduce
	\begin{align} \label{E:ENERGYESTIMATEFORPSISECONDSTEP}
			\shocken[\Fullset^{\vec{I}} \Psi](t,u)
			& \leq
			C \mathring{\upepsilon}^2
			+ C 
				\int_{s=0}^t
					\totmax(s,u)
				\, ds	
			+	C 
				\int_{u'=0}^u
					\totmax(t,u')
				\, du'.
	\end{align}
	
	We now derive a similar energy inequality for $\noshock$, $\diffnoshock$, and their derivatives.
	Specifically, using the energy-characteristic flux identity \eqref{E:NONSHOCKVARIABLEBASICENERGYID},
	the data-size assumptions \eqref{E:L2SMALLDATAALONGSIGMA0} and \eqref{E:L2SMALLDATAALONGP0},
	the pointwise estimates \eqref{E:NONSHOCKVARIABLEPOINTWISEINHOM} 
	and \eqref{E:NONSHOCKCARTESIANDIFFERENTIATEDVARIABLEPOINTWISEINHOM},
	the estimates 
	\eqref{E:POINTWISEESTIMATESTRANSVERSALDERIVATIVESOFNOSHOCKCONTROLLEDBYTANGENTAIL}-\eqref{E:POINTWISEESTIMATESTRANSVERSALDERIVATIVESOFCARTESIANDIFFERENTIATEDNOSHOCKCONTROLLEDBYTANGENTAIL},
	Lemma~\ref{L:COERCIVENESSOFFUNDAMENTALCONTROLLING}, 
	Lemma~\ref{L:L2ESTIMATESFOREIKONALFUNCTIONQUANTITIES},
	and the $L^{\infty}$ estimates of Prop.\ \ref{P:POINTWISEANDIMPROVEMENTOFAUX},
	we deduce that for $|\vec{I}| \leq \Ntop - 1$, we have,
	for any 
	$f \in 
		\lbrace \noshockuparg{J} \rbrace_{1 \leq J \leq M} 
		\cup 
		\lbrace \diffnoshockdoublearg{\alpha}{J} \rbrace_{0 \leq \alpha \leq n; 1 \leq J \leq M}$, 
		the following estimate:
	\begin{align} \label{E:ENERGYESTIMATEFORREGULARVARIABLESFIRSTSTEP}
			&
			\noshocken[\Tanset^{\vec{I}} f](t,u)
			+
			\noshockfl[\Tanset^{\vec{I}} f](t,u)
				\\
			& \leq
				C \mathring{\upepsilon}^2
			+ C 
				\int_{s=0}^t
					\totmax(s,u)
				\, ds	
			+	C 
				\int_{u'=0}^u
					\totmax(t,u')
				\, du'.
				\notag
	\end{align}
	From \eqref{E:ENERGYESTIMATEFORPSISECONDSTEP}, 
	\eqref{E:ENERGYESTIMATEFORREGULARVARIABLESFIRSTSTEP},
	and Def.\ \ref{D:MAINCOERCIVEQUANT},
	we conclude the desired bound \eqref{E:ENERGYGRONWALLREADY}.

	\medskip
	\noindent \underline{\textbf{Proof of \eqref{E:MAINAPRIORIENERGYESTIMATE}}}:
	With $c > 0$ a real parameter, we define
	\begin{align} \label{E:INTEGRATINGFACTORMULIPLIEDCONTROLLINGQUANTITY}
		\totmax_c(t,u)
		& :=  \sup_{(\hat{t},\hat{u}) \in [0,t] \times [0,u]}
					\left\lbrace
						\exp(-c \hat{t}) \exp(-c \hat{u}) \totmax(\hat{t},\hat{u})
					\right\rbrace.
	\end{align}
	Using \eqref{E:ENERGYGRONWALLREADY} and the simple inequality
	$\int_{y' = 0}^y \exp(c y') \, dy' \leq \frac{1}{c} \exp(c y)$,
	we deduce that for $(\hat{t},\hat{u}) \in [0,t] \times [0,u] \subset [0,\Tboot) \times [0,U_0]$, 
	the following estimate holds:
	\begin{align} \label{E:ENERGYMIDDLEOFGRONWALLARGUMENT}
		& \exp(-c \hat{t}) \exp(-c \hat{u}) \totmax(\hat{t},\hat{u})
			\\
	& \leq C \exp(-c \hat{t}) \exp(-c \hat{u}) \mathring{\upepsilon}^2
				\notag \\
		& \ \
			+
			C \exp(-c \hat{t}) \exp(-c \hat{u})
			\times
			\left\lbrace
				\sup_{t' \in [0,\hat{t}]}
				\exp(-c t') \totmax(t',\hat{u})
			\right\rbrace
			\times
			\int_{t'=0}^{\hat{t}}
				\exp(c t')
			\, dt'
			\notag \\
	& \ \
			+
			C \exp(-c \hat{t}) \exp(-c \hat{u})
			\times
			\left\lbrace
				\sup_{u' \in [0,\hat{u}]} \exp(-c u') \totmax(\hat{t},u')
			\right\rbrace
			\times
			\int_{u'=0}^{\hat{u}}
				\exp(c u')
			\, du'
			\notag	\\
		& \leq C \mathring{\upepsilon}^2
			+
			\frac{2 C}{c} 
			\sup_{(t',u') \in [0,\hat{t}] \times [0,\hat{u}]} 
			\left\lbrace
				\exp(-c t') \exp(-c u') \totmax(t',u')
			\right\rbrace,
			\notag
	\end{align}
	where the constant $C$ on RHS~\eqref{E:ENERGYMIDDLEOFGRONWALLARGUMENT} can be chosen to be independent of $c > 0$.
	From \eqref{E:ENERGYMIDDLEOFGRONWALLARGUMENT}
	and definition \eqref{E:INTEGRATINGFACTORMULIPLIEDCONTROLLINGQUANTITY},
	we deduce that
	$\totmax_c(t,u) \leq C \mathring{\upepsilon}^2 
	+
	\frac{2 C}{c} \totmax_c(t,u)
	$.
	Hence, fixing $c := c' > 2C$, we deduce that
	$\totmax_{c'}(t,u) \leq C' \mathring{\upepsilon}^2$.
	From this bound and the definition of 
	$\totmax_{c'}$,
	it follows that for $(t,u) \in [0,\Tboot) \times [0,U_0]$, we have
	$\totmax(t,u) \leq C' \exp(c' t) \exp(c' u) \mathring{\upepsilon}^2
	\leq C'' \mathring{\upepsilon}^2
	$, where $C''$ depends on $C'$, $c'$, and $\TranminusdatasizeWithFactor^{-1}$
	(in view of the bootstrap assumption \eqref{E:TBOOTBOUNDS}).
	This is precisely the desired bound \eqref{E:MAINAPRIORIENERGYESTIMATE}.
\end{proof}

\begin{corollary}[\textbf{Improvement of the fundamental $L^{\infty}$ bootstrap assumptions}]
		\label{C:IMPROVEMENTOFFUNDAMANETALBOOTSTRAPASSUMPTIONS}
		For $0 \leq \alpha \leq n$ and $1 \leq J \leq M$,
		the following estimates hold:
		\begin{align} \label{E:IMPROVEMENTOFFUNDAMANETALBOOTSTRAPASSUMPTIONS}
	\left\|
		\Tanset^{\leq \Nmid-1} v^J
	\right\|_{L^{\infty}(\Sigma_t^u)},
		\,
	\left\|
		\Tanset^{\leq \Nmid-1} V_{\alpha}^J
	\right\|_{L^{\infty}(\Sigma_t^u)}
	& \leq C \mathring{\upepsilon}.
\end{align}
In particular, if $C \mathring{\upepsilon} < \varepsilon$, then 
the estimate \eqref{E:IMPROVEMENTOFFUNDAMANETALBOOTSTRAPASSUMPTIONS}
is a strict improvement 
of the fundamental bootstrap assumption \eqref{E:FUNDAMENTALBOOTSTRAP}.
		
\end{corollary}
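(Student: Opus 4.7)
The plan is to simply combine the Sobolev embedding statement of Lemma~\ref{L:BOOTSTRAPNORMSINTERMSOFCONTROLLINGQUANTITY} with the a priori energy bound of Proposition~\ref{P:MAINAPRIORI}. No additional structural input is needed beyond what has already been established; the corollary is essentially a repackaging of the previous two results.

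More specifically, the steps I would carry out are as follows. First, I would invoke Lemma~\ref{L:BOOTSTRAPNORMSINTERMSOFCONTROLLINGQUANTITY}, which bounds
\[
\left\|\Tanset^{\leq \Nmid-1} \noshock\right\|_{L^{\infty}(\Sigma_t^u)}, \quad \left\|\Tanset^{\leq \Nmid-1} \diffnoshockdownarg{\alpha}\right\|_{L^{\infty}(\Sigma_t^u)} \leq C \mathring{\upepsilon} + C \totmax^{1/2}(t,u)
\]
for all $(t,u) \in [0,\Tboot) \times [0,U_0]$ (in fact for $U_0 = 1$ as in the hypothesis of Proposition~\ref{P:MAINAPRIORI}). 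Next, I would substitute the a priori bound $\totmax(t,u) \leq C \mathring{\upepsilon}^2$ from Proposition~\ref{P:MAINAPRIORI} into the above, yielding $\totmax^{1/2}(t,u) \leq C \mathring{\upepsilon}$ and therefore the pointwise componentwise bound
\[
\left\|\Tanset^{\leq \Nmid-1} \noshockuparg{J}\right\|_{L^{\infty}(\Sigma_t^u)}, \quad \left\|\Tanset^{\leq \Nmid-1} \diffnoshockdoublearg{\alpha}{J}\right\|_{L^{\infty}(\Sigma_t^u)} \leq C \mathring{\upepsilon}
\]
for each fixed index $\alpha \in \{0,\dots,n\}$ and $J \in \{1,\dots,M\}$, by virtue of the pointwise norm comparison in Definition~\ref{D:POINTWISENORM} (which dominates individual components by the total sum norm). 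This gives \eqref{E:IMPROVEMENTOFFUNDAMANETALBOOTSTRAPASSUMPTIONS}.

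Finally, for the concluding claim, I would observe that the smallness convention \eqref{E:DATAEPSILONVSBOOTSTRAPEPSILON} guarantees $\mathring{\upepsilon} \leq \varepsilon$, and the chain of implications used throughout Subsection~\ref{SS:SMALLNESSASSUMPTIONS} allows us to choose $\mathring{\upepsilon}$ sufficiently small relative to $\varepsilon$ (independent of $C$) so that $C \mathring{\upepsilon} < \varepsilon$; this yields the strict improvement of the fundamental bootstrap assumption \eqref{E:FUNDAMENTALBOOTSTRAP}.

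There is essentially no obstacle to this proof — the work has all been done in Lemma~\ref{L:BOOTSTRAPNORMSINTERMSOFCONTROLLINGQUANTITY} and Proposition~\ref{P:MAINAPRIORI}. The only minor point requiring care is tracking that Proposition~\ref{P:MAINAPRIORI}'s a priori estimate \eqref{E:MAINAPRIORIENERGYESTIMATE} is stated for $u \in [0,1]$ (i.e.\ $U_0 = 1$), so one should note at the outset that the present corollary is to be applied within this same range; this is consistent with the setup of Subsubsection~\ref{SSS:SPACETIMEREGION} and causes no difficulty.
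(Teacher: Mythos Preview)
Your proposal is correct and follows essentially the same approach as the paper: the paper's proof simply cites the energy estimate \eqref{E:MAINAPRIORIENERGYESTIMATE} and the Sobolev embedding result \eqref{E:BOOTSTRAPNORMSINTERMSOFCONTROLLINGQUANTITY}, which is exactly the combination you describe. Your additional remarks about componentwise bounds and the $U_0$ range are harmless elaborations on what is a two-line proof in the paper.
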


\begin{proof}	
	\eqref{E:IMPROVEMENTOFFUNDAMANETALBOOTSTRAPASSUMPTIONS}
	follows from the energy estimate \eqref{E:MAINAPRIORIENERGYESTIMATE}
	and the Sobolev embedding result \eqref{E:BOOTSTRAPNORMSINTERMSOFCONTROLLINGQUANTITY}.
\end{proof}

\section{Continuation criteria}
\label{S:CONTINUATION}
In this section, we provide a proposition that yields continuation criteria.
We will use the proposition during the proof of the main theorem (Theorem~\ref{T:MAINTHM}), 
specifically as an ingredient in showing that the solution 
survives until the shock.

\begin{proposition}[\textbf{Continuation criteria}]
\label{P:CONTINUATIONCRITERIA}
Let $(\Psi,\noshockuparg{1},\cdots,\noshockuparg{M})$
be a smooth solution to the system \eqref{E:SHOCKEQN}-\eqref{E:NONSHOCKEQUATION}
verifying the size assumptions\footnote{Recall that 
even though we make size assumptions only for certain Sobolev norms,
for technical convenience, 
we have assumed that the data on $\Sigma_0^1$ and $\mathcal{P}_0^{2 \TranminusdatasizeWithFactor^{-1}}$
are $C^{\infty}$.}
on $\Sigma_0^1$ and $\mathcal{P}_0^{2 \TranminusdatasizeWithFactor^{-1}}$
stated in Subsect.\ \ref{SS:DATASIZE}
as well as the smallness assumptions stated in Subsect.\ \ref{SS:SMALLNESSASSUMPTIONS}.
Let $T_{(Local)} \in (0,2 \TranminusdatasizeWithFactor^{-1})$ and $U_0 \in (0,1]$, and assume that
the solution exists classically on the
(``open at the top'')
spacetime region $\mathcal{M}_{T_{(Local)},U_0}$
that is completely determined by the data on
$\Sigma_0^{U_0} \cup \mathcal{P}_0^{2 \TranminusdatasizeWithFactor^{-1}}$
(see Fig.\ \ref{F:REGION} on pg.~\pageref{F:REGION}).
Let $u$ be the eikonal function that verifies the eikonal equation initial value problem \eqref{E:EIKONAL},
let $\upmu$ be the inverse foliation density of the characteristics $\mathcal{P}_u$ defined in \eqref{E:MUDEF}, 
and let $\uplambda_{\alpha} = \upmu \partial_{\alpha} u$ (as in \eqref{E:EIKONALFUNCTIONONEFORMS}).
Assume that $\upmu > 0$ on $\mathcal{M}_{T_{(Local)},U_0}$ and that
the change of variables map $\Upsilon$ from geometric to 
Cartesian coordinates (see Def.\ \ref{D:CHOV}) is 
a diffeomorphism from $[0,T_{(Local)}) \times [0,U_0] \times \mathbb{T}^{n-1}$ onto $\mathcal{M}_{T_{(Local)},U_0}$
(where $\mathcal{M}_{T_{(Local)},U_0}$ is defined in \eqref{E:MTUDEF})
such that for $i=2,\cdots,n$ and $\alpha = 0,\cdots,n$, we have
\begin{align} \label{E:CONTINUATIONCHOVREGULARITY}
	\Upsilon^{\alpha},
		\,
	\CoordAng{i} \Upsilon^{\alpha}
	\in
	C\left([0,T_{(Local)}),W^{1,\infty}([0,U_0] \times \mathbb{T}^{n-1}) \right)
	\cap
	C^1\left([0,T_{(Local)}),L^{\infty}([0,U_0] \times \mathbb{T}^{n-1}) \right).
\end{align}
Let $\mathcal{H} \subset \mathbb{R} \times \mathbb{R}^M \times \mathbb{R}^{1+n}$ 
be the set of arrays $(\widetilde{\Psi},\widetilde{\noshock},\widetilde{\uplambda})$
such that the following two conditions hold:
\begin{itemize}
	\item The Cartesian components $\Lunit^i(\Psi,\noshock)$, $(i=1,\cdots,n)$, 
		and the $M \times M$ matrices 
		$A^{\alpha}(\Psi,\noshock)$,
		$(\alpha = 0,\cdots,n)$,
		are smooth
		functions for $(\Psi,\noshock)$ belonging to a neighborhood of $(\widetilde{\Psi},\widetilde{\noshock})$.
	\item $A^0(\Psi,\noshock)$ and 
		$A^{\alpha}(\Psi,\noshock) \uplambda_{\alpha}$
		are positive definite matrices
		for $(\Psi,\noshock,\uplambda)$ 
		belonging to a neighborhood of $(\widetilde{\Psi},\widetilde{\noshock},\widetilde{\uplambda})$.
\end{itemize}

Assume that none of the following four breakdown scenarios occur:
\begin{enumerate}
	\item $\inf_{\mathcal{M}_{T_{(Local)},U_0}} \upmu = 0$. 
	\item $\sup_{\mathcal{M}_{T_{(Local)},U_0}} \upmu = \infty$.
	\item There exists a sequence $p_n \in \mathcal{M}_{T_{(Local)},U_0}$
		such that $(\Psi(p_n),\noshock(p_n),\uplambda(p_n))$ escapes every compact subset of $\mathcal{H}$ as $n \to \infty$.
	\item $\sup_{\mathcal{M}_{T_{(Local)},U_0}} 
				\max_{\alpha=0,1,\cdots,n}
				\left\lbrace
				\left|
					\partial_{\alpha} \Psi
				\right|
				+
				\left|
					\diffnoshockdownarg{\alpha}
				\right|
			\right\rbrace
		= \infty
		$,
	where $\diffnoshockdoublearg{\alpha}{J} = \partial_{\alpha} \noshockuparg{J}$.
	\end{enumerate}

In addition, assume that the following condition is verified:
\begin{enumerate}
	\setcounter{enumi}{4}
	\item The change of variables map $\Upsilon$
		extends to the compact set $[0,T_{(Local)}] \times [0,U_0] \times \mathbb{T}^{n-1}$
		as a diffeomorphism onto its image
		that enjoys the regularity properties 
		\eqref{E:CONTINUATIONCHOVREGULARITY}
		with $[0,T_{(Local)})$ replaced by $[0,T_{(Local)}]$.
\end{enumerate}

Then there exists a $\Delta > 0$ such that 
$\Psi$, 
$\noshock$,
$\diffnoshock$,
$u$, 
$\upmu$,
$\uplambda$,
and all of the
other geometric quantities defined
throughout the article can be uniquely extended 
(where $\Psi$, $\noshock$, $u$, and $\upmu$ are smooth solutions to their evolutions equations)
to a strictly larger region of the form
$\mathcal{M}_{T_{(Local)} + \Delta,U_0}$
into which their Sobolev regularity 
along $\Sigma_0^{U_0}$ and $\mathcal{P}_0^{2 \TranminusdatasizeWithFactor^{-1}}$
(described in Subsect.\ \ref{SS:DATASIZE})
is propagated.\footnote{Put differently, 
the same norms that are finite
along $\Sigma_0^{U_0}$ and $\mathcal{P}_0^{2 \TranminusdatasizeWithFactor^{-1}}$
(as stated in Subsect.\ \ref{SS:DATASIZE})
are also finite along $\Sigma_t^u$ and $\mathcal{P}_u^t$
for $(t,u) \in [0,T_{(Local)} + \Delta] \times [0,U_0]$.}
Moreover, if $\Delta$ is sufficiently small,
then none of the four breakdown scenarios
occur in the larger region, and
$\Upsilon$ extends to $[0,T_{(Local)} + \Delta] \times [0,U_0] \times \mathbb{T}^{n-1}$
as a diffeomorphism onto its image
that enjoys the regularity properties 
\eqref{E:CONTINUATIONCHOVREGULARITY}
with $[0,T_{(Local)})$ replaced by $[0,T_{(Local)} + \Delta]$.
\end{proposition}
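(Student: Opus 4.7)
The plan is to convert to Cartesian coordinates, apply a standard local well-posedness result for the coupled symmetric hyperbolic/transport system there, and then reconstruct the geometric framework on the slightly extended region. The failure of the breakdown scenarios together with hypothesis (5) will provide both the ``data'' on the top slice $\Sigma_{T_{(Local)}}^{U_0}$ and the a priori bounds needed to extend past $T_{(Local)}$.

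First, I would show that all relevant solution and geometric quantities extend continuously (with appropriate Sobolev regularity) to the compact closure $[0,T_{(Local)}] \times [0,U_0] \times \mathbb{T}^{n-1}$. The failure of scenarios (1) and (2) gives $\upmu$ uniformly bounded above and bounded below away from $0$; the failure of (3) confines $(\Psi,\noshock,\uplambda)$ to a compact subset of $\mathcal{H}$, so the system stays strictly hyperbolic; the failure of (4) gives uniform $L^\infty$ bounds on $\partial_\alpha \Psi$ and $\diffnoshock$. Combined with the structural identities of Sects.\ \ref{S:SETUPOFPROBLEM}--\ref{S:GEOMETRICCONSTRUCTIONS} (which express $\Rad$, $\CoordAng{i}$, $\Xi$, $\upxi_j^{(Small)}$, etc., in terms of the solution and its first Cartesian derivatives), these bounds force all the geometric quantities used in the paper to extend continuously to the closure. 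Standard energy estimates for the coupled system (of the same kind carried out in Sect.\ \ref{S:ENERGYESTIMATES}, but now with constants depending only on the fixed a priori bounds rather than on smallness) then propagate the Sobolev regularity of the data along $\Sigma_0^{U_0} \cup \mathcal{P}_0^{2\TranminusdatasizeWithFactor^{-1}}$ forward to $\Sigma_{T_{(Local)}}^{U_0}$.

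Second, using the extended diffeomorphism from hypothesis (5), I would view the system \eqref{E:SHOCKEQN}--\eqref{E:NONSHOCKEQUATION} in Cartesian coordinates on the image region and invoke a standard local well-posedness result for coupled first-order quasilinear symmetric hyperbolic/transport systems (available because $A^0$ is positive definite near the attained values). This yields a smooth classical extension of $(\Psi,\noshock)$ to a Cartesian slab of time-width $\Delta' > 0$ beyond the image of $\Sigma_{T_{(Local)}}^{U_0}$, with the appropriate Sobolev regularity preserved; finite-propagation-speed considerations (using positivity of $A^\alpha \covL_\alpha$, which keeps the $\mathcal{P}_u$ spacelike for the subsystem) confine its dependence to the sub-region of the domain of dependence of $\Sigma_0^{U_0} \cup \mathcal{P}_0^{2 \TranminusdatasizeWithFactor^{-1}}$. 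I would then reconstruct $u$, the torus coordinates $\vartheta^i$, $\upmu$, $\Rad$, and the other geometric quantities on the extended slab by solving their defining transport equations against the newly extended $\Lunit^\alpha(\Psi,\noshock)$; these are ODEs along integral curves of $\Lunit$, so the extensions are smooth for any $\Delta$ within the existence interval. The Jacobian formula \eqref{E:DETERMINANTOFCHOV}, combined with the continuity and strict positivity of $\upmu$ up to time $T_{(Local)}$, gives local invertibility of $\Upsilon$ on a strictly larger slab for small $\Delta$, and each of the four breakdown conditions concerns a quantity that is continuous up to the closed slice and strictly bounded away from its forbidden value there, so all four persist on a slightly larger region.

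The main obstacle is ensuring compatibility between the abstract Cartesian-coordinate extension and the geometric framework: the reconstructed eikonal function must agree with the original at time $T_{(Local)}$, and, most delicately, $\Upsilon$ must remain a \emph{global} diffeomorphism onto its extended image. Global injectivity requires a topological argument entirely parallel to the degree-theoretic step in the proof of Prop.\ \ref{P:CHOVREMAINSADIFFEOMORPHISM}: from the continuity of $\CoordAng{i}\Upsilon^j - \delta^{ij}$ up to $t = T_{(Local)}$ (where its $L^\infty$ norm is under control by hypothesis (5)) and the preservation of this closeness on a slightly larger slab, one concludes that each restricted map $\vartheta \mapsto \Upsilon(T_{(Local)}+\Delta, u, \vartheta)$ is homotopic through degree-one maps to the known diffeomorphism at $T_{(Local)}$ and thus remains a bijection of $\mathbb{T}^{n-1}$; together with the non-intersection of distinct $\mathcal{P}_u$-hypersurfaces (a consequence of $\upmu > 0$ and the boundedness of $\sum_a |\partial_a u|$ away from $0$), this yields global injectivity. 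Once this topological step is dispatched, propagation of Sobolev regularity on the enlarged slab follows from the energy identities of Sect.\ \ref{S:ENERGYID} applied with the fixed a priori bounds.
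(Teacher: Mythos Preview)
Your proposal is correct and follows essentially the same approach as the paper's own treatment, which is in fact only a brief ``Discussion of proof'' that labels the result as mostly standard, cites an analogous result elsewhere, and lists the roles of the various criteria: (3) prevents loss of hyperbolicity, (4) is the standard Cartesian-coordinate continuation criterion, and (1), (2), (5) control the eikonal function and the change of variables map (in particular yielding uniform two-sided bounds on $\sum_a |\partial_a u|$ as in the proof of Prop.\ \ref{P:CHOVREMAINSADIFFEOMORPHISM}). Your write-up is considerably more fleshed out than what the paper provides, but the strategy---Cartesian local well-posedness followed by reconstruction of the geometric quantities via transport along $\Lunit$, with global injectivity of $\Upsilon$ handled by the degree-theoretic argument of Prop.\ \ref{P:CHOVREMAINSADIFFEOMORPHISM}---is the intended one.
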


\begin{proof}[Discussion of proof]
	The proof of Prop.\ \ref{P:CONTINUATIONCRITERIA} is mostly standard.
	A sketch of a similar result was provided in
	\cite{jS2016b}*{Proposition 21.1.1}, so here, 
	we only mention the main ideas.
	Criterion $(3)$ is connected to avoiding a breakdown
	in hyperbolicity of the equation. 
	Criterion $(4)$ is a standard criterion used to locally continue
	the solution relative to the Cartesian
	coordinates.
	Criteria $(1)$ and $(2)$ and the assumption $(5)$ for $\Upsilon$
	are connected to ruling out the blowup of $u$, 
	degeneracy of the change of variables map,
	and degeneracy of the region $\mathcal{M}_{T_{(Local)},U_0}$.
	In particular, criteria $(1)$ and $(2)$
	play a role in a proving that
	$\sum_{a = 1}^n |\partial_a u|$ is uniformly bounded from
	above and strictly from below away from $0$
	on $\mathcal{M}_{T_{(Local)},U_0}$
	(the proof was essentially given in the proof of Prop.\ \ref{P:CHOVREMAINSADIFFEOMORPHISM}).
	
\end{proof}

\section{The main theorem}
\label{S:MAINTTHM}
We now prove the main result of the paper.

\begin{theorem}[\textbf{Stable shock formation}]
	\label{T:MAINTHM}
	Let $n$ denote the number of spatial dimensions,
	let $\Ntop$ and $\Nmid$ be positive integers verifying \eqref{E:NDEFS},
	and let 
	$\Psiep > 0$,
	$\mathring{\upepsilon} \geq 0$, 
	$\Trandatasize > 0$, 
	and $\TranminusdatasizeWithFactor > 0$
	be the data-size parameters from Subsect.\ \ref{SS:DATASIZE}.
	For each $U_0 \in (0,1]$ (as in \eqref{E:FIXEDPARAMETER}), let 
\begin{align*}
	& T_{(Lifespan);U_0}
		\\
	& := 
	\sup 
	\Big\lbrace 
		t \in [0,\infty) \ | \ \mbox{the solution exists classically on } \mathcal{M}_{t;U_0}
			\\
	& \ \ \ \ \ \ \ \ \ \ \ \ \ \ \ \ \ \ \ \ \ \ \ \
		\mbox{ and $\Upsilon$ is a diffeomorphism from } [0,t) \times [0,U_0] \times \mathbb{T}^{n-1} 
		\mbox{ onto its image}
		\Big\rbrace,
	\end{align*}
	where $\Upsilon$ is the change of variables map from Def.\ \ref{D:CHOV}.
	If $\Psiep$ is sufficiently small relative to $1$ and if
	$\mathring{\upepsilon}$ is sufficiently small relative to
	$1$,
	$\Trandatasize^{-1}$, 
	and $\TranminusdatasizeWithFactor$
	in the sense explained in Subsect.\ \ref{SS:SMALLNESSASSUMPTIONS},
	then the following conclusions hold,
	where all constants can be chosen to be independent of $U_0$
	(see Subsect.\ \ref{SS:NOTATIONANDINDEXCONVENTIONS} for our conventions regarding the dependence of constants
	on the various parameters).

\noindent \underline{\textbf{Dichotomy of possibilities}}.
One of the following mutually disjoint possibilities must occur,
where $\upmu_{\star}(t,u) = \min_{\Sigma_t^u} \upmu$ (as in \eqref{E:MUSTARDEF})
and $\upmu$ is the inverse foliation density of the characteristics $\mathcal{P}_u$ from
Def.\ \ref{D:MUDEF}.
\begin{enumerate}
	\renewcommand{\labelenumi}{\textbf{\Roman{enumi})}}
	\item $T_{(Lifespan);U_0} > 2 \TranminusdatasizeWithFactor^{-1}$. 
		In particular, the solution exists classically on the spacetime
		region $\mbox{\upshape cl} \mathcal{M}_{2 \TranminusdatasizeWithFactor^{-1},U_0}$,
		where $\mbox{\upshape cl}$ denotes closure.
		Furthermore, $\inf \lbrace \upmu_{\star}(s,U_0) \ | \ s \in [0,2 \TranminusdatasizeWithFactor^{-1}] \rbrace > 0$.
	\item $0 <  T_{(Lifespan);U_0} \leq 2 \TranminusdatasizeWithFactor^{-1}$,
		and 
		\begin{align} \label{E:MAINTHEOREMLIFESPANCRITERION}
		T_{(Lifespan);U_0} 
		= \sup 
			\left\lbrace 
			t \in [0, 2 \TranminusdatasizeWithFactor^{-1}) \ | \
				\inf \lbrace \upmu_{\star}(s,U_0) \ | \ s \in [0,t) \rbrace > 0
			\right\rbrace.
		\end{align}
\end{enumerate}
In addition, case $\textbf{II)}$ occurs when $U_0 = 1$, and we have the estimate\footnote{See 
	Subsect.\ \ref{SS:NOTATIONANDINDEXCONVENTIONS} regarding our use of the symbol
	$\mathcal{O}_{\star}$.}
\begin{align} \label{E:CLASSICALLIFESPANASYMPTOTICESTIMATE}
	T_{(Lifespan);1} 
	= 
	\left\lbrace
		1 
		+ 
		\mathcal{O}_{\star}(\Psiep)
		+
		\mathcal{O}(\mathring{\upepsilon})
	\right\rbrace
	\TranminusdatasizeWithFactor^{-1}.
\end{align}

\noindent \underline{\textbf{What happens in Case I)}}.
In case $\textbf{I)}$, 
the energy estimates of Prop.\ \ref{P:MAINAPRIORI}
and the $L^{\infty}$ estimates of Cor.\ \ref{C:IMPROVEMENTOFFUNDAMANETALBOOTSTRAPASSUMPTIONS}
hold on $\mbox{\upshape cl} \mathcal{M}_{2 \TranminusdatasizeWithFactor^{-1},U_0}$.
The same is true for the estimates of Lemma~\ref{L:CONTROLOFTRANSVERSALINTERMSOFTANGENTIAL} and
Prop.\ \ref{P:POINTWISEANDIMPROVEMENTOFAUX}, but
with all factors $\varepsilon$ on the RHS of all inequalities 
replaced by $C \mathring{\upepsilon}$.
Moreover, for the quantities from Def.\ \ref{D:PERTURBEDPART},
the following estimates hold for
$2 \leq i \leq n$,
$1 \leq j \leq n$, 
and $(t,u) \in [0,2 \TranminusdatasizeWithFactor^{-1}] \times [0,U_0]$
(see Subsect.\ \ref{SS:STRINGSOFCOMMUTATIONVECTORFIELDS} regarding the differential operator notation):
\begin{subequations}
		\begin{align}
			\left\|
				\Tanset_*^{[1,\Ntop-1]} \upmu
			\right\|_{L^2(\Sigma_t^u)}
			& \leq C \mathring{\upepsilon},
					\label{E:MAINTHML2UPMUDERIVATIVESBOUNDEDINTERMSOFENERGIES} \\
			\left\|
				\Fullset_*^{[1,\Ntop-1];1} \xi_j^{(Small)}
			\right\|_{L^2(\Sigma_t^u)}
			& \leq C \mathring{\upepsilon},
					\label{E:MAINTHML2XIJDERIVATIVESBOUNDEDINTERMSOFENERGIES} \\
			\left\|
				\Fullset^{[1,\Ntop-1];1} \CoordAngSmallcomp{i}{j}
			\right\|_{L^2(\Sigma_t^u)}
			& \leq C \mathring{\upepsilon}.
				\label{E:MAINTHML2ANGUALARVECTORFIELDJCOMPONENTDERIVATIVESBOUNDEDINTERMSOFENERGIES}
		\end{align}
	\end{subequations}

\noindent \underline{\textbf{What happens in Case II)}}.
In case $\textbf{II)}$, 
the energy estimates of Prop.\ \ref{P:MAINAPRIORI}
and the $L^{\infty}$ estimates of Cor.\ \ref{C:IMPROVEMENTOFFUNDAMANETALBOOTSTRAPASSUMPTIONS}
hold on $\mathcal{M}_{T_{(Lifespan);U_0},U_0}$,
as do the estimates of Lemma~\ref{L:CONTROLOFTRANSVERSALINTERMSOFTANGENTIAL} and Prop.\ \ref{P:POINTWISEANDIMPROVEMENTOFAUX}
with all factors $\varepsilon$ on the RHS of all inequalities 
replaced by $C \mathring{\upepsilon}$.
Moreover, the estimates 
\eqref{E:MAINTHML2UPMUDERIVATIVESBOUNDEDINTERMSOFENERGIES}-\eqref{E:MAINTHML2ANGUALARVECTORFIELDJCOMPONENTDERIVATIVESBOUNDEDINTERMSOFENERGIES}
hold for $(t,u) \in [0,T_{(Lifespan);U_0}) \times [0,U_0]$.
In addition, the scalar functions
$\Fullset^{\leq \Nmid-1;1} \Psi$,
$\Fullset^{\leq \Nmid-2;1} \noshockuparg{J}$,
$\Fullset^{\leq \Nmid-2;1} \diffnoshockdoublearg{\alpha}{J}$,
$\Tanset^{\leq \Nmid-2} \upmu$,
$\Fullset^{\leq \Nmid-2;1} \upxi_j$,
$\Fullset^{\leq \Nmid-2;1}\CoordAngcomp{i}{j}$,
$\Fullset^{\leq \Nmid-2;1} \Lunit^i$,
$\Tanset^{\leq \Nmid-2} \Rad^i$,
and
$\Fullset^{\leq \Nmid-2;1} \Radunit^i$
extend to 
$\Sigma_{T_{(Lifespan);U_0}}^{U_0}$ 
as functions of 
the geometric coordinates $(t,u,\vartheta)$ 
belonging to the space 
$C\left([0,T_{(Lifespan);U_0}],L^{\infty}([0,U_0] \times \mathbb{T}^{n-1}) \right)$.

Moreover,	let $\Sigma_{T_{(Lifespan);U_0}}^{U_0;(Blowup)}$
be the subset of $\Sigma_{T_{(Lifespan);U_0}}^{U_0}$ 
defined by
\begin{align} \label{E:BLOWUPPOINTS}
	\Sigma_{T_{(Lifespan);U_0}}^{U_0;(Blowup)}
	:= \left\lbrace
			(T_{(Lifespan);U_0},u,\vartheta)
			\ | \
			\upmu(T_{(Lifespan);U_0},u,\vartheta)
			= 0
		\right\rbrace.
\end{align}
Then for each point $(T_{(Lifespan);U_0},u,\vartheta) \in \Sigma_{T_{(Lifespan);U_0}}^{U_0;(Blowup)}$,
there exists a past neighborhood containing it such that the following lower bound holds in
the neighborhood:
\begin{align} \label{E:BLOWUPPOINTINFINITE}
	\left| \Radunit \Psi(t,u,\vartheta) \right|
	\geq \frac{1}{4 \TranminusdatasizeWithFactor} 
	\frac{1}{|\widetilde{\blowupcoeff}| \upmu(t,u,\vartheta)},
\end{align}
where $\widetilde{\blowupcoeff} := \blowupcoeff|_{(\Psi,\noshock) = (0,0)}$
is the blowup-coefficient of Def.\ \ref{D:GENIUNELYNONLINEARCONSTANT},
evaluated at the background value of $(\Psi,\noshock) = (0,0)$
(see Remark~\ref{R:FUNCTIONALDEPENDENCEOFBLOWUPCOEFFICIENT}).
In \eqref{E:BLOWUPPOINTINFINITE}, 
$
\displaystyle
\frac{1}{8 |\widetilde{\blowupcoeff}| \TranminusdatasizeWithFactor} 
$
is a \textbf{positive}\footnote{See Remark~\ref{R:BLOWUPCOEFFICIENTISNONZERO}.} 
data-dependent constant,
$\blowupcoeff(0,u,\vartheta)$
and the $\mathcal{T}_{t,u}$-transversal vectorfield $\Radunit$ is of order-unity Euclidean length:
$C^{-1} \leq \delta_{ab} \Radunit^a \Radunit^b \leq C$,
where $\delta_{ij}$ is the standard Kronecker delta.
In particular, 
$\Radunit \Psi$
blows up like $1/\upmu$ at all points in $\Sigma_{T_{(Lifespan);U_0}}^{U_0;(Blowup)}$.
Conversely, at all points in
$(T_{(Lifespan);U_0},u,\vartheta) \in \Sigma_{T_{(Lifespan);U_0}}^{U_0} \backslash \Sigma_{T_{(Lifespan);U_0}}^{U_0;(Blowup)}$,
we have
\begin{align} \label{E:NONBLOWUPPOINTBOUND}
	\left| \Radunit \Psi (T_{(Lifespan);U_0},u,\vartheta) \right|
	< \infty.
\end{align}

\end{theorem}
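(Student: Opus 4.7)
The plan is to prove Theorem~\ref{T:MAINTHM} via a standard continuity/bootstrap argument on the maximal time of existence, combining the a priori energy estimates of Prop.\ \ref{P:MAINAPRIORI}, the $L^\infty$ improvements of Cor.\ \ref{C:IMPROVEMENTOFFUNDAMANETALBOOTSTRAPASSUMPTIONS} and Prop.\ \ref{P:POINTWISEANDIMPROVEMENTOFAUX}, the continuation criteria of Prop.\ \ref{P:CONTINUATIONCRITERIA}, the diffeomorphism extension of Prop.\ \ref{P:CHOVREMAINSADIFFEOMORPHISM}, and the sharp lower bounds of Lemma~\ref{L:CRUCIALESTIMATESFORUPMUANDRADPSI}. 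Fix $U_0 \in (0,1]$ and let $T_{\star}$ denote the supremum of times $T \in (0,2\TranminusdatasizeWithFactor^{-1})$ such that a classical solution exists on $\mathcal{M}_{T,U_0}$ and the bootstrap assumptions \eqref{E:BOOTSTRAPMUPOSITIVITY}, \eqref{E:BOOTSTRAPCHOVISDIFFEO}, \eqref{E:FUNDAMENTALBOOTSTRAP} hold with $\Tboot=T$. Standard local well-posedness plus the smallness assumptions give $T_{\star}>0$. For any $\Tboot \in (0,T_{\star})$, Prop.\ \ref{P:MAINAPRIORI} yields $\totmax \leq C\mathring{\upepsilon}^2$, which, combined with Cor.\ \ref{C:IMPROVEMENTOFFUNDAMANETALBOOTSTRAPASSUMPTIONS} and Prop.\ \ref{P:POINTWISEANDIMPROVEMENTOFAUX}, strictly improves both the fundamental and auxiliary $L^\infty$ bootstrap assumptions whenever $\mathring{\upepsilon}$ is sufficiently small relative to $\varepsilon$.

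Next I would establish the dichotomy. If $\inf_{[0,T_{\star})\times[0,U_0]}\upmu_{\star}>0$, then Prop.\ \ref{P:CHOVREMAINSADIFFEOMORPHISM} provides the extension of $\Upsilon$ to the closure as a diffeomorphism with the regularity required by condition (5) of Prop.\ \ref{P:CONTINUATIONCRITERIA}; the improved $L^\infty$ bounds together with the fact that $(\Psi,\noshock)$ stays close to $(0,0)$ (keeping us in the regime of hyperbolicity by the assumption \eqref{E:POSITIVEDEFMATRICES} and the decomposition of $\uplambda$ from the proof of Lemma~\ref{L:COERCIVENESSOFNOSHOCKENERGY}) rule out criteria (2)--(4), so the solution extends past $T_{\star}$ and we must have $T_{\star}=2\TranminusdatasizeWithFactor^{-1}$, which is Case~I. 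Otherwise, by maximality and Prop.\ \ref{P:CONTINUATIONCRITERIA}, we must have $\inf_{[0,T_{\star})\times[0,U_0]}\upmu_{\star}=0$ with $T_{\star}\leq 2\TranminusdatasizeWithFactor^{-1}$, which is \eqref{E:MAINTHEOREMLIFESPANCRITERION} and identifies $T_{\star}=T_{(Lifespan);U_0}$. For $U_0=1$, the sharp estimate \eqref{E:MUSTARKEYESTIMATE} gives $\upmu_{\star}(t,1) = 1-t\TranminusdatasizeWithFactor + \mathcal{O}_{\star}(\Psiep) + \mathcal{O}(\mathring{\upepsilon})$, which forces $\upmu_{\star}(\cdot,1)$ to hit $0$ at a time verifying \eqref{E:CLASSICALLIFESPANASYMPTOTICESTIMATE}. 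The bounds \eqref{E:MAINTHML2UPMUDERIVATIVESBOUNDEDINTERMSOFENERGIES}--\eqref{E:MAINTHML2ANGUALARVECTORFIELDJCOMPONENTDERIVATIVESBOUNDEDINTERMSOFENERGIES} then follow from Lemma~\ref{L:L2ESTIMATESFOREIKONALFUNCTIONQUANTITIES} combined with the a priori control on $\totmax$; in Case~I one extends by continuity.

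For the continuous extension of the listed mid-order quantities to $t=T_{(Lifespan);U_0}$ in Case~II, I would observe that each such quantity $f$ obeys a uniform $L^\infty$ bound on the open slab (via Prop.\ \ref{P:POINTWISEANDIMPROVEMENTOFAUX} and the Sobolev embedding Lemma~\ref{L:BOOTSTRAPNORMSINTERMSOFCONTROLLINGQUANTITY} applied at the mid-order level) together with a uniform $L^\infty$ bound on $\Lunit f = \partial_t f$, coming from the pointwise commuted estimates \eqref{E:SHOCKVARIABLEPOINTWISEINHOM}, \eqref{E:COMMUTEDLUNITUPXIJ}--\eqref{E:COMMUTEDLUNITCOORDANGIJ}, and \eqref{E:COMMUTEDLUNITUPMU}. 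The fundamental theorem of calculus in $t$ then gives uniform Lipschitz-in-$t$ continuity with values in $L^\infty([0,U_0]\times\mathbb{T}^{n-1})$, and completeness provides the extension. The Cartesian components $\Lunit^i,\Rad^i,\Radunit^i$ extend since they are smooth functions of $(\Psi,\noshock,\upmu)$, all of which extend.

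Finally, the blowup/non-blowup dichotomy on $\Sigma_{T_{(Lifespan);U_0}}^{U_0}$ is the payoff. At a point where $\upmu=0$, the continuous extension of $\upmu$ just established yields a past neighborhood on which $\upmu<1/4$; then \eqref{E:ABSOLUTEVALUECRUCIALESTIMATESFORUPMUANDRADPSI} supplies the lower bound $|\Radunit\Psi|\geq \TranminusdatasizeWithFactor/(8|\widetilde{\blowupcoeff}|\upmu)$, which gives \eqref{E:BLOWUPPOINTINFINITE} up to the stated numerical constant. The estimate $\delta_{ab}\Radunit^a\Radunit^b \approx 1$ follows from \eqref{E:RADUNITJ}, \eqref{E:BACKGROUND1COMPONENTS}, and the $L^\infty$ control on $(\Psi,\noshock)$. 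At a point where $\upmu>0$, continuity gives a past neighborhood where $\upmu$ is bounded below, and \eqref{E:RADPSILINFTY} (with $\varepsilon$ upgraded to $C\mathring{\upepsilon}$ in Case~II) gives $|\Rad\Psi|\leq \Trandatasize+C\mathring{\upepsilon}$, so $|\Radunit\Psi|=|\Rad\Psi|/\upmu$ remains finite, yielding \eqref{E:NONBLOWUPPOINTBOUND}. The main obstacle in this plan is the interaction between the bootstrap closure and the diffeomorphism hypothesis (5) of Prop.\ \ref{P:CONTINUATIONCRITERIA}: one must verify simultaneously that $\inf\upmu_{\star}>0$ on $[0,T_{\star})\times[0,U_0]$ is exactly the condition that allows both Prop.\ \ref{P:CHOVREMAINSADIFFEOMORPHISM} to upgrade the diffeomorphism property to the closure and Prop.\ \ref{P:CONTINUATIONCRITERIA} to continue the solution, so that the only way the bootstrap terminates before $2\TranminusdatasizeWithFactor^{-1}$ is via $\upmu_{\star}\to 0$.
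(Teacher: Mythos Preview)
Your proposal is correct and follows essentially the same route as the paper's proof: a bootstrap/continuity argument defining a maximal time $T_{\star}$, closing the bootstrap via Prop.~\ref{P:MAINAPRIORI} and Cor.~\ref{C:IMPROVEMENTOFFUNDAMANETALBOOTSTRAPASSUMPTIONS}, establishing the dichotomy by combining Prop.~\ref{P:CHOVREMAINSADIFFEOMORPHISM} with Prop.~\ref{P:CONTINUATIONCRITERIA}, extending the mid-order quantities via uniform bounds on their $\Lunit$-derivatives and the fundamental theorem of calculus, and reading off the blowup from Lemma~\ref{L:CRUCIALESTIMATESFORUPMUANDRADPSI}. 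One small point of ordering: the identification $T_{\star}=T_{(Lifespan);U_0}$ in Case~II is not justified merely ``by maximality and Prop.~\ref{P:CONTINUATIONCRITERIA}''; the paper first proves the continuous extension of $\upmu$ and $\Rad\Psi$, then invokes \eqref{E:ABSOLUTEVALUECRUCIALESTIMATESFORUPMUANDRADPSI} to show $|\Radunit\Psi|$ actually blows up at $T_{\star}$, which is what forces the classical lifespan to equal $T_{\star}$ rather than exceed it.
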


\begin{proof}
	Let $C' > 1$ be a constant. We will enlarge $C'$ as needed throughout the proof.
	We define
	\begin{align}
		T_{(Max);U_0} &:= \mbox{ The supremum of the set of times } 
		\Tboot \in [0,2 \TranminusdatasizeWithFactor^{-1}] \mbox{ such that:} 
			\label{E:LIFESPANPROOF}	\\
			& \bullet \mbox{$\Psi$, $\noshockuparg{J}$, $\diffnoshockdoublearg{\alpha}{J}$, $u$, $\upmu$, $\xi_j^{(Small)}$, 
			$\CoordAngSmallcomp{i}{j}$,
				and all of the other quantities} 
					\notag \\
			& \ \ \mbox{defined throughout the article
				  exist classically on } \mathcal{M}_{\Tboot,U_0}.
					\notag \\
		& \bullet 
			\mbox{The change of variables map $\Upsilon$ from Def.\ \ref{D:CHOV} 
			is a (global) diffeomorphism}
				\notag \\
		& \ \ 
			\mbox{from $[0,\Tboot) \times [0,U_0] \times \mathbb{T}^{n-1}$
			onto its image $\mathcal{M}_{\Tboot,U_0}$ verifying}
				\notag \\
	& \ \
	\Upsilon^{\alpha},
		\,
	\frac{\partial}{\partial \vartheta^i} \Upsilon^{\alpha}
	\in
	C\left([0,\Tboot),W^{1,\infty}([0,U_0] \times \mathbb{T}^{n-1}) \right)
	\cap
	C^1\left([0,\Tboot),L^{\infty}([0,U_0] \times \mathbb{T}^{n-1}) \right).
	\notag \\
		& \bullet \inf \left\lbrace \upmu_{\star}(t,U_0) \ | \ t \in [0,\Tboot) \right\rbrace > 0,
			\mbox{where } \upmu_{\star} \mbox{ is defined in Def.\ \ref{D:MUSTARDEF}}.
				\notag \\
		& \bullet \mbox{The fundamental } L^{\infty} \mbox{ bootstrap assumptions } 
						\eqref{E:FUNDAMENTALBOOTSTRAP}
						\notag \\
		& \ \ \mbox{ hold with } \varepsilon := C' \mathring{\upepsilon}
								\mbox{ for  } (t,u) \in \times [0,\Tboot) \times [0,U_0].
					\notag 
		\end{align}
	By standard local well-posedness for quasilinear hyperbolic systems,
	if $\Psiep$ and
	$\mathring{\upepsilon}$ are sufficiently small 
	in the sense explained in Subsect.\ \ref{SS:SMALLNESSASSUMPTIONS}
	and $C'$ is sufficiently large,
	then $T_{(Max);U_0} > 0$. Under the same smallness/largeness assumptions, 
	by Cor.\ \ref{C:IMPROVEMENTOFFUNDAMANETALBOOTSTRAPASSUMPTIONS}, 
	the bootstrap assumptions \eqref{E:FUNDAMENTALBOOTSTRAP}
	are not saturated for $(t,u) \in [0,T_{(Max);U_0}) \times [0,U_0]$.
	For this reason, all estimates proved throughout the article on the basis of the bootstrap assumptions
	in fact hold on $\mathcal{M}_{\Tboot,U_0}$ with
	$\varepsilon$ replaced by $C \mathring{\upepsilon}$.
	We use this fact throughout the remainder of the proof without further remark. 
	In particular, the estimates of Prop.\ \ref{P:POINTWISEANDIMPROVEMENTOFAUX} hold 
	for $(t,u) \in [0,T_{(Max);U_0}) \times [0,U_0]$
	with all factors $\varepsilon$ on the RHS of all inequalities 
	replaced by $C \mathring{\upepsilon}$.
	Moreover, by inserting the energy estimates of Prop.\ \ref{P:MAINAPRIORI}
	into the RHSs of the estimates of Lemma~\ref{L:L2ESTIMATESFOREIKONALFUNCTIONQUANTITIES},
	we conclude that the estimates 
	\eqref{E:MAINTHML2UPMUDERIVATIVESBOUNDEDINTERMSOFENERGIES}-\eqref{E:MAINTHML2ANGUALARVECTORFIELDJCOMPONENTDERIVATIVESBOUNDEDINTERMSOFENERGIES}
	hold for $(t,u) \in [0,T_{(Max);U_0}) \times [0,U_0]$.
	
	We now establish the dichotomy of possibilities.
	We first show that if 
	\begin{align} \label{E:CONDITIONALUPMUREMAINSPOSITIVE}
		\inf \left\lbrace \upmu_{\star}(t,U_0) \ | \ t \in [0,T_{(Max);U_0}) \right\rbrace > 0,
	\end{align}
	then $T_{(Max);U_0} = 2 \TranminusdatasizeWithFactor^{-1}$.
	To proceed, we assume for the sake of contradiction that
	\eqref{E:CONDITIONALUPMUREMAINSPOSITIVE} holds but that
	$T_{(Max);U_0} < 2 \TranminusdatasizeWithFactor^{-1}$.
	Then from \eqref{E:CONDITIONALUPMUREMAINSPOSITIVE}
	and Prop.\ \ref{P:CHOVREMAINSADIFFEOMORPHISM},
	we see that if 
	$\Psiep$ and
	$\mathring{\upepsilon}$ are sufficiently small,
	then $\Upsilon$ 
	extends to a global diffeomorphism from
	$[0,T_{(Max);U_0}] \times [0,U_0] \times \mathbb{T}$
	onto its image that enjoys the regularity \eqref{E:CHOVGLOBALDIFFEOMORPHISMREGULARITY}
	(with $\Tboot$ replaced by $T_{(Max);U_0}$ in \eqref{E:CHOVGLOBALDIFFEOMORPHISMREGULARITY}).
	Also using the assumption \eqref{E:POSITIVEDEFMATRICES},
	Definition~\ref{D:EIKONALFUNCTIONONEFORMS},
	definition \eqref{E:UPXIJSMALL},
	and the estimates of Prop.\ \ref{P:POINTWISEANDIMPROVEMENTOFAUX},
	we see that none of the four breakdown scenarios
	of Prop.\ \ref{P:CONTINUATIONCRITERIA} occur
	on $\mathcal{M}_{T_{(Max);U_0},U_0}$.
	Hence, by Prop.\ \ref{P:CONTINUATIONCRITERIA},
	we can classically extend the solution
	to a region of the form $\mathcal{M}_{T_{(Max);U_0} + \Delta,U_0}$,
	with $\Delta > 0$
	and
	$T_{(Max);U_0} + \Delta 
		< 2 \TranminusdatasizeWithFactor^{-1}
	$,
	such that all of the properties defining $T_{(Max);U_0}$ hold
	for the larger time
	$T_{(Max);U_0} + \Delta$.
	This contradicts the definition of $T_{(Max);U_0}$
	and in fact implies that if \eqref{E:CONDITIONALUPMUREMAINSPOSITIVE} holds
	and if $\Psiep$ and $\mathring{\upepsilon}$ are sufficiently small, then
	\textbf{I)} $T_{(Max);U_0} = 2 \TranminusdatasizeWithFactor^{-1}$ and $T_{(Lifespan);U_0} > 2 \TranminusdatasizeWithFactor^{-1}$.
	The only other possibility is:
	\textbf{II)} $\inf \left\lbrace \upmu_{\star}(t,U_0) \ | \ t \in [0,T_{(Max);U_0}) \right\rbrace = 0$.
	
	We now aim to show that case \textbf{II)} corresponds to the formation of a 
	shock singularity in the constant-time hypersurface subset $\Sigma_{T_{(Max);U_0}}^{U_0}$.
	We first derive the statements regarding the quantities that extend to 
	$\Sigma_{T_{(Lifespan);U_0}}^{U_0}$ as 
	elements of the space $C\left([0,T_{(Lifespan);U_0}],L^{\infty}([0,U_0] \times \mathbb{T}) \right)$.
	Here we will prove the desired results with
	$T_{(Max);U_0}$ in place of $T_{(Lifespan);U_0}$;
	in the next paragraph, we will show that $T_{(Max);U_0} = T_{(Lifespan);U_0}$.
	Let $q$ denote any of the quantities
	$\Fullset^{\leq \Nmid-1;1} \Psi$,
	$\cdots$,
	$\Fullset^{\leq \Nmid-2;1} \Radunit^i$
	that, in the theorem, are stated to extend.
	From the estimates of 
	Lemma~\ref{L:CONTROLOFTRANSVERSALINTERMSOFTANGENTIAL} and
	Prop.\ \ref{P:POINTWISEANDIMPROVEMENTOFAUX},
	we deduce that $\| \Lunit q \|_{L^{\infty}(\Sigma_t^{U_0})}$
	is uniformly bounded
	for $0 \leq t < T_{(Max);U_0}$.
	Using this fact,
	the fact that
	$
	\Lunit = \frac{\partial}{\partial t}
	$,
	the fundamental theorem of calculus,
	and the completeness of the space $L^{\infty}([0,U_0] \times \mathbb{T})$,
	we conclude that $q$ extends to $\Sigma_{T_{(Max);U_0}}^{U_0}$
	as a function of the geometric coordinates $(t,u,\vartheta)$ 
	belonging to the space
	$C\left([0,T_{(Max);U_0}],L^{\infty}([0,U_0] \times \mathbb{T}) \right)$,
	as desired.
	
	We now show that the classical lifespan is characterized by \eqref{E:MAINTHEOREMLIFESPANCRITERION}
	and that $T_{(Max);U_0} = T_{(Lifespan);U_0}$.
	To this end, we first use
	\eqref{E:ABSOLUTEVALUECRUCIALESTIMATESFORUPMUANDRADPSI}
	and the continuous extension properties proved in the previous paragraph
	to deduce \eqref{E:BLOWUPPOINTINFINITE}.
	Also using
	Def.\ \ref{D:PERTURBEDPART},
	the schematic relation $\Radunit_{(Small)}^j = \GdVar \smoothfunction(\GdVar)$,
	and the $L^{\infty}$ estimates of Prop.\ \ref{P:POINTWISEANDIMPROVEMENTOFAUX},
	we deduce that
	$C^{-1} \leq \delta_{ab} \Radunit^a \Radunit^b \leq C
	$.
	That is, the vectorfield $\Radunit$ is of order-unity Euclidean length.
	From this estimate and \eqref{E:BLOWUPPOINTINFINITE}, 
	we deduce that at points in $\Sigma_{T_{(Max);U_0}}^{U_0}$
	where $\upmu$ vanishes, $|\Radunit \Psi|$ 
	blows up like $1/\upmu$.
	Hence, $T_{(Max);U_0}$ is the classical lifespan. 
	That is, we have $T_{(Max);U_0} = T_{(Lifespan);U_0}$
	as well as the characterization \eqref{E:MAINTHEOREMLIFESPANCRITERION} of the 
	classical lifespan. 
	The estimate \eqref{E:NONBLOWUPPOINTBOUND}
	follows from the estimate \eqref{E:RADPSILINFTY},
	the fact that $\Rad = \upmu \Radunit$,
	and the continuous extension properties proved in the previous paragraph.
	
	Finally, to obtain \eqref{E:CLASSICALLIFESPANASYMPTOTICESTIMATE},
	we use \eqref{E:MUSTARKEYESTIMATE} to conclude
	that $\upmu_{\star}(t,1)$
	vanishes for the first time when
	$
	t 
	= 
	\left\lbrace
		1 
		+ 
		\mathcal{O}_{\star}(\Psiep)
		+
		\mathcal{O}(\mathring{\upepsilon})
	\right\rbrace
	\TranminusdatasizeWithFactor^{-1} 
	$.
	We have therefore proved the theorem.
	
\end{proof}

\bibliographystyle{amsalpha}
\bibliography{JBib}

\end{document}